\date{\today}
\title[Analytic torsion of cones]{The analytic torsion of the cone over an odd dimensional manifold}
\thanks{2000 {\em Mathematics Subject Classification: 58J52}.\\
}
\author{L. Hartmann  and M. Spreafico}
\address[Luiz Hartmann]{\tt UFSCar, Universidade Federal de S\~{a}o Carlos, S\~{a}o Carlos, Brazil. \newline Partially supported by FAPESP 2009/15145-9}
\email{hartmann@dm.ufscar.br}
\address[Mauro Spreafico]{\tt ICMC, Universidade S\~{a}o Paulo, S\~{a}o Carlos, Brazil.}
\email{mauros@icmc.usp.br}
\numberwithin{equation}{section}
\newtheorem{theo}{Theorem}[section]
\newtheorem{lem}{Lemma}[section]
\newtheorem{corol}{Corollary}[section]
\newtheorem{defi}{Definition}[section]
\newtheorem{prop}{Proposition}[section]
\newtheorem{rem}{Remark}[section]
\renewcommand{\S}{\mathcal{S}}
\renewcommand{\Re}{{\rm Re}}
\newcommand{\Sp}{{\rm Sp}}
\newcommand{\beq}{\begin{equation}}
\newcommand{\eeq}{\end{equation}}
\newcommand{\Z}{{\mathds{Z}}}
\newcommand{\R}{{\mathds{R}}}
\newcommand{\C}{{\mathds{C}}}
\newcommand{\F}{{\mathds{F}}}
\newcommand{\Q}{{\mathds{Q}}}
\newcommand{\T}{{\mathcal{T}}}
\newcommand\e{{\rm e}}
\newcommand{\A}{{\mathcal{A}}}
\renewcommand{\H}{{\mathcal{H}}}
\renewcommand{\P}{{\mathcal{P}}}
\newcommand{\B}{{\mathcal{B}}}
\newcommand{\End}{{\rm End}}
\newcommand{\Hom}{{\rm Hom}}
\renewcommand{\b}{{\partial}}
\newcommand{\ec}{{\mathsf e}}
\renewcommand{\ge}{{\mathsf g}}
\renewcommand{\det}{{\rm det}}
\newcommand{\E}{{\mathcal{E}}}
\newcommand{\tr}{C_{[l_1,l_2]} W}
\date{}
\DeclareMathOperator*{\Rz}{Res_0}
\DeclareMathOperator*{\Ru}{Res_1}
\begin{document}



\maketitle

\begin{abstract}  We study the analytic torsion of the cone over an orientable odd dimensional compact connected Riemannian manifold $W$. We prove that the logarithm of the analytic torsion of the cone decomposes as the sum of the logarithm of the root of the analytic torsion of the boundary of the cone, plus a topological term, plus a further term that is a rational linear combination of local Riemannian invariants of the boundary. We also prove that this last term coincides with the anomaly boundary term appearing in the Cheeger M\"uller theorem \cite{Che1} \cite{Mul} for a manifold with boundary, according to Br\"uning and Ma \cite{BM}. 
We also prove  Poincar\'e duality for the analytic torsion of a cone.
 
\end{abstract}

\tableofcontents

\section{Introduction and statement of the results}
\label{s0}

Analytic torsion was originally introduced by Ray and Singer in \cite{RS}, as an analytic counter part of the Reidemeister torsion of Reidemeister, Franz and de Rham \cite{Rei} \cite{Fra} \cite{deR}. Since then, analytic torsion became an important invariant of Riemannian manifolds, and has been intensively studied, several generalizations have been introduced and significative  results obtained. Concerning the original invariant, the first important result was achieved by W. M\"uller \cite{Mul} and J. Cheeger \cite{Che1}  who proved that for a compact connected Riemannian manifold without boundary, the analytic torsion and the Reidemeister torsion coincide, result conjectured by Ray and Singer in \cite{RS}, because of the several similar properties shared by the two torsions. This result is nowadays known as the celebrated Cheeger-M\"uller theorem. The next natural question along this line of investigation was to answer the same problem for manifolds with boundary. It was soon realized that the answer to such a question was an highly non trivial one, since the boundary introduces some wild terms. The first case to be analyzed was the case of a product metric near the boundary. W. L\"uck proved in \cite{Luc} that in this case the boundary term is topological, and depends only upon the Euler characteristic of the boundary. The answer to the general case required 20 more years of work, and has eventually been given in a recent  paper of Br\"uning and Ma \cite{BM}. The new contribution of the boundary, beside the topological one given by L\"uck, is called anomaly boundary term and we denote it by $A_{\rm BM,abs}$. The term $A_{\rm BM,abs}$ has  a quite complicate expression, but only depends on some local quantities constructed from the metric tensor near the boundary (see Section \ref{cm} for details). The next natural step is to study the analytic torsion for spaces with singularities. A first, natural type of space with singularity is the cone over a manifold, $CW$. Cones and spaces with conical singularities have been deeply investigated by J. Cheeger in a series of works \cite{Che1} \cite{Che2} \cite{Che3} (see also \cite{Nag}).  Due to this investigation, all information on $L^2$-forms, Hodge theory, and Laplace operator on forms on $CW$ are available. Further information on the class of regular singular operators, that contains the Laplace operator on $CW$, are given in works of Br\"uning and Seely (see in particular \cite{BS2}). As a result it is not difficult to obtain a complete description of the eigenvalues of the Laplace operator on $CW$ in terms of the eigenvalues of the Laplace operator on $W$. With all these tools available, namely on one side the formula for the boundary term, and on the other some representation of the eigenvalues of the Laplace operator on the cone, it is natural to tackle the problem of investigating the analytic torsion of $CW$. What one expects in this case in fact is some relation between the torsion of the cone and the torsion of the section. A possible extension of the Cheeger M\"uller theorem could follow, or not. Indeed, it is general belief that in case of conical singularity such an extension would require intersection R torsion more than classical R torsion (see \cite{Dar}). However, if the section is a rational homology manifold, then the two torsion coincide (see \cite{Che1}, end of Section 2), and the classical Cheeger M\"uller theorem is expected to extend.

If $\mathcal{C}(W)$ is the chain complex associated to some cell decomposition of $W$, then the algebraic mapping cone $Cone(\mathcal{C}(W))$ gives the chain complex for a cell decomposition of  $CW$. It is then easy to see that the R torsion of $CW$ only depends on the choice of a base for the zero dimensional homology. Even if Poincar\'e duality does not hold, it does hold between top and bottom dimension, and therefore we can use the method of Ray and Singer in order to fix the base for the zero homology using the Riemannian structure and harmonic forms (see \cite{RS} Section 3, see also \cite{HMS}). The result for the R torsion is
\[
\tau(CW)=\sqrt{\rm Vol(CW)}.
\]

On the other side, one wants the analytic torsion. The analytic tools necessary to deal with the zeta functions appearing in the definition of the analytic torsion, constructed with the eigenvalues of the Laplace operator on $CW$, are  available by works of M. Spreafico  \cite{Spr4} \cite{Spr5} \cite{Spr9}. In these works, the zeta function associated to a general class of
double sequences is investigated. In particular, a decomposition result is presented and formulas for the zeta invariants
of a decomposable sequence are given. This technique applies to the case of the zeta function on $CW$, and permits to obtain some results on the analytic torsion that we will describe here below. Before, we note that this approach has been also followed in \cite{HMS}, \cite{Ver} and \cite{HS}. The main results of these papers are that in the case of $W$ an odd low dimensional sphere, then the classical Cheeger M\"uller theorem with the anomalous boundary term of Br\"uning and Ma holds for $CW$, while if $W$ is an even dimensional sphere, it does not hold. 
Explicit formulas for $W$ the sphere of dimensions one, two and three are also given, and in \cite{HS} it is conjectured that the Cheeger M\"uller theorem with the anomalous boundary term of Br\"uning and Ma holds for $W$ any odd dimensional sphere. This is proved to be true in Theorem \ref{t02} below. In Theorem \ref{t03} the same result is proved for the cone over a generic orientable compact connected Reimannian manifold of  odd dimension. We split the proof of  Theorem \ref{t03} in two parts. We first prove in Section \ref{s7} that the result is true if the dimension of the manifold is smaller than six. A basic ingredient in this proof are  results of P. Gilkey on local invariants of Riemannan manifolds \cite{Gil}. Next, in Section \ref{ultima}, we prove the general statement. The reason for giving a different explicit proof for the low dimensional cases is due to the fact that the proof of the general case is based on a result that has not been published yet, namely a formula for the anomaly boundary term of Br\"{u}ning and Ma with mixed boundary conditions. This result is contained in a preprint of the same authors \cite{BM2}, and we thanks the author for making available part of their work.

We are now ready to state the main results of this paper, for we fix some notation. Let $(W,g)$ be an orientable  compact connected Riemannian manifold of finite dimension $m$ without boundary  and with Riemannian structure $g$. We denote by $C_lW$ the cone over $W$ with the Riemannian structure
\[
dx\otimes dx+x^2 g,
\]
on $CW-\{pt\}$, where $pt$ denotes the tip of the cone and $0<x\leq l$ (see Section \ref{Lap1.1} for details). The formal Laplace operator on forms on $C_ W-\{pt\}$ has a suitable $L^2$-self adjoint extension $\Delta_{\rm abs/rel}$ on $C_l W$ with absolute or relative boundary conditions on the boundary $\b C_l W$ (see Section \ref{Lap1.3} for details), with pure discrete spectrum $\Sp \Delta_{\rm abs/rel}$. This permit to define the associated zeta function 
\[
\zeta(s, \Delta_{\rm abs/rel})=\sum_{\lambda\in \Sp_+ \Delta_{\rm abs/rel}} \lambda^{-s},
\]
for $\Re(s)>\frac{m+1}{2}$. This zeta function has a meromorphic analytic continuation to the whole complex $s$-plane with at most isolated poles (see Section \ref{s2c} for details). It is then possible to define the analytic torsion of the cone
\[
\log T_{\rm abs/rel}(C_l W)=\frac{1}{2}\sum_{q=0}^{m+1}(-1)^q q\zeta'(0,\Delta^{(q)}_{\rm abs/rel}).
\]



In this setting,  we have the following results (analogous results with relative boundary conditions also follow by Poincar\'e duality on the cone, proved in Theorem \ref{Poinc2} below).

\begin{theo}\label{t01} The analytic torsion on the cone $C_l W$ on an orientable compact connected Riemannian manifold $(W,g)$ of odd dimension $2p-1$ is
\begin{align*}
\log T_{\rm abs}(C_lW)= &\frac{1}{2} \sum_{q=0}^{p-1} (-1)^{q+1} {\rm rk}H_q(W;\Q)\log \frac{2(p-q)}{l}+\frac{1}{2} \log T(W,l^2g)+{\rm S}(\b C_l W),
\end{align*}
where the singular term ${\rm S}(\b C_l W)$ only depends on the boundary of the cone:
\[
{\rm S}(\b C_l W)=\frac{1}{2}\sum_{q=0}^{p-1}  \sum_{j=0}^{p-1}\sum^{j}_{k=0} \Rz_{s=0}\Phi_{2k+1,q}(s)
\binom{-\frac{1}{2}-k}{j-k} \sum^{q}_{h=0}(-1)^{h}\Ru_{s=j+\frac{1}{2}}\zeta\left(s,\tilde \Delta^{(h)}\right)(q-p+1)^{2(j-k)},
\]
where the functions $\Phi_{2k+1,q}(s)$ are some universal functions, explicitly known by some recursive relations, and $\tilde\Delta$ is the Laplace operator on forms on the section of the cone.
\end{theo}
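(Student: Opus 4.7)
The plan is first to write down the spectrum of $\Delta^{(q)}_{\rm abs}$ on $C_lW$ explicitly in terms of the spectral data on the section. Using separation of variables on the cone, every coclosed $q$-eigenform on $W$ of eigenvalue $\mu$ yields two sequences of eigenvalues on $C_lW$, of the form $j_{\nu_\pm(q,\mu),k}^2/l^2$, where $j_{\nu,k}$ are the positive zeros of $J_\nu$ (or of a linear combination of $J_\nu$ and $J_\nu'$ coming from the absolute boundary condition on $\b C_lW$) and the orders $\nu_\pm(q,\mu)$ are explicit functions of $q$, $\mu$ and $p$. The harmonic part of the section contributes Bessel problems of index $|q-p+1|$, which is where the number $2(p-q)$ in the statement originates.

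With this spectrum in hand, each $\zeta(s,\Delta^{(q)}_{\rm abs})$ becomes a zeta function of a double sequence in the sense of \cite{Spr4,Spr5,Spr9}, built from Bessel zeros. I would then apply Spreafico's decomposition theorem separately to each subspectrum. This splits $\zeta'(0,\Delta^{(q)}_{\rm abs})$ into (i) a \emph{regular} contribution, obtained from the simple zeta function on the index set, which after passing through the relation between $\nu_\pm(q,\mu)$ and $\mu$ recombines across degrees into the torsion of the boundary with the rescaled metric, and (ii) a \emph{singular} contribution produced by the polynomial part of the large-order asymptotic expansion of $\log\Gamma$ applied to Bessel-type Gamma functions. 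The singular contribution is where the universal functions $\Phi_{2k+1,q}(s)$ enter: they are Mellin transforms of the pieces in the Bessel asymptotic expansion, and their residues at $s=0$ are paired with the residues of $\zeta(s,\tilde\Delta^{(h)})$ at the half-integers $j+\tfrac12$.

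Assembling the alternating sum $\tfrac12\sum_{q}(-1)^q q\,\zeta'(0,\Delta^{(q)}_{\rm abs})$, the regular contributions collapse to $\tfrac12\log T(W,l^2g)$ after reindexing $q\mapsto q-1$ (shifting between coexact $q$-forms and $(q+1)$-forms on the cone) and using $g\mapsto l^2 g$ scaling. The harmonic Bessel problems of order $|q-p+1|$ produce the Betti-number term $\tfrac12\sum_{q=0}^{p-1}(-1)^{q+1}{\rm rk}\,H_q(W;\Q)\log\frac{2(p-q)}{l}$; the factor $2(p-q)$ arises as twice the Bessel order, via the standard formula $\zeta'_\nu(0)=-\log(2^\nu\Gamma(\nu+1))$ for the zeta function over positive zeros of $J_\nu$. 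Restricting to $q\le p-1$ exploits the odd dimension of $W$ and Poincar\'e duality on $W$, which lets one pair the degrees $q$ and $2p-1-q$ and halves the range. The remaining singular terms, after collecting $(q-p+1)^{2(j-k)}$ weights coming from the polynomial relation between $\nu_\pm(q,\mu)$ and $\mu$, are exactly ${\rm S}(\b C_lW)$ as written.

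The main obstacle, in my view, is the bookkeeping in the singular part. One must show that the asymptotic coefficients of $\log\Gamma(\nu_\pm(q,\mu)+1)$ in $\mu$, weighted by the Spreafico kernel and by $(-1)^q q$, reorganise into the precise binomial sum $\sum_{k\le j}\binom{-1/2-k}{j-k}(q-p+1)^{2(j-k)}$ in the statement. This amounts to a careful use of the binomial expansion of $\nu_\pm(q,\mu)^{2k+1}$ in the variable $\mu$, combined with the identity that converts the polynomial-in-$\mu$ integrals against the asymptotic expansion of Bessel orders into residues of $\zeta(s,\tilde\Delta^{(h)})$ at shifted arguments. Once this algebraic identity is established, the theorem follows by collecting the three contributions.
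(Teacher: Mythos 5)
Your proposal follows essentially the same route as the paper: separation of variables on $C_lW$ producing a Bessel-type spectrum (Lemma~\ref{l3}), application of Spreafico's double-sequence decomposition (Theorem~\ref{t4} and Corollary~\ref{c}), the split of each $t'_q(0)$ into regular and singular parts (Propositions~\ref{pro2} and~\ref{pro3}), the regular parts recombining across degrees into $\tfrac{1}{2}\log T(W,l^2g)$ plus the Betti-number term, and the singular part reorganized by binomially expanding $\zeta_{\rm cex}(s,\tilde\Delta^{(q)}+\alpha_q^2)$ into residues of $\zeta(s,\tilde\Delta^{(h)})$ at half-integers (Lemma~\ref{lel} and Proposition~\ref{ppoo}). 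One small slip worth noting: the Bessel-zeta derivative you quote, $\zeta'_\nu(0)=-\log\bigl(2^\nu\Gamma(\nu+1)\bigr)$, is not quite right---the correct expression is $z'(0,\nu,0,l)=-\log\frac{\sqrt{\pi}\,l^{\nu+1/2}}{2^{\nu-1/2}\Gamma(\nu+1)}$ as in (\ref{p00})---but this does not affect the argument because the factor $2(p-q)$ arises from the \emph{difference} $z_q(s)=\sum_k\bigl(j^{-2s}_{-\alpha_{q-1},k}-j^{-2s}_{-\alpha_q,k}\bigr)$, in which the extraneous $\sqrt{\pi}$ and $2^{-1/2}$ cancel.
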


It is important to observe that the singular term ${\rm S}(\b C_l W)$ is a universal linear combination of local Riemannian invariants of the boundary, for the residues of the zeta function of the section are such linear combination (see Section \ref{gene} for details).

\begin{theo}\label{t02} When $W$ is the odd dimensional sphere (of radius $a$), with the standard induced Euclidean metric, then the singular term of the analytic torsion of the cone $C_l W$ appearing in Theorem \ref{t01} coincides with the anomaly boundary term of Br\"uning and Ma, namely ${\rm S}(\b C_l S^{2p-1}_{a})=A_{\rm BM,abs}(\b C_lS^{2p-1}_{a})$. In this case, the formula for the analytic torsion reads
\begin{align*}
\log T_{\rm abs}(C_lS^{2p-1}_{a})= \frac{1}{2} \log {\rm Vol} (C_l S^{2p-1}_{a})+ A_{\rm BM,abs}(\b C_lS^{2p-1}_{a}),
\end{align*}
where
\begin{align*}
 A_{\rm BM,abs}(\b C_l S^{2p-1}_{a})=&\frac{(2p-1)!}{4^p (p-1)!}\sum_{k=0}^{p-1} \frac{1}{(p-1-k)!(2k+1)} \sum^{k}_{j=0}
\frac{(-1)^{k-j}2^{j+1}}{(k-j)!(2j+1)!!}a^{2k+1}.
\end{align*}

\end{theo}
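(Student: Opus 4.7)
The plan is to specialize Theorem \ref{t01} to $W = S^{2p-1}_{a}$ and to show, on one hand, that the combination of the topological term with $\frac{1}{2}\log T(S^{2p-1}_{a},l^2 g)$ collapses to $\frac{1}{2}\log\Vol(C_l S^{2p-1}_{a})$, and on the other hand that the singular term $\mathrm{S}(\b C_l S^{2p-1}_{a})$ matches the Br\"uning-Ma anomaly. The topological piece is immediate: $H_q(S^{2p-1};\Q)$ is non-zero only in degrees $0$ and $2p-1$, both of rank one, so the sum $\sum_{q=0}^{p-1}(-1)^{q+1}\mathrm{rk}H_q(S^{2p-1};\Q)\log\frac{2(p-q)}{l}$ reduces to $-\log\frac{2p}{l}$. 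Combined with the known value of the analytic torsion of the rescaled odd round sphere (Ray-Singer, where only the rescaling by $l^2$ produces a volume-type contribution since $\chi=0$), this should give precisely $\frac{1}{2}\log\Vol(C_l S^{2p-1}_{a})$; the check is a direct computation using the explicit volumes $\Vol S^{2p-1}_{a} = 2\pi^p a^{2p-1}/(p-1)!$ and $\Vol C_l S^{2p-1}_{a} = (la)^{2p}\pi^p/(2p\,(p-1)!)$.

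The main task is the identity $\mathrm{S}(\b C_l S^{2p-1}_{a}) = A_{\mathrm{BM,abs}}(\b C_l S^{2p-1}_{a})$, together with the closed form given in the statement. I would first compute the residues $\Ru_{s=j+\frac{1}{2}}\zeta(s,\tilde\Delta^{(h)})$ of the Hodge Laplacian on $h$-forms of the round sphere $S^{2p-1}_{a}$ at the half-integer poles. These are universal linear combinations of local Riemannian invariants (Gilkey); for the round sphere the curvature is constant, so one obtains explicit rational polynomials in $a$, with only finitely many non-zero residues in the relevant range. I would pair these with the rational constants $\Rz_{s=0}\Phi_{2k+1,q}(s)$, which are given by the recursion in Spreafico's work, and then perform the alternating sum in $h$. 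Because of the high degree of symmetry of the sphere, many terms should collapse via the standard Hodge duality $\tilde\Delta^{(h)}\cong \tilde\Delta^{(2p-2-h)}$, leaving a single polynomial in $a$.

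On the Br\"uning-Ma side, the boundary of the cone is totally umbilical with shape operator $\frac{1}{l}\mathrm{Id}$ on $T\b C_l W$, and the induced metric on the boundary is the rescaled round metric $l^2 g_{S^{2p-1}}$. Both the curvature tensor of the boundary and the second fundamental form are pure scalar multiples of the identity, so the universal Berezin-type integrand defining $A_{\mathrm{BM,abs}}$ expands into a finite sum that may be integrated explicitly over the sphere. After this integration, a reorganisation of the resulting sums in the form $\sum_k a^{2k+1}\sum_j(\cdots)$ should yield exactly the right-hand side displayed in the statement.

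The main obstacle is the combinatorial matching of these two triple sums: both the spectral expression from $\mathrm{S}(\b C_l S^{2p-1}_{a})$ and the geometric expression from $A_{\mathrm{BM,abs}}(\b C_l S^{2p-1}_{a})$ carry binomial coefficients $\binom{-\frac{1}{2}-k}{j-k}$, factorials and double factorials $(2j+1)!!$, and their identification requires a careful index shuffle together with hypergeometric-type identities. Once this matching is done, both the equality $\mathrm{S}=A_{\mathrm{BM,abs}}$ and the explicit closed form for $A_{\mathrm{BM,abs}}(\b C_l S^{2p-1}_{a})$ follow in one stroke, and the formula for $\log T_{\mathrm{abs}}(C_l S^{2p-1}_{a})$ then results by substitution into Theorem \ref{t01} combined with the collapse of the topological piece described above.
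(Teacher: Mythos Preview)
Your outline follows essentially the same architecture as the paper's proof in Section~\ref{s6}: collapse the regular part to $\frac{1}{2}\log\Vol(C_l S^{2p-1}_a)$ via the known torsion $T(S^{2p-1}_a,l^2g_E)=\Vol(S^{2p-1}_{la})$, and then match the singular term to the Br\"uning--Ma anomaly by a combinatorial identity. One substantive difference: you propose to extract the residues $\Ru_{s=j+\frac12}\zeta(s,\tilde\Delta^{(h)})$ from Gilkey's heat invariants evaluated on the constant-curvature sphere, whereas the paper bypasses this and works directly with the explicit Ikeda--Taniguchi spectrum, writing $\zeta(s,U_q)$ as a finite combination of shifted Riemann zeta functions (Lemmas~\ref{lema Up-1S^2p-1}, \ref{l26} and Corollary~\ref{r7}). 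The paper's route pays off because the resulting residues come packaged with elementary symmetric polynomials $e_{p-1-j}(d^q)$, which feed cleanly into the reduction of the conjecture to a single identity $M_j(p,k)=N_j(p,k)$ between explicit rational numbers, then proved by the binomial identities (\ref{ida1})--(\ref{idB}) in Lemmas~\ref{lema M0pk} and~\ref{Mjpk}. Your ``careful index shuffle together with hypergeometric-type identities'' is exactly this step, but you should be aware that it occupies the bulk of the argument and is not a routine simplification.

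Two minor corrections: Hodge duality on $S^{2p-1}$ identifies $\tilde\Delta^{(h)}$ with $\tilde\Delta^{(2p-1-h)}$, not $\tilde\Delta^{(2p-2-h)}$ (the $2p-2$ shift you may have in mind is the coexact-eigenvalue duality $\lambda_{q,n}=\lambda_{2p-2-q,n}$); and your formula $\Vol(C_l S^{2p-1}_a)=(la)^{2p}\pi^p/(2p\,(p-1)!)$ has an extra factor of $a/2$ --- the correct value is $l^{2p}a^{2p-1}\pi^p/p!$, since $\Vol(C_l W)=\frac{l^{m+1}}{m+1}\Vol(W)$.
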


\begin{corol} The natural extension of Cheeger M\"uller theorem for manifold with boundary is valid for  the cone over an odd dimensional sphere, namely
\[
\log T_{\rm abs}(C_l S^{2p-1})=\log \tau(C_l S^{2p-1})+A_{\rm BM,abs} (\b C_l S^{2p-1}).
\]

\end{corol}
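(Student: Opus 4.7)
The corollary is essentially an immediate consequence of Theorem \ref{t02} once combined with the formula for the Reidemeister torsion of a cone that was recorded in the introduction. My plan is to simply match the two pieces.

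First, I would invoke Theorem \ref{t02} (taking the sphere of unit radius $a=1$, though the argument works for any $a$), which gives
\[
\log T_{\rm abs}(C_l S^{2p-1})=\tfrac{1}{2}\log {\rm Vol}(C_l S^{2p-1})+A_{\rm BM,abs}(\b C_l S^{2p-1}).
\]
This reduces the problem to identifying $\tfrac{1}{2}\log{\rm Vol}(C_l S^{2p-1})$ with $\log\tau(C_l S^{2p-1})$.

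For that identification I would appeal to the discussion of the Reidemeister torsion of a cone given in the introduction: since $CW$ is contractible, Poincaré duality fails in most dimensions, but it does hold between top and bottom dimension, so one may fix a base for $H_0$ by means of the Riemannian volume form via harmonic representatives, exactly as in Ray-Singer. The output is the formula $\tau(CW)=\sqrt{{\rm Vol}(CW)}$ recorded just above Section~0 of the introduction, hence
\[
\log\tau(C_l S^{2p-1})=\tfrac{1}{2}\log {\rm Vol}(C_l S^{2p-1}).
\]
Substituting this into the display from Theorem \ref{t02} yields the claimed identity.

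There is essentially no technical obstacle: everything hard — the computation of the zeta-regularized determinants on the cone and the verification that the singular term $\mathrm{S}(\b C_l S^{2p-1})$ equals the Brüning-Ma anomaly $A_{\rm BM,abs}(\b C_l S^{2p-1})$ — has already been absorbed into Theorem \ref{t02}. The only thing to double-check is that the normalization of the base for $H_0$ used in defining $\tau$ is the same as the one implicit in the analytic torsion, so that no extra volume-type factor is introduced when passing between the two; this is exactly the normalization chosen in \cite{RS} and followed in \cite{HMS}, which I would cite for consistency.
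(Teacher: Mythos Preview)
Your proposal is correct and matches the paper's approach exactly: the corollary is stated as an immediate consequence of Theorem \ref{t02} together with the formula $\tau(CW)=\sqrt{\rm Vol(CW)}$ recorded in the introduction, and that is precisely what you do. The paper gives no separate proof of the corollary, so your two-step argument is the intended one.
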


The result in the corollary should be understood as a particular case of the still unproved general result that the analytic torsion and the intersection R-torsion of a cone coincides up to the boundary term, for the intersection R-torsion is the classical R-torsion for the cone over a sphere.

\begin{theo}\label{t03} When $(W,g)$ is an orientable compact connected Riemannian manifold of odd dimension, then the singular term of the analytic torsion of the cone $C_l W$ appearing in Theorem \ref{t01} coincides with the anomaly boundary term of Br\"uning and Ma, namely ${\rm S}(\b C_l W)=A_{\rm BM,abs}(\b C_l W)$.
\end{theo}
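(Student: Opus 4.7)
The plan is to show that both $S(\partial C_l W)$ and $A_{\rm BM,abs}(\partial C_l W)$ can be written as integrals over the boundary of universal polynomial expressions in the intrinsic curvature of $W$ and the second fundamental form, and then identify the two universal expressions. The strategy combines the explicit formula of Theorem \ref{t01}, Gilkey's invariance theory for local Riemannian invariants, and a reduction to the sphere case via Theorem \ref{t02}.

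First, I would analyse the structure of $S(\partial C_l W)$. From Theorem \ref{t01} it is a finite linear combination, with explicit universal coefficients depending only on the dimension and the cohomological data, of residues of the zeta functions of the Laplacian on forms on the section $W$ at the half-integer points $s=j+\tfrac{1}{2}$. By the Seeley heat-kernel expansion on the closed manifold $W$, each such residue equals the integral over $W$ of a universal polynomial in the curvature tensor and its covariant derivatives, of weight determined by $j$. Hence $S(\partial C_l W)$ is the integral over $\partial C_l W$ of a universal local expression built entirely from the intrinsic geometry of $W$. On the other side, $A_{\rm BM,abs}(\partial C_l W)$ is by definition the integral over $\partial C_l W$ of a universal local expression in the intrinsic curvature and the second fundamental form of the boundary. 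In the cone case, the second fundamental form is determined by the metric $g$ on $W$ and the cone parameter $l$, so both sides live in the same finite-dimensional space of local invariants on $W$.

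The next step is to pin down this space. By Gilkey's classification (\cite{Gil}), the local Riemannian invariants of a fixed weight on $W$ form a finite-dimensional space with an explicit basis. In the range $\dim W<6$ this basis is short enough to handle directly. This is the content of Section \ref{s7}: I would write both $S(\partial C_l W)$ and $A_{\rm BM,abs}(\partial C_l W)$ in the explicit basis and verify coefficient by coefficient that they agree, using Theorem \ref{t01} on the one side and the Br\"uning--Ma formula on the other. For dimensions one and three, comparison with the already-known cases of low-dimensional spheres from \cite{HMS} and \cite{HS} provides a useful sanity check.

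For the general case in Section \ref{ultima}, the direct enumeration becomes unwieldy and I would instead argue by reduction to the sphere. The extra ingredient is the Br\"uning--Ma formula for the anomaly boundary term with \emph{mixed} absolute/relative boundary conditions, available in the preprint \cite{BM2}. The idea is to consider products of the form $W\times S^{2k-1}$ embedded as the section of a cone, split the Laplace operator and the associated zeta functions using the decomposition techniques of \cite{Spr4}, \cite{Spr5}, \cite{Spr9}, and compare the anomaly contributions coming from the two factors. The mixed-boundary formula is what allows the sphere factor to be isolated, and Theorem \ref{t02} then identifies its contribution with $S$. Combining this with the product formula for $S(\partial C_l W)$ coming from Theorem \ref{t01} yields the equality on a general odd-dimensional $W$.

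The main obstacle is the last step: setting up the product reduction in a way that genuinely disentangles the local contribution of $W$ from that of the sphere factor, both on the analytic-torsion side (where one must manage the decomposition of the double-sequence zeta function and track residues and $s=0$ derivatives carefully) and on the Br\"uning--Ma side (where the mixed boundary conditions formula of \cite{BM2} must be applied in the correct geometric setup). Once these bookkeeping issues are handled, the invariance-theoretic framework of the first two paragraphs guarantees that matching the two sides in a single product family with varying sphere dimension suffices to prove the identity in general.
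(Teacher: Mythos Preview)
Your treatment of the low-dimensional cases matches the paper: direct computation of both sides in the basis of local invariants, as in Section~\ref{s7}. Your remarks about invariance theory also echo the paper's Section~\ref{gene}.

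However, your plan for the general case is not what the paper does, and I do not see how your product-with-spheres reduction would go through. The paper's argument in Section~\ref{ultima} does not use $W\times S^{2k-1}$ at all. Instead it introduces the \emph{conical frustum} $C_{[l_1,l_2]}W=[l_1,l_2]\times W$ with the same conical metric $dx\otimes dx+x^2g$. This is a genuine smooth compact manifold with two boundary components, so the Cheeger--M\"uller theorem with boundary applies to it. One imposes relative conditions on the inner boundary $\partial_1$ and absolute conditions on the outer boundary $\partial_2$; this is where the mixed-boundary-condition anomaly formula of \cite{BM2} enters, not in any product decomposition. The paper then (i) computes the spectrum of the frustum in terms of zeros of combinations of Bessel functions $J_\mu,Y_\mu$, (ii) shows directly that the regular part of the frustum torsion vanishes, (iii) proves the key identity $\log T_{{\rm sing}}(\text{frustum})=2\log T_{{\rm abs,sing}}(C_lW)=2S(\partial C_lW)$ by comparing the functions $\Phi$ arising in the spectral decomposition, (iv) observes that the Reidemeister torsion of the pair $(C_{[l_1,l_2]}W,\partial_1)$ vanishes, and (v) uses duality and locality of the Br\"uning--Ma term to get $A_{\rm BM,rel}(\partial_1)+A_{\rm BM,abs}(\partial_2)=2A_{\rm BM,abs}(\partial C_lW)$. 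Equating (iii) with (v) gives the theorem.

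The essential gap in your proposal is the product step. The cone over $W\times S^{2k-1}$ is not built from the cones over the factors in any way that would let you separate the singular contributions, and the zeta-decomposition machinery of \cite{Spr4,Spr5,Spr9} does not give such a splitting. Moreover, the paper itself notes in Section~\ref{gene} that knowing the sphere case does \emph{not} suffice to determine all the universal coefficients via invariance theory, because for $m>3$ the relevant invariant polynomials involve covariant derivatives of curvature which all vanish on round spheres. So a reduction-to-sphere argument of the kind you sketch would need substantially more input than Theorem~\ref{t02} provides. The frustum trick avoids this entirely by working with the given $W$ throughout and exploiting that the frustum is smooth, so the established Cheeger--M\"uller machinery with boundary applies to it directly.
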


We conclude with a remark on the even dimensional case, namely when the dimension of the section $W$ is even. It is quit clear that all the arguments used in the odd dimensional case go through also in the even dimensional case. So we obtain formulas for the analytic torsion as in the theorems above. However, in the even dimensional case some further term appears: this was described in some details for $W=S^2$ in \cite{HMS}. Since we do not have a clear understanding of this new term yet, we prefer to omit  the  non particularly illuminating formulas for the even dimensional case here.

\section{Preliminaries and notation}
\label{s1}

In this section we will recall some basic results in  Riemannian geometry, Hodge de Rham theory and global analysis, and the definitions of the main objects we will deal with in this work. All the results are contained either in classical literature or in \cite{RS}, \cite{Mul}, \cite{Che2}, \cite{BM}, \cite{Gil} \cite{BGV}. This section can be skipped at first reading, and it is added exclusively for the reader's benefit. From one side, it permits to the interested reader to find out all the necessary tools to verify the arguments developed in the following sections, with precise reference to definitions and formulas, avoiding the fuss of searching in several different papers and books, from the other, it provides a unified notation, whereas notations of different authors are too often quite different in the available literature.

\subsection{$\Z/2$-graded algebras and Berezin integrals}
\label{ppp}

Let  $V,W$ be  finite dimensional vector spaces over a field $\F$ of characteristic zero, with Euclidean inner products $\langle\_,\_\rangle$. Let  $V^*=\Hom(V,\F)$ denotes the dual of $V$, and fix an isomorphism of $V$ onto $V^*$ by $v^*(u)=\langle v,u\rangle$.  Then, we identify $\F$-homomorphisms with tensor product of tensors by $\Hom(V,W)= V^*\otimes W$. If  $V,W$ have dimensions $m, n$ respectively, and  $\{e_k\}_{k=1}^m$, $\{b_l\}_{l=1}^n$ are orthonormal bases of $V$ and $W$ respectively, then we identify  $t\in \Hom(V,W)$ with the tensor $T\in V^*\otimes W$ with components $T_{kl}=\langle b_l,t(e_k)\rangle=b^*(t(e_k))$, namely
\[
T=\sum_{k=1}^m\sum_{l=1}^n T_{kl} e^*_k\otimes b_l,
\]
where we denote by $\{e_k^*\}$ the dual base. 
We use the {\it Euclidean norm} of a linear homomorphism $|t|=\sqrt{{\rm tr} (t^* t)}$. In the orthonormal base, this gives for the associated tensor
\[
|T|^2=\sum_{k=1}^m\sum_{l=1}^n T^2_{kl}.
\]

We denotes by $\Lambda V$ the exterior algebra of $V$ (the universal algebra with unit generated by $V$ with the relation $v^2=0$).  A {\it $\Z/2$-graded algebra} $A$ is a vector space with an involutions such that $A=A_+\oplus A_-$, and a product $\cdot$ that preserves the involution, i.e. such that $A_j A_k\subset A_{jk}$.  The  exterior algebra is a first example. Let $A$ and $B$ be two $\Z/2$-graded algebras. The vector spaces tensor product $A\otimes B$ has a natural  $\Z/2$-grading, $A\otimes B=(A\otimes B)_+\oplus (A\otimes B)_-$, where $(A\otimes B)_+=A_+\otimes B_+\oplus A_-\otimes B_-$, and $(A\otimes B)_-=A_+\otimes B_-\oplus A_-\otimes B_+$. This becomes a $\Z/2$-graded algebra, that we denote by $A\hat\otimes B$, with the product defined by (we will omit the dot in the following)
\[
(a\hat\otimes b)\cdot (a'\hat\otimes b')=(-1)^{|b|\, |a'|} a\cdot a'\hat\otimes b\cdot b'.
\]

There are two natural immersions of $A$ in $A\hat\otimes A$ as an algebra: we identify  $A$ with $A\otimes 1$ and we denote by $\hat A=1\otimes A$. 
Since $A\otimes 1\otimes 1\otimes B=A\otimes B$, we have that $A\hat\otimes B=A\otimes\hat B$. 
Let $A=B=\Lambda V$, for some vector space $V$, where we denote the product by $\wedge$, as usual. Then,  $V\hat\otimes V\in \Lambda V\hat\otimes\Lambda V$. If  $v\in V$, then $v=v\hat\otimes 1\in V=V\hat\otimes 1$, $\hat v=1\hat\otimes v\in \hat V=1\hat\otimes V$, and $v\wedge \hat v\in V\wedge \hat V$. Note that, $v\wedge \hat v=(v\hat\otimes 1)\wedge (1\hat\otimes v)$, while $(1\hat\otimes v)\wedge (v\hat\otimes 1)=-v\wedge \hat v$, and 
\[
\hat u\wedge \hat v=(1\hat\otimes u)\wedge(1\hat\otimes v)=1\hat\otimes (u\wedge v)=\widehat{u\wedge v}\in \widehat{V\wedge V}.
\]

This permits to identify an antisymmetric  endomorphism $\phi$ of $V$ with the element 
\[
\hat \phi=\frac{1}{2}\sum_{j,k=1}^m \langle\phi(v_j),v_k\rangle \hat v_j\wedge \hat v_k,
\]
of $\widehat{\Lambda^2 V}$. For the elements $\langle\phi(v_j),v_k\rangle$ are the entries of the tensor representing $\phi$ in the base $\{v_k\}$, and this is an antisymmetric matrix. Now assume that $r$ is an antisymmetric endomorphism of $\Lambda^2 V$. Then, $(R_{jk}=\langle r(v_j),v_k\rangle)$ is a tensor of two forms in $\Lambda^2 V$. We extend the above construction identifying $R$ with the element
\begin{equation}\label{proap}
\hat R=\frac{1}{2}\sum_{j,k=1}^m \langle r(v_j),v_k\rangle\wedge \hat v_j\wedge \hat v_k,
\eeq
of $\Lambda^2 V\wedge \widehat{\Lambda^2 V}$. This can be generalized to higher dimensions. In particular,
all the construction can be done taking the dual $V^*$ instead of $V$. We conclude with the definition of the Berezin integral. Assume  $V$ to be oriented. We define the {\it Berezin integral} be
\begin{align*}
\int^B&:\Lambda W\hat\otimes \Lambda V\to \Lambda W,\\
\int^B&:\alpha\otimes \beta\mapsto c_B\beta(e_1,\dots, e_m)\alpha,
\end{align*}
where $c_B=\frac{(-1)^\frac{m(m+1)}{2}}{\pi^\frac{m}{2}}$. In particular, note that 
\[
\int^B \e^{-\frac{\hat A}{2}}=Pf\left(\frac{A}{2\pi}\right), 
\]
and this vanishes if ${\rm dim}E=m$ is odd.


\subsection{Some Riemannian geometry}
\label{rim}

Let $(W,g)$ be an orientable connected Riemannian manifold of dimension $m$ without boundary, where $g$ denotes the Riemannian structure. We denote by $TW$ the (total space) of the tangent bundle over $W$, and by $T^* W$ the dual bundle. We denote by $\Lambda T^* W$ the exterior algebra of $T^* W$. We denote by $\Gamma(W,TW)$ and $\Gamma(W,T^*W)$ the corresponding spaces of smooth sections, and by $\Gamma_0(W,TW)$ and $\Gamma_0(W,T^*W)$ the spaces of smooth sections with compact support. Let $x=(x_1,\dots, x_m)$ be a local coordinate system on $W$. We denote by $\{\b_j\}_{j=1}^m$ the local coordinate base on $TW$, and by $\{d x_j\}_{j=1}^m$ the dual base of one forms on $T^* W$: $dx_j(\b_k)=\delta_{jk}$. We denote by $\{e_j\}_{j=1}^m$ and $\{e^*_j\}_{j=1}^m $ local orthonormal bases of $TW$ and $T^*W$, $e^*_j(e_k)=\delta_{jk}$. Then, 
\begin{align*}
g&=\sum_{j,k=1}^m g_{jk}dx_j\otimes dx_k=\sum_{j,k=1}^m \delta_{jk}e^*_j\otimes e^*_k,
\end{align*}
and the  {\it volume element} on $W$ is  
$dvol_g=e^*_1\wedge \dots \wedge e^*_m=\sqrt{|\det g|} dx_1\wedge\dots \wedge dx_m$.

Let $\nabla:\Gamma(W,TW)\to \Gamma(W,\End(T W))$ denotes the {\it covariant derivative} associated to the Levi Civita connection of the metric $g$. This is completely determined by the tensor $\Gamma=\sum_{\alpha\beta\gamma=1}^m \Gamma_{\alpha\beta\gamma} e_\alpha^*\otimes e_\beta^*\otimes e_\gamma,
\in \Gamma(W,T^*W\otimes T^* W\otimes T W)$, with components the   {\it Christoffel symbols}:
\[
\Gamma_{\alpha\beta\gamma}=\Gamma(e_\alpha, e_\beta, e_\gamma^*)=e_\gamma^*(\Gamma(e_\alpha, e_\beta, \_))
=e_\gamma^*(\nabla_{e_\alpha}e_\beta)=g(e_\gamma, \nabla_{e_\alpha}e_\beta)=-\Gamma_{\alpha\gamma\beta}, 
\]
that can be computed using the formula:
\beq\label{gam1}
\Gamma_{\alpha \beta\gamma}=\frac{1}{2}\left(c_{\alpha\beta\gamma}+c_{\gamma\alpha\beta}+c_{\gamma \beta\alpha}\right),
\eeq
where the {\it Cartan structure constants} $c_{\alpha\beta\gamma}=-c_{\beta\alpha\gamma}$  are defined by
\[
[e_\alpha,e_\beta]=\sum_{\gamma=1}^m  c_{\alpha \beta\gamma}e_\gamma.
\]

The {\it connection one form} associated to $\nabla$ is the matrix valued one form $\omega\in \mathtt{so}(m,\Gamma(W, \Lambda T^*W))$, with components
\[
\omega_{\beta\gamma}=\sum_{\alpha=1}^m  \Gamma ( e_\alpha, e_\gamma,e_\beta^*)e^*_\alpha.
\]

The {\it curvature } associated to the Riemannian connection $\nabla$ (of the metric $g$)  is the linear map $r:\Gamma(W,TW\otimes TW)\to \Gamma(W,\End(T W))$, defined by:
\[
 r(x,y,z)=\nabla_x\nabla_y z -\nabla_y\nabla_x z- \nabla_{[x,y]} z,
\]
where $x,y,z,  r(x,y,z)\in \Gamma(W,TW)$, and corresponds to the tensor    $R=\sum_{\alpha,\beta,\gamma,\delta=1}^m R_{\alpha\beta\gamma\delta}  e_\alpha^*\otimes e_\beta^*\otimes e_\gamma^*\otimes e_\delta,
\in \Gamma(W,T^* W\otimes T^* W\otimes T^*W\otimes TW)$, with components
\begin{align*}
 R_{\alpha\beta\gamma\delta}=-R_{\beta\alpha\gamma\delta}&=R(e_\alpha,e_\beta,e_\gamma,e_\delta^*)=e^*_\delta(r(e_\alpha,e_\beta,e_\gamma))\\
 &=e_\delta^*(\nabla_{e_\alpha}\nabla_{e_\beta} e_\gamma -\nabla_{e_\beta}\nabla_{e_\alpha} e_\gamma -\nabla_{[e_\alpha,e_\beta]} e_\gamma).
\end{align*}

The {\it curvature two}  form associated to $R$ is the  matrix valued  two form $\Omega\in \mathtt{so}(m,\Gamma(W, \Lambda^2 T^*W))$,  with components
\[
\Omega_{\gamma\delta}=\sum_{\alpha,\beta=1,\alpha<\beta}^m  R( e_\alpha, e_\beta,e_\delta,e_\gamma^*)e^*_\alpha\wedge e^*_\beta,
\]
and can be computed by the following formula:
\beq\label{cur2}
\Omega_{\alpha\beta}=d\omega_{\alpha\beta}+\sum_{\gamma=1}^m \omega_{\alpha\gamma}\wedge \omega_{\gamma\beta}.
\eeq

We introduce two more tensor fields. The Ricci tensor $Ric=\sum_{\alpha,\beta=1}^m Ric_{\alpha\beta} e_\alpha^*\otimes e_\beta^* \in \Gamma(W,T^* W\otimes T^* W)$, defined by 
\[
Ric(x,y)=\sum_{k=1}^m e_k^*(\nabla_{e_k}\nabla_x y -\nabla_x\nabla_{e_k} y- \nabla_{[e_k,x]} y),
\]
and the scalar curvature tensor, defined by
\[
\tau=\sum_{k=1}^m Ric(e_k,e_k).
\]

The components of these tensors in terms of the curvature tensor are:
\begin{align*}
Ric_{\alpha\beta}&=\sum_{k=1}^m R(e_k,e_\alpha,e_\beta,e_k^*),\\
\tau&=\sum_{k,h=1}^m R(e_k,e_h,e_h,e_k^*).
\end{align*}

\subsection{Hodge theory and  de Rham complex}
\label{hodg}

We  recall some  results on the de Rham complex (see for example \cite{Mor})  and some results from \cite{RS} and \cite{Mul}. From now one we will assume  that $W$ is compact. In this section we  also assume that $W$ has no boundary.

Let denote by $\Omega^{q}$ the space of sections $\Gamma(W,\Lambda^{(q)} T^*W)$. The exterior differential $d$ defines the {\it de Rham complex}
\[
\xymatrix{
\mathcal{C}_{DR}: \dots \ar[r]&\Omega^{(q)}(W)\ar[r]^d &\Omega^{(q+1)}(W)\ar[r]&\dots,
}
\]
whose homology coincides with the rational homology of $W$. The Hodge star $\star:\Lambda^{(q)} T^* W\to\Lambda^{(m-q)} T^* W$, defines an isometry $\star:\Omega^{q}(W)\to\Omega^{m-q}(W)$, and an inner product on $\Omega^{q}(W)$
\[
(\alpha,\beta)=\int_W \alpha\wedge \star \beta=\int_W \langle\alpha,\beta\rangle dvol_g.
\]

The closure of  $\Omega^{q}$ with respect to this inner product is the  Hilbert space the $L^2$ $q$-forms on $W$. The de Rham complex with this product is an elliptic complex. The dual of the exterior derivative $d^\dagger$, defined by $(\alpha, d\beta)=(d^\dagger \alpha, \beta)$, satisfies $d^\dagger=(-1)^{mq+m+1} \star d\star$. The Laplace operator is $\Delta=(d+d^\dagger)^2$. It satisfies:   1) $\star\Delta=\Delta\star$, 2) $\Delta$ is self adjoint, and 3)
 $\Delta\omega=0$ if and only if $d\omega=d^\dagger \omega=0$. Let $\H^q(W)=\{\omega\in \Omega^{(q)}(W)~|~\Delta\omega=0\}$, be the space of the $q$-harmonic forms. Then, we have the {\it Hodge decomposition}
\[
\Omega^{q}(W)=\H^q(W)\oplus d\Omega^{q-1}(W)\oplus d^\dagger \Omega^{q+1}(W).
\]

All this  generalizes considering a bundle over $W$. In particular, we are interested in the following situation. Let $\rho:\pi_1(W)\to O(k,\F)$ be a representation of the fundamental group of $W$, and let  $E_\rho$ be the associated vector bundle over $W$ with fibre $\F^k$ and group $O(k,\F)$, $E_\rho=\widetilde W\times_\rho \F^k$. Then, we denote by 
$\Omega(W,E_\rho)$ be the graded linear space of smooth forms on $W$ with values in $E_\rho$, namely $\Omega(W,E_\rho)=\Omega(W)\otimes E_\rho$.  The exterior differential on $W$ defines the exterior differential on $\Omega^q(W, E_\rho)$, $d:\Omega^q(W, E_\rho)\to
\Omega^{q+1}(W, E_\rho)$. The metric $g$ defines an Hodge operator on $W$ and hence on $\Omega^q(W, E_\rho)$, $\star:\Omega^q(W, E_\rho)\to
\Omega^{m-q}(W, E_\rho)$, and,  using the inner product $\langle\_,\_\rangle$ in $E_\rho$,
an inner product on $\Omega^q(W, E_\rho)$ is defined by
\beq\label{inner}
(\omega,\eta)=\int_W \langle \omega\wedge\star\eta \rangle.
\eeq

It is clear that the adjoint $d^\dagger$ and the Laplacian $\Delta=(d+d^\dagger)^2$ are also defined on the spaces of sections with values in $E_\rho$. Setting $\H^q(W,E_\rho)=\{\omega\in \Omega^{(q)}(W,E_\rho)~|~\Delta\omega=0\}$, be the space of the $q$-harmonic forms, we have the Hodge decomposition
\beq\label{hodge1}
\Omega^{q}(W,E_\rho)=\H^q(W,E_\rho)\oplus d\Omega^{q-1}(W,E_\rho)\oplus d^\dagger \Omega^{q+1}(W,E_\rho).
\eeq

This  induces a decomposition of the eigenspace of a given eigenvalue $\lambda\not=0$ of $\Delta^{(q)}$ into the spaces of {\it closed forms} and {\it coclosed forms}: $\E^{(q)}_\lambda=\E^{(q)}_{\lambda, {\rm cl}}\oplus \E^{(q)}_{\lambda,{\rm ccl}}$, where 
\begin{align*}
\E^{(q)}_{\lambda,{\rm cl}}&=\{\omega\in\Omega^{q}(W,E_\rho)~|~\Delta\omega=\lambda\omega, \, d\omega=0\},\\
\E^{(q)}_{\lambda,{\rm ccl}}&=\{\omega\in\Omega^{q}(W,E_\rho)~|~\Delta\omega=\lambda\omega, \, d^\dagger\omega=0\}.
\end{align*}

Defining {\it exact forms} and {\it coexact forms} by 
\begin{align*}
\E^{(q)}_{\lambda,{\rm ex}}&=\{\omega\in\Omega^{q}(W,E_\rho)~|~\Delta\omega=\lambda\omega, \, \omega=d\alpha\},\\
\E^{(q)}_{\lambda,{\rm cex}}&=\{\omega\in\Omega^{q}(W,E_\rho)~|~\Delta\omega=\lambda\omega, \, \omega=d^\dagger\alpha\}.
\end{align*}

Note that, if $\lambda\not=0$, then  $\E^{(q)}_{\lambda,{\rm cl}}=\E^{(q)}_{\lambda,{\rm ex}}$, and $\E^{(q)}_{\lambda,{\rm ccl}}=\E^{(q)}_{\lambda,{\rm cex}}$, 
and we have an  isometry
\beq\label{iso1}\begin{aligned}
\phi:&\E^{(q)}_{\lambda,{\rm cl}}\to\E^{(q-1)}_{\lambda,{\rm cex}},\\
\phi:&\omega\mapsto \frac{1}{\sqrt{\lambda}}d^\dagger \omega,
\end{aligned}
\eeq
whose inverse is $\frac{1}{\sqrt{\lambda}}d$. Also, the restriction of the Hodge star defines an isometry
\begin{align*}
\star:& d^\dagger \Omega^{(q+1)}(W)\to d\Omega^{(m-q-1)}(W),
\end{align*}
and that composed with the previous one gives the isometries:
\beq\label{it}
\begin{aligned}
\frac{1}{\sqrt{\lambda}}d\star &:\E^{(q)}_{\lambda,{\rm cl}}\to\E^{(m-q+1)}_{\lambda,{\rm cex}},\\
\frac{1}{\sqrt{\lambda}}d^\dagger\star&:\E^{(q)}_{\lambda,{\rm ccl}}\to\E^{(m-q-1)}_{\lambda,{\rm ex}}.
\end{aligned}
\eeq

\subsection{Manifolds with boundary}
\label{bord}

Next consider a manifold with boundary. If $W$ has a boundary $\b W$, then there is a natural splitting near the boundary, of $\Lambda W$ as direct sum of vector bundles $\Lambda T^*\b W\oplus N^* W$, where $N^*W$ is the dual to the normal bundle to the boundary. 
Locally, this reads as follows. Let $\b_x$ denotes the outward pointing unit normal vector to the boundary, and $dx$ the corresponding one form. Near the boundary we have the collar decomposition
$Coll(\b W)=(-\epsilon,0]\times \b W$, and if $y$ is a system of local coordinates on the boundary, then $(x,y)$ is a local system of coordinates in $Coll(\b W)$. The metric tensor decomposes near the boundary in this local system as
\[
g=dx\otimes dx+  g_\b(x),
\]
where $ g_\b(x)$ is a family of metric structure on $\b W$ such that $ g_\b(0)=i^* g$, where $i:\b W\to W$ denotes the inclusion.  The smooth forms on $W$ near the boundary decompose as $\omega=\omega_{\rm tan}+\omega_{\rm norm}$, where $\omega_{\rm norm}$ is the orthogonal projection on the subspace generated by $dx$, and $\omega_{\rm tan}$ is in $C^\infty(W)\otimes\Lambda(\b W)$. We  write $\omega=\omega_1+ dx \wedge\omega_{2}$, where $\omega_j\in C^\infty( W)\otimes \Lambda(\b W)$, and
\beq\label{dec}
\star\omega_2=-dx \wedge \star\omega.
\eeq

Define absolute boundary conditions by
\[
B_{\rm abs}(\omega)=\omega_{\rm norm}|_{\b W}=\omega_2|_{\b W}=0,
\]
and relative boundary conditions by
\[
B_{\rm rel}(\omega)=\omega_{\rm tan}|_{\b W}=\omega_1|_{\b W}=0.
\]

Note that, if $\omega \in \Omega^{q}(W)$, then $B_{\rm abs}(\omega) = 0$ if and only if $B_{\rm rel} (\star\omega) = 0$,
$B_{\rm rel}(\omega) = 0$ implies $B_{\rm rel} (d\omega) = 0$, and  $B_{\rm abs}(\omega) = 0$ implies $B_{\rm
abs} (d^{\dag}\omega) = 0$. Let $\B(\omega)=B(\omega)\oplus B((d+d^\dagger)(\omega))$. Then the operator $\Delta=(d+d^\dagger)^2$ with boundary conditions $\B(\omega)=0$  is self adjoint, and if $\B(\omega)=0$, then $\Delta\omega=0$ if and only if $(d+d^\dagger)\omega=0$. Note  that $\B$ correspond to
\beq\label{abs}
\B_{\rm abs}(\omega)=0\hspace{20pt}{\rm if~ and~ only~ if}\hspace{20pt}\left\{\begin{array}{l}\omega_{\rm norm}|_{\b W}=0,\\
(d\omega)_{\rm norm}|_{\b W}=0,\\
       \end{array}
\right.
\eeq
\beq\label{rel}
\B_{\rm rel}(\omega)=0\hspace{20pt}{\rm if~ and~ only~ if}\hspace{20pt}\left\{\begin{array}{l}\omega_{\rm tan}|_{\b W}=0,\\
(d^\dagger\omega)_{\rm tan}|_{\b W}=0,\\
       \end{array}
\right.
\eeq


Let
\begin{align*}
\H^q(W,E_\rho)&=\{\omega\in\Omega^q(W,E_\rho)~|~\Delta^{(q)}\omega=0\},\\
\H_{\rm abs}^q(W,E_\rho)&=\{\omega\in\Omega^q(W,E_\rho)~|~\Delta^{(q)}\omega=0, B_{\rm abs}(\omega)=0\},\\
\H_{\rm rel}^q(W,E_\rho)&=\{\omega\in\Omega^q(W,E_\rho)~|~\Delta^{(q)}\omega=0, B_{\rm rel}(\omega)=0\},
\end{align*}
be the spaces of harmonic forms with boundary conditions, then the Hodge decomposition reads
\begin{align*}
\Omega^{q}_{\rm abs}(W,E_\rho) &=  \H_{\rm abs}^q(W,E_\rho) \oplus d \Omega^{q-1}_{\rm abs}(W,E_\rho)\oplus
d^{\dagger}\Omega^{q+1}_{\rm abs}(W,E_\rho),\\
\Omega^{q}_{\rm rel}(W,E_\rho) &=  \H_{\rm rel}^q(W,E_\rho) \oplus d \Omega^{q-1}_{\rm rel}(W,E_\rho)\oplus
d^{\dagger}\Omega^{q+1}_{\rm rel}(W,E_\rho).\\
\end{align*}

\subsection{The form valued zeta functions and the analytic torsion}
\label{forms}

By the results of the previous sections, the  Laplace operator $\Delta^{(q)}$, with boundary conditions $\B_{\rm abs/rel}$  has a pure point spectrum $\Sp \Delta_{\rm abs/rel}^{(q)}$ consisting of real non negative eigenvalues. The sequence $\Sp_+ \Delta_{\rm abs/rel}^{(q)}$ is a totally regular sequence of spectral type accordingly to Section \ref{sb2.1}, and the  {\it forms valued zeta function} is the associated zeta function, defined by
\[
\zeta(s,\Delta_{\rm abs/rel}^{(q)} )=\zeta(s,\Sp_+ \Delta_{\rm abs/rel}^{(q)} )=\sum_{\lambda\in\Sp_+ \Delta_{\rm abs/rel}^{(q)}}\lambda^{-s},
\]
when $\Re(s)>\frac{m}{2}$ (see Propositions \ref{ss.l1} and {\ref{l3.1}). The {\it analytic torsion} $T_{\rm abs/rel}((W,g);\rho)$ of $(W,g)$ with respect to the representation $\rho:\pi_1(W)\to O(k,\R)$ is defined by 
\[
\log T_{\rm abs/rel}((W,g);\rho)=\frac{1}{2}\sum_{q=1}^m (-1)^q q \zeta'(0,\Delta_{\rm abs/rel}^{(q)}).
\]

We will omit the representation in the notation, whenever we mean the trivial representation.

\begin{theo}\label{Poinc1} {\bf Poincar\'e duality for analytic torsion} \cite{Luc}. Let $(W,g)$ be an orientable  compact connected Riemannian manifold of dimension $m$, with possible boundary. Then, for any representation $\rho$, 
\[
\log T_{\rm abs}((W,g);\rho)=(-1)^{m+1}\log T_{\rm rel}((W,g);\rho).
\]
\end{theo}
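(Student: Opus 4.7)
The plan is to exploit the Hodge star $\star$ as an isometric intertwiner between the absolute and relative Laplacians in complementary degrees. From Section~\ref{bord} we already have three facts: $\star:\Omega^{q}(W,E_\rho)\to\Omega^{m-q}(W,E_\rho)$ is an isometry for the inner product \eqref{inner}, $\star\Delta=\Delta\star$, and $B_{\rm abs}(\omega)=0$ if and only if $B_{\rm rel}(\star\omega)=0$. Combined with $\star d\star=\pm d^{\dagger}$, the latter upgrades to the statement that $\B_{\rm abs}(\omega)=0$ iff $\B_{\rm rel}(\star\omega)=0$, so $\star$ gives an isometric bijection between the eigenspaces of $\Delta^{(q)}_{\rm abs}$ and $\Delta^{(m-q)}_{\rm rel}$. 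Consequently, as multisets $\Sp_{+}\Delta^{(q)}_{\rm abs}=\Sp_{+}\Delta^{(m-q)}_{\rm rel}$, whence $\zeta(s,\Delta^{(q)}_{\rm abs})=\zeta(s,\Delta^{(m-q)}_{\rm rel})$ as meromorphic functions and in particular $\zeta'(0,\Delta^{(q)}_{\rm abs})=\zeta'(0,\Delta^{(m-q)}_{\rm rel})$.

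Plugging this identity into the definition of the torsion and reindexing by $p=m-q$ gives
\begin{align*}
\log T_{\rm abs}((W,g);\rho)
&=\tfrac{(-1)^{m}}{2}\sum_{p=0}^{m}(-1)^{p}(m-p)\,\zeta'(0,\Delta^{(p)}_{\rm rel})\\
&=(-1)^{m+1}\log T_{\rm rel}((W,g);\rho)+\tfrac{m(-1)^{m}}{2}\sum_{p=0}^{m}(-1)^{p}\zeta'(0,\Delta^{(p)}_{\rm rel}),
\end{align*}
so the theorem reduces to showing that the alternating sum $\sum_{p=0}^{m}(-1)^{p}\zeta'(0,\Delta^{(p)}_{\rm rel})$ vanishes. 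For this I would refine the Hodge decomposition compatibly with the relative boundary condition: writing $\zeta(s,\Delta^{(q)}_{\rm rel})=\zeta_{\rm cl,rel}(s,\Delta^{(q)})+\zeta_{\rm ccl,rel}(s,\Delta^{(q)})$ and using that $\omega\mapsto d\omega/\sqrt{\lambda}$ is an isometry from coexact forms at level $q-1$ onto closed forms at level $q$ which preserves the relative condition (because $B_{\rm rel}(\omega)=0$ implies $B_{\rm rel}(d\omega)=0$), one gets $\zeta_{\rm ccl,rel}(s,\Delta^{(q-1)})=\zeta_{\rm cl,rel}(s,\Delta^{(q)})$. The alternating sum then telescopes to $\zeta_{\rm cl,rel}(s,\Delta^{(0)})$, which is identically zero since every closed $0$-form is automatically harmonic and hence absent from the positive spectrum. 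Differentiating at $s=0$ yields the required vanishing.

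The main obstacle I anticipate is the boundary-condition bookkeeping throughout. One must verify carefully that $\star$ interchanges the full second-order conditions $\B_{\rm abs/rel}$ in \eqref{abs}--\eqref{rel} and not merely their first-order parts $B_{\rm abs/rel}$, and that the isometry from coexact to closed eigenspaces genuinely preserves the relative subspace in every degree. These points are standard in the closed case but delicate with boundary, and an overlooked boundary term would propagate through the telescoping and spoil the identity.
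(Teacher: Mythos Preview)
The paper does not give its own proof of Theorem~\ref{Poinc1}; the result is simply quoted from L\"uck~\cite{Luc}. Your argument is correct and is the standard one; it is also precisely the template the authors use when they do prove the cone version, Theorem~\ref{Poinc2}: identify $\Sp_+\Delta^{(q)}_{\rm abs}$ with $\Sp_+\Delta^{(m-q)}_{\rm rel}$ via $\star$, reduce to the vanishing of $\sum_p(-1)^p\zeta(s,\Delta^{(p)})$, split each zeta into closed plus coclosed pieces, and telescope using the isometry $d/\sqrt{\lambda}$ between coclosed $(p-1)$-eigenforms and closed $p$-eigenforms.

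One small correction to your bookkeeping: after substituting $\zeta_{\rm cl,rel}(s,\Delta^{(p)})=\zeta_{\rm ccl,rel}(s,\Delta^{(p-1)})$ and using $\zeta_{\rm cl,rel}(s,\Delta^{(0)})=0$, the alternating sum collapses to $(-1)^m\zeta_{\rm ccl,rel}(s,\Delta^{(m)})$, not to $\zeta_{\rm cl,rel}(s,\Delta^{(0)})$. This residual term vanishes for the dual reason: a coclosed $m$-eigenform with positive eigenvalue is automatically closed (being top-degree), hence harmonic, which is impossible. So your conclusion stands. Your caveat about checking that $\star$ exchanges the full second-order conditions $\B_{\rm abs}$ and $\B_{\rm rel}$ is well placed; it follows from $B_{\rm abs}(\omega)=0\Leftrightarrow B_{\rm rel}(\star\omega)=0$ applied to both $\omega$ and $d\omega$, together with $d^\dagger\star=\pm\star d$, exactly as you indicate.
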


We now use results of section \ref{hodg} to define {\it closed, coclosed, exact} and {\it coexact zeta functions}. We again restrict ourselves to the case of a manifold without boundary (see \cite{RS} for the case of manifold with boundary). By the very definition, we have 
\[
\zeta(s,\Delta^{(q)})=\sum_{\lambda\in\Sp_+\Delta^{(q)}} \dim \E^{(q)}_\lambda \lambda^{-s}=
\zeta_{\rm cl}(s,\Delta^{(q)})+\zeta_{\rm ccl}(s,\Delta^{(q)}),
\]
where
\begin{align*}
\zeta_{\rm cl}(s,\Delta^{(q)})&=\sum_{\lambda\in\Sp_+\Delta^{(q)}} \dim \E^{(q)}_{\lambda, {\rm cl}}\lambda^{-s},\\
\zeta_{\rm ccl}(s,\Delta^{(q)})&=\sum_{\lambda\in\Sp_+\Delta^{(q)}} \dim \E^{(q)}_{\lambda, {\rm ccl}}\lambda^{-s}.\\
\end{align*}

Since, by (\ref{iso1}), $\zeta_{\rm cl}(s,\Delta^{(q)})=\zeta_{\rm ccl}(s,\Delta^{(q-1)})$, we obtain from the above relations the following formulas for the torsion of a closed $m$ dimensional manifold $W$:
\begin{align*}
\log T((W,g);\rho)=\frac{1}{2}\sum_{q=1}^m (-1)^q q\zeta'(0,\Delta^{(q)})&=\frac{1}{2}\sum_{q=1}^m(-1)^q\zeta'_{\rm cl}(0,\Delta^{(q)})\\
&=-\frac{1}{2}\sum_{q=0}^{m-1}(-1)^q\zeta'_{\rm ccl}(0,\Delta^{(q)}).
\end{align*}

In particular, using again duality, for an odd dimensional manifold $W$ of dimension $m=2p-1$, 
\beq\label{odd}
\begin{aligned}
\log T((W,g);\rho)&=\sum_{q=1}^{p-1} (-1)^q\zeta'_{\rm cl}(0,\Delta^{(q)})+\frac{(-1)^p}{2}\zeta'_{\rm cl}(0,\Delta^{(p)})\\
&=-\sum_{q=0}^{p-2} (-1)^q\zeta'_{\rm ccl}(0,\Delta^{(q)})+\frac{(-1)^{p}}{2}\zeta'_{\rm ccl}(0,\Delta^{(p-1)}).
\end{aligned}
\eeq


\subsection{The Cheeger M\"uller theorem for manifolds with boundary, and the anomaly boundary term of Br\"uning and Ma}  
\label{cm}

In case of a smooth orientable compact connect Riemannian manifold $(W,g)$ with boundary $\b W$, for any representation $\rho$ of the fundamental group (for simplicity assume ${\rm rk}(\rho)=1$), the analytic torsion is given by the Reidemeister torsion plus some further contributions. It was shown by J. Cheeger in \cite{Che1}, that this further contribution only depends on the boundary, namely that
\[
\log T_{\rm abs}((W,g);\rho)=\log\tau(W;\rho)+c(\b W).
\]

In the case of a product metric near the boundary, the following formula for this boundary contribution was given by W. L\"uck \cite{Luc}, where $\chi(X)$ denotes the Euler characteristic of $X$,
\[
\log T_{\rm abs}((W,g);\rho)=\log\tau(W;\rho)+\frac{1}{4}\chi(\b W)\log 2.
\]

In  the general case a further contribution appears, that measures how the metric is far from a product metric.
A formula for this new anomaly boundary contribution is contained in some recent result of Br\"uning and Ma \cite{BM}. More precisely, in \cite{BM} (equation (0.6)) is given a formula for the ratio of the analytic torsion of two metrics, $g_0$ and $g_1$,
\beq\label{bat}
\begin{aligned}
\log \frac{T_{\rm abs}((W,g_1);\rho)}{T_{\rm abs}((W,g_0);\rho)}=
\frac{1}{2}\int_{\b W} \left(B(\nabla_1)-B(\nabla_0)\right),
\end{aligned}
\eeq
where $\nabla_j$ is the covariant derivative of the metric $g_j$, the 
forms $B(\nabla_j)$ are defined as follows, according to Section 1.3 of \cite{BM} (observe, however, that  we take the opposite sign with respect to the definition in \cite{BM}, since we are considering left actions instead of right actions, and also that we use the formulas of \cite{BM} in the particular case of a flat trivial bundle $F$). Using the notation of Section \ref{ppp} (in particular see equation (\ref{proap})), we define the following forms

\beq\label{pippo}\begin{aligned}
\mathcal{S}_j&=\frac{1}{2}\sum_{k=1}^{m-1}(i^*\omega_j-i^*\omega_0)_{0 k}\wedge\hat e^*_{k},\\
\widehat {i^*\Omega_j}&=
\frac{1}{2}\sum_{k,l=1}^{m-1}i^*\Omega_{j,k l}\wedge\hat e^*_{k}\wedge  \hat e^*_{l}\\
\hat{\Theta}&=\frac{1}{2}\sum_{k,l=1}^{m-1}\Theta_{kl} \wedge \hat  e^*_{k}\wedge  \hat e^*_{l}.\\
\end{aligned}
\eeq

Here, $\omega_j$  are the connection one forms, and $\Omega_j$, $j=0,1$, the curvature two forms associated to the metrics $g_0$ and $g_1$, respectively, while $ \Theta$ is the curvature two form of the boundary (with the metric induced by the inclusion), and $\{e_k\}_{k=0}^{m-1}$ is an orthonormal base of $TW$ (with respect to the metric $g$). Then, set  
\beq\label{ebm1} 
B(\nabla_j)=\frac{1}{2}\int_0^1\int^B
\e^{-\frac{1}{2}\hat{\Theta}-u^2 \mathcal{S}_j^2}\sum_{k=1}^\infty \frac{1}{\Gamma\left(\frac{k}{2}+1\right)}u^{k-1}
\mathcal{S}_j^k du. 
\eeq

Taking $g_1=g$, and $g_0$ an opportune deformation of $g$, that is a product metric near the boundary, it is easy to see that (see equation (\ref{sr2}) of Section \ref{Lap1.2})
\[
\log \frac{T_{\rm abs}((W,g_1);\rho)}{T_{\rm abs}((W,g_0);\rho)}=\frac{1}{2}\int_{\b W} B(\nabla_1).
\]

Note that the right end side of this equation is (as expected) a local quantity, and is well defined if there exists a regular collar neighborhood of the boundary. If this is the case,  we define the Br\"{u}ning and Ma {\it anomaly boundary term} by
\beq\label{anom}
A_{\rm BM,abs}(\b W)=\frac{1}{2}\int_{\b W} B(\nabla_1),
\eeq
and we have 
\beq\label{pop1}
\log T_{\rm abs}((W,g);\rho)=\log\tau(W;\rho)+\frac{1}{4}\chi(\b W)\log 2+A_{\rm BM,abs}(\b W).
\eeq

\section{Zeta determinants}
\label{sb2}

In this section we collect some results on the theory of the zeta function associated to a sequence of spectral type  introduced in works of M. Spreafico \cite{Spr3} \cite{Spr4} \cite{Spr5} and \cite{Spr9}. This will give the analytic tools necessary in order to evaluate the zeta determinants appearing in the calculation of the analytic torsion in the following sections. We give the basic definition  in Section \ref{sb2.1}, some results concerning simple sequences in Section \ref{ss2}, the main results for double sequences in Section \ref{sb2.2}, and we specializes to the zeta functions associated to the Laplace operator on Riemannian manifolds in Section \ref{s2}. We present here a simplified version of the theory, that is sufficient for our purpose here; we refer to the mentioned papers for further details  (see in particular the  general formulation in Theorem 3.9 of \cite{Spr9} or the Spectral Decomposition Lemma of \cite{Spr5}).

\subsection{Zeta functions for sequences of spectral type}
\label{sb2.1}

Let $S=\{a_n\}_{n=1}^\infty$ be a sequence of non vanishing complex numbers, ordered by increasing modules, with the unique point of accumulation at infinite. The positive real number (possibly infinite)
\[
s_0=\limsup_{n\to\infty} \frac{\log n}{\log |a_n|},
\]
is called the {\it exponent of convergence} of $S$, and denoted by $\ec(S)$. We are only interested in sequences with  $\ec(S)=s_0<\infty$. If this is the case, then there exists a least integer $p$ such that the series $\sum_{n=1}^\infty a_n^{-p-1}$ converges absolutely. We assume $s_0-1< p\leq s_0$, we call the integer $p$ the {\it genus} of the sequence $S$, and we write $p=\ge(S)$.  We define the {\it zeta function} associated to $S$ by the uniformly convergent series
\[
\zeta(s,S)=\sum_{n=1}^\infty a_n^{-s},
\]
when $\Re(s)> \ec(S)$, and by analytic continuation otherwise.  We call the open subset $\rho(S)=\C-S$ of the complex plane the  {\it resolvent set} of $S$. For all $\lambda\in\rho(S)$, we define the {\it Gamma function} associated to $S$  by the canonical product
\beq\label{gamma}
\frac{1}{\Gamma(-\lambda,S)}=\prod_{n=1}^\infty\left(1+\frac{-\lambda}{a_n}\right)\e^{\sum_{j=1}^{\ge(S)}\frac{(-1)^j}{j}\frac{(-\lambda)^j}{a_n^j}}.
\eeq

When necessary in order to define the meromorphic branch of an analytic function, the domain for $\lambda$ will be the  open subset $\C-[0,\infty)$ of the complex plane.
We use the notation $\Sigma_{\theta,c}=\left\{z\in \C~|~|\arg(z-c)|\leq \frac{\theta}{2}\right\}$,
with $c\geq \delta> 0$, $0< \theta<\pi$. We use
$D_{\theta,c}=\C-\Sigma_{\theta,c}$, for the complementary (open) domain and $\Lambda_{\theta,c}=\partial \Sigma_{\theta,c}=\left\{z\in \C~|~|\arg(z-c)|= \frac{\theta}{2}\right\}$, oriented counter clockwise, for the boundary.
With this notation, we define now a particular subclass of sequences. Let $S$ be as above, and assume that $\ec(S)<\infty$, and that there exist $c>0$ and $0<\theta<\pi$, such that $S$ is contained in the interior of the sector $\Sigma_{\theta,c}$. Furthermore, assume that the logarithm of the associated Gamma function has a uniform asymptotic expansion for large $\lambda\in D_{\theta,c}(S)=\C-\Sigma_{\theta,c}$ of the following form
\[
\log\Gamma(-\lambda,S)\sim\sum_{j=0}^\infty a_{\alpha_j,0}(-\lambda)^{\alpha_j} +\sum_{k=0}^{\ge(S)} a_{k,1}(-\lambda)^k\log(-\lambda),
\]
where $\{\alpha_j\}$ is a decreasing sequence of real numbers. Then, we say that $S$ is a {\it totally regular sequence of spectral type with infinite order}. We call the open set $D_{\theta,c}(S)$ the {\it asymptotic domain} of $S$.

\subsection{The zeta determinant of some simple sequences}
\label{ss2}

The results of this section are known to specialists, and can be found in different places. We will use the formulation of \cite{Spr1}. For positive real numbers $l$ and $q$, define the {\it non homogeneous quadratic Bessel zeta function} by
\[
z(s,\nu,q,l)=\sum_{k=1}^\infty \left(\frac{j_{\nu,k}^2}{l^2}+q^2\right)^{-s},
\]
for $\Re(s)>\frac{1}{2}$. Then, $z(s,\nu,q,l)$ extends analytically to a meromorphic function in the complex plane with simple poles at $s=\frac{1}{2}, -\frac{1}{2}, -\frac{3}{2}, \dots$. The point $s=0$ is a regular point and
\beq\label{p00}
\begin{aligned}
z(0,\nu,q,l)&=-\frac{1}{2}\left(\nu+\frac{1}{2}\right),\\
z'(0,\nu,q,l)&=-\log\sqrt{2\pi l}\frac{I_\nu(lq)}{q^\nu}.
\end{aligned}
\eeq

In particular, taking the limit for $q\to 0$,
\[
z'(0,\nu,0,l)=-\log\frac{\sqrt{\pi}l^{\nu+\frac{1}{2}}}{2^{\nu-\frac{1}{2}}\Gamma(\nu+1)}.
\]

\subsection{Zeta determinant for a class of double sequences}
\label{sb2.2}

Let $S=\{\lambda_{n,k}\}_{n,k=1}^\infty$ be a double sequence of non
vanishing complex numbers with unique accumulation point at the
infinity, finite exponent $s_0=\ec(S)$ and genus $p=\ge(S)$. Assume if necessary that the elements of $S$ are ordered as $0<|\lambda_{1,1}|\leq|\lambda_{1,2}|\leq |\lambda_{2,1}|\leq \dots$. We use the notation $S_n$ ($S_k$) to denote the simple sequence with fixed $n$ ($k$). We call the exponents of $S_n$ and $S_k$ the {\it relative exponents} of $S$, and we use the notation $(s_0=\ec(S),s_1=\ec(S_k),s_2=\ec(S_n))$. We define {\it relative genus} accordingly.

\begin{defi} Let $S=\{\lambda_{n,k}\}_{n,k=1}^\infty$ be a double
sequence with finite exponents $(s_0,s_1,s_2)$, genus
$(p_0,p_1,p_2)$, and positive spectral sector
$\Sigma_{\theta_0,c_0}$. Let $U=\{u_n\}_{n=1}^\infty$ be a totally
regular sequence of spectral type of infinite order with exponent
$r_0$, genus $q$, domain $D_{\phi,d}$. We say that $S$ is
spectrally decomposable over $U$ with power $\kappa$, length $\ell$ and
asymptotic domain $D_{\theta,c}$, with $c={\rm min}(c_0,d,c')$,
$\theta={\rm max}(\theta_0,\phi,\theta')$, if there exist positive
real numbers $\kappa$, $\ell$ (integer), $c'$, and $\theta'$, with
$0< \theta'<\pi$,   such that:
\begin{enumerate}
\item the sequence
$u_n^{-\kappa}S_n=\left\{\frac{\lambda_{n,k}}{u^\kappa_n}\right\}_{k=1}^\infty$ has
spectral sector $\Sigma_{\theta',c'}$, and is a totally regular
sequence of spectral type of infinite order for each $n$;
\item the logarithmic $\Gamma$-function associated to  $S_n/u_n^\kappa$ has an asymptotic expansion  for large
$n$ uniformly in $\lambda$ for $\lambda$ in
$D_{\theta,c}$, of the following form
\beq\label{exp}
\hspace{30pt}\log\Gamma(-\lambda,u_n^{-\kappa} S_n)=\sum_{h=0}^{\ell}
\phi_{\sigma_h}(\lambda) u_n^{-\sigma_h}+\sum_{l=0}^{L}
P_{\rho_l}(\lambda) u_n^{-\rho_l}\log u_n+o(u_n^{-r_0}),
\eeq
where $\sigma_h$ and $\rho_l$ are real numbers with $\sigma_0<\dots <\sigma_\ell$, $\rho_0<\dots <\rho_L$, the
$P_{\rho_l}(\lambda)$ are polynomials in $\lambda$ satisfying the condition $P_{\rho_l}(0)=0$, $\ell$ and $L$ are the larger integers 
such that $\sigma_\ell\leq r_0$ and $\rho_L\leq r_0$.
\end{enumerate}
\label{spdec}
\end{defi}

When a double sequence $S$ is spectrally decomposable over a simple sequence $U$, Theorem 3.9 of \cite{Spr9} gives a formula for the derivative of the associated zeta function at zero. In order to understand such a formula, we need to introduce some other quantities. First, we define the functions
\beq\label{fi1}
\Phi_{\sigma_h}(s)=\int_0^\infty t^{s-1}\frac{1}{2\pi i}\int_{\Lambda_{\theta,c}}\frac{\e^{-\lambda t}}{-\lambda} \phi_{\sigma_h}(\lambda) d\lambda dt.
\eeq

Next, by Lemma 3.3 of \cite{Spr9}, for all $n$, we have the expansions:
\beq\label{form}\begin{aligned}
\log\Gamma(-\lambda,S_n/{u_n^\kappa})&\sim\sum_{j=0}^\infty a_{\alpha_j,0,n}
(-\lambda)^{\alpha_j}+\sum_{k=0}^{p_2} a_{k,1,n}(-\lambda)^k\log(-\lambda),\\
\phi_{\sigma_h}(\lambda)&\sim\sum_{j=0}^\infty b_{\sigma_h,\alpha_j,0}
(-\lambda)^{\alpha_j}+\sum_{k=0}^{p_2} b_{\sigma_h,k,1}(-\lambda)^k\log(-\lambda),
\end{aligned}
\eeq
for large $\lambda$ in $D_{\theta,c}$. We set (see Lemma 3.5 of \cite{Spr9})
\beq\label{fi2}
\begin{aligned}
A_{0,0}(s)&=\sum_{n=1}^\infty \left(a_{0, 0,n} -\sum_{h=0}^\ell
b_{\sigma_h,0,0}u_n^{-\sigma_h}\right)u_n^{-\kappa s},\\
A_{j,1}(s)&=\sum_{n=1}^\infty \left(a_{j, 1,n} -\sum_{h=0}^\ell
b_{\sigma_h,j,1}u_n^{-\sigma_h}\right)u_n^{-\kappa s},
~~~0\leq j\leq p_2.
\end{aligned}
\eeq

We can now state the formula for the derivative at zero of the double zeta function. We give here a modified version of Theorem 3.9 of \cite{Spr9}, more suitable for our purpose here. This is based on the following fact. The key point in the proof of Theorem 3.9 of \cite{Spr9} is the decomposition given in Lemma 3.5 of that paper of the sum
\[
\mathcal{T}(s,\lambda, S,U)=\sum_{n=1}^\infty u_n^{-\kappa s} \log\Gamma(-\lambda, u_n^{-\kappa}S_n),
\]
in two terms: the regular part $\mathcal{P}(s,\lambda,S,U)$ and the remaining singular part. The regular part is obtained subtracting from $\T$ some terms constructed starting from the expansion of the logarithmic Gamma function given in equation (\ref{exp}), namely
\[
\P(s,\lambda,S,u)=\T(s,\lambda, S,U)-\sum_{h=0}^{\ell}
\phi_{\sigma_h}(\lambda) u_n^{-\sigma_h}-\sum_{l=0}^{L}
P_{\rho_l}(\lambda)u_n^{-\rho_l}\log u_n.
\]

Now, assume instead we subtract only the terms such that the zeta function $\zeta(s,U)$ has a pole at $s=\sigma_h$ or at $s=\rho_l$. Let $\hat \P(s,\lambda, S,U)$ be the resulting function. Then the same argument as the one used in Section 3 of \cite{Spr9} in order to prove Theorem 3.9 applies, and we obtain similar formulas for the values of the residue, and of the finite part of the zeta function $\zeta(s,S)$ and of its derivative at zero, with just two differences: first, in the all the sums, all the terms with index $\sigma_h$ such that $s=\sigma_h$ is not a pole of $\zeta(s,U)$ must be omitted; and second, we must substitute the terms $A_{0,0}(0)$ and $A_{0,1}'(0)$, with the finite parts of the analytic continuation of $A_{0,0}(s)$, and $A_{0,1}'(s)$. The first modification is an obvious consequence of the substitution of the function $\P$ by the function $\hat \P$. The second modification, follows by the same reason noting that the function $A_{\alpha_j,k}(s)$ defined in Lemma 3.5 of \cite{Spr9} are no longer regular at $s=0$ themselves. However, they both admits a meromorphic extension regular at $s=0$, using the extension of the zeta function $\zeta(s,U)$, and the expansion of the coefficients $a_{\alpha_j,k,n}$ for large $n$.
Thus we have the following result.

\begin{theo} \label{tt} The formulas of Theorem 3.9 of \cite{Spr9} hold if all the quantities with index $\sigma_h$ such that the zeta function $\zeta(s,U)$ has not a pole at $s=\sigma_h$ are omitted. In such a case, the result must be read by means of the analytic extension of the zeta function $\zeta(s,U)$.
\end{theo}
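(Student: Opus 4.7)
The plan is to adapt the proof of Theorem~3.9 in \cite{Spr9} by using a weaker regularization scheme and tracking carefully what survives. Recall that the argument there hinges on the decomposition
\[
\mathcal{T}(s,\lambda,S,U)=\mathcal{P}(s,\lambda,S,U)+\text{singular part},
\]
where $\mathcal{P}$ is produced by subtracting from each summand of $\mathcal{T}$ the \emph{entire} asymptotic expansion (\ref{exp}). First I would introduce the modified regular part $\hat{\mathcal{P}}(s,\lambda,S,U)$ in which only the terms $\phi_{\sigma_h}(\lambda)u_n^{-\sigma_h}$ (respectively $P_{\rho_l}(\lambda)u_n^{-\rho_l}\log u_n$) corresponding to actual poles of $\zeta(s,U)$ at $s=\sigma_h$ (resp.\ $s=\rho_l$) are subtracted. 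The crucial observation that makes this work is that whenever $\zeta(s,U)$ is regular at $\sigma_h$, the Dirichlet series $\sum_n u_n^{-\kappa s-\sigma_h}$ already extends holomorphically to a neighborhood of $s=0$, so subtracting the corresponding asymptotic term would only have removed a harmless holomorphic contribution.

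Next I would rerun the contour argument of Section~3 of \cite{Spr9} essentially verbatim on $\hat{\mathcal{P}}$ rather than $\mathcal{P}$. Every step that relies on the uniform asymptotics (\ref{exp}) transfers without change, because the two regularizations differ only by summands whose $\zeta(s,U)$-series are holomorphic near $s=0$, and hence do not contribute to $\Rz_{s=0}\zeta(s,S)$, $\zeta(0,S)$, or $\zeta'(0,S)$ through the singular mechanism. The consequence is that in the resulting formulas for the residues and finite parts at $s=0$, every sum over $h$ (and $l$) is to be restricted to those indices for which $\zeta(s,U)$ actually has a pole at $\sigma_h$ (resp.\ $\rho_l$); the remaining indices simply get reabsorbed into the redefined functions $A_{0,0}(s)$ and $A_{0,1}(s)$.

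I would then address the second necessary modification. With fewer subtractions, the functions $A_{\alpha_j,k}(s)$ defined in (\ref{fi2}) are no longer automatically regular at $s=0$: each omitted subtraction injects into $A_{\alpha_j,k}(s)$ a piece of the form $b_{\sigma_h,\alpha_j,k}\,\zeta(\kappa s+\sigma_h,U)$. Since by hypothesis $\sigma_h$ is not a pole of $\zeta(s,U)$, the meromorphic extension of $\zeta(s,U)$ combined with the uniform expansions (\ref{form}) still provides a meromorphic extension of $A_{0,0}(s)$ and $A_{0,1}(s)$ to a punctured neighborhood of $s=0$, with at worst poles that cancel the prefactor $b_{\sigma_h,\cdot,\cdot}$ dependence. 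The correct replacement in the statement of Theorem~3.9 is then $\mathrm{FP}_{s=0}A_{0,0}(s)$ and $\mathrm{FP}_{s=0}A_{0,1}'(s)$ in place of $A_{0,0}(0)$ and $A_{0,1}'(0)$.

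The main technical obstacle is confirming that the meromorphic structure at $s=0$ really does survive intact under the weaker regularization, i.e. that the passage from $\mathcal{P}$ to $\hat{\mathcal{P}}$ moves no genuine singular contribution out of the pieces tracked by the residues of $\zeta(s,U)$ and, dually, that the finite parts of $A_{0,0}$ and $A_{0,1}'$ at $s=0$ agree with what the contour integral produces after the reduced subtraction. This is essentially bookkeeping rather than new analysis, but it must be carried out index by index; once that is done, the formulas of \cite[Thm.~3.9]{Spr9} transfer directly with the two stated modifications, which is exactly the content of the theorem.
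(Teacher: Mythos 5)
Your proposal is correct and follows essentially the same route as the paper: introduce the modified regular part $\hat{\P}$ obtained by subtracting from $\T$ only those asymptotic terms whose exponent $\sigma_h$ (resp.\ $\rho_l$) is an actual pole of $\zeta(s,U)$, rerun the contour argument of Section~3 of \cite{Spr9}, and then observe that the two modifications to the final formulas are the restriction of the sums to poles of $\zeta(s,U)$ and the replacement of $A_{0,0}(0)$, $A_{0,1}'(0)$ by the finite parts of their meromorphic continuations. One small point of phrasing worth adjusting: the paper actually asserts that the continuations of $A_{0,0}(s)$ and $A_{0,1}(s)$ are \emph{regular} at $s=0$ (so the finite part is just the value), whereas you only say they have ``at worst poles''; this does not affect the validity of the argument, since the finite-part reading covers both cases, but the sharper claim is what the paper relies on and is worth recording explicitly.
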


Next, assuming some simplified pole structure for the zeta function $\zeta(s,U)$, sufficient for the present analysis, we state the main result of this section.

\begin{theo} \label{t4} Let $S$ be spectrally decomposable over $U$ as in Definition \ref{spdec}. Assume that the functions $\Phi_{\sigma_h}(s)$ have at most simple poles for $s=0$. Then,
$\zeta(s,S)$ is regular at $s=0$, and
\begin{align*}
\zeta(0,S)=&-A_{0,1}(0)+\frac{1}{\kappa}{\sum_{h=0}^\ell} \Ru_{s=0}\Phi_{\sigma_h}(s)\Ru_{s=\sigma_h}\zeta(s,U),\\
\zeta'(0,S)=&-A_{0,0}(0)-A_{0,1}'(0)+\frac{\gamma}{\kappa}\sum_{h=0}^\ell\Ru_{s=0}\Phi_{\sigma_h}(s)\Ru_{s=\sigma_h}\zeta(s,U)\\
&+\frac{1}{\kappa}\sum_{h=0}^\ell\Rz_{s=0}\Phi_{\sigma_h}(s)\Ru_{s=\sigma_h}\zeta(s,U)+{\sum_{h=0}^\ell}{^{\displaystyle
'}}\Ru_{s=0}\Phi_{\sigma_h}(s)\Rz_{s=\sigma_h}\zeta(s,U),
\end{align*}
where the notation $\sum'$ means that only the terms such that $\zeta(s,U)$ has a pole at $s=\sigma_h$ appear in the sum.

\end{theo}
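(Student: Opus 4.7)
The strategy is to derive Theorem \ref{t4} as a direct specialization of Theorem \ref{tt} under the simple-pole hypothesis on $\Phi_{\sigma_h}(s)$ at $s=0$. The starting point is the Mellin-Barnes decomposition sketched just before Theorem \ref{tt}: writing $\mathcal{T}(s,\lambda,S,U) = \sum_n u_n^{-\kappa s}\log\Gamma(-\lambda, u_n^{-\kappa}S_n)$ and subtracting only those asymptotic terms $\phi_{\sigma_h}(\lambda) u_n^{-\sigma_h}$ for which $\zeta(s,U)$ is singular at $s=\sigma_h$ produces the regularized function $\hat{\mathcal{P}}(s,\lambda,S,U)$. Its contribution to $\zeta(s,S)$ yields, exactly as in the proof of Theorem 3.9 of \cite{Spr9}, the pieces $-A_{0,1}(0)$ for the value at zero and $-A_{0,0}(0)-A'_{0,1}(0)$ for the derivative, where each is interpreted as the finite part after meromorphic continuation, as emphasized in Theorem \ref{tt}.

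The remaining singular summands each contribute a factor of the form $\kappa^{-1}\Phi_{\sigma_h}(s)\,\zeta(\kappa s, U)$. Under the simple-pole assumption, one has
\begin{equation*}
\Phi_{\sigma_h}(s) = \frac{\Ru_{s=0}\Phi_{\sigma_h}(s)}{s} + \Rz_{s=0}\Phi_{\sigma_h}(s) + O(s),
\end{equation*}
while the Laurent expansion of $\zeta(s,U)$ at $s=\sigma_h$ provides the pair $(\Ru_{s=\sigma_h}\zeta(s,U), \Rz_{s=\sigma_h}\zeta(s,U))$. Multiplying these two expansions and expanding to order $s$ at $s=0$ (after the substitution $\kappa s = \sigma_h + w$) one reads off, coefficient by coefficient, the value $\zeta(0,S)$ from the residue and $\zeta'(0,S)$ from the constant term. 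The factor $\gamma$ appearing in the derivative formula arises from the combination of the $\kappa s-\sigma_h$ shift with the simple pole of $\zeta(s,U)$, through the identity $\Gamma'(1)=-\gamma$ in the Mellin representation. The marked sum $\sum'$ in the formula for $\zeta'(0,S)$ then reflects the restriction, already stated in Theorem \ref{tt}, that only those $\sigma_h$ at which $\zeta(s,U)$ genuinely has a pole contribute a residue-times-finite-part cross term; the remaining indices have already been absorbed into $\hat{\mathcal{P}}$ and produce no extra contribution.

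The main obstacle is the bookkeeping needed to confirm that the double-singular term $\Ru_{s=0}\Phi_{\sigma_h}(s)\cdot \Ru_{s=\sigma_h}\zeta(s,U)$ does not create a pole of $\zeta(s,S)$ at $s=0$: one must verify that the naive singular contribution is cancelled either inside the regularized $A_{0,1}(s)$ or between the regular and singular halves of the decomposition, leaving a finite combination of residues. This is what ultimately proves regularity of $\zeta(s,S)$ at $s=0$ and forces the particular matching of cross terms displayed in the statement. Once this cancellation is in place, Theorem \ref{t4} follows from Theorem \ref{tt} by term-by-term identification of Laurent coefficients, with no computation beyond the elementary algebra of products of series with at most simple poles.
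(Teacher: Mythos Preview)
Your proposal is correct and follows essentially the same approach as the paper: the paper does not give an independent proof of Theorem \ref{t4} but presents it as the specialization of Theorem \ref{tt} (itself a variant of Theorem 3.9 of \cite{Spr9}) obtained under the additional simple-pole hypothesis on $\Phi_{\sigma_h}(s)$, and then simply refers the reader to the Spectral Decomposition Lemma of \cite{Spr5} and Proposition~1 of \cite{Spr6}. Your outline of the Laurent-coefficient matching and the role of the $\hat{\mathcal P}$-regularization is exactly the mechanism implicit in that reduction.
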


This result should be compared with the Spectral Decomposition Lemma  of \cite{Spr5} and Proposition 1 of \cite{Spr6}.

\begin{rem}\label{rqr} We call regular part of $\zeta(0,S)$ the first term appearing in the formula given in the theorem, and regular part of $\zeta'(0,S)$ the first two terms. The other terms gives what we call singular part.
\end{rem}

\begin{corol} \label{c} Let $S_{(j)}=\{\lambda_{(j),n,k}\}_{n,k=1}^\infty$, $j=1,...,J$, be a finite set of  double sequences that satisfy all the requirements of Definition \ref{spdec} of spectral decomposability over a common sequence $U$, with the same parameters $\kappa$, $\ell$, etc., except that the polynomials $P_{(j),\rho}(\lambda)$ appearing in condition (2) do not vanish for $\lambda=0$. Assume that some linear combination $\sum_{j=1}^J c_j P_{(j),\rho}(\lambda)$, with complex coefficients, of such polynomials does satisfy this condition, namely that $\sum_{j=1}^J c_j P_{(j),\rho}(\lambda)=0$. Then, the linear combination of the zeta function $\sum_{j=1}^J c_j \zeta(s,S_{(j)})$ is regular at $s=0$ and satisfies the linear combination of the formulas given in Theorem \ref{t4}.
\end{corol}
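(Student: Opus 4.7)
The plan is to revisit the proof of Theorem \ref{t4} (i.e.\ Theorem 3.9 of \cite{Spr9}) applied separately to each $S_{(j)}$, track precisely where the hypothesis $P_{\rho_l}(0)=0$ is invoked, and then use the linearity of the construction to transfer the vanishing hypothesis from the level of the polynomials to the level of the zeta functions.

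Recall that the proof of Theorem \ref{t4} decomposes the auxiliary function $\mathcal{T}(s,\lambda,S,U)=\sum_n u_n^{-\kappa s}\log\Gamma(-\lambda,u_n^{-\kappa}S_n)$ as a regular part $\hat{\mathcal{P}}(s,\lambda,S,U)$ plus explicit singular pieces extracted from the asymptotic expansion (\ref{exp}), and then reads off $\zeta(0,S)$ and $\zeta'(0,S)$ by evaluating suitable quantities at $\lambda=0$. The condition $P_{\rho_l}(0)=0$ of Definition \ref{spdec} enters only at this final evaluation: without it, the terms $P_{\rho_l}(\lambda)u_n^{-\rho_l}\log u_n$, summed against $u_n^{-\kappa s}$, contribute an additional piece of schematic form $-\kappa^{-1}P_{\rho_l}(\lambda)\zeta'(s\kappa+\rho_l,U)$, whose value at $\lambda=0$ is proportional to $P_{\rho_l}(0)$ and is in general singular at $s=0$. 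First I would retrace the derivation without imposing $P_{(j),\rho_l}(0)=0$, in order to identify this extra contribution explicitly for each $S_{(j)}$. Next, noting that every object appearing in the derivation (the subtracted asymptotic terms, the functions $\Phi_{\sigma_h}$, the analytic continuations $A_{0,0}(s)$ and $A_{0,1}(s)$, and the residues paired with $\zeta(s,U)$) depends linearly on the asymptotic data $(\phi_{(j),\sigma_h},P_{(j),\rho_l},a_{(j),\alpha_j,k,n})$ of the individual sequence, I would form $Z(s)=\sum_{j=1}^J c_j\zeta(s,S_{(j)})$ and observe that the extra contribution in this combination carries the coefficient $\sum_{j=1}^J c_jP_{(j),\rho_l}(0)$. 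By hypothesis this coefficient vanishes, so the sole obstruction to regularity disappears: $Z(s)$ is regular at $s=0$, and $Z(0)$ and $Z'(0)$ are precisely the linear combinations of the formulas of Theorem \ref{t4} applied to each $S_{(j)}$.

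The main obstacle is the bookkeeping required to verify that every intermediate step in the proof of Theorem 3.9 of \cite{Spr9} is genuinely linear in the asymptotic data of the sequence, so that the linear combination can be distributed term by term without producing cross terms between different indices $j$. This ultimately rests on the fact that the only nonlinearity of $\log\Gamma(-\lambda,u_n^{-\kappa}S_n)$ in the sequence is confined inside each individual $\log\Gamma$, whereas the subsequent residue calculus against $\zeta(s,U)$ and the evaluation at $\lambda=0$ are additive operations on the expansion coefficients. No new analytic input beyond the machinery already assembled in Section \ref{sb2.2} should be required.
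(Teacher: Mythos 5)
Your proposal is correct and captures the natural (if unwritten) proof of this corollary: the paper states the result without a proof, treating it as an immediate consequence of the linearity, in the asymptotic data of each individual sequence, of every step in the derivation of Theorem~\ref{t4}, and you correctly identify both the single place where the hypothesis $P_\rho(0)=0$ enters and why forming the linear combination removes that obstruction to regularity at $s=0$. One small observation: the paper's own applications (see the remarks following Lemmas~\ref{s3.l2} and \ref{s4.l2}, where the $\log\mu_{q,n}$ terms in the combination cancel identically) indicate that the intended hypothesis is the stronger $\sum_{j}c_jP_{(j),\rho}(\lambda)\equiv 0$ as a polynomial rather than merely vanishing at $\lambda=0$, and under that reading your argument becomes still simpler, since the entire logarithmic contribution to $\sum_j c_j\log\Gamma(-\lambda,u_n^{-\kappa}S_{(j),n})$ simply disappears and Theorem~\ref{t4} applies verbatim to the combination.
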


\subsection{Zeta invariants of compact Riemannian manifolds}
\label{s2}

We recall in this section some known facts about zeta invariants of a compact manifold. We will rewrite such results in the terminology of zeta functions associated to sequences of spectral type just introduced. Our main reference are the works of P. Gilkey, in particular we refer to the book \cite{Gil}.

Let $(W,g)$ be a 
compact connected Riemannian manifold of dimension
$m$, with metric $g$. Let $\Delta^{(q)}$ denote the metric Laplacian on forms on $W$, and  $\Sp \Delta^{(q)}=\{\lambda_n\}_{n=0}^\infty$ ($\lambda_0=0$) its spectrum. Then,  there exists a full asymptotic expansion for the trace of the heat kernel of $\Delta^{(q)}$ for small $t$, 
\beq\label{eegg}
{\rm Tr}_{L^2}\e^{-t\Delta^{(q)}}=t^{-\frac{m}{2}}\sum_{j=0}^\infty e_{q,j} t^\frac{j}{2},
\eeq
where the coefficients  depend only on local invariants constructed from the metric tensor, and are in
principle calculable from it.


\begin{prop} \label{coeff}
\begin{align*}
e_{q,0}&=\frac{1}{(4\pi)^\frac{m}{2}}\binom{m}{q} \int_W dvol_g ,\\
e_{q,2}&=\frac{1}{6(4\pi)^\frac{m}{2}}\left(\binom{m}{q}-6\binom{m-2}{q-1}\right) \int_W \tau dvol_g ,\\
e_{q,4}&=\frac{1}{360(4\pi)^\frac{m}{2}} \left(5\binom{m}{q}-60\binom{m-2}{q-1}+180\binom{m-4}{p-2}\right) \int_W \tau^2 dvol_g \\
&+\frac{1}{360(4\pi)^\frac{m}{2}} \left(-2\binom{m}{q}+180\binom{m-2}{q-1}-720\binom{m-4}{p-2}\right) \int_W |Ric|^2 dvol_g\\
&+\frac{1}{360(4\pi)^\frac{m}{2}} \left(2\binom{m}{q}-30\binom{m-2}{q-1}+180\binom{m-4}{p-2}\right) \int_W |R|^2 dvol_g.
\end{align*}
\end{prop}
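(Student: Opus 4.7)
The natural strategy is to invoke Gilkey's universal heat kernel asymptotics for a Laplace-type operator $D = \nabla^*\nabla + E$ on a vector bundle with metric-compatible connection over a closed Riemannian manifold (Theorem 4.1.6 of \cite{Gil}). These express the coefficients as fiberwise traces of universal polynomials in the Riemann curvature $R$, the endomorphism $E$, and the bundle curvature $F$:
\begin{align*}
a_0(D) &= (4\pi)^{-m/2}\int_W \mathrm{tr}(I)\,dvol_g,\\
a_2(D) &= \frac{(4\pi)^{-m/2}}{6}\int_W \mathrm{tr}(\tau I - 6E)\,dvol_g,\\
a_4(D) &= \frac{(4\pi)^{-m/2}}{360}\int_W \mathrm{tr}\bigl(5\tau^2 I - 2|Ric|^2 I + 2|R|^2 I - 60\tau E + 180 E^2 + 30 F_{ij}F^{ij}\bigr)\,dvol_g,
\end{align*}
where I have already dropped the total divergence terms $\Delta\tau$ and $\Delta E$ which integrate to zero since $W$ is closed.

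The second step is to specialize to $D = \Delta^{(q)}$ on $V = \Lambda^q T^*W$ with the Levi-Civita connection. The Weitzenb\"ock formula identifies the endomorphism $E_q$ as an explicit contraction of $R$ acting on $q$-forms: $E_0 = 0$, $E_1 = \mathrm{Ric}$ as an endomorphism of $T^*W$, and in general $E_q$ is a signed sum of terms of the form $R_{ijkl}\,e^i\wedge\iota_{e_j}\otimes e^k\wedge\iota_{e_l}$ acting naturally on $\Lambda^q T^*W$. The bundle curvature $F$ is the extension of $R$ to $\Lambda^q T^*W$ as a derivation.

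The third step is purely combinatorial: evaluate the fiber traces $\mathrm{tr}_{\Lambda^q}(I)$, $\mathrm{tr}_{\Lambda^q}(E_q)$, $\mathrm{tr}_{\Lambda^q}(E_q^2)$, $\mathrm{tr}_{\Lambda^q}(F_{ij}F^{ij})$. The identity $\mathrm{tr}_{\Lambda^q}(I) = \binom{m}{q}$ is immediate. Using the standard fact that a symmetric endomorphism $A$ of $T^*W$ extends to $\Lambda^q T^*W$ with trace $\binom{m-1}{q-1}\mathrm{tr}(A)$, and carefully pairing exterior and interior multiplications against the symmetries of $R$, each trace can be written as a linear combination of $\binom{m}{q}$, $\binom{m-2}{q-1}$, and $\binom{m-4}{q-2}$ multiplied respectively by the pointwise invariants $1$ or $\tau$ for $e_{q,0}$ and $e_{q,2}$, and $\tau^2$, $|Ric|^2$, or $|R|^2$ for $e_{q,4}$. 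Substituting these combinatorial expressions into Gilkey's formulas and collecting coefficients of the three scalar invariants yields the claimed expressions.

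The main technical obstacle is the quartic trace $\mathrm{tr}_{\Lambda^q}(E_q^2)$, whose calculation requires commuting several pairs of exterior and interior products and tracking delicate sign cancellations using the first Bianchi identity. This is where the coefficient $\binom{m-4}{q-2}$ first appears, and it is also the step where the cross-contributions between $|Ric|^2$ and $|R|^2$ separate out. Since the full computation is essentially the content of Chapter 4 of \cite{Gil}, the proof amounts to a verification, with the only genuine labor being this combinatorial bookkeeping on $\Lambda^q T^*W$.
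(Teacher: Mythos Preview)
The paper does not give a proof of this proposition: it is stated as a known result with reference to Gilkey \cite{Gil}. Your sketch is exactly the standard derivation (Gilkey's universal formulas for $D=\nabla^*\nabla+E$ specialized via the Weitzenb\"ock formula, followed by the combinatorial fiber traces on $\Lambda^q T^*W$), so it is consistent with what the paper invokes.
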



\begin{prop} \label{ss.l1} The sequence $\Sp_+ \Delta^{(q)}$
of the positive eigenvalues of the metric Laplacian on forms 
on a compact
connected Riemannian manifold of dimension $m$, is a totally regular sequence of spectral type, with finite
exponent $\ec=\frac{m}{2}$, genus $\ge=[\ec]$, spectral sector $\Sigma_{\theta,c}$ with  some $0< c< \lambda_1$, $\epsilon<\theta<\frac{\pi}{2}$,  asymptotic domain $D_{\theta,c}=\C-\Sigma_{\theta,c}$, and infinite order.
\end{prop}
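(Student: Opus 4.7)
The plan is to read off each of the claimed properties from classical spectral theory of the Hodge Laplacian together with the heat kernel asymptotic expansion in equation (\ref{eegg}). First, since $W$ is closed and $\Delta^{(q)}$ is a non-negative, formally self-adjoint elliptic operator of order two, $\Sp\Delta^{(q)}$ is a discrete subset of $[0,\infty)$ accumulating only at $+\infty$, and the nonzero eigenvalues form a sequence $\Sp_+\Delta^{(q)}$ of strictly positive reals. Hence, for any $0<c<\lambda_1$ and any $\theta\in(\epsilon,\pi/2)$, the sequence lies in $\Sigma_{\theta,c}$, giving the claimed spectral sector and asymptotic domain $D_{\theta,c}=\mathbb C-\Sigma_{\theta,c}$.

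Next I would establish the exponent of convergence. From the leading term $e_{q,0}t^{-m/2}$ in (\ref{eegg}) and a Karamata/Tauberian argument one obtains the Weyl asymptotic
\[
N(\Lambda)=\#\{n:\lambda_n\le\Lambda\}\sim \frac{e_{q,0}}{\Gamma(m/2+1)}\Lambda^{m/2},\qquad \Lambda\to\infty,
\]
so $\lambda_n\sim Cn^{2/m}$, and consequently $\ec(\Sp_+\Delta^{(q)})=m/2$. The definition of the genus then forces $\ge=[m/2]$, since this is the least integer $p$ with $s_0-1<p\le s_0$.

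The core step is the uniform asymptotic expansion of $\log\Gamma(-\lambda,S)$ for $|\lambda|\to\infty$ in $D_{\theta,c}$, which must display the shape given in Section \ref{sb2.1} and must have infinitely many powers in order to give infinite order. For this I would use the Mellin--Barnes representation
\[
-\log\Gamma(-\lambda,S)=\frac{1}{2\pi i}\int_{\Re(s)=c_0}\frac{\pi}{s\sin\pi s}\,(-\lambda)^{-s}\zeta(-s,S)\,ds,\qquad -\ge-1<c_0<-\ge,
\]
valid on $D_{\theta,c}$, and shift the contour to the right. The meromorphic extension of $\zeta(s,S)$ is obtained from (\ref{eegg}) via the Mellin transform
\[
\zeta(s,S)\Gamma(s)=\int_0^1 t^{s-1}\bigl(\Theta(t)-\dim\ker\Delta^{(q)}\bigr)\,dt+\int_1^\infty t^{s-1}\bigl(\Theta(t)-\dim\ker\Delta^{(q)}\bigr)\,dt,
\]
where $\Theta(t)=\mathrm{Tr}\,e^{-t\Delta^{(q)}}$; substituting the expansion (\ref{eegg}) into the first integral produces simple poles at $s=(m-j)/2$, with residues $e_{q,j}/\Gamma((m-j)/2)$, and regularity at the nonpositive integers. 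Collecting the residues of $\frac{\pi}{s\sin\pi s}(-\lambda)^{-s}\zeta(-s,S)$ swept out by the contour shift yields a series of terms $a_{\alpha_j,0}(-\lambda)^{\alpha_j}$ with $\alpha_j=(m-j)/2$ strictly decreasing (infinitely many, so the order is infinite), together with finitely many $(-\lambda)^k\log(-\lambda)$ contributions coming from the finitely many double poles at $k=0,1,\dots,[m/2]$; the remainder integrals decay as the contour is pushed rightward, with constants uniform on $D_{\theta,c}$ because $|(-\lambda)^{-s}|$ is uniformly bounded on any fixed vertical line $\Re(s)=N$ by $|\lambda|^{-N}$ times a factor bounded in $\arg(-\lambda)$.

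The main obstacle is the last point, namely controlling the tails of the Mellin--Barnes integrals uniformly in $\lambda\in D_{\theta,c}$ as the contour is shifted. Away from the cut, the factor $|(-\lambda)^{-s}|=|\lambda|^{-\Re s}e^{\Im s\cdot\arg(-\lambda)}$ is tempered, and combined with the super-exponential decay of $(s\sin\pi s)^{-1}$ along vertical lines and with polynomial bounds on $\zeta(-s,S)$ in vertical strips (standard consequences of the heat kernel decay and of Phragm\'en--Lindel\"of applied to the Mellin-transformed remainder), one obtains the required uniform asymptotic estimate and hence the asymptotic expansion of $\log\Gamma(-\lambda,S)$ in the precise form demanded by the definition of a totally regular sequence of spectral type of infinite order in Section \ref{sb2.1}.
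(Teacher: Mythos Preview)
The paper does not actually give a proof of this proposition: it is stated in the preliminaries (Section~\ref{s2}) as a known fact, with the surrounding text referring the reader to Gilkey \cite{Gil} for the heat-kernel asymptotics and to \cite{Spr3,Spr4,Spr5,Spr9} for the general framework of sequences of spectral type. So there is no ``paper's own proof'' to compare against; your task is really to supply what the paper leaves implicit.

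Your plan is the standard one and is essentially the argument carried out in the cited Spreafico papers. The identification of the spectral sector, the computation of $\ec=m/2$ via Weyl/Karamata from the leading heat coefficient, and the genus $\ge=[m/2]$ are all correct. The heart of the matter is indeed the uniform large-$\lambda$ expansion of $\log\Gamma(-\lambda,S)$, and deriving it by Mellin inversion from the full heat-trace expansion (\ref{eegg}) and then shifting the contour is exactly the right mechanism; the infinitely many half-integer powers produced this way give the ``infinite order,'' and the finitely many double poles at $s=0,1,\dots,[m/2]$ give the allowed logarithmic terms. One point to be careful with: the precise Mellin--Barnes identity you wrote for $-\log\Gamma(-\lambda,S)$ should be checked against the Weierstrass normalisation (\ref{gamma}) with the genus-$\ge$ exponential factors, since those factors shift which contributions appear as genuine asymptotic terms and which are absorbed; in practice it is cleaner to first expand $\mathrm{Tr}(\Delta^{(q)}-\lambda)^{-N}$ for $N>\ge$ via $\int_0^\infty t^{N-1}e^{\lambda t}\Theta(t)\,dt$ and then integrate back in $\lambda$, as in \cite{Spr9}. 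Your uniformity argument on $D_{\theta,c}$ is the right idea and goes through once that normalisation is handled.
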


\begin{prop} The zeta function $\zeta(s,\Sp_+\Delta^{(q)})$ has a meromorphic continuation
to the whole complex plane up to simple poles at the values of
$s=\frac{m-h}{2}$, $h=0,1,2,\dots$, that are not negative integers
nor zero, with residues
\[
\begin{array}{l}\Ru_{s=\frac{m-h}{2}}
\zeta(s,\Sp_+ \Delta^{(q)})=\frac{e_{q,h}}{\Gamma\left(\frac{m-h}{2}\right)},\\
\end{array}
\]
the point $s=-k=0,-1,-2,\dots$ are regular points and
\begin{align*}
\zeta(0,\Sp_+ \Delta^{(q)})&=e_{q,m}-{\rm dimker}\Delta^{(q)},\\
\zeta(-k,\Sp_+ \Delta^{(q)})&=(-1)^k k! e_{q,m+2k}.
\end{align*}
\label{l3.1}
\end{prop}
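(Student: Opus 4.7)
My plan is to use the Mellin transform that relates the zeta function to the heat trace, combined with the short-time asymptotic expansion (\ref{eegg}). For $\Re(s) > m/2$, standard manipulations give
$$\zeta(s, \Sp_+ \Delta^{(q)}) = \frac{1}{\Gamma(s)} \int_0^\infty t^{s-1}\Big(\mathrm{Tr}_{L^2}\, e^{-t\Delta^{(q)}} - \dim\ker \Delta^{(q)}\Big) dt,$$
where subtracting the kernel projection secures integrability at infinity. I would split the integral as $\int_0^1 + \int_1^\infty$. Because $\Sp_+ \Delta^{(q)}$ is contained in a sector bounded away from the origin (Proposition \ref{ss.l1}), the positive-eigenvalue heat trace decays exponentially as $t \to \infty$, so the tail integral defines an entire function of $s$.

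For the integral over $(0,1]$, I would insert the asymptotic expansion (\ref{eegg}): for any integer $N$,
$$\mathrm{Tr}_{L^2}\, e^{-t\Delta^{(q)}} - \dim\ker\Delta^{(q)} = \sum_{j=0}^{N} e_{q,j}\, t^{(j-m)/2} - \dim\ker\Delta^{(q)} + R_N(t),$$
with $R_N(t) = O(t^{(N+1-m)/2})$. The remainder term, integrated against $t^{s-1}$, is holomorphic in the half-plane $\Re(s) > (m-N-1)/2$, while each explicit monomial integrates to $e_{q,j}/(s + (j-m)/2)$, and the constant term gives $-\dim\ker\Delta^{(q)}/s$. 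Summing these and dividing by $\Gamma(s)$ produces a meromorphic extension valid for $\Re(s) > (m-N-1)/2$, with simple poles located only at the points $s = (m-h)/2$; letting $N \to \infty$ gives the extension to the whole plane.

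To read off the values, I distinguish three cases. At $s = (m-h)/2$ with $(m-h)/2$ not a non-positive integer, $\Gamma(s)$ is analytic and nonzero, so the residue is $e_{q,h}/\Gamma((m-h)/2)$. At $s = -k$ with $k \geq 1$, the only monomial with a pole there comes from $j = m+2k$, and the simple pole of $1/(s+k)$ is canceled by the simple zero of $1/\Gamma(s)$: from $\lim_{s\to -k}(s+k)\Gamma(s) = (-1)^k/k!$ one obtains $\zeta(-k, \Sp_+ \Delta^{(q)}) = (-1)^k k!\, e_{q, m+2k}$. At $s = 0$, the pole from $j = m$ and the pole from the subtraction combine into a simple pole with coefficient $e_{q,m} - \dim\ker\Delta^{(q)}$, which the zero of $1/\Gamma(s) \sim s$ turns into precisely this finite value.

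I do not expect a real obstacle: the argument is classical once the exponential decay from Proposition \ref{ss.l1} and the expansion (\ref{eegg}) are in hand. The only point demanding genuine care is the evaluation at $s=0$, where the constant term $e_{q,m}$ coming from the heat coefficient must be combined correctly with the subtracted kernel dimension; this bookkeeping is precisely what produces the characteristic topological correction $-\dim\ker\Delta^{(q)}$ in the zeta-regularized value.
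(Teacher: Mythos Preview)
Your argument is correct and is precisely the classical Mellin-transform derivation of the pole structure of the spectral zeta function from the heat-kernel expansion. The paper does not supply its own proof of this proposition: Section \ref{s2} presents it as a known fact, with the book \cite{Gil} as the reference, so your proof is in effect the standard one underlying that citation.
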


\section{Geometric setting and Laplace operator}
\label{Lap1}

\subsection{The finite metric cone}
\label{Lap1.1}

Let $(W,g)$ be an orientable  compact connected Riemannian manifold of finite dimension $m$ without boundary  and with Riemannian structure $g$. We denote by $CW$ the {\it cone} over $W$, namely the mapping cone of the constant map $:W\to \{p\}$. Then, $CW$ is compact connected separable Hausdorff space, but in general is not a  topological manifold. However, if we remove the tip of the cone $p$, then $CW-\{p\}$ is an open differentiable manifold, with the obvious differentiable structure. Embedding $W$ in the opportune Euclidean space $\R^k$, and $\R^k$ in some hyperplane of $\R^{k+h}$, with opportune $h$, disconnected from the origin, a geometric realization of $CW$ is the  given by the set of the finite length $l$ line segments joining the origin to the embedded copy of $W$. Let $x$ the euclidean geodesic distance from the origin, if we equip $CW-\{p\}$ with the Riemannian structure
\beq\label{g1}
dx\otimes dx+x^2 g,
\eeq
this coincides with the metric structure induced by the described embedding. We denote by $C_{(0,l]}W$ the space $(0,l]\times W$ with the metric in equation (\ref{g1}). We denote by $C_l W$ the compact space $\overline{C_{(0,l]}W}=C_{(0,l]}W\cup\{p\}$. We call the space $C_l W$ the {\it (completed finite metric) cone} over $W$. We call the subspace $\{l\}\times W$ of $C_l W$, the {\it boundary of the cone}, and we denote it by $\b C_l W$. This is of course diffeomorphic to $W$, and isometric to $(W,l^2g)$.  For the  global coordinate $x$ corresponds to the local coordinate $x'=l-x$, where $x'$ is the geodesic distance from the boundary. Therefore, $ g_\b(x')=(l-x)^2g$, and if $i:W\to C_{(0,l]}W$ denotes the inclusion, $i^*(dx\otimes dx+x^2g)=g_\b(0)=l^2 g$.  Following common notation, we will call $(W,  g)$ the {\it section } of the cone. Also following usual notation, a tilde will denotes operations on the section (of course $\tilde g=g$), and not on the boundary. All the results of Section \ref{bord} are valid. In particular, given a local coordinate system $y$ on $W$, then $(x,y)$ is a local coordinate system on the cone. 

We now give the explicit form of $\star$, $d^\dagger$ and $\Delta$. See \cite{Che0} \cite{Che2} and \cite{Nag} Section 5 for details. If $\omega\in \Omega^{q}(C_{(0,l]} W)$, set
\[
\omega(x,y)=f_1(x)\omega_1(y)+f_2(x)dx\wedge \omega_2(y),
\]
with smooth functions $f_1$ and $f_2$, and $\omega_j\in \Omega( W)$. Then a straightforward calculation gives 
\begin{align}
\label{f1}\star \omega(x,y)&= x^{m-2q+2} f_2(x)\tilde\star \omega_2(y)+(-1)^q x^{m-2q}f_1(x) dx\wedge\tilde\star \omega_1(y),
\end{align}
\beq\label{f2}
\begin{aligned}d \omega(x,y)   &= f_1(x)\tilde d \omega_1(y) + \b_x f_1(x) dx \wedge \omega_1(y) - f_2(x) dx \wedge d\omega_2(y),\\
d^\dagger \omega(x,y)&= x^{-2} f_1(x)\tilde d^{\dag}\omega_1 (y) -\left((m-2q+2)x^{-1}f_2(x) + \b_x f_2(x)\right)\omega_2(y)\\
&-x^{-2}f_2(x) dx \wedge \tilde d^{\dag} \omega_2(y),
\end{aligned}
\eeq

\beq\label{f3}
\begin{aligned}
\Delta\omega(x,y)&= \left(-\b_x^2 f_1(x) -(m-2q)x^{-1}\b_x f_1(x)\right)\omega_1(y) + x^{-2}f_1(x)\tilde
\Delta\omega_1(y)-2x^{-1}f_2(x)\tilde d\omega_2(y)\\
&+dx\wedge \left(x^{-2}f_2(x) \tilde\Delta\omega_2(y)+\omega_2(y)\left(-\b^2_x f_2(x) -(m-2q+2)x^{-1}\b_x f_2(x)\right.\right.\\
&\left.\left. + (m-2q+2)x^{-2}f_2(x) \right) -2x^{-3}f_1(x)\tilde d^{\dag}\omega_1(y)\right).
\end{aligned}
\eeq

\subsection{Riemannian tensors on the cone}
\label{Lap1.2} 

We give here the explicit form of the main Riemannian quantities on the cone. All calculation are based on the formulas given in Sections \ref{rim} and \ref{cm}. Recall that a tilde denotes quantities relative to the section, that we have local coordinate $(x,y_1,\dots,y_m)$ on $C_lW$, and that the metric is
\[
g_1=dx\otimes dx+x^2 g.
\]

Let $\{b_k\}_{k=1}^m$ be a local orthonormal base of $TW$, and $\{b^*_k\}_{k=1}^m$ the associated dual base.  Then,
\begin{align*}
e_0&=\b_x& e^*_0&=dx,&&\\
e_k&=\frac{1}{x}b_k,&e^*_k&=xb^*_k, &&1\leq k\leq m.
\end{align*}

Direct calculations give Cartan structure constants
\begin{align*}
c_{jk0}&=0,&&1\leq j,k\leq m,\\
c_{0kl}=-c_{k0l}&=-\frac{\delta_{kl}}{x},&&1\leq k,l\leq m,\\
c_{jkl}&=\frac{1}{x}\tilde c_{jkl}, &&1\leq j,k,l\leq m.
\end{align*}

The Christoffel symbols are
\begin{align*}
\Gamma_{0kl}&=0,&&1\leq k,l\leq m,\\
\Gamma_{j0k}=-\Gamma_{jk0}&=\frac{\delta_{jk}}{x},&&1\leq j,k\leq m,\\
\Gamma_{jkl}&=\frac{1}{x}\tilde \Gamma_{jkl}, &&1\leq j,k,l\leq m.
\end{align*}

The connection one form matrix relatively to the metric $g_1$ has components
\beq\label{om1}
\begin{aligned}
\omega_{1,00}&=0,\\
\omega_{1,0j}=-\omega_{1,j0}&=-\frac{1}{x}e^*_j=-b^*_j,&&1\leq j\leq m,\\
\omega_{1,jk}&=\sum_{h=1}^m \Gamma_{hkj}e^*_h=\frac{1}{x}\sum_{h=1}^m \tilde\Gamma_{hkj}e^*_h
=\sum_{h=1}^m \tilde\Gamma_{hkj}b^*_h=\tilde\omega_{jk},&&1\leq j,k\leq m.\\
\end{aligned}
\eeq

To compute the curvature we calculate
\[
d\omega_{1,0j}=-\sum_{l=1}^m (\b_l b^*_j)\wedge dy_l=-\sum_{l,k=1}^m (\b_l b_{kj})dy_k\wedge dy_l,
\]
where $b_j^*=\sum_{k=1}^m b_{kj} dy_k$, and, for $1\leq j,k\leq m$, 
\[
d\omega_{1,jk}=\tilde d\tilde\omega_{jk};
\]
while
\begin{align*}
-(\omega_1\wedge\omega_1)_{k0}&=(\omega_1\wedge\omega_1)_{0k}=\sum_{l=0}^m \omega_{1,0l}\wedge \omega_{1,lk}=\sum_{l=1}^m \omega_{1,0l}\wedge \omega_{1,lk}=-\sum_{l=1}^m b_l^*\wedge\tilde\omega_{lk},\\
(\omega_1\wedge\omega_1)_{jk}&=\sum_{l=0}^m \omega_{1,jl}\wedge \omega_{1,lk}=\omega_{1,j0}\wedge \omega_{1,0k}+\sum_{l=1}^m \omega_{1,jl}\wedge \omega_{1,lk}=-b^*_j\wedge b^*_k+(\tilde\omega\wedge\tilde\omega)_{jk},
\end{align*}
for $1\leq j,k\leq m$. The curvature two form  has components
\begin{align*}
\Omega_{1,00}&=0,\\
\Omega_{1,0j}&=-\sum_{l,k=1}^m (\b_l b_{kj})dy_k\wedge dy_l-\sum_{l=1}^m b_l^*\wedge\tilde\omega_{lk},&&1\leq j\leq m,\\
\Omega_{1,jk}&=\tilde d\tilde\omega_{jk}-b^*_j\wedge b^*_k+(\tilde\omega\wedge\tilde\omega)_{jk}=\tilde\Omega_{jk}-b_j^*\wedge b_k^*,&&1\leq j,k\leq m.\\
\end{align*}

Next, considering  the metric $g_0=dx\otimes dx+g$, similar calculations gives:
\beq\label{om0}
\begin{aligned}
\omega_{0,0j}&=0,&&0\leq j\leq m,\\
\omega_{0,jk}&=\tilde\omega_{jk},&&1\leq j,k\leq m.\\
\end{aligned}
\eeq

By equations (\ref{om1}) and (\ref{om0}),
\begin{align}
\label{sr1}\mathcal{S}_1&=-\frac{1}{2l}\sum_{k=1}^{m}e^*_k\wedge e^*_{k}=-\frac{l}{2}\sum_{k=1}^{m}b^*_k\wedge b^*_{k}=-\frac{1}{2}\sum_{k=1}^{m}b^*_k\wedge e^*_{k},\\
\label{sr2}\mathcal{S}_0&=0.
\end{align}

We also need the curvature two form $\Theta$ on the boundary $\b C_l W$. A similar calculation gives 
\[
\Theta_{jk}=\tilde\Omega_{jk}.
\]

Note in particular that it is easy to verify the equation (1.16) of \cite{BM}: $\hat{\Theta}=\widehat{i^*\Omega_1}-2\S_1^2$. For 
\begin{align*}
2\S_1^2&=-\frac{l^2}{2}\sum_{j,k=1}^m b_j^*\wedge b_k^*\wedge\hat b_j^*\wedge \hat b_k^*,\\
\hat{\Theta}&=\frac{l^2}{2}\sum_{j,k=1}^m  \tilde\Omega_{jk}\wedge \hat b_j^*\wedge \hat b_k^*,
\end{align*}
while $(i^*\Omega)_{jk}=\tilde \Omega_{jk}-b_j^*\wedge b_k^*$,  gives
\[
\widehat{i^*\Omega_{1,jk}}=\frac{l^2}{2}\sum_{j,k=1}^m \left( \tilde\Omega_{jk}-b_j^*\wedge b_k^*\right)\wedge \hat b_j^*\wedge \hat b_k^*.
\]

\subsection{The Laplace operator on the cone and its spectrum}
\label{Lap1.3}

We study the  Laplace operator on forms on the space $C_lW$. This is essentially based on \cite{Che2} and \cite{BS2}. Let denote by $\mathcal{L}$ the formal differential operator defined by equation (\ref{f3}) acting on smooth forms on $C_{(0,l]}W$, $\Gamma(C_{(0,l]}W, \Lambda T^* C_{(0,l]}W)$. We define in Lemma \ref{l1} a self adjont operator $\Delta$ acting on $L^2 (C_l W,\Lambda^{(q)}C_lW)$, and such that $\Delta \omega=\mathcal{L}\omega$, if $\omega\in {\rm dom} \Delta$. Then, in Lemma \ref{l2}, we list all the solutions of the eigenvalues equation for $\mathcal{L}$. Eventually, in Lemma \ref{l3}, we give the spectrum of $\Delta$.

\begin{lem} \label{l1} The formal operator $\mathcal{L}$ in equation (\ref{f3}) with the absolute/relative boundary conditions given in equations (\ref{abs})/(\ref{rel}) on the boundary $\b C_lW$ defines a unique self adjoint semi bounded operator on $L^2 (C_lW,\Lambda^{(q)} T^*C_l W)$, that we denote by the symbol $\Delta_{\rm abs}$/$\Delta_{\rm rel}$, respectively, with pure point spectrum.
\end{lem}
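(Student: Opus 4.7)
My plan is to follow the separation of variables strategy for regular singular operators on a metric cone, essentially the approach of Cheeger \cite{Che2} and Br\"uning--Seeley \cite{BS2}, adapted here to absolute/relative boundary conditions on $\partial C_lW$.

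First I would Hodge-decompose forms on the section. Writing $\omega(x,y)=f_1(x)\omega_1(y)+f_2(x)\,dx\wedge\omega_2(y)$ as in \ref{Lap1.1} and expanding $\omega_1,\omega_2$ in a Hilbert basis of $L^2(\Lambda W)$ consisting of harmonic, exact and coexact eigenforms of $\tilde\Delta$, the formula (\ref{f3}) becomes diagonal on each $2\times 2$ block of radial functions. Concretely, on the eigenspace of $\tilde\Delta^{(q)}$ with eigenvalue $\mu$, the operator $\mathcal L$ acts as a system of the form
\begin{equation*}
L_\nu \begin{pmatrix}f_1\\ f_2\end{pmatrix}=\left(-\partial_x^2+\frac{\nu^2-\tfrac14}{x^2}\right)\begin{pmatrix}f_1\\ f_2\end{pmatrix}+\text{(lower order coupling)},
\end{equation*}
after conjugation by $x^{(m-2q)/2}$, where the indicial parameter $\nu$ depends on $\mu$, on $q$ and on whether one is on the closed, coclosed, exact or coexact component; explicit forms of $\nu$ (and of the $2\times 2$ coupling on the ``closed $\leftrightarrow$ coexact of lower degree'' blocks) are standard, see \cite{Che2}. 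In this way $\mathcal L$ on $L^2(C_lW,\Lambda^{(q)})$ is unitarily equivalent to an orthogonal sum of singular Sturm--Liouville operators $L_\nu$ on $(0,l]$, one for each mode of $\tilde\Delta$.

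Next I would perform the von Neumann / Weyl limit point--limit circle analysis at the singular endpoint $x=0$. For all but finitely many modes one has $\nu\ge 1$, so the Bessel-type operator $L_\nu$ is in the limit point case at $0$; the minimal operator is already essentially self-adjoint at the tip, and no boundary condition at $x=0$ is required. For the finitely many exceptional modes (those with $0\le\nu<1$, coming from low eigenvalues of $\tilde\Delta$ and, when $m$ is odd, from harmonic forms in middle-adjacent degrees) one is in the limit circle case; here I would follow Cheeger's prescription and choose the ``closed ideal boundary condition'' corresponding to $L^2$-integrable solutions behaving like $x^{\nu+1/2}$ at $0$, which agrees with the $L^2$ minimal extension and is stable under $d$ and $d^\dagger$. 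At the regular endpoint $x=l$, the absolute/relative conditions (\ref{abs})/(\ref{rel}) translate into standard separated self-adjoint boundary conditions on $f_1,f_2$ and their derivatives at $x=l$, so the full boundary form annihilates, yielding a self-adjoint operator $\Delta_{\rm abs/rel}$ on each mode. Summing over modes gives the desired unique self-adjoint extension, semi-bounded from below because each $L_\nu$ with the chosen conditions is semi-bounded by the Hardy inequality $-\partial_x^2+(\nu^2-\tfrac14)x^{-2}\ge 0$ on $L^2(0,l]$ when $\nu^2\ge 0$.

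Finally, pure point spectrum: each one-dimensional problem $L_\nu$ on the compact interval $(0,l]$ with the above endpoint conditions has compact resolvent (the Green function is Hilbert--Schmidt; this is where the regular singular structure and the explicit Bessel solutions play their role, as in \cite{BS2}), and its eigenvalues are the zeros of an explicit cross-product of Bessel functions. The direct sum over the (countable) modes of $\tilde\Delta$, combined with Weyl's asymptotics for $\tilde\Delta$, gives a sequence of eigenvalues with finite multiplicity and unique accumulation point at $+\infty$; hence $\Delta_{\rm abs/rel}$ has pure point spectrum. The main technical obstacle is the limit circle modes: one must check that Cheeger's ideal boundary condition at $x=0$ is indeed the self-adjoint extension that arises as the $L^2$-closure of $\mathcal L$ on compactly supported smooth forms in the cone satisfying (\ref{abs})/(\ref{rel}) at $\partial C_lW$, and that it is compatible on the ``closed/coexact'' coupled blocks; once this is verified the rest is a routine assembling of standard facts.
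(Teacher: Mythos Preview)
Your approach is correct and follows the same Cheeger/Br\"uning--Seeley philosophy as the paper, but the paper takes a shortcut worth noting. Rather than decomposing all the way down to scalar Bessel operators $L_\nu$ and running a mode-by-mode Weyl limit point/limit circle analysis, the paper applies a single unitary map $\psi_q$ (half-density twist in the radial variable) to put $\mathcal L^{(q)}$ in the form $D^2+A(0)/x^2$ with $A(0)$ a \emph{constant, operator-valued} coefficient built from $\tilde\Delta^{(q)}$, $\tilde\Delta^{(q-1)}$, $\tilde d$, $\tilde d^\dagger$; it then invokes Theorems 5.1, 5.2 and 6.1 of \cite{BS2} directly. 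This bypasses exactly the ``main technical obstacle'' you identify: there is no need to check compatibility of the tip boundary conditions on the coupled closed/coexact $2\times 2$ blocks, because \cite{BS2} treats the full coupled system at once and singles out the Friedrichs extension. It also sidesteps the summability issue for the direct sum of compact resolvents, since Theorem 5.2 of \cite{BS2} gives trace-class of the resolvent squared for the whole operator in one stroke. Your route is more hands-on and would recover the same extension (your ``closed ideal boundary condition'' is the Friedrichs choice), but the paper's packaging is cleaner and avoids the bookkeeping you flag at the end.
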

\begin{proof} Let $L^{(q)}$ denote the minimal operator defined by the formal operator $\mathcal{L}^{(q)}$, with domain the $q$-forms with compact support in $C_{(0,l]} W$, namely ${\rm dom} L^{(q)}=\Gamma_0(C_{(0,l]}W, \Lambda T^* C_{(0,l]}W)$. The boundary values problem at the boundary $x=l$, i.e. $\b C_lW$, is trivial, and gives the self adjoint extensions stated. The point $x=0$ requires more work. First, note that $L^{(q)}$ reduces by unitary transformation to an operator of the type
\beq\label{A}
D^2+\frac{A(x)}{x^2}, \hspace{20pt} D=-i\frac{d}{dx},
\eeq
where $A(x)$ is smooth family of symmetric second order elliptic operators \cite{BS2} pg. 370. More precisely, the map 
\begin{align*}
\psi_q&:C_0^\infty ((0,l],\Lambda^{(q)}T^* W\times \Lambda^{(q-1)}T^*W)\to C^\infty(C_{(0,l]}W,\Lambda^{(q)}T^* C_{(0,l]}W),\\
\psi_q&:(\omega^{(q)},\omega^{(q-1)})\mapsto x^{q-m/2}\pi^*\omega^{(q)}(x)+x^{p-1-m/2}\pi^*  \omega^{(q-1)}(x) \wedge
dx,
\end{align*}
where $\pi:C_{(0,l]} W\to W$, is bijective onto forms with compact support. Moreover, $\psi_q$ is unitary with respect to the usual $L^2$ structure on the function space $ \mathcal{F}(C_lW,\Lambda^{(q)}T^* C_lW)$ and the Hilbert space structure on $\mathcal{F}([0,l],\Lambda^{(q)}T^* W\times \Lambda^{(q-1)}T^*W)$ given by
\[
\int_0^l \left(||\omega^{(q)}(x)||^2_{\Lambda^{(q)}T^* W, g}+||\omega^{(q-1)}(x)||^2_{\Lambda^{(q-1)}T^* W, g}\right)dx.
\]

Under the transformation $\psi_q$, $L^{(q)}$ has the form in equation (\ref{A}), with $A(x)$  the constant smooth family of symmetric second order elliptic operators in $\Gamma(W,\Lambda^{(q)}T^* W\times \Lambda^{(q-1)}T^*W)$:
\[
A(x)=A(0)=\left(\begin{matrix}\tilde\Delta^{(q)}+\left(\frac{m}{2}-q\right)\left(\frac{m}{2}-q-1\right)&2 (-1)^q \tilde d\\
2 (-1)^q \tilde d^\dagger&\tilde\Delta^{(q-1)}+\left(\frac{m}{2}+2-q\right)\left(\frac{m}{2}+1-q\right)\end{matrix}
\right)
\]

Next, by its definition, $A(x)$ satisfies all the requirements at pg. 373 of \cite{BS2}, with $p=1$ (in particular this
follows from the fact that $A(x)$ is defined by the Laplacian on forms on a compact space). We can apply the results of
Br\"uning and Seeley \cite{BS1} \cite{BS2}, observing that in the present case we are in what they call ``constant coefficient
case'' (Section 3 of \cite{BS2}). By Theorem 5.1 of \cite{BS2}, the operator $L$ extends to a unique self adjoint
bounded operator $\Delta^{(q)}$. Note that this extension is the Friedrich extension by Theorem 6.1 of \cite{BS2}. Note
also that bundary condition at $x=0$ are necessary in general in the definition of the domain of $\Delta^{(q)}$, see
(L2) (c), pg. 410 of \cite{BS2} for these conditions.

Eventually, by Theorem 5.2 of \cite{BS2}, the square (here $p=1$, so $m=2$) of the resolvent of $\Delta^{(q)}$ is of trace class. This means that the resolvent is Hilbert Schmidt, and consequently the spectrum of $\Delta^{(q)}$ is pure point, by the spectral theorm for compact operators. Note that we do not need the cut off function $\gamma$ appearing in Theorem 5.2 of \cite{BS2}, since here $0<x\leq l$.

\end{proof}



\begin{lem}\label{l2} \cite{Che2}
Let $\{\varphi_{{\rm har},n}^{(q)},\varphi_{{\rm cex},n}^{(q)},\varphi_{{\rm ex},n}^{(q)}\}$ be an orthonormal  base of $\Gamma(W,\Lambda^{(q)}T^* W)$
consisting of harmonic,  coexact and exact eigenforms of $\tilde\Delta^{(q)}$ on $W$. Let $\lambda_{q,n}$
denotes the  eigenvalue of $\varphi_{{\rm cex},n}^{(q)}$ and $m_{{\rm cex},q,n}$ its multiplicity (so that $m_{{\rm cex},q,n}=\dim \E^{(q)}_{{\rm cex},n}=\dim \E^{(q)}_{{\rm ccl},n}$).  Let $J_\nu$ be the Bessel function of index $\nu$. Define
\begin{align*}
\alpha_q &=\frac{1}{2}(1+2q-m),\\
\mu_{q,n} &= \sqrt{\lambda_{q,n}+\alpha_q^2}.
\end{align*}

Then, assuming that $\mu_{q,n}$ is not an integer, all the solutions of the equation $\Delta u=\lambda^2 u$,
with $\lambda\not=0$, are convergent sums of forms of the following six types:
\begin{align*}
\psi^{(q)}_{\pm, 1,n,\lambda} =& x^{\alpha_q} J_{\pm\mu_{q,n}}(\lambda x) \varphi_{{\rm cex},n}^{(q)},\\
\psi^{(q)}_{\pm, 2,n,\lambda} =& x^{\alpha_{q-1}} J_{\pm\mu_{q-1,n}}(\lambda x) \tilde d\varphi_{{\rm cex},n}^{(q-1)} +
\b_x(x^{\alpha_{q-1}}J_{\pm\mu_{q-1,n}}(\lambda x)) dx \wedge \varphi_{{\rm cex},n}^{(q-1)}\\
\psi^{(q)}_{\pm,3,n,\lambda} =& x^{2\alpha_{q-1}+1}\b_x(x^{-\alpha_{q-1}}J_{\pm\mu_{q-1,n}}(\lambda x) )\tilde d\varphi_{{\rm cex},n}^{(q-1)}\\
\nonumber &+ x^{\alpha_{q-1}-1}J_{\pm\mu_{q-1,n}}(\lambda x) dx \wedge \tilde d^{\dag} \tilde d \varphi_{{\rm cex},n}^{(q-1)}\\
\psi^{(q)}_{\pm,4,n,\lambda} =& x^{\alpha_{q-2}+1}J_{\pm\mu_{q-2,n}}(\lambda x) dx \wedge \tilde d \varphi_{{\rm cex},n}^{(q-2)}\\
\psi^{(q)}_{\pm,E,\lambda} =& x^{\alpha_{q}} J_{\pm|\alpha_{q}|}(\lambda x) \varphi_{{\rm har},n}^{(q)}\\
\psi^{(q)}_{\pm,O,\lambda} =& \b_x(x^{\alpha_{q-1}}J_{\pm|\alpha_{q-1}|}(\lambda x)) dx \wedge \varphi_{{\rm har},n}^{(q-1)}.
\end{align*}

When $\mu_{q,n}$ is an integer the $-$ solutions must be modified including some logarithmic term (see for example \cite{Wat} for a set of linear independent solutions of the Bessel equation).

\end{lem}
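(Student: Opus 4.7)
The plan is to exploit the Hodge decomposition on the section $W$ together with the explicit form of $\Delta$ in equation (\ref{f3}) to reduce the eigenvalue equation $\Delta u = \lambda^2 u$ to a Bessel equation in the radial variable. Write an arbitrary $q$-form on the punctured cone as $\omega(x,y) = f_1(x)\omega_1(y) + f_2(x)\,dx\wedge\omega_2(y)$ with $\omega_1\in\Omega^q(W)$ and $\omega_2\in\Omega^{q-1}(W)$, and decompose each $\omega_j$ by Hodge on $W$ into its harmonic, exact and coexact parts. Since $\tilde\Delta$, $\tilde d$, and $\tilde d^\dagger$ preserve the Hodge decomposition and the eigenspaces of $\tilde\Delta$, and the only cross-terms in (\ref{f3}) are $-2x^{-1}f_2\tilde d\omega_2$ and $-2x^{-3}f_1\tilde d^\dagger\omega_1$, the eigenvalue equation decouples into the six independent sub-systems listed in the statement.

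For the prototypical family 1, take $\omega = f_1(x)\varphi^{(q)}_{\rm cex,n}(y)$ (so $f_2=0$ and $\tilde d^\dagger\omega_1=0$); formula (\ref{f3}) collapses to the scalar ODE $-f_1'' - (m-2q)x^{-1}f_1' + \lambda_{q,n} x^{-2} f_1 = \lambda^2 f_1$. The substitution $f_1(x)=x^{\alpha_q} g(x)$ with $\alpha_q=(1+2q-m)/2$, combined with the identity $m-2q-1=-2\alpha_q$, eliminates the first-order term and produces $x^2 g'' + x g' + (\lambda^2 x^2 - \alpha_q^2 - \lambda_{q,n})g = 0$. After setting $t=\lambda x$ and $\mu_{q,n}^2=\lambda_{q,n}+\alpha_q^2$, this is Bessel's equation of index $\mu_{q,n}$, whose independent solutions (for non-integer $\mu_{q,n}$) are $J_{\pm\mu_{q,n}}(\lambda x)$, yielding $\psi^{(q)}_{\pm,1,n,\lambda}$. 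The harmonic families E and O are the same calculation with $\lambda_{q,n}=0$, so $\mu=|\alpha_q|$.

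The coupled families 2, 3, 4 arise from ans\"atze in which $\omega_1$ is proportional to $\tilde d\varphi^{(q-1)}_{\rm cex,n}$ (for 2 and 3) or vanishes (for 4), and correspondingly $\omega_2$ is proportional to $\varphi^{(q-1)}_{\rm cex,n}$, to $\tilde d^\dagger\tilde d\varphi^{(q-1)}_{\rm cex,n}$, or to $\tilde d\varphi^{(q-2)}_{\rm cex,n}$. Substituting into (\ref{f3}) and using $\tilde d^\dagger\tilde d\varphi^{(q-1)}_{\rm cex,n}=\lambda_{q-1,n}\varphi^{(q-1)}_{\rm cex,n}$, one reduces the problem to a coupled linear system for $(f_1,f_2)$. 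Plugging in the prescribed combinations of $x^{\alpha_{q-1}}J_{\pm\mu_{q-1,n}}(\lambda x)$ and its $x$-derivative (respectively $x^{\alpha_{q-2}}J_{\pm\mu_{q-2,n}}(\lambda x)$ in family 4), and invoking the Bessel recurrences $\partial_t(t^\nu J_\nu)=t^\nu J_{\nu-1}$ and $\partial_t(t^{-\nu}J_\nu)=-t^{-\nu}J_{\nu+1}$, one verifies that the system is consistent and once again reduces to Bessel's equation of the appropriate index.

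The main obstacle is the bookkeeping in families 2--4: the six terms in (\ref{f3}) must cancel in a precise pattern, and one must verify that the six ans\"atze already exhaust the solution space family by family, so that every formal solution is a convergent sum of the listed types. When $\mu_{q,n}\in\Z$, the second solution $J_{-\mu_{q,n}}$ degenerates and must be replaced by the Bessel function of the second kind $Y_{\mu_{q,n}}$, which carries the logarithmic modification alluded to in the statement; this is a classical fact about Bessel's equation and requires no geometric input beyond what we have already used.
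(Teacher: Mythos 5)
Your proposal takes essentially the same route as the paper: write a $q$-form on the punctured cone as $f_1(x)\omega_1(y)+f_2(x)\,dx\wedge\omega_2(y)$, decompose $\omega_1,\omega_2$ by Hodge theory on $W$ into harmonic, exact and coexact eigenforms, plug into the explicit formula~(\ref{f3}) for $\Delta$, observe that the cross-terms close the system family by family, and reduce each resulting ODE (or $2\times 2$ system) to Bessel's equation of the appropriate index, with $J_{-\mu}$ replaced by the logarithmic second solution when $\mu\in\Z$. The paper's own proof is just a brief indication of exactly these three steps (with a pointer to Nagase for the harmonic families); your write-up fills in the radial reduction explicitly and is correct, apart from the slightly misleading phrase ``eliminates the first-order term'' --- the substitution $f_1=x^{\alpha_q}g$ does not remove the first-order term but normalizes it to the standard $xg'$ of Bessel's equation.
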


\begin{proof}
The proof is a direct verification of the assertion, using the definitions in equations (\ref{f1}), (\ref{f2}), and (\ref{f3}). First, by Hodge theorem, there exist an
orthonormal base of $\Lambda^{(q)}T^* W$ as stated. Thus, we decompose any form $\omega$ in this base. Second, we
compute $\Delta\omega$, using this decomposition and the formula in equation (\ref{f3}). This gives some differential
equations in the functions appearing as coefficients of the forms. All these differential equations reduce to equations
of Bessel type. Third, we write all the solutions using Bessel functions. A complete proof for the case of the harmonic
forms can be found in \cite{Nag} Section 5.
\end{proof}

Note that the forms of types $1$ and $3$ are coexact, those of types  $2$ and $4$ exacts. The operator $d$ sends forms
of types $1$ and $3$ in forms of types $2$ and $4$, while $d^{\dag}$ sends forms of types $2$ and $4$ in forms of types
$1$ and $3$, respectively. The Hodge operator sends forms of type $1$ in forms of type $4$, $2$ in $3$, and  $E$ in
$0$.

\begin{corol}
The functions $+$ in Lemma \ref{l1} are square integrable and satisfy the boundary conditions at $x=0$ defining the
domain of $\Delta_{\rm rel/abs}$. The functions $-$ either are not square integrable or do not satisfy these
conditions. 
\end{corol}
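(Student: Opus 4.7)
The plan is to establish the corollary by computing $L^2$ norms directly near the singular tip and then invoking the characterization of the Friedrich extension given by Brüning--Seeley that is already used in Lemma \ref{l1}.

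First I would reduce the question of square integrability to an explicit one-dimensional Bessel integral. Using the volume form $x^m dx\,dvol_g$ on $C_{(0,l]}W$ and the fact that the pointwise norm of a $q$-form pulled back from $W$ with respect to $x^2 g$ differs from its norm with respect to $g$ by a factor $x^{-2q}$, each of the six basic forms $\psi_{\pm,\bullet,n,\lambda}^{(q)}$ reduces to an integral of the shape
\[
\int_0^l x^{2\alpha + 1 + \text{(lower order from } \partial_x\text{)}} \bigl|J_{\pm\mu}(\lambda x)\bigr|^2\,dx ,
\]
with appropriate $\alpha$ and $\mu$ depending on the type; the exponent arithmetic works out cleanly because $2\alpha_q+m-2q = 1$ and $2\alpha_{q-1}+1+m-2q = 1$, etc. The behavior near $x=l$ is harmless, so the issue is purely the endpoint $x=0$. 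Using the standard small-argument asymptotics $J_\nu(z)\sim z^\nu/(2^\nu \Gamma(\nu+1))$ (modified by a logarithm in the integer exceptional case), the integrand behaves like $x^{1\pm 2\mu}$, and I would check case by case through the six types that the $+$ choice always gives an exponent $>-1$ (hence $L^2$), whereas the $-$ choice gives exponent $1-2\mu$, which is $\leq -1$ exactly when $\mu \geq 1$.

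The remaining, more delicate point is that when $0\leq \mu < 1$ (which can happen for the low eigenvalues because $\mu_{q,n}\geq |\alpha_q|$ and $|\alpha_q|$ is small when $q$ is close to $(m-1)/2$) both $\psi_+$ and $\psi_-$ are square integrable. Here I would invoke the $\psi_q$-unitary reduction already carried out in the proof of Lemma \ref{l1}: under $\psi_q$, the operator takes the constant-coefficient form $D^2+A(0)/x^2$ considered in \cite{BS2}, and the self-adjoint extension chosen is the Friedrich extension, whose domain at $x=0$ is characterized by condition (L2)(c) of \cite{BS2}. This condition precisely selects, on each $\mu$-eigenspace of $A(0)$, the solution whose leading behavior at $0$ is $x^{\alpha+\mu}$ rather than $x^{\alpha-\mu}$. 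Since $\psi_{+,\cdot}$ are exactly the solutions with the larger power behavior, they alone lie in the domain; the $\psi_{-,\cdot}$ are excluded by the boundary condition at $0$. In the exceptional integer $\mu$ case, the second solution acquires a logarithmic factor $\log x$, which likewise fails either $L^2$ regularity or the Friedrich condition, by the same argument.

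The main obstacle in carrying out the plan is not the conceptual one but the bookkeeping for the types $2$, $3$, $4$, and the harmonic types $E,O$: these involve $dx$-components and derivatives $\partial_x(x^{\alpha}J_{\pm\mu}(\lambda x))$, so one must verify that the derivative does not enlarge the leading singular power enough to destroy integrability for the $+$ solution or restore it for the $-$ solution. A short computation using $\partial_x J_\nu(\lambda x) = \lambda J_{\nu-1}(\lambda x) - (\nu/x) J_\nu(\lambda x)$ shows that the leading exponents remain $\alpha \pm \mu$ in each component, so the dichotomy between $+$ and $-$ is preserved.
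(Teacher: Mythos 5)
Your plan is sound and takes essentially the same route that the paper leaves implicit: exponent counting near $x=0$ for square-integrability, and then the Friedrich-extension characterization (L2)(c) of Br\"uning--Seeley, already set up in the proof of Lemma 4.1, to eliminate the $-$ solutions that happen to survive the $L^2$ check. The treatment of the low range $0\leq\mu<1$ and of the integer exceptional case via the logarithmic second solution is the right one. Two small points. First, there is an arithmetic slip: $\alpha_{q-1}=\tfrac12(2q-1-m)$ gives $2\alpha_{q-1}+1+m-2q=0$, not $1$, so for the types whose components carry a $\partial_x$ (types $2$, $3$, $O$) the radial integrand near $x=0$ is $x^{-1\pm 2\mu}$ rather than $x^{1\pm 2\mu}$; this does not harm the dichotomy, since for coexact types one has $\mu_{q-1,n}>|\alpha_{q-1}|\geq 0$ strictly, so the $+$ exponent $-1+2\mu$ is still $>-1$ while the $-$ exponent $-1-2\mu$ is always $\leq -1$, and for type $O$ with $\alpha_{q-1}\leq 0$ the leading coefficient $(\alpha_{q-1}+\mu)$ in the derivative actually vanishes, improving the $+$ exponent further. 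Second, it is worth knowing that Remark 4.1 of the paper records an alternative, purely $L^2$ criterion going back to Cheeger: every $-$ solution fails square-integrability of either $\omega$ or $d\omega$, so that requiring $d\omega\in L^2$ in the domain already excludes them without imposing any explicit boundary condition at $x=0$. Your Friedrich-extension argument and this ``ideal boundary condition'' argument single out the same self-adjoint operator, so the two proofs are interchangeable.
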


\begin{rem} \label{pip} All the $-$ solutions are either not square or their exterior derivative are not square integrable. Requiring the last condition in the definition of the  domain of $\Delta_{\rm rel/abs}$, it follows that   there are not boundary conditions at zero. This was observed by Cheeger for harmonic forms when the dimension is odd in \cite{Che2} Section 3.
\end{rem}

\begin{lem}\label{l3}
The positive part of the spectrum of the Laplace operator on forms on $C_lW$, with
absolute boundary conditions on $\b C_l W$ is:
\begin{align*}
\Sp_+ \Delta_{\rm abs}^{(q)} &= \left\{m_{{\rm cex},q,n} : \hat j^{2}_{\mu_{q,n},\alpha_q,k}/l^{2}\right\}_{n,k=1}^{\infty}
\cup
\left\{m_{{\rm cex},q-1,n} : \hat j^{2}_{\mu_{q-1,n},\alpha_{q-1},k}/l^{2}\right\}_{n,k=1}^{\infty} \\
&\cup \left\{m_{{\rm cex},q-1,n} : j^{2}_{\mu_{q-1,n},k}/l^{2}\right\}_{n,k=1}^{\infty} \cup \left\{m _{q-2,n} :
j^{2}_{\mu_{q-2,n},k}/l^{2}\right\}_{n,k=1}^{\infty} \\
&\cup \left\{m_{{\rm har},q,0}:\hat j^2_{|\alpha_q|,\alpha_q,k}/l^{2}\right\}_{k=1}^{\infty} \cup \left\{ m_{{\rm har},q-1,0}:\hat
j^2_{|\alpha_{q-1}|,\alpha_q,k}/l^{2}\right\}_{k=1}^{\infty}.
\end{align*}

With relative boundary conditions:
\begin{align*}
\Sp_+ \Delta^{(q)}_{\rm rel} &= \left\{m _{{\rm cex},q,n} : j^{-2s}_{\mu_{q,n},k}/l^{-2s}\right\}_{n,k=1}^{\infty} \cup
\left\{m _{{\rm cex},q-1,n} :j^{-2s}_{\mu_{q-1,n},k}/l^{-2s}\right\}_{n,k=1}^{\infty} \\
&\cup \left\{ m_{{\rm cex},q-1,n} : \hat j^{-2s}_{\mu_{q-1,n},-\alpha_{q-1},k}/l^{-2s}\right\}_{n,k=1}^{\infty} \cup
\left\{m _{{\rm cex},q-2,n} :
\hat j^{-2s}_{\mu_{q-1,n},-\alpha_{q-2},k}/l^{-2s}\right\}_{n,k=1}^{\infty} \\
&\cup \left\{m_{{\rm har},q}:j_{|\alpha_q|,k}/l^{-2s}\right\}_{k=1}^{\infty} \cup \left\{m_{{\rm har},q-1}:
j_{|\alpha_{q-1}|,k}/l^{-2s}\right\}_{k=1}^{\infty},
\end{align*}
where  the $j_{\mu,k}$ are the zeros of the Bessel function $J_{\mu}(x)$,  the $\hat j_{\mu,c,k}$ are the zeros of
the function $\hat J_\mu(x) = c J_\mu (x) + x J'_\mu(x)$,  $c\in \R$, $\alpha_q$ and $\mu_{q,n}$ are defined in Lemma \ref{l2}.
\end{lem}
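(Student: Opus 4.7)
The plan is to start from Lemma \ref{l2}, which enumerates a complete collection of formal solutions of $\mathcal{L}\omega=\lambda^2\omega$ on $C_{(0,l]}W$, and to single out those that lie in the domain of $\Delta_{\rm abs/rel}$. By the Corollary to Lemma \ref{l2} together with Remark \ref{pip}, among the pairs $\pm$ only the $+$ branch is square integrable with square integrable exterior derivative near $x=0$, and it already fulfills every requirement at the tip; no extra boundary condition has to be imposed there. Consequently the eigenvalue problem reduces to imposing the absolute or relative conditions (\ref{abs})/(\ref{rel}) at $x=l$ on each of the six families $\psi^{(q)}_{+,1,n,\lambda}$, $\psi^{(q)}_{+,2,n,\lambda}$, $\psi^{(q)}_{+,3,n,\lambda}$, $\psi^{(q)}_{+,4,n,\lambda}$, $\psi^{(q)}_{+,E,\lambda}$, $\psi^{(q)}_{+,O,\lambda}$.

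Next I would apply these boundary conditions family by family. Each $\psi$ is already written in the form $\omega_1+dx\wedge\omega_2$, so the normal and tangential components can be read off directly and the conditions can be computed using the formulas (\ref{f2})--(\ref{f3}), together with the identities $\tilde d^\dagger\varphi^{(q)}_{{\rm cex},n}=0$ and $\tilde d^\dagger \tilde d\varphi^{(q)}_{{\rm cex},n}=\lambda_{q,n}\varphi^{(q)}_{{\rm cex},n}$. The outcome is that, for each family, the boundary equation collapses to a single transcendental equation in $\lambda$: either $J_\mu(\lambda l)=0$, producing the eigenvalues $j_{\mu,k}^2/l^2$, or $\alpha J_\mu(\lambda l)+\lambda l J'_\mu(\lambda l)=\hat J_\mu(\lambda l)=0$, producing $\hat j_{\mu,\alpha,k}^2/l^2$. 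A case by case check, distinguishing purely tangential families (for which the relative condition forces $J=0$ while the absolute condition forces $\hat J=0$) from families carrying a nonzero $dx$-component (for which the two roles are reversed), will reproduce exactly the six lines listed in the statement, with the appropriate index $\mu=\mu_{q-k,n}$, $k\in\{0,1,2\}$, or $\mu=|\alpha_q|$, $|\alpha_{q-1}|$ in the harmonic families.

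Completeness of this list is then obtained by separation of variables along the section. The Hodge decomposition (\ref{hodge1}) on $W$ supplies an orthonormal basis of $\Gamma(W,\Lambda T^*W)$ given by $\{\varphi^{(q)}_{{\rm har},n},\varphi^{(q)}_{{\rm cex},n},\varphi^{(q)}_{{\rm ex},n}\}$, so every admissible eigenform on $C_lW$ can be expanded along these sections with $x$-dependent coefficients. Plugging such an expansion into $\mathcal{L}\omega=\lambda^2\omega$ forces each coefficient to satisfy the Bessel-type ODE whose $+$ solutions are precisely the six families of Lemma \ref{l2}, with multiplicities $m_{{\rm cex},q-k,n}=\dim\E^{(q-k)}_{{\rm cex},\lambda_{q-k,n}}$ for the non harmonic families and $m_{{\rm har},q,0}$, $m_{{\rm har},q-1,0}$ for the harmonic ones.

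The hard part will be bookkeeping rather than conceptual. One must differentiate $x^\alpha J_\mu(\lambda x)$ carefully at $x=l$ in order to read off the second parameter $\alpha$ inside $\hat J_{\mu,\alpha}$; avoid double counting between the type 1/type 3 and type 2/type 4 pairs, which involve the same section eigenforms $\varphi^{(q-1)}_{{\rm cex},n}$ yet produce different Bessel orders and different boundary constants; and handle the exceptional values of $\mu_{q,n}$ where the second independent Bessel solution carries a logarithmic factor. For this last point, Lemma \ref{l1} guarantees via the Br\"uning--Seeley framework that the selfadjoint realization is the Friedrichs one and that no additional boundary condition at $x=0$ is needed, so the analysis at $x=l$ is unaffected and produces the same spectral list.
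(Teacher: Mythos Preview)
Your proposal is correct and follows essentially the same route as the paper: select the $+$ solutions of Lemma \ref{l2} using the Corollary and Remark \ref{pip} to handle the tip, then impose the boundary conditions (\ref{abs})/(\ref{rel}) at $x=l$ family by family, reducing each case to $J_\mu(\lambda l)=0$ or $\hat J_{\mu,c}(\lambda l)=0$. The paper's proof is exactly this computation carried out explicitly for the absolute case (with the relative case declared analogous), and your remarks on completeness and on the mixed families of types $2$ and $3$ anticipate precisely the points where care is needed.
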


\begin{proof}
By the Lemma \ref{l1}, Lemma \ref{l2}  and its corollary, we know that the + solutions of Lemma \ref{l2} determine a
complete system of square integrable solutions of the eigenvalues equation $\Delta^{(q)}u=\lambda u$, with
$\lambda\not=0$, satisfying the boundary condition at $x=0$. Since $\Delta_{\rm abs/rel}^{(q)}$ has pure point
spectrum, in order to obtain a discrete resolution (more precisely the positive part of it) of $\Delta_{\rm
abs/rel}^{(q)}$, we have to determine among these solutions those that belong to the domain of $\Delta_{\rm
abs/rel}^{(q)}$, namely those that satisfy the boundary condition at $x=l$. We give details for  absolute BC, the analysis for relative BC is analogous. So consider absolute BC, as given in equation (\ref{abs}). For a form of type 1 to satisfy this condition means
\[
\b_x\psi^{(q)}_{1,n,\lambda}|_{x=l} = 0,
\]
i.e.
\[
\alpha_q J_{\mu_{q,n}}(\lambda l) +\lambda l J'_{\mu_{q,n}}(\lambda l) = 0,
\]
and this gives $\lambda = \hat j_{\mu_{q,n},\alpha_q,k}/ l$. For forms of type 2, we get
\[
\b_x\psi^{(q)}_{2}|_{x=l} = 0, 
\]
that gives
\[
\alpha(q-1) J_{\mu_{q-1,n}}(\lambda l) +\lambda l J'_{\mu_{q-1,n}}(\lambda l) =0,
 \]
 so  $\lambda = \hat j_{\mu_n(i-1),k,\alpha(i-1)}/ l$. For forms of type 3, we obtain the system
\begin{equation*}
\left\{\begin{array}{l}
\left.x^{\alpha_{q-1}-1} J_{\mu_{q-1,n}}(\lambda x)\right|_{x=l} =0,\\
\left. \b_x\left(x^{2\alpha_{q-1}+1} \b_x(x^{-\alpha_{q-1}}J_{\mu_{q,n}}(\lambda x))\right) -\tilde \lambda
x^{\alpha_{q-1}-1} J_{\mu_{q-1,n}}(\lambda x)\right|_{x=l} = 0.
\end{array}
\right.
\end{equation*} 

Using classical properties of Bessel functions and their derivative, we obtain $\lambda = j_{\mu_{q-1},n,k} / l$. For forms of type 4, we get
\[
\left. x^{\alpha_{q-2}+1} J_{\mu_{q-2},n,k}(\lambda l)\right|_{r=l} = 0,
\]
that gives $\lambda =j_{\mu_{q-1},n,k}/l$. Similar analysis gives for forms of types  $E$ and $O$: $\lambda = \hat j_{|\alpha_q|,k,\alpha_q}/l$ and $\lambda = \hat j_{|\alpha_{q-1}|,k,\alpha_{q-1}}/l$, respectively.

\end{proof}

We conclude with the harmonic forms of $\Delta$. The proofs are similar to the previous ones, so will be omitted.

\begin{lem}\label{l2bb} \cite{Che2}\cite{Nag} With the notation of Lemma \ref{l2}, and 
\[
a_{\pm, q,n}=\alpha_q\pm \mu_{q,n},
\]
then all the solutions of the harmonic equation $\Delta u=0$, are convergent sums of forms of the following four types:
\begin{align*}
\psi^{(q)}_{\pm, 1,n} =& x^{a_{\pm,q,n}} \varphi_{{\rm ccl},n}^{(q)},\\
\psi^{(q)}_{\pm, 2,n} =& x^{a_{\pm,q-1,n}} \tilde d\varphi_{{\rm ccl},n}^{(q-1)}+a_{\pm,q-1,n}x^{a_{\pm,q-1,n}-1}dx\wedge \varphi_{{\rm ccl},n}^{(q-1)},\\
\psi^{(q)}_{\pm,3,n} =& x^{a_{\pm,q-1,n}+2} \tilde d\varphi_{{\rm ccl},n}^{(q-1)}+a_{\mp,q-1,n}x^{a_{\pm,q-1,n}+1}dx\wedge \varphi_{{\rm ccl},n}^{(q-1)},\\
\psi^{(q)}_{\pm,4,n} =& x^{a_{\pm,q-2,n}+1}dx\wedge \tilde d\varphi_{{\rm ccl},n}^{(q-2)}.
\end{align*}

\end{lem}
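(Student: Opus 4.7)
My approach would exactly parallel the proof of Lemma \ref{l2}, with the single important difference that setting $\lambda=0$ in the eigenvalue equation turns the radial ODEs from Bessel-type into Euler-type equations, whose solutions are power functions rather than Bessel functions. As in the proof of Lemma \ref{l2}, the proof is a direct verification, based on (i) the Hodge decomposition on the cross-section, (ii) the explicit expression (\ref{f3}) for the Laplacian on $C_{(0,l]}W$, and (iii) the decoupling of the resulting system once we expand in eigenforms of $\tilde\Delta$.

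\emph{Setup.} By the Hodge decomposition on $W$ (equation \eqref{hodge1}), I choose an $L^2$-orthonormal basis of $\Omega^{(q)}(W)$ consisting of coclosed eigenforms $\varphi^{(q)}_{{\rm ccl},n}$ of $\tilde\Delta^{(q)}$ with eigenvalues $\lambda_{q,n}\ge 0$ (harmonic forms correspond to $\lambda_{q,n}=0$), together with the exact forms $\tilde d\varphi^{(q-1)}_{{\rm ccl},n}$. I then write an arbitrary $q$-form on $C_{(0,l]}W$ in the product form
\[
u(x,y)=f_1(x)\,\omega_1(y)+f_2(x)\,dx\wedge\omega_2(y),
\]
and reduce to the four natural choices of $(\omega_1,\omega_2)$ that decouple the equation $\Delta u=0$ when (\ref{f3}) is applied: (1) $\omega_1=\varphi^{(q)}_{{\rm ccl},n}$, $\omega_2=0$; (2) $\omega_1=\tilde d\varphi^{(q-1)}_{{\rm ccl},n}$, $\omega_2=\varphi^{(q-1)}_{{\rm ccl},n}$; (3) a second independent combination with the same pair of angular factors; and (4) $\omega_1=0$, $\omega_2=\tilde d\varphi^{(q-2)}_{{\rm ccl},n}$. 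In (2) and (3) the two parts of $\Delta u$ are coupled because $\tilde d\omega_2=\omega_1$ and $\tilde d^\dagger\tilde d\varphi^{(q-1)}_{{\rm ccl},n}=\lambda_{q-1,n}\varphi^{(q-1)}_{{\rm ccl},n}$.

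\emph{Solving the indicial equations.} Substituting the ansatz $f_1=x^a$, $f_2=C\,x^{a-1}$ (respectively $f_2=C\,x^{a+1}$ for type 4) into the relevant system obtained from (\ref{f3}), all factors of $x$ cancel and one is left with a purely algebraic system in $(a,C)$. For type 1 one obtains the indicial equation
\[
a^2-2\alpha_q\,a-\lambda_{q,n}=0,
\]
using $m-2q-1=-2\alpha_q$; its roots are precisely $a_{\pm,q,n}=\alpha_q\pm\mu_{q,n}$. For types 2 and 3, both equations of the coupled system reduce, using $\lambda_{q-1,n}=a(a-2\alpha_{q-1})$, to the same indicial relation with $q$ replaced by $q-1$, and the compatibility condition fixes the constant $C$ to be $a_{\pm,q-1,n}$ and $a_{\mp,q-1,n}$ respectively (the sign flip in type 3 accounts for the independent second solution after shifting $a\mapsto a+2$). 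For type 4 the single equation reduces, after the substitution $c=c_0+1$, to the analogue indicial equation for degree $q-2$, giving the exponent $a_{\pm,q-2,n}+1$.

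\emph{Expected obstacle.} The only mildly delicate point is to exhibit solutions 2 and 3 as \emph{two independent} harmonic forms with the \emph{same} radial exponents $a_{\pm,q-1,n}$, distinguished only by the relative weight of the $dx\wedge\varphi$ component; this is an algebraic effect of the fact that, once the two coupled ODEs share the Euler form with identical indicial polynomial, the two-dimensional solution space is spanned by the two eigenvectors $(1,a_{\pm,q-1,n})$ and $(1,a_{\mp,q-1,n})$ of the coupling matrix at $\lambda=0$. Aside from this bookkeeping, the proof is routine algebra, and (as in Lemma \ref{l2}) I would simply verify each of the four families by direct substitution, omitting the mechanical details.
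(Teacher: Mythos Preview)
Your proposal is correct and follows exactly the approach the paper intends: the paper simply states that ``the proofs are similar to the previous ones, so will be omitted,'' meaning the argument is the $\lambda=0$ specialization of the proof of Lemma~\ref{l2}, where the Bessel-type radial ODEs degenerate to Euler equations with power solutions $x^{a_{\pm,q,n}}$. Your derivation of the indicial equation $a^2-2\alpha_q a-\lambda_{q,n}=0$ and your handling of the coupled system for types~2 and~3 are precisely what that omitted verification would contain.
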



\begin{lem}\label{l3bb} Assume $\dim W=2p-1$ is odd. Then
\begin{align*}
\H^q_{\rm abs}(C_l W)&=\begin{cases}\H^q(W), &0\leq q\leq p-1,\\
 \{0\}, & p\leq q\leq 2p-1.\end{cases}\\
\H^q_{\rm rel}(C_l W)&=\begin{cases} \{0\}, & \hspace{17pt}0\leq q\leq p,\\
\left\{x^{2\alpha_q-1}dx \wedge \varphi^{(q-1)}, \varphi^{(q-1)}\in \H^{q-1}(W)\right\}, &p+1\leq  q\leq 2p.
\end{cases}
\end{align*}

\end{lem}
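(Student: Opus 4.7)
The plan is to use Lemma \ref{l2bb} as a complete list of formal harmonic forms on $C_{(0,l]}W$ and to pick out, within each of the four types, those that (i) are square-integrable near the apex $x=0$, (ii) satisfy the absolute/relative boundary conditions (\ref{abs})/(\ref{rel}) at $x=l$, (iii) have $d\psi, d^\dagger\psi \in L^2$ as required by Remark \ref{pip}, and (iv) actually satisfy $d\psi = d^\dagger\psi = 0$; this last clause is forced by $\Delta\psi = 0$ together with the integration-by-parts identity $\langle \Delta\psi,\psi\rangle = \|d\psi\|^2 + \|d^\dagger\psi\|^2$ for $\psi$ in the domain of the self-adjoint extension.

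A preliminary $L^2$-computation near $x=0$ shows that a radial profile $x^a$ multiplying a degree-$r$ form on $W$ is square integrable on the cone iff $a > r - p$ (using $m = 2p-1$). Next, using formulas (\ref{f2}) and (\ref{f3}), one computes $d\psi$ and $d^\dagger\psi$ on each of the four types $\psi^{(q)}_{\pm, j, n}$. For type 1, $\psi = x^a\varphi_{\rm ccl,n}^{(q)}$, one has $d^\dagger\psi = 0$ automatically, while $d\psi = x^a\tilde d\varphi + a\, x^{a-1}\,dx\wedge\varphi$ vanishes precisely when $\tilde d\varphi = 0$ (so $\varphi$ is $W$-harmonic) and $a = 0$. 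Type 2 applied to a harmonic $\varphi^{(q-1)}$ collapses to $a\,x^{a-1}\,dx\wedge\varphi$ with $a = \alpha_{q-1}\pm|\alpha_{q-1}|$; the condition $d^\dagger\psi = 0$ singles out $a = 2\alpha_{q-1}$, the other root $a = 0$ being trivial. Types 3 and 4 with harmonic $\varphi$ yield either a trivial form or (for type 3 at the borderline $q = p+1$) the same form already obtained from type 2. The non-harmonic coclosed candidates across types 2--4 yield formal harmonic forms, but the BC at $x = l$ cannot be imposed because $f_1|_{x=l}\tilde d\varphi = l^a\tilde d\varphi \neq 0$.

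For absolute BC the only survivor is the pullback $\psi = \varphi_{\rm har}^{(q)}$ from type 1 with $a = 0$, for which $f_2 = 0$ and $(d\psi)_{\rm norm}|_{x=l} = 0$ both hold trivially; the $L^2$-condition $0 > q-p$ restricts to $q \leq p-1$, giving $\H^q_{\rm abs}(C_lW) \cong \H^q(W)$ for $q \leq p-1$ and $\{0\}$ otherwise. For relative BC, type 1 is killed because $f_1|_{x=l} = l^a \neq 0$ forces $\varphi = 0$; the type~2 candidate with $a = 2\alpha_{q-1}$, namely $\psi = 2\alpha_{q-1}\,x^{2\alpha_{q-1}-1}\,dx\wedge\varphi$, satisfies both relative conditions identically (since $f_1 = 0$ and $d^\dagger\psi$ vanishes identically for this exponent), and the $L^2$-condition $2\alpha_{q-1}-1 > \alpha_{q-1}-1$ becomes $\alpha_{q-1} > 0$, equivalent to $q \geq p+1$. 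This yields the asserted description.

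The main obstacle is clause (iv): several formal harmonic forms from Lemma \ref{l2bb} pass the $L^2$ and BC tests yet still fail $d\psi = d^\dagger\psi = 0$, so one must carry out the explicit computations of $d^\dagger\psi$ (using the identity $\mu_{q-1,n}^2 = \lambda_{q-1,n} + \alpha_{q-1}^2$) to exclude them, in particular the type 3 candidate $\psi \propto x\,dx\wedge\varphi$ which for $q < p+1$ is smooth, $L^2$, and has $f_1 = 0$ yet does not yield a form in the domain. As a consistency check, the Hodge star on the cone sends $\varphi \in \H^q(W) \subset \H^q_{\rm abs}(C_lW)$ to $(-1)^q x^{m-2q}\,dx\wedge \tilde\star\varphi \in \H^{m+1-q}_{\rm rel}(C_lW)$; after reindexing $Q = m+1-q$ this recovers the asserted description of $\H^Q_{\rm rel}$ and matches the dimensional count via Poincar\'e duality on $W$.
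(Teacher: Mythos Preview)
Your proposal is correct and follows essentially the same strategy as the paper: start from the list of formal harmonic solutions in Lemma~\ref{l2bb}, discard the $-$ branch via the $L^2$ conditions of Remark~\ref{pip}, and then impose the boundary conditions at $x=l$ type by type. The paper's proof is much terser---it cites Remark~\ref{pip}, asserts the argument is parallel to Lemma~\ref{l3}, and only writes out the type~1 absolute case---while you carry out more of the case analysis explicitly and add the Hodge-duality consistency check.

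One small comment on your framing: you present clause~(iv), $d\psi=d^\dagger\psi=0$, as a separate ``main obstacle'' needed to exclude, e.g., the type~3 form $\psi\propto x\,dx\wedge\varphi$. In fact this form is already killed by the second boundary condition: for relative BC one computes $(d^\dagger\psi)_{\rm tan}|_{x=l}\propto (p-q+1)\varphi\neq 0$, and for absolute BC already $\psi_{\rm norm}|_{x=l}\neq 0$. More generally, once you restrict to $+$ solutions the constraint $a_{+,q,n}=0$ coming from the BC forces $\lambda_{q,n}=0$ directly (since $a_+=\alpha_q+\mu_{q,n}$ with $\mu_{q,n}\geq|\alpha_q|$), so harmonicity of $\varphi$ follows without invoking~(iv). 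Your use of~(iv) is a legitimate shortcut, but it is not logically independent of the boundary conditions; the paper's route via the exponent constraint is slightly cleaner.
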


\begin{proof} First, by Remark \ref{pip}, we need only to consider the $+$ solutions in Lemma \ref{l2bb}. The proof then follows by argument similar to the one used in the proof of Lemma \ref{l3}. Let see one case in details. Consider $\psi_{+,1,n}^{(q)}=x^{a_{+,q,n}} \varphi_{{\rm ccl},n}^{(q)}$, where $a_{+,q,n}=\alpha_q+ \mu_{q,n}$. In order that $\psi_{+,1,n}^{(q)}$ satisfies the absolute boundary condition (\ref{abs}), we need that 
\[
\left. (d \psi_{+,1,n}^{(q)})_{\rm norm}\right|_{\b C_l W}=a_{+,q,n}l^{a_{+,q,n}-1}dx \wedge d \varphi_{{\rm ccl},n}^{(q)}=0
\]
and this is true if and only if $a_{+,q,n}=0$. The condition $a_{+,q,n}=0$ is equivalent to the conditions  $\lambda_{q,n}=0$, and $\alpha_q=-|\alpha_q|$. Therefore, $\varphi_{{\rm ccl},n}^{(q)}$ is harmonic, $0\leq q\leq p-1$, and $\psi_{+,1,n}^{(q)}= \varphi_{{\rm ccl},n}^{(q)}$.

\end{proof}

\section{Torsion zeta function and Poincar\'e duality for a cone}
\label{s2c}

Using the description of the spectrum of the Laplace operator on forms $\Delta_{\rm abs/rel}^{(q)}$ given in  Lemma \ref{l3}, we define the zeta function on $q$-forms as in Section \ref{forms}, by
\[
\zeta(s,\Delta_{\rm abs/rel}^{(q)})=\sum_{\lambda\in \Sp_+\Delta_{\rm abs/rel}^{(q)}} \lambda^{-s},
\]
for $\Re(s)>\frac{m+1}{2}$. Even if we can not apply directly Proposition \ref{ss.l1}, the explicit knowledge of the behaviour of the large eigenvalues allows to completely determine the analytic continuation of the zeta function, by using the tools os Section \ref{sb2.2}. In particular , it is possible to prove that there can be at most a simple, pole at $s=0$. 
We will not do this here (but the interested reader can compare with  \cite{Spr9}), because for our purpose it is more convenient to investigate the analytic properties of other zeta functions, resulting by a suitable different decomposition of the analytic torsion, as described here below. For  we  define the {\it torsion zeta function} by
\[
t_{\rm abs/rel}(s)=\frac{1}{2}\sum_{q=1}^{m+1} (-1)^q q \zeta(s,\Delta_{\rm abs/rel}^{(q)}).
\]

It is clear that the analytic torsion of $C_l W$ is (in the following we will use the simplified notation $T(C_l W)$ for $T((C_lW,g);\rho)$)
\[
\log T_{\rm abs/rel}(C_l W)=t_{\rm abs/rel}'(0).
\]

Our first result is a Poincar\'e duality (compare with Proposition \ref{Poinc1}, \cite{Luc} and the result of \cite{Dar}).

\begin{theo}\label{Poinc2} {\bf Poincar\'e duality for the analytic torsion of a cone}. Let $(W,g)$ be an orientable  compact connected Riemannian manifold of dimension $m$, without boundary, then
\[
\log T_{\rm abs}(C_l W)=(-1)^{m}\log T_{\rm rel}(C_lW).
\]
\end{theo}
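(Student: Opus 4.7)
The proof naturally breaks into three steps: Hodge duality of the operators on the cone, an algebraic reindexing of the alternating sum, and cancellation of a residual ``Euler-characteristic-type'' term.

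The first step is to show that the Hodge star on $C_{(0,l]}W$ induces a unitary isomorphism between the self-adjoint extensions
\[
\star: L^2(C_lW,\Lambda^{(q)}T^*C_lW)\to L^2(C_lW,\Lambda^{(m+1-q)}T^*C_lW)
\]
intertwining $\Delta_{\rm abs}^{(q)}$ with $\Delta_{\rm rel}^{(m+1-q)}$. On the smooth part this is immediate from Section \ref{hodg} and the remark that $B_{\rm abs}(\omega)=0$ iff $B_{\rm rel}(\star\omega)=0$. At the tip, one must check that $\star$ preserves the Friedrichs extension produced in Lemma \ref{l1}; the cleanest way is to simply inspect the explicit spectra listed in Lemma \ref{l3}, where one sees directly that the substitution $q\mapsto m+1-q$ and $\alpha_q\mapsto -\alpha_{m+1-q}$ exchanges the six families of Bessel zeros describing $\Sp_+\Delta_{\rm abs}^{(q)}$ with those describing $\Sp_+\Delta_{\rm rel}^{(m+1-q)}$, with matching multiplicities (note $m_{{\rm cex},q,n}=m_{{\rm cex},m-q,n}$ by classical Hodge duality on $W$). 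Consequently
\[
\zeta(s,\Delta_{\rm abs}^{(q)})=\zeta(s,\Delta_{\rm rel}^{(m+1-q)}).
\]

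Next I would substitute this identity into $\log T_{\rm abs}(C_lW)=\frac{1}{2}\sum_{q=0}^{m+1}(-1)^q q\,\zeta'(0,\Delta_{\rm abs}^{(q)})$ and reindex with $k=m+1-q$. A short computation yields
\[
\log T_{\rm abs}(C_lW)=(-1)^m\log T_{\rm rel}(C_lW)+(-1)^{m+1}\frac{m+1}{2}\sum_{k=0}^{m+1}(-1)^k \zeta'(0,\Delta_{\rm rel}^{(k)}).
\]
Thus the statement is equivalent to the vanishing of the alternating sum $\Sigma(s):=\sum_{k=0}^{m+1}(-1)^k\zeta(s,\Delta_{\rm rel}^{(k)})$, at least up to first order at $s=0$.

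The last step is to prove $\Sigma(s)\equiv 0$. Using the Hodge decomposition \eqref{hodge1} adapted to relative boundary conditions (Section \ref{bord}), split each zeta function as $\zeta(s,\Delta_{\rm rel}^{(q)})=\zeta_{\rm ccl}(s,\Delta_{\rm rel}^{(q)})+\zeta_{\rm cl}(s,\Delta_{\rm rel}^{(q)})$. The isometry $\omega\mapsto d\omega/\sqrt\lambda$ of \eqref{iso1}, applied within $\Omega^\bullet_{\rm rel}$, identifies coexact $(q-1)$-eigenforms with exact $q$-eigenforms of the same eigenvalue, giving $\zeta_{\rm cl}(s,\Delta_{\rm rel}^{(q)})=\zeta_{\rm ccl}(s,\Delta_{\rm rel}^{(q-1)})$. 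Substituting and shifting the index produces a telescoping sum whose only possible surviving contributions are $\zeta_{\rm ccl}^{(-1)}$ and $\zeta_{\rm ccl}^{(m+1)}$: the former is vacuous, while the latter vanishes because on $C_lW$ every top-degree form is closed, so positive eigenvalues yield no coclosed top forms. Therefore $\Sigma(s)\equiv 0$, and the desired equality follows.

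The main obstacle is really the first step: verifying that $\star$ maps the domain of the Friedrichs extension $\Delta_{\rm abs}^{(q)}$ onto that of $\Delta_{\rm rel}^{(m+1-q)}$ across the conical singularity. All the later manipulations are formal and follow the pattern of L\"uck's argument \cite{Luc}; it is the preservation of the singular boundary conditions at the tip, for which one cannot appeal directly to standard manifold-with-boundary results, that requires the explicit spectral matching in Lemma \ref{l3}.
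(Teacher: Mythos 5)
Your proposal follows the same route as the paper: establish $\zeta(s,\Delta_{\rm abs}^{(q)})=\zeta(s,\Delta_{\rm rel}^{(m+1-q)})$ via Hodge duality and the explicit spectra of Lemma \ref{l3}, reindex the alternating sum to isolate a residual factor of $\frac{m+1}{2}\sum_q(-1)^q\zeta(s,\Delta_{\rm rel}^{(q)})$, and kill that factor by the closed/coclosed telescoping $\zeta_{\rm cl}^{(q)}=\zeta_{\rm ccl}^{(q-1)}$ with vanishing endpoints. The only discrepancy is a small index slip in your first step — Hodge duality on coexact section forms is $q\mapsto m-1-q$ with $\alpha_q=-\alpha_{m-1-q}$ and $\mu_{q,n}=\mu_{m-1-q,n}$, not $q\mapsto m+1-q$ — but this does not affect the structure or correctness of the argument.
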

\begin{proof} By Hodge duality in equation (\ref{it}), the Hodge operator $\star$ sends forms of type $1,2,3,4,E$, and $O$ into forms of type $4,3,2,1,O$, and $E$, respectively. Moreover, $\star$ sends $q$-forms satisfying absolute boundary conditions, as in equation (\ref{abs}), into $m+1-q$-forms satisfying relative boundary conditions, as in equation (\ref{rel}). Therefore, using the explicit description of the eigenvalues given in Lemma \ref{l3}, it follows that $\Sp\Delta_{\rm abs}^{(q)}=\Sp\Delta_{\rm rel}^{(m+1-q)}$. Using the formulas in equations (\ref{f1}), (\ref{f2}), and (\ref{f3}), and the eigenforms in Lemma \ref{l2}, a straightforward calculation shows that the forms of type $1$, $3$, and $E$ are coexact, and those of type $2$, $4$, and $O$ are exact, and that the operator $d$ sends forms of type $1$, $3$, and $E$ in forms of type $2$, $4$, and $O$, respectively, with inverse $d^\dag$. Then, set
\begin{align*}
F^{(q)}_{\rm ccl,abs}=& \left\{m _{{\rm ccl},q,n} : \hat j^{2}_{\mu_{q,n},\alpha_q,k}/l^{2}\right\}_{n,k=1}^{\infty} \cup
\left\{m_{{\rm ccl},q-1,n} : j^{2}_{\mu_{q-1,n},k}/l^{2}\right\}_{n,k=1}^{\infty}\\
&\cup \left\{m_{{\rm ccl},q,0}:\hat j^2_{|\alpha_q|,\alpha_q,k}/l^{2}\right\}_{k=1}^{\infty},\\
F_{{\rm cl,abs}}^{(q)}=& \left\{m _{{\rm cl},q-1,n} : \hat j^{2}_{\mu_{q-1,n},\alpha_{q-1},k}/l^{2}\right\}_{n,k=1}^{\infty}
\cup
\left\{m _{{\rm cl},q-2,n} : j^{2}_{\mu_{q-2,n},k}/l^{2}\right\}_{n,k=1}^{\infty}\\
&\cup \left\{m_{{\rm cl},q-1,0}:\hat j^2_{|\alpha_{q-1}|,\alpha_{q-1},k}/l^{2}\right\}_{k=1}^{\infty}.
\end{align*}

$F^{(q)}_{{\rm ccl,abs}}$ is the set of the eigenvalues of the coclosed $q$-forms with absolute boundary conditions, and 
$F_{{\rm cl,abs}}^{(q)}$ is the set of the eigenvalues of the closed $q$-forms with absolute. Since obviously $\Sp \Delta^{(q)}_{\rm abs} = F_{{\rm ccl, abs}}^{(q)} \cup F_{{\rm cl, abs}}^{(q)}$, and $F_{{\rm ccl,
abs}}^{(q)} = F_{{\rm cl, abs}}^{(q+1)}$, we have that
\begin{align*}
t_{\rm abs}(s) &=\frac{1}{2} \sum^{m+1}_{q=0} (-1)^{q}q \zeta(s,\Delta^{(q)}_{\rm abs})
= \frac{1}{2}\sum^{m+1}_{q=0} (-1)^{q}q \zeta(s,\Delta^{(m+1-q)}_{\rm rel})\\
&=(-1)^{m}t_{\rm rel}(s) +\frac{1}{2} (m+1)\sum^{m+1}_{q=0} (-1)^{m+1-q} \zeta(s,\Delta^{(q)}_{\rm rel})\\
&=(-1)^{m}t_{\rm rel}(s) + \frac{1}{2}(m+1)\sum^{m+1}_{q=0} (-1)^{q} \left(\zeta(s,F_{{\rm ccl,abs}}^{(q+1)}) + \zeta(s,F_{{\rm cl,abs}}^{(q)})\right)\\
&=(-1)^{m}t_{\rm rel}(s).
\end{align*}

Since by definition $\log T_{\rm abs}(W) = t'_{\rm abs}(0) $, the thesis follows.

\end{proof}

\section{The torsion zeta function of the cone over an odd dimensional manifold}
\label{s2d}

In this section we develop the main steps in order to obtain the proof of our theorems. This accounts essentially in the application of the tools described in Section \ref{sb2.2} to some suitable sequences appearing in the definition of the torsion. So our first step is precisely to obtain this suitable description. This we do in this section. In the next two subsections, we will make the calculations necessary for the proof of our main theorems.

We proceed assuming ${\rm dim} W=2p-1$ odd, and assuming absolute boundary condition; for notational convenience, we will omit the ${\it abs}$ subscript. 

\begin{lem} \label{tps}
\begin{align*}
t(s)=& \frac{l^{2s}}{2} \sum^{p-2}_{q=0} (-1)^q \left(\sum^{\infty}_{n,k=1} m_{{\rm cex},q,n}\left(2j^{-2s}_{\mu_{q,n},k} - \hat j^{-2s}_{\mu_{q,n},\alpha_q,k}-\hat j^{-2s}_{\mu_{q,n},-\alpha_q,k}\right)
\right) \\
&+(-1)^{p-1}\frac{l^{2s}}{2}\left(\sum^{\infty}_{n,k=1} m_{{\rm cex},p-1,n}\left(j^{-2s}_{\mu_{p-1,n},k} -
(j'_{\mu_{p-1,n},k})^{-2s}\right)\right)\\
&- \frac{l^{2s}}{2} \sum_{q=0}^{p-1} (-1)^{q} {\rm rk}\H_q(\b C_lW;\Q) \sum_{k=1}^{\infty} \left(j^{-2s}_{-\alpha_{q-1},k} - j^{-2s}_{-\alpha_{q},k}\right).
\end{align*}
\end{lem}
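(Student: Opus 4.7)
The plan is to substitute the explicit spectrum of Lemma \ref{l3} into the definition $t(s) = \frac{1}{2}\sum_{q=1}^{2p}(-1)^q q\,\zeta(s,\Delta^{(q)})$ and then collapse the resulting double sum, first by a telescoping in $q$, then by a Hodge-duality fold on the section $W$ combined with standard Bessel recurrences. No new analytic input is needed; the work is purely combinatorial.

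Introduce, for each $r = 0, 1, \dots, 2p-1$, the auxiliary Dirichlet series
\[
A_r(s) = \sum_{n,k=1}^{\infty} m_{{\rm cex},r,n}\,\hat j_{\mu_{r,n},\alpha_r,k}^{-2s}, \quad C_r(s) = \sum_{n,k=1}^{\infty} m_{{\rm cex},r,n}\,j_{\mu_{r,n},k}^{-2s},
\]
and $E_r(s) = b_r \sum_{k=1}^{\infty} \hat j_{|\alpha_r|,\alpha_r,k}^{-2s}$, where $b_r = \dim\H^r(W) = {\rm rk}\,H_r(W;\Q)$. Inspection of Lemma \ref{l3} shows that the type-2, type-4 and type-$O$ families of $\Delta^{(q)}$ reproduce respectively $A_{q-1}$, $C_{q-2}$ and $E_{q-1}$ (same Bessel zeros, same multiplicities), so
\[
\zeta(s,\Delta^{(q)}) = l^{2s}\bigl(A_q + A_{q-1} + C_{q-1} + C_{q-2} + E_q + E_{q-1}\bigr).
\]
Substitution in $t(s)$ and the telescope $(-1)^q q + (-1)^{q+1}(q+1) = -(-1)^q$, after observing $A_{2p} = C_{2p-1} = 0$ and $E_{2p} = 0$ (from $b_{2p} = 0$), yields
\[
2\,l^{-2s}\,t(s) = -\sum_{q=0}^{2p-1}(-1)^q A_q + \sum_{q=0}^{2p-2}(-1)^q C_q - \sum_{q=0}^{2p-1}(-1)^q E_q.
\]

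For the coexact sums, Hodge duality on the odd-dimensional $W$ provides an eigenvalue-preserving isomorphism between coexact $r$-forms and coexact $(2p-2-r)$-forms, and $\alpha_{2p-2-r} = -\alpha_r$ gives $\mu_{2p-2-r,n} = \mu_{r,n}$; pairing $r \leftrightarrow 2p-2-r$ folds both the $A$-sum and the $C$-sum to the range $0 \leq r \leq p-1$. At the self-paired index $r = p-1$ one has $\alpha_{p-1} = 0$, so $\hat J_{\mu,0}(x) = xJ_\mu'(x)$ identifies $\hat j_{\mu_{p-1,n},0,k}$ with $j'_{\mu_{p-1,n},k}$, producing the middle term of the statement. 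For the harmonic sum, the Bessel recurrences $xJ_\nu'(x) = \nu J_\nu(x) - xJ_{\nu+1}(x)$ and $xJ_\nu'(x) = xJ_{\nu-1}(x) - \nu J_\nu(x)$ yield $\hat j_{|\alpha_q|,\alpha_q,k} = j_{-\alpha_{q-1},k}$ when $\alpha_q \leq 0$ and $\hat j_{|\alpha_q|,\alpha_q,k} = j_{\alpha_{q-1},k}$ when $\alpha_q \geq 1$; a final fold by Poincaré duality on $W$ ($b_q = b_{2p-1-q}$, with $\b C_l W \cong W$ so $b_q = {\rm rk}\,H_q(\b C_l W;\Q)$) produces the third line of the claimed formula. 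The only real obstacle is keeping the signs and indices consistent through the successive reindexings.
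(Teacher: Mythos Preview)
Your proposal is correct and follows essentially the same route as the paper's proof: substitute the spectrum from Lemma~\ref{l3}, telescope the sum $\sum(-1)^q q$ against the shifted indices, fold the coexact range $0\le q\le 2p-2$ onto $0\le q\le p-1$ via Hodge duality on the section (using $\alpha_{2p-2-q}=-\alpha_q$, $\mu_{2p-2-q,n}=\mu_{q,n}$, with the self-dual index $q=p-1$ producing the $j'$ term since $\hat J_{\mu,0}=xJ'_\mu$), and then simplify the harmonic piece with the Bessel recurrences and Poincar\'e duality $b_q=b_{2p-1-q}$. Your packaging into the auxiliary series $A_r,C_r,E_r$ is a convenient bookkeeping device, but the argument is otherwise identical to the paper's.
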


\begin{proof} Using the eigenvalues in Lemma \ref{l3}
\begin{align*}
l^{2s}\zeta(s,\Delta
^{(q)}) =& \sum_{n,k=1}^{\infty}  m _{{\rm cex},q,n} \hat j^{-2s}_{\mu_{q,n},\alpha_q,k}  +\sum_{n,k=1}^{\infty} m_{{\rm cex},q-1,n} \hat j^{-2s}_{\mu_{q-1,n},\alpha_{q-1},k} + \sum_{n,k=1}^{\infty} m_{{\rm cex},q-1,n}j^{-2s}_{\mu_{q-1,n},k} \\
&+ \sum_{n,k=1}^{\infty} m_{{\rm cex},q-2,n}
j^{-2s}_{\mu_{q-2,n},k}+\sum_{k=1}^\infty m_{{\rm har},q,0} \hat j_{|\alpha_q|,\alpha_q,k}^{-2s}+\sum_{k=1}^\infty m_{{\rm har},q-1,0} \hat j_{|\alpha_{q-1}|,\alpha_{q-1},k}^{-2s}.
\end{align*}

Since for each fixed $q$, with $0\leq q\leq 2p-2$,
\begin{align*}
&(-1)^q q \sum^{\infty}_{n,k=1}  m_{{\rm cex},q,n} \hat j^{-2s}_{\mu_{q,n},\alpha_q,k} +
(-1)^{q+1}(q+1) \sum^{\infty}_{n,k=1}  m_{{\rm cex},q,n} \hat j^{-2s}_{\mu_{q,n},\alpha_q,k}\\
&+ (-1)^{q+1}(q+1)  \sum^{\infty}_{n,k=1} m_{{\rm cex},q,n}j^{-2s}_{\mu_{q,n},k}+(-1)^{q+2}(q+2)
\sum^{\infty}_{n,k=1} m_{{\rm cex},q,n}j^{-2s}_{\mu_{q,n},k}\\
&+q(-1)^{q}\sum_{k=1}^{\infty} m_{{\rm har},q,0}\hat j^{-2s}_{|\alpha_{q}|,\alpha_q,k} + (q+1)(-1)^{q+1}\sum_{k=1}^{\infty}
m_{{\rm har},q,0}\hat j^{-2s}_{|\alpha_{q}|,\alpha_q,k} \\
=&(-1)^q \left(\sum^{\infty}_{n,k=1} m_{{\rm cex},q,n}j^{-2s}_{\mu_{q,n},k} - \sum^{\infty}_{n,k=1} m_{{\rm cex},q,n} \hat
j^{-2s}_{\mu_{q,n},\alpha_q,k}\right) + (-1)^{q+1} \sum_{k=1}^{\infty} m_{{\rm har},q,0}\hat j^{-2s}_{|\alpha_{q}|,\alpha_q,k}.
\end{align*}
it follows that
\begin{align*}
t(s)&= \frac{l^{2s}}{2} \sum^{2p-2}_{q=0} (-1)^q \sum^{\infty}_{n,k=1} m_{{\rm cex},q,n}\left( j^{-2s}_{\mu_{q,n},k} - \hat
j^{-2s}_{\mu_{q,n},\alpha_q,k}\right) +\frac{l^{2s}}{2}\sum^{2p-1}_{q=0} (-1)^{q+1} \sum_{k=1}^\infty m_{{\rm har},q,0} \hat
j_{|\alpha_q|,k}^{-2s}.
\end{align*}

Next, by Hodge duality on coexact $q$-forms on the section (see equation (\ref{it}))  $\lambda_{q,n} = \lambda_{2p-2-q,n}$, and recalling the definition of the constants $\alpha_q$ and $\mu_{q,n}$ in Lemma \ref{l2}, we have that $\alpha_{q} = \frac{1}{2}(1+2q-2p+1) = q-p+1 = -\alpha_{2p-2-q}$, and $\mu_{q,n} =
\mu_{2p-2-q,n}$. Thus, fixing $q$ with $0\leq q\leq p-2$,
\begin{align*}
&(-1)^q \sum^{\infty}_{n,k=1} m_{{\rm cex},q,n} \left( j^{-2s}_{\mu_{q,n},k} - \hat j^{-2s}_{\mu_{q,n},\alpha_q,k} \right)
+ (-1)^{(2p-2-q)} \sum^{\infty}_{n,k=1} m_{{\rm cex},q,n} \left(j^{-2s}_{\mu_{q,n},k} - \hat j^{-2s}_{\mu_{q,n},-\alpha_q,k}\right)  \\
=& (-1)^{q} \sum^{\infty}_{n,k=1} m_{{\rm cex},q,n} \left(2 j^{-2s}_{\mu_{q,n},k} -\hat j^{-2s}_{\mu_{q,n},\alpha_q,k} - \hat
j^{-2s}_{\mu_{q,n},-\alpha_q,k}\right),
\end{align*}
while when $q=p-1$, $\lambda_{p-1,n} = \lambda_{2p-1-(p-1),n}$, and  $\alpha_q=0$. Therefore,
\begin{align*}
t(s)=& \frac{l^{2s}}{2} \sum^{p-2}_{q=0} (-1)^q \sum^{\infty}_{n,k=1} m_{{\rm cex},q,n}\left(2 j^{-2s}_{\mu_{q,n},k}
-\hat j^{-2s}_{\mu_{q,n},\alpha_q,k} - \hat j^{-2s}_{\mu_{q,n},-\alpha_q,k}\right)\\
&+ (-1)^{p-1}\frac{l^{2s}}{2} \sum^{\infty}_{n,k=1} m_{{\rm cex},p-1,n} \left( j^{-2s}_{\mu_{p-1,n},k} -
(j'_{\mu_{p-1,n},k})^{-2s} \right)\\
&+\frac{l^{2s}}{2}\sum^{2p-1}_{q=0} (-1)^{q+1} \sum_{k=1}^\infty m_{{\rm har},q,0} \hat j_{|\alpha_q|,\alpha_q,k}^{-2s},
\end{align*}
where $j'_{\nu,k}=\hat j_{\nu,0,k}$ are the zeros of $J'_{nu}$.
Eventually, consider the sum
\[
\sum^{2p-1}_{q=0} (-1)^{q+1} \sum_{k=1}^\infty m_{{\rm har},q,0} \hat j_{|\alpha_q|,\alpha_q,k}^{-2s}.
\] 

We will use some classical properties of Bessel function, see for example \cite{Wat}. 
Recall $m=\dim W =2p-1$, and therefore  $\alpha_q=q-p+1$ is an integer. Moreover, $\alpha_q$ is negative for
$0\leq q < p-1$. Fixed such a  $q$, we study the function $\hat J_{-\alpha_q,\alpha_q}(z) = \alpha_q
J_{-\alpha_q}(z) +z J'_{-\alpha_q}(z)$. Since
\[
zJ'_{\mu}(z) = -z J_{\mu+1}(z) + \mu J_\mu(z)
\] 
it follows that $\hat J_{-\alpha_q,\alpha_q}(z) = -z J_{-\alpha_q + 1}(z) =- zJ_{-\alpha_{q-1}}(z)$, and hence $\hat
j_{|\alpha_{q}|,\alpha_q,k} = j_{-\alpha_{q-1},k}$. Next, fix $q$ with $p-1<q\leq2p-1$, such that  $\alpha_q$ is a positive integer. Then, since
\[
zJ'_{\mu}(z) = z J_{\mu-1}(z) - \mu J_\mu(z),
\]
the function $\hat J_{\alpha_q,\alpha_q}(z) = \alpha_q J_{\alpha_q}(z) +z J'_{\alpha_q}(z)$ coincides with $zJ_{\alpha_q -1}(z)$, and hence $\hat j_{|\alpha_{q}|,\alpha_q,k} = j_{\alpha_{q-1},k}$. 
Note that when $q = p-1$, $\alpha_{p-1} =0$ and hence $j_{\alpha_{p-1},\alpha_{p-1},k} = j'_{0,k} = j_{1,k}$. Summing up,
\begin{align*}
\sum^{2p-1}_{q=0} (-1)^{q+1} \sum_{k=1}^\infty m_{{\rm har},q,0} \hat j_{|\alpha_q|,\alpha_q,k}^{-2s} =&
\sum_{q=0}^{p-2} (-1)^{q+1}\sum_{k=1}^{\infty}\frac{m_{{\rm har},q,0}}{j^{-2s}_{-\alpha_{q-1},k}} + (-1)^{p}
\sum_{k=1}^{\infty}\frac{m_{{\rm har},p-1,0}}{j^{-2s}_{1,k}} \\
&+\sum_{q=p}^{2p-1}(-1)^{q+1}\sum_{k=1}^{\infty}\frac{m_{{\rm har},q,0}}{j^{-2s}_{\alpha_{q-1},k}},
\end{align*}
and since by Hodge duality $m_{q,0} = m_{2p-1-q,0}$, 
\begin{align*}
=& \sum_{q=0}^{p-2}
(-1)^{q+1}\sum_{k=1}^{\infty}m_{{\rm har},q,0}j^{-2s}_{-\alpha_{q-1},k} + (-1)^{p} \sum_{k=1}^{\infty} m_{{\rm har},p-1,0}j^{-2s}_{1,k} \\
&+\sum_{q=0}^{p-1}(-1)^{2p-q}\sum_{k=1}^{\infty}m_{{\rm har},2p-1-q,0}j^{-2s}_{\alpha_{2p-2-q},k}\\
+& \sum_{q=0}^{p-2} (-1)^{q+1}\sum_{k=1}^{\infty} m_{{\rm har},q,0} j^{-2s}_{-\alpha_{q-1},k} + (-1)^{p} \sum_{k=1}^{\infty}
m_{{\rm har},p-1,0}j^{-2s}_{1,k} +
\sum_{q=0}^{p-1}(-1)^{q}\sum_{k=1}^{\infty} m_{{\rm har},q,0} j^{2s}_{-\alpha_{q},k}\\
=& \sum_{q=0}^{p-1} (-1)^{q+1} m_{{\rm har},q,0} \sum_{k=1}^{\infty} \left(j^{-2s}_{-\alpha_{q-1},k} - j^{-2s}_{-\alpha_{q},k}\right).
\end{align*}

Since  $m_{{\rm har},q,0} = {\rm rk} \H_q(\b C_lW;\Q)$, this completes the proof.

 \end{proof}

It is convenient to introduce the following functions.  We set
\beq\label{z1}
\begin{aligned}
Z_q(s)&=\sum^{\infty}_{n,k=1} m_{{\rm cex},q,n}j^{-2s}_{\mu_{q,n},k},\\
\dot Z_q(s)&=\sum^{\infty}_{n,k=1} m_{{\rm cex},q,n}(j'_{\mu_{q,n},k})^{-2s},\\
Z_{q,\pm}(s)&=\sum^{\infty}_{n,k=1} m_{{\rm cex},q,n}\hat j^{-2s}_{\mu_{q,n},\pm\alpha_q,k},\\
z_q(s)&=\sum_{k=1}^{\infty} \left(j^{-2s}_{-\alpha_{q-1},k} - j^{-2s}_{-\alpha_{q},k}\right),
\end{aligned}
\eeq
for $0\leq q \leq p-1$, and
\beq\label{tq}
\begin{aligned}
t_{p-1}(s)&=Z_{p-1}(s)-\dot Z_{p-1}(s),&\\
t_q(s)&=2Z_q(s)-Z_{q,+}(s)-Z_{q,-}(s),&0\leq q\leq p-2.\\
\end{aligned}
\eeq

Then,
\begin{align*}
t(s)=& \frac{l^{2s}}{2} \sum^{p-2}_{q=0} (-1)^q \left(2Z_q(s)-Z_{q,+}(s)-Z_{q,-}(s)\right)+(-1)^{p-1}\frac{l^{2s}}{2}\left(Z_{p-1}(s)-\dot Z_{p-1}(s)\right)\\
&- \frac{l^{2s}}{2} \sum_{q=0}^{p-1} (-1)^{q} {\rm rk}\H_q(\b C_lW;\Q)z_q(s)\\
=&\frac{l^{2s}}{2} \sum^{p-1}_{q=0} (-1)^q t_q(s)- \frac{l^{2s}}{2} \sum_{q=0}^{p-1} (-1)^{q} {\rm rk}\H_q(\b C_lW;\Q)z_q(s),
\end{align*}
and
\beq\label{ttt}
\begin{aligned}
\log T(C_l W)=t'(0)=& \frac{\log l^{2}}{2} \left(\sum_{q=0}^{p-1} (-1)^{q+1} r_qz_q(0)+\sum^{p-1}_{q=0}(-1)^{q} t_q(0)\right) \\
&+ \frac{1}{2} \left(\sum_{q=0}^{p-1} (-1)^{q+1} r_q z_q'(0)+\sum^{p-1}_{q=0}(-1)^{q} t_q'(0)\right) ,
\end{aligned}
\eeq
where $r_q={\rm rk}\H_q(\b C_lW;\Q)$. In order to obtain the value of $\log T(C_l W)$ we use Theorem \ref{t4} and its corollary applied to the functions $z_q(s)$, $Z_q(s)$, $\dot Z_q(s)$, $Z_{q,\pm}(s)$. More precisely, the functions $z_q$ were studied in Section \ref{ss2}, and we will study the functions $t_q$ in Sections \ref{s3} and \ref{s4}, and eventually we sum up on the forms degree $q$ in Section \ref{s5}.


\subsection{The function $t_{p-1}(s)$}
\label{s3}

In this section we study the function $t_{p-1}(s)$. For we apply Theorems \ref{tt} and \ref{t4} to the double sequences $S_{p-1}=\{m_{p-1,n}:j_{\mu_{p-1,n},k}^2\}_{n=1}^{\infty} $ and $\dot
S_{p-1}=\{m_{p-1,n}:(j_{\mu_{p-1,n},k}')^2\}_{n=1}^{\infty}$, since $Z_{p-1}(s)=\zeta(s,S_{p-1})$, $\dot Z_{p-1}(s)=\zeta(s,\dot S_{p-1})$. First, we verify Definition \ref{spdec}. For we introduce the simple sequence $U_{p-1}=\{m_{p-1,n}:\mu_{p-1,n}\}_{n=1}^\infty$.

\begin{lem} \label{lp1} The sequence $U_{p-1}$ is a totally regular sequence of spectral type with infinite order, $\ec(U_{p-1})=\ge(U_{p-1})=2p-1$, and $\zeta(s,U_{p-1})=\zeta_{cex}\left(\frac{s}{2},\tilde\Delta^{(p-1)}\right)$,
with possible simple poles  at $s=2p-1-h$, $h=0,2,4, \dots$.

\end{lem}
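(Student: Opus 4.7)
The plan is to reduce every assertion to a statement about the coexact form zeta function on the closed manifold $W$, and then to read off the claims from the general heat-kernel results of Section \ref{s2}. Because $\dim W=2p-1$ is odd, the constant $\alpha_{p-1}=\frac{1}{2}(1+2(p-1)-(2p-1))$ vanishes, so by Lemma \ref{l2} the elements of $U_{p-1}$ are $\mu_{p-1,n}=\sqrt{\lambda_{p-1,n}}$, where $\lambda_{p-1,n}$ runs over the distinct positive eigenvalues of $\tilde\Delta^{(p-1)}$ restricted to coexact $(p-1)$-forms on $W$, with multiplicities $m_{{\rm cex},p-1,n}$. Substituting in the definition of the zeta function of Section \ref{sb2.1} will give
\[
\zeta(s,U_{p-1})=\sum_{n=1}^\infty m_{{\rm cex},p-1,n}\,\lambda_{p-1,n}^{-s/2}=\zeta_{\rm cex}\!\left(\tfrac{s}{2},\tilde\Delta^{(p-1)}\right),
\]
which is the zeta-function identity of the lemma.

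For the exponent of convergence, Weyl's law for $\tilde\Delta^{(p-1)}$ on $W$ gives $\lambda_{p-1,n}\sim n^{2/(2p-1)}$, hence $\mu_{p-1,n}\sim n^{1/(2p-1)}$ and $\ec(U_{p-1})=2p-1$; since this is a positive integer, the definition of genus in Section \ref{sb2.1} forces $\ge(U_{p-1})=2p-1$ as well. For the pole structure I will apply Proposition \ref{l3.1} to $\tilde\Delta^{(p-1)}$: the coexact zeta function $\zeta_{\rm cex}(u,\tilde\Delta^{(p-1)})$ has at most simple poles at $u=(2p-1-h)/2$, $h=0,1,2,\dots$, with residues proportional to the heat coefficients $e_{p-1,h}$ of Proposition \ref{coeff}. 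Since $W$ is closed, the heat expansion (\ref{eegg}) for $\tilde\Delta^{(p-1)}$ contains only integer powers of $t$, so $e_{p-1,h}=0$ for all odd $h$; hence only even $h$ can produce poles, and the substitution $u=s/2$ yields the announced locations $s=2p-1-h$, $h=0,2,4,\dots$ (none of which is a non-positive integer, so no cancellation occurs).

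It remains to verify that $U_{p-1}$ is totally regular of spectral type with infinite order in the sense of Section \ref{sb2.1}. I plan to deduce this from the corresponding property for the coexact Laplacian sequence $\Sp_+\tilde\Delta^{(p-1)}_{\rm cex}$, which holds by Proposition \ref{ss.l1} combined with the Hodge decomposition (\ref{hodge1}). The bridge is the factorization
\[
\prod_{n}\left(1-\frac{\lambda^2}{\lambda_{p-1,n}}\right)=\prod_{n}\left(1+\frac{-\lambda}{\mu_{p-1,n}}\right)\prod_{n}\left(1+\frac{\lambda}{\mu_{p-1,n}}\right);
\]
after reinstating the polynomial regularizing factors prescribed by (\ref{gamma}) for the relevant genera, this identifies $\log\Gamma(-\lambda,U_{p-1})+\log\Gamma(\lambda,U_{p-1})$ with $\log\Gamma(-\lambda^2,\Sp_+\tilde\Delta^{(p-1)}_{\rm cex})$ up to an explicit polynomial in $\lambda$. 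Substituting $\lambda\mapsto\lambda^2$ in the uniform asymptotic expansion of the right hand side then produces a uniform asymptotic expansion of the left hand side in a preimage sector $D_{\theta,c}$, of the shape required by Definition \ref{spdec}, with exponent list inherited by dilation from the Laplacian expansion.

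The main technical point I anticipate is the careful bookkeeping of the polynomial correction factors and of the sector of uniform validity under the substitution $\lambda\mapsto\lambda^2$, so that the resulting expansion is really of infinite order with the logarithmic terms allowed by Section \ref{sb2.1}; once this is in place, all other assertions follow directly from the zeta-function identity and Weyl's law.
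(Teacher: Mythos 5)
Your argument is correct and follows essentially the same route as the paper's proof of the more general Lemma \ref{lel} (given in Section \ref{s4}) specialized to $q=p-1$, where $\alpha_{p-1}=0$ makes the binomial expansion trivial and yields the zeta-function identity directly, with exponent and genus read off from Weyl's law and the pole locations from Proposition \ref{l3.1}. The extra detail you supply — that only even $h$ contribute poles because the odd-index heat coefficients vanish on a closed manifold, and the Hadamard-product factorization relating $\Gamma(\cdot,U_{p-1})$ to $\Gamma(\cdot,\Sp_+\tilde\Delta^{(p-1)}_{\rm cex})$ to justify the spectral-type asymptotics — fills in steps the paper asserts without elaboration; the only small caveat is that your factorization identity as stated controls $\log\Gamma(-\lambda,U_{p-1})+\log\Gamma(\lambda,U_{p-1})$ rather than each summand alone, so a short extra argument (e.g.\ working directly from the heat-trace asymptotics of the square-root sequence) is needed to isolate the expansion of $\log\Gamma(-\lambda,U_{p-1})$ itself.
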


\begin{lem} The logarithimic Gamma functions associated to the sequences
$S_{p-1,n}/\mu_{p-1,n}^2$ and $\dot S_{p-1,n}/\mu_{p-1,n}^2$ have the following  representations, with  $\lambda\in D_{\theta,c}$, $0\leq\theta\leq \pi$, $c=\frac{{\rm min}(j_{\mu_{p-1,1}}^2,(j_{\mu_{p-1,1}}')^2)}{2\mu_{p-1,1}^2}$,
\begin{align*}
\log \Gamma(-\lambda,S_{p-1,n}/\mu_{p-1,n}^2)=&-\log\prod_{k=1}^\infty
\left(1+\frac{(-\lambda)\mu_{p-1,n}^2}{j_{\mu_{p-1,n},k}^2}\right)\\
=&-\log I_{\mu_{p-1,n}}(\mu_{p-1,n}\sqrt{-\lambda})+(\mu_{p-1,n})\log\sqrt{-\lambda} \\
&+\mu_{p-1,n}\log (\mu_{p-1,n})-\mu_{p-1,n}\log 2-\log\Gamma(\mu_{p-1,n}+1),\\
\log \Gamma(-\lambda,\hat S_{p-1,n}/\mu_{p-1,n}^2)=&-\log\prod_{k=1}^\infty
\left(1+\frac{(-\lambda)(\mu_{p-1,n})^2}{(j_{\mu_{p-1,n},k}')^2}\right)\\
=&-\log I'_{\mu_{p-1,n}}(\mu_{p-1,n}\sqrt{-\lambda})+(\mu_{p-1,n}-1)\log\sqrt{-\lambda} \\
&+\mu_{p-1,n}\log (\mu_{p-1,n})-\mu_{p-1,n}\log 2-\log\Gamma(\mu_{p-1,n}+1).
\end{align*}
\label{s3.l1}
\end{lem}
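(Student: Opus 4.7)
The proof is essentially a direct calculation, combining the definition of $\Gamma(-\lambda,S)$ with the classical Hadamard product for Bessel functions. The plan has three short steps.

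First I would specialize the canonical product in (\ref{gamma}) to the sequences at hand. Since $j_{\nu,k}, j'_{\nu,k}\sim k\pi$ as $k\to\infty$, both sequences $S_{p-1,n}/\mu_{p-1,n}^2$ and $\dot S_{p-1,n}/\mu_{p-1,n}^2$ have exponent of convergence $1/2$ and hence genus $0$. Consequently the canonical product in (\ref{gamma}) carries no exponential convergence factors, and we obtain immediately
\[
\frac{1}{\Gamma(-\lambda,S_{p-1,n}/\mu_{p-1,n}^2)}=\prod_{k=1}^\infty\left(1+\frac{(-\lambda)\mu_{p-1,n}^2}{j_{\mu_{p-1,n},k}^2}\right),
\]
and analogously for $\dot S_{p-1,n}/\mu_{p-1,n}^2$ with $j'_{\mu_{p-1,n},k}$ in place of $j_{\mu_{p-1,n},k}$. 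This gives the first equality in each displayed identity of the lemma.

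Second, I would identify the resulting infinite product with an explicit Bessel expression. The classical Hadamard factorization of $J_\nu$ (Watson, Chapter XV), combined with the identity $I_\nu(z)=i^{-\nu}J_\nu(iz)$, yields
\[
I_\nu(z)=\frac{(z/2)^\nu}{\Gamma(\nu+1)}\prod_{k=1}^\infty\left(1+\frac{z^2}{j_{\nu,k}^2}\right),
\]
valid for $\nu>0$. Setting $\nu=\mu_{p-1,n}$ and $z=\mu_{p-1,n}\sqrt{-\lambda}$, taking logarithms and solving for the log of the product gives exactly the right-hand side of the first claimed identity. The appropriate branch of $\sqrt{-\lambda}$ is the principal one on $\C\setminus[0,\infty)$, and the choice $c=\min(j_{\mu_{p-1,1},1}^2,(j'_{\mu_{p-1,1},1})^2)/(2\mu_{p-1,1}^2)$ ensures that for $\lambda\in D_{\theta,c}$ all factors $1+(-\lambda)\mu_{p-1,n}^2/j_{\mu_{p-1,n},k}^2$ lie in a half-plane avoiding the origin, so that the sum of logarithms is well defined.

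Third, for $\dot S_{p-1,n}/\mu_{p-1,n}^2$ the same strategy applies with the analogous product representation for $I_\nu'$:
\[
I_\nu'(z)=\frac{(z/2)^{\nu-1}}{2\,\Gamma(\nu)}\prod_{k=1}^\infty\left(1+\frac{z^2}{(j'_{\nu,k})^2}\right),
\]
which again follows from the Hadamard factorization of $J_\nu'$ through $I_\nu'(z)=i^{1-\nu}J_\nu'(iz)$ (and is valid because $\nu=\mu_{p-1,n}>0$, so the zero at the origin of $J_\nu'$, if present, is accounted for by the prefactor $(z/2)^{\nu-1}$). Substituting $z=\mu_{p-1,n}\sqrt{-\lambda}$, taking logarithms, and using $\Gamma(\nu+1)=\nu\Gamma(\nu)$ to rewrite $\log(2\Gamma(\nu))$ as $\log\Gamma(\nu+1)-\log\nu+\log 2$, one recovers precisely the second formula in the lemma, where the additional $\log\nu=\log\mu_{p-1,n}$ accounts for the shift of the exponent of $\sqrt{-\lambda}$ from $\mu_{p-1,n}$ to $\mu_{p-1,n}-1$.

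There is no real obstacle: the whole argument is a transcription of standard Bessel identities into the language of (\ref{gamma}). The only point that merits verification is the uniformity of the convergence and the choice of branch for $\sqrt{-\lambda}$ on $D_{\theta,c}$, both of which are handled by the elementary estimate above.
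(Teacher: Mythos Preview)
Your argument is correct and is precisely the standard derivation: the paper itself states this lemma without proof, treating it as an immediate consequence of the genus-zero canonical product together with the Hadamard factorizations of $J_\nu$ and $J_\nu'$. Your three steps supply exactly those details, including the bookkeeping $(\mu-1)\log\mu-\log\Gamma(\mu)=\mu\log\mu-\log\Gamma(\mu+1)$ needed to match the second formula.
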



Using the expansions of the Bessel functions, it follow from   Lemma \ref{s3.l1} that there is a complete asymptotic expansion for the functions $\log \Gamma(-\lambda,S_{p-1,n}/\mu_{p-1,n}^2)$ and $\log \Gamma(-\lambda,\dot S_{p-1,n}/\mu_{p-1,n}^2)$, and then the sequences $S_{p-1,n}/\mu_{p-1,n}^2$ and $\dot S_{p-1,n}/\mu_{p-1,n}^2$ are sequence of spectral type. A simple calculation shows that they are totally regular sequences of infinite order.

\begin{prop}\label{s3.p1} The double sequences $S_{p-1}$ and $\dot S_{p-1}$ have relative exponents $\left(p,\frac{2p-1}{2},\frac{1}{2}\right)$, relative genus $\left(p,p-1,0\right)$, and are spectrally decomposable over $U_{p-1}$ with power $\kappa=2$, length $\ell=2p$ and domain $D_{\theta,c}$. The coefficients $\sigma_h$ appearing in equation (\ref{exp}) are $\sigma_h=h-1$, with $h=0,1,\dots,\ell=2p$. 
\end{prop}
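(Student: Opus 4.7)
The plan is to verify one by one the conditions in Definition \ref{spdec}. There are three things to check: (i) the claimed relative exponents $(p,\tfrac{2p-1}{2},\tfrac12)$ and genera $(p,p-1,0)$ of $S_{p-1}$ and $\dot S_{p-1}$, (ii) condition (1) for the rescaled sequences $\mu_{p-1,n}^{-2}S_{p-1,n}$ and $\mu_{p-1,n}^{-2}\dot S_{p-1,n}$, and (iii) the uniform asymptotic expansion (\ref{exp}) with $\kappa=2$, $\ell=2p$, $\sigma_h=h-1$.

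Parts (i) and (ii) should be routine. The McMahon asymptotic $j_{\nu,k}=(k+\nu/2-1/4)\pi+O(1/k)$ (and the analogous one for $j'_{\nu,k}$) yields exponent $1/2$ and genus $0$ for each $S_{p-1,n}$ and $\dot S_{p-1,n}$. By Lemma \ref{lp1}, applied to $\tilde\Delta^{(p-1)}$ on the $(2p-1)$-dimensional section, Weyl's law gives $\mu_{p-1,n}\sim c\,n^{1/(2p-1)}$, whence each $S_{p-1,k}$ has exponent $(2p-1)/2$. The global exponent $p$ and genus $p$ follow from the standard double-series estimate. The inequality $j_{\nu,k}>\nu$ (and $j'_{\nu,k}>\nu$) for $k\geq 1$ places all the rescaled sequences in a common positive sector $\Sigma_{\theta',c'}$ with $c'$ fixed, and the explicit formulas of Lemma \ref{s3.l1} together with the known expansion of Bessel functions make them totally regular of spectral type of infinite order, establishing condition (1).

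The main work is condition (2). I plan to insert Debye's uniform asymptotic expansion
\[
I_\nu(\nu z)\sim\frac{e^{\nu\eta(z)}}{\sqrt{2\pi\nu}\,(1+z^2)^{1/4}}\sum_{k\geq 0}u_k(t(z))\,\nu^{-k},\qquad
\eta(z)=\sqrt{1+z^2}+\log\frac{z}{1+\sqrt{1+z^2}},
\]
(with $t=(1+z^2)^{-1/2}$), and its analogue for $I'_\nu(\nu z)$, into the formulas of Lemma \ref{s3.l1}, with $z=\sqrt{-\lambda}$ on $D_{\theta,c}$. Combined with Stirling for $\log\Gamma(\mu+1)$, the terms $\mu\log\mu$ cancel and one obtains, writing $\mu=\mu_{p-1,n}$,
\[
\log\Gamma(-\lambda,S_{p-1,n}/\mu^2)\sim\mu\,\phi_{-1}(\lambda)+\phi_0(\lambda)+\sum_{k\geq 1}\phi_k(\lambda)\,\mu^{-k},
\]
with
\[
\phi_{-1}(\lambda)=1-\sqrt{1-\lambda}+\log\frac{1+\sqrt{1-\lambda}}{2},\qquad \phi_0(\lambda)=\tfrac14\log(1-\lambda),
\]
and the $\phi_k(\lambda)$ for $k\geq 1$ explicit rational functions of $\sqrt{1-\lambda}$ coming from $u_k$ and the Bernoulli numbers. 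Setting $\sigma_h=h-1$ and truncating at the first index with $\sigma_h>r_0=2p-1$ gives $\ell=2p$. Since neither Debye nor Stirling contribute logarithmic terms in $\mu$ after the cancellations, the polynomials $P_{\rho_l}$ in (\ref{exp}) are identically zero, vacuously satisfying $P_{\rho_l}(0)=0$. The same scheme with $I'_\nu$ in place of $I_\nu$ handles $\dot S_{p-1}$; Bessel's equation relates the two expansions and the resulting $\phi_h$ agree in leading order, differing only in the subleading corrections.

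The main obstacle is to justify that the Debye expansion is uniform in $\lambda$ on the full sector $D_{\theta,c}$, not only on compact sets in the variable $z$. I would handle this by appealing to the refined complex-analytic form of Debye's method due to Olver, which gives uniformity in $z$ on closed subsets of the right half-plane away from the imaginary axis, and then pulling back through $z=\sqrt{-\lambda}$ to the whole of $D_{\theta,c}$, with the branch of the square root chosen on $\C\setminus[0,\infty)$. A secondary observation, important for the next sections, is that $\phi_{-1}(0)=0$ and $\phi_0(0)=0$ by direct substitution, and more generally $\phi_k(0)=0$ for all $k$, reflecting the normalization $\log\Gamma(0,S_{p-1,n}/\mu^2)=0$; this vanishing is what will make the functions $\Phi_{\sigma_h}(s)$ defined in (\ref{fi1}) have at worst simple poles at $s=0$, as required for the later application of Theorem \ref{t4}.
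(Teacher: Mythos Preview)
Your proposal is correct and follows essentially the same approach as the paper: the paper likewise invokes classical Bessel-zero asymptotics (referring to Watson and to the Young inequality/Plana theorem for the global exponent $s_0=p$), the uniform Debye/Olver expansions for $I_\nu(\nu z)$ and $I_\nu'(\nu z)$, and Stirling for $\log\Gamma(\mu+1)$, with the explicit $\phi_h$ you write out appearing in the proof of the next lemma. Your more careful remark on the uniformity in $\lambda$ via Olver's theorem, and the observation that the $\log\mu$ terms cancel already in each $\log\Gamma$ separately (not only in their difference), are both correct and slightly sharper than what the paper states.
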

\begin{proof} The values of the exponents and genus of $S$ follow by classical estimates of the zeros of the Bessel functions \cite{Wat}, and zeta function theory. In particular, to determinate $s_0=p$, we use the Young inequality and the Plana theorem as in \cite{Spr0}. Note that $\alpha >\frac{1}{2}$, since $s_2 = \frac{1}{2}$. The exponents and genus of $\dot S$ are the same, since the zeros of the derivative of the Bessel function are correlated by those of the Bessel function, namely $j_{\mu,k}<j'_{\mu,k}<j_{\mu,k+1}$. As observed, the existence of a complete asymptotic expansion of the Gamma function
$\log \Gamma(-\lambda,S_{p-1,n})$ and $\log \Gamma(-\lambda,\dot S_{p-1,n})$ follows by Lemma \ref{s3.l1}. This implies that $S_{p-1,n}$ and $\dot S_{p-1,n}$ are sequences of spectral type. A direct inspection of the expansions shows that $S_{p-1,n}$ and $\dot S_{p-1,n}$ are totally regular sequences of infinite order. The existence of the uniform expansion follows using the uniform expansions for the Bessel functions and their derivative given for example in \cite{Olv} (7.18) and Ex. 7.2, and classical expansion of the Euler Gamma function \cite{GZ} 8.344. We refer to \cite{HS} Section 5 or to \cite{HMS} Section 4 for details. This proves that $S_{p-1,n}$ and $\dot S_{p-1,n}$ are spectrally decomposable over $U_{p-1}$, with power $\kappa=2$. The length $\ell$ of the decomposition is precisely $2p$. For $\ec(U_{p-1})=2p-1$, and therefore the larger integer such that $\sigma_h=h-1\leq 2p-1$ is $2p$. 
\end{proof}

\begin{rem}\label{rrr} By Theorem \ref{tt}, only the terms with
$\sigma_h=1$, $\sigma_h=3$, $\ldots$, $\sigma_h=2p-1$ namely $h=2,4,\ldots,2p$, appear in the formula of Theorem
\ref{t4}, since the unique non negative poles of $\zeta(s,U_{p-1})$ are at $s=1$, $s=3$, $\ldots$\  $s=2p-1$, by Lemma \ref{lp1}. 
\end{rem}

Since we aim to apply the version of Theorem \ref{t4} given in Corollary \ref{c}, for linear combination of two spectrally decomposable sequences, we need more information on the uniform asymptotic expansion of the difference $S_{p-1}-\dot S_{p-1}$. This will also give the functions $\phi_{\sigma_h}(\lambda)$, necessary in the following.

\begin{lem}\label{s3.l2} The difference of the logarithimic Gamma functions associated to the sequences $S_{p-1,n}/\mu_{p-1}^2$ and $\dot S_{p-1,n}/\mu_{p-1,n}^2$ have the following  uniform asymptotic expansions for large $n$, $\lambda\in D_{\theta,c}$,
\begin{align*}
\log \Gamma(-\lambda, S_{p-1,n}&/(\mu^p_{p-1,n})^2) - \log \Gamma(-\lambda,\dot S_{p-1,n}/(\mu^p_{p-1,n})^2) =
\\&= -\log I(\mu_{p-1,n}\sqrt{-\lambda}) +\log I'(\mu_{p-1,n}\sqrt{-\lambda}) + \log \sqrt{-\lambda}\\
&=\frac{1}{2} \log (1-\lambda) + \sum^{2p-1}_{j=1} \phi_{j,p-1}(\lambda)\frac{1}{(\mu^{p}_{p-1,n})^{j}}
+O\left(\frac{1}{\mu_{p-1,n}^{2p}}\right).
\end{align*}
\end{lem}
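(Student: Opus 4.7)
The plan splits naturally into two parts: establishing the closed form identity, and then extracting the asymptotic expansion from the classical uniform expansions of Bessel functions.

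First I would verify the identity in the first line by direct subtraction of the two formulas in Lemma~\ref{s3.l1}. Writing out both expressions, the terms $\mu_{p-1,n}\log\mu_{p-1,n}$, $-\mu_{p-1,n}\log 2$, and $-\log\Gamma(\mu_{p-1,n}+1)$ are identical in the two formulas and therefore cancel. The only surviving algebraic piece is $\mu_{p-1,n}\log\sqrt{-\lambda}-(\mu_{p-1,n}-1)\log\sqrt{-\lambda}=\log\sqrt{-\lambda}$, and the Bessel pieces combine as $-\log I_{\mu_{p-1,n}}(\mu_{p-1,n}\sqrt{-\lambda})+\log I'_{\mu_{p-1,n}}(\mu_{p-1,n}\sqrt{-\lambda})$. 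This gives the first equality with no analysis required.

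For the asymptotic expansion, I would invoke the Olver uniform expansions for large order (\cite{Olv}, Chapter~10, in particular the forms used in \cite{HS} Section~5 and \cite{HMS} Section~4):
\[
I_\nu(\nu z)\sim \frac{e^{\nu\eta(z)}}{(2\pi\nu)^{1/2}(1+z^2)^{1/4}}\sum_{k=0}^{\infty}\frac{U_k(t)}{\nu^{k}},\qquad I'_\nu(\nu z)\sim \frac{(1+z^2)^{1/4}}{z}\,\frac{e^{\nu\eta(z)}}{(2\pi\nu)^{1/2}}\sum_{k=0}^{\infty}\frac{V_k(t)}{\nu^{k}},
\]
with $t=(1+z^2)^{-1/2}$ and $U_k,V_k$ the usual Olver polynomials, valid uniformly in $z$ in a sector containing $z=\sqrt{-\lambda}$ for $\lambda\in D_{\theta,c}$. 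Taking logarithms, the dominant pieces $-\tfrac12\log(2\pi\nu)$ and $\nu\eta(z)$ cancel on subtraction, and the prefactor exponents combine to produce $\tfrac12\log(1+z^{2})-\log z$. Setting $z=\sqrt{-\lambda}$, adding back the $+\log\sqrt{-\lambda}$ from the first equality, and noting that $1+z^{2}=1-\lambda$, one obtains the announced leading term $\tfrac12\log(1-\lambda)$.

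The remaining contribution is the difference of the logarithms of the two formal $1/\nu$ series in $U_k(t)$ and $V_k(t)$. Expanding this formally with $\log(1+x)=\sum_{j\ge 1}(-1)^{j-1}x^j/j$ and collecting powers of $\nu^{-1}=\mu_{p-1,n}^{-1}$ produces an expansion
\[
\log\Bigl(\sum_{k\ge 0}V_k(t)\nu^{-k}\Bigr)-\log\Bigl(\sum_{k\ge 0}U_k(t)\nu^{-k}\Bigr)=\sum_{j=1}^{2p-1}\frac{\phi_{j,p-1}(\lambda)}{\nu^{j}}+O(\nu^{-2p}),
\]
with coefficients $\phi_{j,p-1}(\lambda)$ that are universal polynomials in $t=(1-\lambda)^{-1/2}$, determined recursively from the $U_k,V_k$ and hence, in principle, explicitly known. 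This identifies the functions $\phi_{j,p-1}$ in the statement.

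The main obstacle is the uniformity of the remainder $O(\mu_{p-1,n}^{-2p})$ in $\lambda$ over the non-compact asymptotic domain $D_{\theta,c}$: one must check that Olver's explicit remainder bounds for the truncated Bessel expansions remain uniform in the relevant sector (where $t$ stays bounded away from the branch locus), and then propagate them through the $\log$ of the two series and through subtraction. This is exactly the kind of estimate carried out in \cite{HS} and \cite{HMS} in the low-dimensional cases, so I would reduce to those arguments rather than redo them in detail.
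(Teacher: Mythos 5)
Your proposal matches the paper's own proof essentially step for step: cancellation in Lemma~\ref{s3.l1} for the first equality, the Olver uniform expansions of $I_\nu(\nu z)$ and $I'_\nu(\nu z)$ in the polynomials $U_j$, $V_j$, cancellation of the dominant exponential and $\log(2\pi\nu)$ pieces leaving $\tfrac12\log(1-\lambda)$, and the formal $\log$-of-series expansion defining $\phi_{j,p-1}=\dot l_j-l_j$. Your explicit flagging of the uniformity of the remainder over $D_{\theta,c}$ and the deferral to \cite{HS} and \cite{HMS} is also consistent with the paper, which handles that point the same way.
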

\begin{proof} By Lemma \ref{s3.l1}
\begin{equation}\label{eq1}
\begin{aligned}
\log \Gamma(-\lambda, S_{p-1,n}&/(\mu_{p-1,n})^2) - \log \Gamma(-\lambda,\dot S_{p-1,n}/(\mu_{p-1,n})^2)=\\
&= -\log I(\mu_{p-1,n}\sqrt{-\lambda}) +\log I'(\mu_{p-1,n}\sqrt{-\lambda}) + \log \sqrt{-\lambda}.
\end{aligned}
\end{equation}

Recall the uniform expansions for the Bessel functions given
for example in \cite{Olv} (7.18) pg. 376, and Ex. 7.2,
\[
I_{\nu}(\nu z)=\frac{\e^{\nu\sqrt{1+z^2}}\e^{\nu\log\frac{z}{1+\sqrt{1+z^2}}}}{\sqrt{2\pi
\nu}(1+z^2)^\frac{1}{4}}\left(1+\sum_{j=1}^{2p-1}\frac{U_{j}(z)}{\nu^{j}}+O\left(\frac{1}{\nu^{2p}}\right)\right),
\]
where
\begin{align*}
U_0(w)=&1,\\
U_j(w)=&\frac{1}{2}w^2(1-w^2)\frac{d}{dw}U_{j-1}(w)+\frac{1}{8}\int_{0}^{w}(1-5t^2)U_{j-1}(t)dt,\\
\end{align*}
with $w=\frac{1}{\sqrt{1+z^{2}}}$, and
\[
I_{\nu}'(\nu z)=\frac{(1+z^2)^\frac{1}{4}\e^{\nu\sqrt{1+z^2}}\e^{\nu\log\frac{z}{1+\sqrt{1+z^2}}}}{\sqrt{2\pi
\nu}z}\left(1+\sum^{2p-1}_{j=1} \frac{V_j(z)}{\nu^{j}}+O\left(\frac{1}{\nu^{2p}}\right)\right),
\]
\begin{align*}
V_0(w)=&1,\\
V_j(w)=& U_j(w)-\frac{w}{2}(1-w^2)U_{j-1}(w)-w^2(1-w^2)\frac{d}{dw}U_{j-1}(w).\\
\end{align*}

Then, 
\begin{align*}
\log I_{\nu}(\nu z) & = \nu \sqrt{1+z^2} + \nu \log z - \nu\log(1+\sqrt{1+z^2}) \\
&-\frac{1}{2}\log2\pi\nu - \frac{1}{4}\log(1+z^2) +
\log\left(1+\sum_{j=1}^{2p-1}\frac{U_{j}(z)}{\nu^{j}}+O\left(\frac{1}{\nu^{2p}}\right)\right)\\
\log I'_{\nu}(\nu z) & = \nu \sqrt{1+z^2} + \nu \log z - \nu\log(1+\sqrt{1+z^2})-\log z \\
&-\frac{1}{2}\log2\pi\nu + \frac{1}{4}\log(1+z^2) +
\log\left(1+\sum_{j=1}^{2p-1}\frac{V_{j}(z)}{\nu^{j}}+O\left(\frac{1}{\nu^{2p}}\right)\right),
\end{align*}
and substitution in equation (\ref{eq1}) gives
\begin{equation*}
\begin{aligned}
\log \Gamma(-\lambda, S_{p-1,n}/&(\mu_{p-1,n})^2) - \log \Gamma(-\lambda,\dot S_{p-1,n}/(\mu_{p-1,n})^2)\\
=&\frac{1}{2}\log(1-\lambda)- \log\left(1+\sum_{j=1}^{2p-1}\frac{U_{j}(\sqrt{-\lambda})}{\mu_{p-1,n}^{j}}+O\left(\frac{1}{\mu_{p-1,n}^{2p}}\right)\right)\\
&+ \log\left(1+\sum_{j=1}^{2p-1}\frac{V_{j}(\sqrt{-\lambda})}{\mu_{p-1,n}^{j}}+O\left(\frac{1}{\mu_{p-1,n}^{2p}}\right)\right).
\end{aligned}
\end{equation*}

Expanding the logarithm as
\[
\log \left(1+ \sum^{\infty}_{j=1}\frac{a_j}{z^{j}} \right) = \sum^{\infty}_{j=1} \frac{l_j}{z^{j}},
\] 
where $a_0=1$, $a_1 = l_1$ and 
\[
l_j= a_j - \sum_{k=1}^{j-1} \frac{j-k}{j} a_k l_{j-k},
\]
we have that
\begin{equation*}
\begin{aligned}
\log \Gamma(&-\lambda, S_{p-1,n}/(\mu_{p-1,n})^2) - \log \Gamma(-\lambda,\dot S_{p-1,n}/(\mu_{p-1,n})^2)=
\frac{1}{2}\log(1-\lambda)\\
&+ \sum^{2p-1}_{j=1} \frac{1}{\mu_{p-1,n}^{j}}\left(V_j(\sqrt{-\lambda}) - U_j(\sqrt{-\lambda}) + \sum^{j-1}_{k=1}
\frac{j-k}{j}\left(V_k(\sqrt{-\lambda}) \dot l_{j-k}(\lambda) - U_k(\sqrt{-\lambda}) l_{j-k}(\lambda)\right) \right) \\
&+O\left(\frac{1}{\mu_{p-1,n}^{2p}}\right),
\end{aligned}
\end{equation*} 
where we denote by $\dot l_j(\lambda)$ the term in the expansion relative to the sequence $\dot S$ (thus the one containing the  $V_j(z)$) and by   $l_j(\lambda)$
the term relative to $S$ (thus the one containing the $U_j(z)$). Setting
\begin{equation}\label{phi_p-1}
\begin{aligned}
\phi_{p-1,j}(\lambda) &= \dot l_j(\lambda)-l_j(\lambda)\\
&= V_j(\sqrt{-\lambda}) - U_j(\sqrt{-\lambda}) + \sum^{j-1}_{k=1} \frac{j-k}{j}\left(V_k(\sqrt{-\lambda}) \dot l_{j-k}(\lambda) - U_k(\sqrt{-\lambda})
l_{j-k}(\lambda)\right),
\end{aligned}
\end{equation}
we have the formula stated in the thesis.


\end{proof}

\begin{rem} Note that there are no logarithmic terms $\log\mu_{p-1,n}$ in the asymptotic expansion of the difference of the logarithmic Gamma function given in Lemma \ref{s3.l2}. This permits to apply Corollary \ref{c}.
\end{rem}

Next, give some results on the functions $\phi_{j,p-1}(\lambda)$, and on the functions $\Phi_{j,p-1}(s)$ defined in equation (\ref{fi1}).

\begin{lem} For all $j$, the functions $\phi_{j,p-1}(\lambda)$ are odd polynomial in $w=\frac{1}{\sqrt{1-\lambda}}$  
\[
\phi_{j,p-1}(\lambda) =  \sum^{3j+1}_{k=j}a_{j,p-1,k}w^{2k+1}.
\] 
\end{lem}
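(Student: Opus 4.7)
The plan is to prove the claim by induction on $j$, using parity considerations on the Debye coefficients $U_k(w)$ and $V_k(w)$ appearing in the uniform asymptotic expansion of the modified Bessel functions. The variable $w = (1-\lambda)^{-1/2}$ coincides (via $z = \sqrt{-\lambda}$) with Olver's variable $w = (1+z^2)^{-1/2}$, so the Debye polynomials are genuinely polynomials in $w$, and all the structural questions reduce to tracking parity and degree through the constituent recursions and logarithmic expansions.

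The first step is to verify by induction on $k$ that $U_k(w)$ and $V_k(w)$ are polynomials in $w$ of degree $3k$, with parity matching $k \bmod 2$, and with minimum power $w^k$. For $U_k$, the recursion
\[
U_k(w) = \tfrac{1}{2}w^2(1-w^2) U_{k-1}'(w) + \tfrac{1}{8}\int_0^w (1-5t^2)\,U_{k-1}(t)\,dt
\]
carries the inductive step at once: differentiation and integration each flip parity, while multiplication by $w^2(1-w^2)$ or $(1-5t^2)$ preserves it, so both summands on the right have parity $k$. The bound on the minimum power $w^k$ is obtained by inspecting the behaviour at $w = 0$. For $V_k$, the relation $V_k = U_k - \tfrac{w}{2}(1-w^2) U_{k-1} - w^2(1-w^2) U_{k-1}'$ permits the same argument.

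The second step is to propagate these properties to $l_j$ and $\dot l_j$. These arise from the formal expansion $\log(1+x) = \sum_{r\geq 1}(-1)^{r-1}x^r/r$ applied with $x = \sum_k U_k/\nu^k$ (respectively $V_k/\nu^k$), giving
\[
l_j(\lambda) = \sum_{r=1}^{j}\frac{(-1)^{r-1}}{r}\sum_{\substack{k_1+\cdots+k_r=j\\ k_i\geq 1}} U_{k_1}(w)\cdots U_{k_r}(w),
\]
and analogously for $\dot l_j$. Each product has total parity $\sum k_i = j$ and $w$-degree between $j$ and $3j$. Hence $l_j$ (resp.\ $\dot l_j$) is a polynomial in $w$ of parity $j$ with $w$-degrees in the range $[j,3j]$. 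Substituting into the defining expression
\[
\phi_{j,p-1}(\lambda) = \dot l_j - l_j + \sum_{k=1}^{j-1}\frac{j-k}{j}\bigl(V_k \dot l_{j-k} - U_k l_{j-k}\bigr)
\]
every summand inherits parity $j$, and for odd $j$ this yields the required odd polynomial in $w$.

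The main obstacle will be the careful bookkeeping of the precise index bounds in the sum $\sum_{k} a_{j,p-1,k} w^{2k+1}$. The parity structure is essentially a direct consequence of the Debye recursions, but verifying the asserted upper and lower $w$-degree bounds requires delicate tracking through all the compound products. A useful device is to rewrite the cross terms as $V_k \dot l_{j-k} - U_k l_{j-k} = (V_k - U_k)\dot l_{j-k} + U_k(\dot l_{j-k} - l_{j-k})$ and then exploit the simple closed form $V_k - U_k = -\tfrac{w}{2}(1-w^2) U_{k-1} - w^2(1-w^2) U_{k-1}'$; this isolates the cancellation that raises the minimum $w$-degree of each contribution and should match the bounds asserted in the statement.
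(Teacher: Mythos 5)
Your degree and parity analysis is exactly right and is essentially the same idea as the paper's (very terse) proof, which simply reads off the structure from the definition in equation (\ref{phi_p-1}). Your induction gives the correct facts: $U_k$ and $V_k$ are polynomials in $w$ of parity $k\bmod 2$ with powers ranging from $w^k$ to $w^{3k}$; the logarithmic recursion propagates this to $l_j$, $\dot l_j$ (parity $j$, powers from $w^j$ to $w^{3j}$); and the cross terms $V_k\dot l_{j-k}$, $U_k l_{j-k}$ land in the same range. Hence $\phi_{j,p-1}$ has parity $j$ and $w$-degrees in $[j,3j]$, and you correctly note that the ``odd polynomial'' conclusion requires $j$ odd.

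The one place you go astray is the last paragraph. You worry that your bound $[j,3j]$ does not literally match the displayed formula $\sum_{k=j}^{3j+1}a_{j,p-1,k}w^{2k+1}$ (which gives powers from $w^{2j+1}$ to $w^{6j+3}$), and you conjecture that the closed form $V_k-U_k=-\tfrac{w}{2}(1-w^2)U_{k-1}-w^2(1-w^2)U_{k-1}'$ produces a cancellation that raises the minimum power. It does not: $V_k-U_k$ has exactly the same degree window $[k,3k]$ as $U_k$ and $V_k$ individually, so the decomposition $V_k\dot l_{j-k}-U_k l_{j-k}=(V_k-U_k)\dot l_{j-k}+U_k(\dot l_{j-k}-l_{j-k})$ is a nice bookkeeping device but shifts nothing. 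Your bound $[j,3j]$ is already correct and final, as the explicit case $\phi_{1,p-1}=-\tfrac12 w+\tfrac12 w^3$ confirms. The discrepancy with the stated bounds is a notational artefact of the paper, not a gap in your argument: the formula is meant to describe the odd-index functions $\phi_{2j+1,p-1}$ (these are the only ones used, cf.\ Remark \ref{rrr} and Corollary \ref{c33}, where the same coefficients $a_{j,p-1,k}$ with $k$ from $j+1$ to $3j+1$ appear attached to $\Phi_{2j+1,p-1}$). With the subscript read as $2j+1$, your parity-and-degree result gives powers $w^{2j+1},\dots,w^{6j+3}$, exactly the range $k=j,\dots,3j+1$ in the display. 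So there is nothing further to track down; the right move is to recognize the index shift rather than to look for additional cancellation.
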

\begin{proof} This follows by the definition in equation (\ref{phi_p-1}) in the proof of Lemma \ref{s3.l2}.
\end{proof}

\begin{lem} For all $j$, $\phi_{j,p-1}(0)=0$.
\end{lem}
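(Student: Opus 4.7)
The plan is to evaluate both sides of the uniform asymptotic expansion obtained in Lemma \ref{s3.l2} at $\lambda=0$ and then invoke uniqueness of asymptotic expansions.

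First I would check that $\lambda=0$ belongs to the asymptotic domain $D_{\theta,c}$. Since $c>0$ and $D_{\theta,c}=\C-\Sigma_{\theta,c}$ is the complement of a sector with vertex at $c$, the origin is in $D_{\theta,c}$, so the uniform expansion of Lemma \ref{s3.l2} is valid there.

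Next, I would compute the left-hand side of that expansion at $\lambda=0$. By the very definition of the Gamma function associated to a sequence given in equation (\ref{gamma}), each factor in the canonical product equals $1$ at $\lambda=0$, so $\log\Gamma(0,S)=0$ for any sequence $S$ in the relevant class. Hence
\[
\log\Gamma(0,S_{p-1,n}/\mu_{p-1,n}^{2})-\log\Gamma(0,\dot S_{p-1,n}/\mu_{p-1,n}^{2})=0.
\]
On the right-hand side of the expansion in Lemma \ref{s3.l2}, the term $\tfrac{1}{2}\log(1-\lambda)$ vanishes at $\lambda=0$, so we obtain
\[
0=\sum_{j=1}^{2p-1}\phi_{j,p-1}(0)\,\frac{1}{\mu_{p-1,n}^{j}}+O\!\left(\frac{1}{\mu_{p-1,n}^{2p}}\right),
\]
for all sufficiently large $n$. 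Since $\{\mu_{p-1,n}\}$ tends to infinity, the uniqueness of the coefficients in an asymptotic expansion in the scale $\{\mu_{p-1,n}^{-j}\}$ forces $\phi_{j,p-1}(0)=0$ for every $j=1,\dots,2p-1$.

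There is no real obstacle here: the content of the lemma is essentially a sanity check that the cancellation coming from $\log\Gamma(0,\cdot)=0$ on both sequences propagates to each coefficient of the uniform expansion. The only point one must verify carefully is that the expansion in Lemma \ref{s3.l2} is genuinely uniform down to (and at) $\lambda=0$; this is guaranteed by the uniform estimates on the Bessel functions $I_\nu(\nu z)$ and $I_\nu'(\nu z)$ recalled from \cite{Olv} in the proof of Lemma \ref{s3.l2}, whose domain of validity includes $z=0$. Alternatively, one can argue by the polynomial representation $\phi_{j,p-1}(\lambda)=\sum_{k=j}^{3j+1}a_{j,p-1,k}w^{2k+1}$ with $w=(1-\lambda)^{-1/2}$: at $\lambda=0$ we have $w=1$, and the identity $\sum_k a_{j,p-1,k}=0$ then follows from the same cancellation argument above.
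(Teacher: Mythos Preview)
Your argument is correct and takes a genuinely different route from the paper. The paper proves the vanishing by induction on $j$: it checks $\phi_{1,p-1}(0)=0$ directly, and for the inductive step it unwinds the recursion in equation~(\ref{phi_p-1}) and uses the identity $U_j(1)=V_j(1)$ (an immediate consequence of the recursion defining $V_j$, since the correction terms carry a factor $1-w^2$). Your approach instead exploits the global meaning of the expansion: the left-hand side of Lemma~\ref{s3.l2} vanishes at $\lambda=0$ by the very definition of $\Gamma(-\lambda,S)$ in~(\ref{gamma}), and since $0\in D_{\theta,c}$ the uniform expansion can be evaluated there, forcing every coefficient to vanish.

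Two minor remarks. First, the expansion in Lemma~\ref{s3.l2} is written only up to order $2p-1$, so your argument literally yields $\phi_{j,p-1}(0)=0$ for $1\le j\le 2p-1$; to get ``all $j$'' you should note that the underlying Olver expansion is a full asymptotic series and can be truncated at any order (only $j\le 2p-1$ is actually used in the sequel, cf.\ Corollary~\ref{c33}). Second, the delicate point you flag---uniformity of the remainder at $\lambda=0$---is indeed handled: after the cancellation in the proof of Lemma~\ref{s3.l2} the singular $\log z$ terms disappear and one is left with $\tfrac12\log(1-\lambda)$ plus $\log$ of a ratio of series in $w=(1-\lambda)^{-1/2}$, which is manifestly regular at $w=1$ with uniformly controlled remainder. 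Your argument is cleaner and more conceptual; the paper's inductive proof has the advantage of being self-contained at the level of the polynomials $U_j,V_j$ and of making the combinatorial mechanism (the factor $1-w^2$) explicit, which is reused in the parallel Lemma~\ref{L} for $0\le q\le p-2$.
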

\begin{proof} We use induction on $j$. For $j=1$ 
\begin{align*}
\phi_{1,p-1}(\lambda) &= \dot l_1(\lambda) - l_1(\lambda) = V_1(\sqrt{-\lambda}) - U_1(\sqrt{-\lambda})\\
&=-\frac{1}{2}\frac{1}{(1-\lambda)^{\frac{1}{2}}}+\frac{1}{2}\frac{1}{(1-\lambda)^{\frac{3}{2}}},
\end{align*}
and hence $\phi_{1,p-1}(0) = 0$. Assume $\phi_{k,p-1}(0) = 0$ for $k=1,\ldots,j-1$. For simplicity, write $\phi$ as a function of $w= (1-\lambda)^{-\frac{1}{2}}$. Then, by definition 
\begin{align*}
\phi_{j,p-1}(w)= \dot l_j(w) - l_j(w)&\\
=V_j(w) - U_j(w) &+ \sum^{j-1}_{s=1} \frac{j-s}{j}\left(U_s(w)l_{j-s}(w) - V_s(w)\dot l_{j-s}(w)\right)\\
=V_j(w) - U_j(w) &+ \sum^{j-1}_{s=1} \frac{j-s}{j}\left(U_s(w)(l_{j-s}(w) - \dot l_{j-s}(w))\right)\\
                 &+\sum^{j-1}_{s=1} (1-w^2)\left(\frac{w}{2}U_{s-1}(w)+w^2\frac{d}{dw}U_{s-1}(w)\right)\dot l_{j-s}(w).
\end{align*}

Since $U_j(1)=V_j(1)$ for all $j$, and $\phi_{j-1,p-1}(w)= \dot l_{j-1}(w) - l_{j-1}(w)$,  using the induction's hypothesis, the thesis follows.

\end{proof}

\begin{corol}\label{c33} For all $j$, the Laurent expansion of the functions $\Phi_{2j+1,p-1}(s)$ at $s=0$ has coefficients: for $1\leq j \leq p-1$ \begin{align*}
\Rz_{s=0}\Phi_{2j+1,p-1}(s) &=2\sum^{3j+1}_{k=j+1} a_{j,p-1,k}\sum^{k}_{t=2}\frac{1}{2t-1} ,&\Ru_{s=0}\Phi_{2j+1,p-1}(s) &=0,\\
\end{align*}
and for $j=0$
\begin{align*}
\Rz_{s=0}\Phi_{1,p-1}(s) &=2a_{0,p-1,1}=1,&\Ru_{s=0}\Phi_{1,p-1}(s) &=0.\\
\end{align*}
\end{corol}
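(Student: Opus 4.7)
The plan is a direct calculation starting from the explicit polynomial representation of $\phi_{2j+1,p-1}(\lambda)$ in $w=(1-\lambda)^{-1/2}$ given by the preceding lemma, together with the vanishing $\phi_{2j+1,p-1}(0)=0$. First, I would substitute
\[
\phi_{2j+1,p-1}(\lambda)=\sum_{k} a_{j,p-1,k}\,(1-\lambda)^{-(2k+1)/2}
\]
into the definition (\ref{fi1}) of $\Phi_{2j+1,p-1}(s)$ and use linearity to reduce the problem to computing, for each half-integer exponent $\alpha=k+\tfrac{1}{2}$, the auxiliary function
\[
\Psi_\alpha(s)=\int_0^\infty t^{s-1}\,\frac{1}{2\pi i}\int_{\Lambda_{\theta,c}}\frac{e^{-\lambda t}}{-\lambda}(1-\lambda)^{-\alpha}\,d\lambda\,dt.
\]

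Next I would compute $\Psi_\alpha(s)$ in closed form. Deforming the inner $\lambda$-contour onto the branch cut of $(1-\lambda)^{-\alpha}$ along $[1,\infty)$ picks up the discontinuity $2i\sin(\pi\alpha)$, and Fubini turns the $t$-integral into a Gamma function via $\int_0^\infty t^{s-1}e^{-\lambda t}\,dt=\Gamma(s)\lambda^{-s}$. The remaining $\lambda$-integral on $[1,\infty)$, after the substitution $u=1/\lambda$, is a Beta function, producing an explicit ratio of Gamma functions. Consequently $\Psi_\alpha(s)$ has at most a simple pole at $s=0$ whose residue is a constant $C$ independent of $\alpha$ (it comes from the residue of the integrand at $\lambda=0$).

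Third, summing with weights $a_{j,p-1,k}$, the simple pole cancels by $\sum_k a_{j,p-1,k}=\phi_{2j+1,p-1}(0)=0$, so $\Ru_{s=0}\Phi_{2j+1,p-1}(s)=0$. For the constant term, the digamma values produced by the Laurent expansion of the Gamma quotients combine through the standard identity $\psi(k+\tfrac{1}{2})-\psi(\tfrac{3}{2})=2\sum_{t=2}^{k}\frac{1}{2t-1}$; the uniform $\psi(\tfrac{3}{2})$ piece is again killed by $\sum_k a_{j,p-1,k}=0$, which explains both the shift of the lower summation bound to $k=j+1$ and the vanishing of the $k=1$ term (where the inner sum collapses). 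This yields
\[
\Rz_{s=0}\Phi_{2j+1,p-1}(s)=2\sum_{k=j+1}^{3j+1} a_{j,p-1,k}\sum_{t=2}^{k}\frac{1}{2t-1}
\]
for $j\geq 1$. The case $j=0$ requires separate treatment because then the general formula degenerates (the inner sum is empty at $k=1$); it is verified by direct substitution of $\phi_{1,p-1}(\lambda)=\tfrac{1}{2}[(1-\lambda)^{-3/2}-(1-\lambda)^{-1/2}]$, using $\psi(\tfrac{3}{2})-\psi(\tfrac{1}{2})=2$, giving $\Rz_{s=0}\Phi_{1,p-1}(s)=2a_{0,p-1,1}=1$.

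The main obstacle will be the careful tracking of signs and branch conventions across the contour deformation, together with the identification of the precise digamma identity that collapses the constant term into the stated harmonic-like combinatorial form $\sum_{t=2}^{k}\frac{1}{2t-1}$ with the correct lower index.
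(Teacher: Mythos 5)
Your proposal follows essentially the same route as the paper: substitute the polynomial representation $\phi_{2j+1,p-1}(\lambda)=\sum_k a_{k}\,(1-\lambda)^{-(2k+1)/2}$ into equation (\ref{fi1}), reduce by linearity to the basic integral of $(1-\lambda)^{-a}$, identify this with a ratio of Gamma functions, Laurent-expand at $s=0$, and let the vanishing $\phi_{2j+1,p-1}(0)=0$ (equivalently $\sum_k a_k=0$) annihilate the pole and the $(-\gamma-2\log 2)$ part of $\psi(k+\tfrac12)$. The only structural difference is that you re-derive the auxiliary formula (\ref{e}) from the appendix (the Hankel-contour/Beta-function computation of $\frac{\Gamma(s+a)}{\Gamma(a)s}$), whereas the paper simply cites it from \cite{Spr3}; this is a harmless expansion and your contour deformation and substitution $u=1/\lambda$ do reproduce the same closed form.

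One small imprecision worth flagging: you attribute the shift of the outer summation to $k=j+1$ to the same digamma cancellation that produces $\sum_{t=2}^k$. That cancellation explains the $t=1$ term dropping from the inner sum (since $\psi(k+\tfrac12)-\psi(\tfrac32)=2\sum_{t=2}^k\tfrac1{2t-1}$, and the constant $\psi(\tfrac32)$ is killed by $\sum_k a_k=0$); but the lower bound on the outer sum is a separate matter, controlled by which coefficients $a_k$ are actually present in $\phi_{2j+1,p-1}$ and by the empty-sum convention for small $k$. The paper's own proof is terser still and does not spell this out either, so the gap is not specific to your argument; just be aware that the passage from $\sum_k a_k\,\psi(k+\tfrac12)$ to the stated range $k=j+1,\dots,3j+1$ needs the structural information from the preceding lemma on the polynomial degree range of $\phi_{2j+1,p-1}$, and is not a pure consequence of the digamma identity.
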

\begin{proof} The proof follows from the definition in equation (\ref{fi1}), classical results on the Euler Gamma function (see equation (\ref{residuo Gamma}) in the appendix), and calculation based on the formula (\ref{e}) in the appendix. See \cite{Har} for further details. 
The formula  for $j=0$ follows by explicit knowledge of the coefficients $a_{0,1}$.
\end{proof}

Next, we determine the terms $A_{0,0}(0)$ and $A'_{0,1}(0)$, defined in equation (\ref{fi2}). Using an upper dot to denote the ones relative to the sequence $\dot S_{p-1}$, we have the following result.

\begin{lem} \label{s3.l4}
\begin{align*}
\A_{0,0,p-1}(s)&=A_{0,0,p-1}(s)-\dot A_{0,0,p-1}(s)=0,\\
\A_{0,1,p-1}(s)&=A_{0,1,p-1}(s)- \dot A_{0,1,p-1}(s)
=\frac{1}{2}\zeta(2s, U_{p-1}).
\end{align*}
\end{lem}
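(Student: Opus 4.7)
The plan is to compute directly, from the explicit formulas of Lemma \ref{s3.l1}, each coefficient that enters the definition (\ref{fi2}) of $A_{0,0,p-1}(s)$ and $A_{0,1,p-1}(s)$ (and their dotted analogues), and then take the difference. A crucial preliminary observation is that, by the modified Theorem \ref{tt}, the subtraction $\sum_h b_{\sigma_h,\cdot,\cdot}\,u_n^{-\sigma_h}$ in (\ref{fi2}) is to be taken only over those indices $\sigma_h$ at which $\zeta(s,U_{p-1})$ actually has a pole. By Lemma \ref{lp1} these poles lie exactly at the odd positive integers $\sigma_h=1,3,\dots,2p-1$; the values $\sigma_h=-1$ and $\sigma_h=0$ must therefore be omitted from the sum. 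Keeping this bookkeeping straight is the only subtle part of the argument.

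For fixed $n$ and $|\lambda|\to\infty$, I would insert into the formulas of Lemma \ref{s3.l1} the classical large-$x$ expansions
\[
\log I_\nu(x)=x-\tfrac12\log(2\pi x)+O(x^{-1}),\qquad \log I'_\nu(x)=x-\tfrac12\log(2\pi x)+O(x^{-1}),
\]
(both valid for fixed $\nu$), with $x=\mu_{p-1,n}\sqrt{-\lambda}$. Reading off the coefficient of $(-\lambda)^0$ shows that the constant-in-$\lambda$ parts coincide for the two sequences, giving $a_{0,0,n}-\dot a_{0,0,n}=0$. Reading off the coefficient of $\log(-\lambda)$ one sees that the Bessel contributions are identical, and the only mismatch comes from the explicit terms $\mu_{p-1,n}\log\sqrt{-\lambda}$ versus $(\mu_{p-1,n}-1)\log\sqrt{-\lambda}$ in Lemma \ref{s3.l1}; this yields $a_{0,1,n}-\dot a_{0,1,n}=\tfrac12$.

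Next I would control the $b$-coefficients for $\sigma_h\in\{1,3,\dots,2p-1\}$. By Lemma \ref{s3.l2}, for each such $\sigma_h=j\geq 1$ the difference $\phi^{(S)}_{j}(\lambda)-\phi^{(\dot S)}_{j}(\lambda)$ equals $\phi_{j,p-1}(\lambda)$, and by the lemma preceding Corollary \ref{c33} this is an odd polynomial in $w=1/\sqrt{1-\lambda}$ starting at $w^{2j+1}$. Consequently it vanishes as $|\lambda|\to\infty$ and contributes no $(-\lambda)^0$ term nor $(-\lambda)^0\log(-\lambda)$ term. Hence
\[
b^{(S)}_{\sigma_h,0,0}-b^{(\dot S)}_{\sigma_h,0,0}=0,\qquad b^{(S)}_{\sigma_h,0,1}-b^{(\dot S)}_{\sigma_h,0,1}=0,\qquad \sigma_h=1,3,\dots,2p-1.
\]

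Substituting these vanishings and the two $a$-differences into (\ref{fi2}) gives at once
\[
\A_{0,0,p-1}(s)=\sum_{n\geq 1} m_{\mathrm{cex},p-1,n}\bigl[0-0\bigr]\mu_{p-1,n}^{-2s}=0,
\]
\[
\A_{0,1,p-1}(s)=\sum_{n\geq 1}m_{\mathrm{cex},p-1,n}\bigl[\tfrac12-0\bigr]\mu_{p-1,n}^{-2s}=\tfrac12\zeta(2s,U_{p-1}),
\]
which is the claim. The one place where care is essential is that if one mistakenly included $\sigma_h=0$ in the subtraction then the individual values $\phi^{(S)}_0(\lambda)=\tfrac14\log(1-\lambda)$ and $\phi^{(\dot S)}_0(\lambda)=-\tfrac14\log(1-\lambda)$ (read off from Olver's uniform expansions of $\log I_\nu(\nu z)$ and $\log I'_\nu(\nu z)$ together with Lemma \ref{s3.l1}) would contribute an additional $-\tfrac12$ and spuriously cancel the $a$-difference; this is precisely forbidden by Theorem \ref{tt} because $s=0$ is regular for $\zeta(s,U_{p-1})$.
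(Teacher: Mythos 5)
Your proposal is correct and follows essentially the same route as the paper's own proof: compute the large-$\lambda$ asymptotics of the two representations in Lemma \ref{s3.l1}, read off that $a_{0,0,n,p-1}=\dot a_{0,0,n,p-1}$ and $a_{0,1,n,p-1}-\dot a_{0,1,n,p-1}=\tfrac12$ (the mismatch coming from the $\mu\log\sqrt{-\lambda}$ versus $(\mu-1)\log\sqrt{-\lambda}$ terms), observe that the relevant $b$-coefficients for $\sigma_h\in\{1,3,\dots,2p-1\}$ drop out, and conclude. The paper establishes the vanishing of the $b$-coefficients individually (``$l_{2j-1}$ and $\dot l_{2j-1}$ have no constant term''), whereas you argue directly on the differences via the odd-polynomial structure of the $\phi_{j,p-1}$ from Lemma \ref{s3.l2}; these are equivalent. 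Your closing remark about the pitfall at $\sigma_h=0$ — that $\phi_0^{(S)}-\phi_0^{(\dot S)}=\tfrac12\log(1-\lambda)$ would spuriously cancel the $a$-difference if $s=0$ were mistakenly counted as a pole of $\zeta(s,U_{p-1})$ — is a worthwhile elucidation of why the Theorem \ref{tt} modification matters here, a point the paper handles implicitly by simply summing over $j=1,\dots,p$ in its display for $A_{0,0,p-1}(s)$ and $A_{0,1,p-1}(s)$.
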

\begin{proof} The defining equation (\ref{fi2}) reads 
\begin{align*}
A_{0,0,p-1}(s)&=\sum_{n=1}^\infty m_{{\rm cex},p-1,n}\left(a_{0, 0,n,p-1} -\sum_{j=1}^{p}b_{2j-1,0,0,p-1}\mu_{p-1,n}^{-2j+1}\right)\mu_{p-1,n}^{-2 s},\\
A_{0,1,p-1}(s)&=\sum_{n=1}^\infty m_{{\rm cex},p-1,n}\left(a_{0, 1,n,p-1} -\sum_{j=1}^{p}b_{2j-1,0,1,p-1}\mu_{p-1,n}^{-2j+1}\right)\mu_{p-1,n}^{-2 s}.
\end{align*}
in the present case. We need the expansion  of the functions $\log\Gamma(-\lambda, S_{p-1,n}/\mu_{p-1,n}^2)$,
$l_{2j-1}(\lambda)$, $\log\Gamma(-\lambda, \dot S_{p-1,n}/\mu_{p-1,n}^2)$, and $\dot l_{2j-1}(\lambda)$, for
$j=1,2,\ldots,p$, and large $\lambda$. Using classical expansions for the Bessel functions and their derivative and the formulas in equation
(\ref{form}), we obtain (see \cite{Har} for further details)
\begin{align*}
&a_{0,0,n,p-1}=\frac{1}{2}\log 2\pi+\left(\mu_{p-1,n}+\frac{1}{2}\right)\log\mu_{p-1,n}-\mu_{p-1,n}\log 2-\log\Gamma(\mu_{p-1,n}+1),\\
&a_{0,1,n,p-1}=\frac{1}{2}\left(\mu_{p-1,n}+\frac{1}{2}\right),\\
&b_{2j-1,0,0,p-1}=0,\hspace{5pt} b_{2j-1,0,1,p-1}=0 ,\hspace{5pt}j=1,2,\ldots p,
\end{align*}
note that $b_{2j-1,0,0,p-1}=0$ since $l_{2j-1}(\lambda)$ don't have constant term.
\begin{align*}
&\dot a_{0,0,n,p-1}=\frac{1}{2}\log 2\pi+\left(\mu_{p-1,n}+\frac{1}{2}\right)\log\mu_{p-1,n}-\mu_{p-1,n}\log 2-\log\Gamma(\mu_{p-1,n}+1),\\
&\dot a_{0,1,n,p-1}=\frac{1}{2}\left(\mu_{p-1,n}-\frac{1}{2}\right),\\
&\dot b_{2j-1,0,0,p-1}=0,\hspace{5pt} \dot b_{2j-1,0,1,p-1}=0 ,\hspace{5pt}j=1,2,\ldots p,
\end{align*}
and $\dot b_{2j-1,0,0,p-1}=0$ since $\dot l_{2j-1}(\lambda)$ don't have constant term. The thesis follows.

\end{proof}

We now have all the necessary information to apply Theorem \ref{t4} and its corollary. We obtain the following result, where we distinguish the {\it regular part} and the {\it singular part}, as in Remark \ref{rqr}.

\begin{prop}\label{pro2}  \begin{align*}
t_{p-1}(0)&=t_{p-1,{\rm reg}}(0)+t_{p-1,{\rm sing}}(0),\\
t_{p-1}'(0)&=t_{p-1,{\rm reg}}'(0)+t_{p-1,{\rm sing}}'(0),
\end{align*}
where
\begin{align*}
t_{p-1,{\rm reg}}(0)&=-\frac{1}{2}\zeta(0, U_{p-1})=-\frac{1}{2}\zeta_{\rm cex}\left(0,\tilde\Delta^{(p-1)}\right),\\
t_{p-1,{\rm sing}}(0)&=0,\\
t_{p-1,{\rm reg}}'(0)&=-\zeta'(0, U_{p-1})=-\frac{1}{2}\zeta_{\rm cex}'\left(0,\tilde\Delta^{(p-1)}\right),\\
t_{p-1,{\rm sing}}'(0)&=\frac{1}{2}\sum_{j=0}^{p-1}\Rz_{s=0}\Phi_{2j+1,q}(s)\Ru_{s=2j+1}\zeta(s,U_{p-1})\\
&=\frac{1}{2}\sum_{j=0}^{p-1}\Rz_{s=0}\Phi_{2j+1,q}(s)\Ru_{s=2j+1}\zeta_{\rm cex}\left(\frac{s}{2},\tilde\Delta^{(p-1)}\right).
\end{align*}
\end{prop}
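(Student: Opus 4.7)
My plan is to apply the linear combination version of the spectral decomposition theorem, namely Corollary~\ref{c}, to the difference
\[
t_{p-1}(s)=Z_{p-1}(s)-\dot Z_{p-1}(s)=\zeta(s,S_{p-1})-\zeta(s,\dot S_{p-1}).
\]
By Proposition~\ref{s3.p1}, both $S_{p-1}$ and $\dot S_{p-1}$ are spectrally decomposable over the common sequence $U_{p-1}$ with power $\kappa=2$, length $\ell=2p$, and asymptotic exponents $\sigma_h=h-1$, $h=0,\dots,2p$. Although the polynomials $P_{\rho}(\lambda)$ attached to each of the two sequences individually need not satisfy the condition $P_{\rho}(0)=0$ required in Definition~\ref{spdec}, Lemma~\ref{s3.l2} shows that in the difference every $\log\mu_{p-1,n}$ contribution cancels, so the hypothesis of Corollary~\ref{c} is satisfied and the formulas of Theorem~\ref{t4} apply to $t_{p-1}(s)$ with the functions $\phi_{j,p-1}(\lambda)$ defined in~\eqref{phi_p-1}.

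Next I would determine which of the terms appearing in those formulas actually survive. By Lemma~\ref{lp1}, the zeta function $\zeta(s,U_{p-1})=\zeta_{\rm cex}(s/2,\tilde\Delta^{(p-1)})$ has simple poles only at the odd integers $s=1,3,\dots,2p-1$, so by Theorem~\ref{tt} (see Remark~\ref{rrr}) the sum over $\sigma_h$ in the formulas of Theorem~\ref{t4} reduces to the $p$ terms $\sigma_h=2j+1$, $j=0,\dots,p-1$. Corollary~\ref{c33} then gives $\Ru_{s=0}\Phi_{2j+1,p-1}(s)=0$ for every such $j$. In the formula for $\zeta(0,S)$ this kills the entire $\Phi$-sum and produces $t_{p-1,{\rm sing}}(0)=0$, while in the formula for $\zeta'(0,S)$ both the $\gamma/\kappa$ sum and the primed sum vanish for the same reason. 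Only the middle sum $\frac{1}{\kappa}\sum_{h}\Rz_{s=0}\Phi_{\sigma_h}(s)\Ru_{s=\sigma_h}\zeta(s,U_{p-1})$ remains, and with $\kappa=2$ this is exactly the asserted expression for $t'_{p-1,{\rm sing}}(0)$.

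The regular part is what is left from $-\A_{0,1,p-1}(0)$ in $t_{p-1}(0)$ and from $-\A_{0,0,p-1}(0)-\A'_{0,1,p-1}(0)$ in $t'_{p-1}(0)$, in the sense of Remark~\ref{rqr}. Both quantities have already been identified in Lemma~\ref{s3.l4}: $\A_{0,0,p-1}(s)\equiv 0$ and $\A_{0,1,p-1}(s)=\tfrac{1}{2}\zeta(2s,U_{p-1})$. Substituting yields $t_{p-1,{\rm reg}}(0)=-\tfrac{1}{2}\zeta(0,U_{p-1})$ and $t'_{p-1,{\rm reg}}(0)=-\zeta'(0,U_{p-1})$, and Lemma~\ref{lp1} rewrites these in terms of $\zeta_{\rm cex}(s,\tilde\Delta^{(p-1)})$. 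The most delicate step, which I would verify carefully, is the legitimacy of invoking Corollary~\ref{c}: one must check not only that the logarithmic coefficients cancel in the difference, but also that the remainder $O(\mu_{p-1,n}^{-2p})$ in Lemma~\ref{s3.l2} is uniform in $\lambda\in D_{\theta,c}$, so that the formal manipulations of subtracted Gamma expansions inside the sum over $n$ remain convergent and can be interchanged with the residue extraction at $s=0$.
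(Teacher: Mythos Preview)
Your proposal is correct and follows essentially the same route as the paper's proof: apply Corollary~\ref{c} (via Proposition~\ref{s3.p1} and Lemma~\ref{s3.l2}) to the difference $Z_{p-1}-\dot Z_{p-1}$, use Remark~\ref{rrr} to restrict the singular sum to odd $\sigma_h=2j+1$, invoke Corollary~\ref{c33} to kill all $\Ru_{s=0}\Phi_{2j+1,p-1}(s)$ contributions, and read off the regular part from Lemma~\ref{s3.l4}. Your added remark about the uniformity of the remainder in Lemma~\ref{s3.l2} is a reasonable point of care, though the paper simply defers this to the cited expansions in \cite{Olv} and the general framework of \cite{Spr9}.
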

\begin{proof} By definition in equations (\ref{z1}) and (\ref{tq}),
\begin{align*}
t_{p-1}(0)=&Z_{p-1}(0)-\dot Z_{p-1}(0),\\
t_{p-1}'(0)=&Z_{p-1}'(0)-\dot Z_{p-1}'(0),\\
\end{align*}
where $Z_{p-1}(s)=\zeta(s,S_{p-1})$, and $\dot Z_{p-1}(s)=\zeta(s,\dot S_{p-1})$. By Proposition \ref{s3.p1} and Lemma \ref{s3.l2}, we can apply Theorem \ref{t4} and its Corollary to the difference of these double zeta functions. The regular part of $Z_{p-1}(0)-\dot Z_{p-1}(0)$ is then given in Lemma \ref{s3.l4}, while the singular part vanishes, since, by Corollary \ref{c33}, the residues of the functions $\Phi_{k,p-1}(s)$ at $s=0$ vanish. 
The regular part of $Z_{p-1}'(0)-\dot Z_{p-1}'(0)$ again follows by Lemma \ref{s3.l4}. For the singular part,   since by Proposition \ref{s3.p1}, $\kappa=2$, $\ell=2p$, and $\sigma_h=h-1$, with $0\leq h\leq 2p$, by Remark \ref{rrr} we need only the odd values of $h-1=2j+1$, $0\leq j\leq p-1$, and this gives the formula stated for $t_{p-1,{\rm sing}}'(0)$.
\end{proof}

\subsection{The functions $t_q(s)$, $0\leq q\leq p-2$}
\label{s4}

In this section we study the functions $t_q(s)$. For we apply Theorems \ref{tt} and \ref{t4} to the double sequences $S_{q}=\{m_{q,n}:j_{\mu_{q,n},k}^2\}_{n=1}^{\infty} $ and $S_{q,\pm}=\{m_{q,n}:j_{\mu_{q,n}\pm\alpha_q,k}^2\}_{n=1}^{\infty}$, since we have that $Z_q(s)=\zeta(s,S_q)$, $Z_{q,\pm}(s)=\zeta(s, S_{q,\pm})$, where $q=0,1,\ldots,p-2$, $\alpha_q =p-q-1$. 
Note that the sequence $S_q$ coincides with the sequence $S_{p-1}$ analysed in Section \ref{s3}, with $q=p-1$. So we just need to study the other two sequences.
First, we verify Definition \ref{spdec}. For we introduce the simple sequence 
$U_q=\{m_{q,n}:\mu_{q,n}\}_{n=1}^\infty$. 

\begin{lem}\label{lel} For all $0\leq q\leq p-1$, the sequence $U_{q}$ is a totally regular sequence of spectral type with infinite order, $\ec(U_{q})=\ge(U_{q})=2p-1$, and 
\[
\zeta(s,U_{q})=\zeta_{cex}\left(\frac{s}{2},\tilde\Delta^{(q)}+\alpha^2_q\right).
\] 

The  possible poles of $\zeta(s,U_{q})$ are at $s=2p-1-h$, $h=0,2,4,\dots$,  and the residues are completely determined by the residues of the function $\zeta_{cex}(s,\tilde\Delta^{(q)})$, namely:
\begin{align*}
\Ru_{s=2k+1} \zeta(s,U_q) &=  \sum^{p-1-k}_{j=0} \binom{-\frac{2k+1}{2}}{j} \Ru_{s=2(k+j)+1}\zeta_{cex}\left(\frac{s}{2},\tilde \Delta^{(q)}\right)\alpha_q^{2j}.
\end{align*}

\end{lem}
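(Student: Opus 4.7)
The plan is to reduce everything directly to the well-understood coexact Laplacian zeta function on the closed manifold $W$. From the definition of $\mu_{q,n}$ in Lemma \ref{l2}, $\mu_{q,n}^{-s}=(\lambda_{q,n}+\alpha_q^2)^{-s/2}$, and summing with the coexact multiplicities immediately yields
\[
\zeta(s,U_q)=\sum_{n=1}^\infty m_{{\rm cex},q,n}(\lambda_{q,n}+\alpha_q^2)^{-s/2}=\zeta_{\rm cex}\!\left(\tfrac{s}{2},\tilde\Delta^{(q)}+\alpha_q^2\right),
\]
which is the stated identification. By Proposition \ref{ss.l1}, the coexact spectrum of $\tilde\Delta^{(q)}$ on $W$ is a totally regular sequence of spectral type of infinite order with exponent $(2p-1)/2$, so the reparametrization $s\mapsto s/2$ together with the positive shift by $\alpha_q^2$ (which only multiplies the heat trace by the entire factor $\e^{-t\alpha_q^2}$) gives $\ec(U_q)=\ge(U_q)=2p-1$. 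The uniform expansion of $\log\Gamma(-\lambda,U_q)$ in a sector domain $D_{\theta,c}$ is inherited in the same way from the Mellin-transform argument underlying Proposition \ref{l3.1}, applied to the heat expansion (\ref{eegg}) of $\tilde\Delta^{(q)}+\alpha_q^2$.

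For the residue formula I would use a binomial expansion. For large $\lambda_{q,n}$,
\[
(\lambda_{q,n}+\alpha_q^2)^{-s/2}=\sum_{j=0}^{\infty}\binom{-s/2}{j}\alpha_q^{2j}\,\lambda_{q,n}^{-s/2-j},
\]
so, truncating the binomial series at an integer $N$ and absorbing the remainder into a function $H_N(s)$ holomorphic on a prescribed vertical strip, one obtains
\[
\zeta(s,U_q)=\sum_{j=0}^{N}\binom{-s/2}{j}\alpha_q^{2j}\,\zeta_{\rm cex}\!\left(\tfrac{s}{2}+j,\tilde\Delta^{(q)}\right)+H_N(s).
\]
Since $W$ is closed and odd dimensional, all heat coefficients $e_{q,h}$ with $h$ odd vanish, so by Proposition \ref{l3.1} the function $\zeta_{\rm cex}(s,\tilde\Delta^{(q)})$ has only simple positive poles at the half integers $s=k+\tfrac12$, $0\le k\le p-1$. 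The summand indexed by $j$ therefore contributes a simple pole to $\zeta(s,U_q)$ at $s=2k+1$ if and only if $k+j\le p-1$. Extracting the residue and using the chain-rule identity
\[
\Ru_{s=2k+1}\zeta_{\rm cex}\!\left(\tfrac{s}{2}+j,\tilde\Delta^{(q)}\right)=\Ru_{s=2(k+j)+1}\zeta_{\rm cex}\!\left(\tfrac{s}{2},\tilde\Delta^{(q)}\right)
\]
gives exactly the asserted formula.

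The only technical subtlety is the justification of the interchange between the binomial expansion and the sum over $n$, in particular its uniformity in $n$. I would handle this by splitting $U_q$ as a finite initial segment (trivially entire) plus a tail on which $\alpha_q^2/\lambda_{q,n}<\tfrac12$; on the tail, the double sum $\sum_{j>N}\binom{-s/2}{j}\alpha_q^{2j}\sum_n m_{{\rm cex},q,n}\lambda_{q,n}^{-s/2-j}$ is absolutely convergent whenever $\Re(s)/2+N+1>(2p-1)/2$, and thus defines $H_N(s)$ as a function holomorphic on strips arbitrarily far to the left. That is the only place where real care is required; the rest of the argument is bookkeeping with binomial coefficients.
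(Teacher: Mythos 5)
Your argument is correct and follows essentially the same route as the paper: identify $\zeta(s,U_q)$ with $\zeta_{\rm cex}(s/2,\tilde\Delta^{(q)}+\alpha_q^2)$ via $\mu_{q,n}=\sqrt{\lambda_{q,n}+\alpha_q^2}$, expand $(\lambda_{q,n}+\alpha_q^2)^{-s/2}$ binomially, and read off the residues from the known pole structure of $\zeta_{\rm cex}(s,\tilde\Delta^{(q)})$ given in Proposition \ref{l3.1}. The only difference is that you make explicit the justification for interchanging the binomial series with the spectral sum (splitting off a finite initial segment and truncating), which the paper leaves implicit.
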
 
\begin{proof} By definition $U_q=\{m_{{\rm cex},q,n}:\mu_{q,n}\}_{n=1}^\infty$, where by Lemmas \ref{l2} and \ref{l3}
\[
\mu_{q,n}=\sqrt{\lambda_{q,n}+\alpha_q^2},
\]
and the $\lambda_{q,n}$ are the eigenvalues of the operator $\tilde \Delta^{(q)}$ on the compact manifold $W$. Counting such eigenvalues according to multiplicity of the associated coexact eigenform, since the dimension of the eigenspace of $\lambda_{q,n}$ are finite, by Proposition \ref{ss.l1}, $\lambda_{q,n}\sim n^\frac{2}{m}$ for large $n$. This gives order and genus. Next, by definition
\begin{align*}
\zeta(s,U_q)&=\sum_{n=1}^\infty m_{{\rm cex},q,n}(\lambda_{q,n}+\alpha^2_q)^{-\frac{s}{2}}=\sum_{j=0}^\infty \binom{-\frac{s}{2}}{j} \sum_{n=1}^\infty m_{{\rm cex},q,n}\lambda_{q,n}^{-\frac{s}{2}-j}\alpha_q^{2j}\\
&=\sum_{j=0}^\infty \binom{-\frac{s}{2}}{j} \zeta_{cex}\left(\frac{s}{2}+j,\tilde\Delta^{(q)}\right)\alpha_q^{2j}=\zeta_{cex}\left(\frac{s}{2},\tilde\Delta^{(q)}\right)-\frac{s}{2}\zeta_{cex}\left(\frac{s}{2}+1,\tilde\Delta^{(q)}\right)\alpha_q^2+\dots.
\end{align*}

The last statement follows by Proposition \ref{l3.1}.

\end{proof}

The analysis of the double sequences $S_{q,\pm}$ is little bit harder than that of the sequences of the previous Section \ref{s3}, since now the elements of the sequences are not multiple of zeros of Bessel functions (and their derivative). However, they are the zeros of some linear combinations of Bessel functions and their derivative, and this makes possible the following analysis.

For $c\in \C$, let define the functions
\[
\hat J_{\nu,c}(z)=c J_{\nu}(z)+zJ'_\nu(z).
\]

Recalling the series definition of the Bessel function
\[
J_\nu(z)=\frac{z^\nu}{2^\nu}\sum_{k=0}^\infty \frac{(-1)^kz^{2k}}{2^{2k}k!\Gamma(\nu+k+1)},
\]
we obtain that near $z=0$
\[
\hat J_{\nu,c}(z) =\left(1+\frac{c}{\nu}\right) \frac{z^\nu}{2^\nu\Gamma(\nu)}.
\]

This means that the function $z^{-\nu} \hat J_{\nu,c}(z)$ is an even function of
$z$. Let $\hat j_{\nu,c,k}$ be the positive zeros of $\hat J_{\nu,c}(z)$ arranged in increasing order. By
the Hadamard factorization theorem, we have the product expansion
\[
z^{-\nu} \hat J_{\nu(z),c}=z^{-\nu} \hat
J_{\nu,c}(z){\prod_{k=-\infty}^{+\infty}}\left(1-\frac{z}{\hat j_{\nu,c,k}}\right),
\]
and therefore
\[
\hat J_{\nu,c}(z)=\left(1+\frac{c}{\nu}\right)\frac{z^\nu}{2^\nu\Gamma(\nu)}
\prod_{k=1}^{\infty}\left(1-\frac{z^2}{\hat j^2_{\nu,c,k}}\right).
\]

Next,  recalling that (when $-\pi<\arg(z)<\frac{\pi}{2}$)
\begin{align*}
J_\nu(iz)&=\e^{\frac{\pi}{2}i\nu} I_\nu(z),\\
J'_\nu(iz)&=\e^{\frac{\pi}{2}i\nu}\e^{-\frac{\pi}{2}i} I'_\nu(z),\\
\end{align*}
we obtain
\[
\hat J_{\nu,c}(iz)=\e^{\frac{\pi}{2}i\nu}\left(c I_\nu(z)+zI'_\nu(z)\right).
\]

Thus, we define (for $-\pi<\arg(z)<\frac{\pi}{2}$)
\begin{equation}\label{pop}
\hat I_{\nu,c}(z)=\e^{-\frac{\pi}{2}i\nu} \hat J_{\nu,c}(i z),
\end{equation} 
and hence 
\begin{align}\label{s4.e1}
\hat I_{\nu,\pm\alpha_q}(z)&=\pm\alpha_q I_\nu(z)+zI'_\nu(z)=\left(1\pm\frac{\alpha_q}{\nu}\right)\frac{z^\nu}{2^\nu\Gamma(\nu)}
\prod_{k=1}^{\infty}\left(1+\frac{z^2}{\hat j^2_{\nu,\pm\alpha_q,k}}\right).
\end{align}

Recalling the definition in equation (\ref{gamma}) we have proved the following fact.

\begin{lem}\label{s4.l1} The logarithmic Gamma functions associated to the sequences
$S_{q,\pm,n}$ have the following representations, when $\lambda\in D_{\theta,c'}$, with $c'=\frac{1}{2}{\rm min}(j_{\mu_{q,0}}^2,j_{\mu_{q,0},\pm\alpha_q}^2)$,
\begin{align*}
\log \Gamma(-\lambda,S_{q,\pm,n})=&-\log\prod_{k=1}^\infty \left(1+\frac{(-\lambda)}{\hat j_{\mu_{q,n},\pm\alpha_q,k}^2}\right)\\
=&-\log \hat I_{\mu_{q,n},\pm\alpha_q}(\sqrt{-\lambda})+\mu_{q,n}\log\sqrt{-\lambda}-\mu_{q,n}\log 2\\
&-\log\Gamma(\mu_{q,n})+\log\left(1\pm\frac{\alpha_q}{\mu_{q,n}}\right).
\end{align*}

\end{lem}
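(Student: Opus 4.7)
The plan is to derive both equalities directly from the definition of the Gamma function of a sequence of spectral type, equation (\ref{gamma}), together with the Hadamard factorization of $\hat I_{\nu,\pm\alpha_q}$ already recorded in (\ref{s4.e1}). In this sense the lemma is an algebraic rearrangement rather than a new result, but some care is required with the genus and with branches.

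For the first equality, I would verify that the sequence $S_{q,\pm,n}=\{\hat j^2_{\mu_{q,n},\pm\alpha_q,k}\}_{k=1}^\infty$ has genus zero. The standard large-$k$ asymptotics $\hat j_{\nu,c,k}=k\pi+O(1)$ for the positive zeros of $cJ_\nu(z)+zJ'_\nu(z)$ imply $\hat j^2_{\mu_{q,n},\pm\alpha_q,k}\sim(k\pi)^2$, so the exponent of convergence of $S_{q,\pm,n}$ is $\frac{1}{2}$ and $\sum_k \hat j^{-2}_{\mu_{q,n},\pm\alpha_q,k}$ converges absolutely. Hence $\ge(S_{q,\pm,n})=0$, the canonical product (\ref{gamma}) carries no exponential correction factor, and its logarithm is exactly the first displayed identity.

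For the second equality, I would take the logarithm of (\ref{s4.e1}) to obtain
\[
\log\hat I_{\mu_{q,n},\pm\alpha_q}(z)=\log\!\left(1\pm\frac{\alpha_q}{\mu_{q,n}}\right)+\mu_{q,n}\log z-\mu_{q,n}\log 2-\log\Gamma(\mu_{q,n})+\sum_{k=1}^\infty\log\!\left(1+\frac{z^2}{\hat j^2_{\mu_{q,n},\pm\alpha_q,k}}\right),
\]
then set $z=\sqrt{-\lambda}$ so that $z^2=-\lambda$, and solve for the infinite sum on the right. Combining with the previous paragraph produces exactly the stated second formula. Note that since $0\le q\le p-2$ one has $\alpha_q=q-p+1\ne 0$, and for $n\ge 1$ one has $\lambda_{q,n}>0$, whence $\mu_{q,n}>|\alpha_q|\ge 1$, so both $\Gamma(\mu_{q,n})$ and $1\pm\alpha_q/\mu_{q,n}$ are finite and non-zero.

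The only non-routine point, and therefore the main obstacle, is the bookkeeping of branches. The function $\hat I_{\nu,c}$ is defined in (\ref{pop}) through $\hat J_{\nu,c}(iz)$ under the restriction $-\pi<\arg(iz)<\pi/2$, and this in turn fixes the branch of $z^\nu=(\sqrt{-\lambda})^{\mu_{q,n}}$ used in the factorization. The hypothesis $\lambda\in D_{\theta,c'}$ with $c'=\frac{1}{2}\min(j_{\mu_{q,0}}^2,\hat j_{\mu_{q,0},\pm\alpha_q}^2)$ guarantees that $\sqrt{-\lambda}$ stays away from the imaginary axis where $\hat I_{\mu_{q,n},\pm\alpha_q}$ has its zeros and that no individual factor $1+(-\lambda)/\hat j^2_{\mu_{q,n},\pm\alpha_q,k}$ crosses the negative real axis, so every logarithm appearing in the identity is unambiguously defined and the term-by-term rearrangement is legitimate. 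Once this bookkeeping is in place, nothing further is needed.
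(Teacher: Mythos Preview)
Your proposal is correct and follows essentially the same approach as the paper: the paper derives the lemma directly from the Hadamard factorization (\ref{s4.e1}) combined with the definition (\ref{gamma}), and simply states ``Recalling the definition in equation (\ref{gamma}) we have proved the following fact.'' You have supplied the details the paper leaves implicit, in particular the verification that $\ge(S_{q,\pm,n})=0$ so that the canonical product carries no exponential factors, and the branch bookkeeping; these are exactly the points one needs to check, and your treatment of them is sound.
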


\begin{prop}\label{s4.p1}  The double sequences $S_{q,\pm}$  have relative exponents $\left(p,\frac{2p-1}{2},\frac{1}{2}\right)$, relative genus $(p,p-1,0)$, and are is spectrally decomposable over $U_{q}$ with power $\kappa=2$, length $\ell=2p$ and domain $D_{\theta,c'}$. The coefficients $\sigma_h$ appearing in equation (\ref{exp}) are $\sigma_h=h-1$, with $h=1,2,\dots,\ell=2p$. 
\end{prop}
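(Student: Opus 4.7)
The plan is to follow the template of Proposition 5.1, adapting the asymptotic analysis to accommodate the fact that the elements of $S_{q,\pm}$ are squares of zeros of $\hat J_{\mu_{q,n},\pm\alpha_q}(z) = \pm\alpha_q J_{\mu_{q,n}}(z) + zJ'_{\mu_{q,n}}(z)$, rather than of $J_{\mu_{q,n}}$ or $J'_{\mu_{q,n}}$. For the relative exponents $(p,(2p-1)/2,1/2)$ and genera $(p,p-1,0)$, I would argue exactly as in Proposition 5.1: the classical interlacing $j_{\mu_{q,n},k} < \hat j_{\mu_{q,n},\pm\alpha_q,k} < j_{\mu_{q,n},k+1}$ (up to a shift in $k$), which follows from Sturm--Liouville oscillation and the factorization in \eqref{s4.e1}, ensures that the large-$k$ asymptotics of the zeros match those for $J_{\mu_{q,n}}$. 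Combined with the Weyl estimate $\lambda_{q,n}\sim n^{2/(2p-1)}$ of Proposition \ref{ss.l1} and the Young inequality / Plana summation argument from \cite{Spr0}, this yields the stated exponents and genera.

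For spectral decomposability, I would start from the rescaled form of Lemma \ref{s4.l1},
\[
\log\Gamma(-\lambda, S_{q,\pm,n}/\mu^2) = -\log\hat I_{\mu,\pm\alpha_q}(\mu\sqrt{-\lambda}) + \mu\log(\mu\sqrt{-\lambda}) - \mu\log 2 - \log\Gamma(\mu) + \log\!\bigl(1\pm\tfrac{\alpha_q}{\mu}\bigr),
\]
where $\mu=\mu_{q,n}$. The fact that each $S_{q,\pm,n}/\mu^2$ is totally regular of spectral type of infinite order follows from the usual large-$\lambda$ asymptotics of $\hat I_{\mu,\pm\alpha_q}$, themselves consequences of those for $I_\mu$ and $I'_\mu$. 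For the uniform expansion in $n$ required by condition (2) of Definition \ref{spdec}, I would insert the Olver uniform expansions of $I_\nu(\nu z)$ and $I'_\nu(\nu z)$ (as recalled in the proof of Lemma \ref{s3.l2}) into the identity $\hat I_{\mu,\pm\alpha_q}(\mu z) = \pm\alpha_q I_\mu(\mu z) + \mu z\,I'_\mu(\mu z)$, factor out the dominant $\mu(1+z^2)^{1/4}e^{\mu\eta}/\sqrt{2\pi\mu}$, and take the logarithm, expanding with $\log(1+\sum a_j/\mu^j) = \sum l_j/\mu^j$ exactly as in Lemma \ref{s3.l2}. Combining with Stirling for $\log\Gamma(\mu)$ and with the Taylor series of $\log(1\pm\alpha_q/\mu)$, one obtains an expansion in inverse powers of $\mu$ in which the $\mu\log\mu$ and $\log\mu$ contributions cancel identically among $-\log\hat I$, $\mu\log\mu$, and $-\log\Gamma(\mu)$. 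Consequently no genuine $P_{\rho_l}\log\mu$ term appears, and the polynomial condition $P_{\rho_l}(0)=0$ is vacuously satisfied. The resulting expansion is indexed by $\sigma_h=h-1$; the length $\ell=2p$ is forced by $\ec(U_q)=2p-1$ (the largest integer with $\sigma_\ell\leq 2p-1$), and $\kappa=2$ is inherited from the scaling of $\hat j_{\mu,\pm\alpha_q,k}$ in the large-$\mu$ regime.

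The main technical obstacle is purely mechanical bookkeeping. Unlike Proposition \ref{s3.p1}, where passing to the difference $S_{p-1}-\dot S_{p-1}$ produced a near-complete cancellation (leaving only $\tfrac{1}{2}\log(1-\lambda)$), here no such symmetry is available and the coefficient functions $\phi_{\sigma_h,q,\pm}(\lambda)$ must be extracted term by term using the Olver recursions for $U_j$ and $V_j$. Specifically, the correction $\pm\alpha_q/(\mu\sqrt{1+z^2})$ appearing after factoring out the dominant term mixes the two Olver families $U_j$ and $V_j$, so the $\phi_{\sigma_h,q,\pm}$ are genuinely new and will not individually vanish at $\lambda=0$; this is irrelevant for Definition \ref{spdec} but will matter in Section \ref{s5}, where these coefficients enter the computation of the singular part of $t_q'(0)$.
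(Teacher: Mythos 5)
Your argument is correct and is in substance identical to the paper's own proof, which consists of a single line ("the proof is the same of the one of Proposition \ref{s3.p1}") and therefore implicitly invokes exactly the ingredients you spell out: interlacing of zeros for exponents and genera, the Olver uniform expansions inserted into $\hat I_{\mu,\pm\alpha_q}(\mu z)=\pm\alpha_q I_\mu(\mu z)+\mu z I'_\mu(\mu z)$, Stirling for $\log\Gamma(\mu)$, and the bound $\ec(U_q)=2p-1$ to fix $\ell=2p$. Your explicit verification that the $\mu\log\mu$ and $\log\mu$ contributions cancel (so that the $P_{\rho_l}$ condition in Definition \ref{spdec} is vacuous for the individual sequences $S_{q,\pm}$) is a useful point the paper leaves implicit, and your closing remark correctly anticipates why Corollary \ref{c} and the vanishing $\phi_{j,q}(0)=0$ (Lemma \ref{L}) are needed later for the combination $2S_q-S_{q,+}-S_{q,-}$ rather than for decomposability itself.
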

\begin{proof} The proof is the same of the one of Proposition \ref{s3.p1}.

\end{proof}

\begin{rem}\label{rrr4} By Theorem \ref{tt}, only the term with
$\sigma_h=1$, $\sigma_h=3$, $\ldots$, $\sigma_h=2p-1$ namely $h=2,4,\ldots,2p$, appear in the formula of Theorem
\ref{t4}, since the unique poles of $\zeta(s,U_{q})$ are at $s=1$, $s=3$, $\ldots$\  $s=2p-1$. 
\end{rem}

Since we aim to apply the version of Theorem \ref{t4} given in Corollary \ref{c}, for linear combination of two spectrally decomposable sequences, we inspect directly the uniform asymptotic expansion of $2 S_{q}-S_{q,-}-S_{q,+}$. This give the functions $\phi_{\sigma_h}$.

\begin{lem}\label{s4.l2} We have the  the following  asymptotic expansions for large $n$, uniform in $\lambda$, for $\lambda\in D_{\theta,c'}$,
\begin{align*}
2\log&\Gamma(-\lambda,S_{q,n}/\mu_{q,n}^2) - \log \Gamma(-\lambda,S_{q,+,n}/\mu_{q,n}^2) - \log
\Gamma(-\lambda,S_{q,-,n}/\mu_{q,n}^2) \\
=&-2\log I_{\mu_{q,n}}(\mu_{q,n}\sqrt{-\lambda})+\log \hat I_{\mu_{q,n},\alpha_q}(\mu_{q,n}\sqrt{-\lambda})+\log
\hat I_{\mu_{q,n,-\alpha_q}}(\mu_{q,n}\sqrt{-\lambda})\\&- 2\log
\mu_{q,n}-\log\left(1-\frac{\alpha_q^{2}}{\mu_{q,n}^2}\right)\\
=&\log(1-\lambda)+\sum^{2p-1}_{j=1}\phi_{j,q}(\lambda)\frac{1}{\mu_{q,n}^j}+O\left(\frac{1}{(\mu_{q,n})^{2p}}\right).
\end{align*}
\end{lem}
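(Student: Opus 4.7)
The plan is to mirror closely the argument used in Lemma \ref{s3.l2}, the main new feature being the presence of both sign variants $\hat I_{\mu_{q,n},\pm\alpha_q}$ which combine multiplicatively into a simpler expression.

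First I would establish the opening equality. Lemma \ref{s4.l1} gives a representation for $\log\Gamma(-\lambda,S_{q,\pm,n})$ in terms of $\hat I_{\mu_{q,n},\pm\alpha_q}(\sqrt{-\lambda})$; rescaling the argument by $\mu_{q,n}^2$ produces
\[
\log\Gamma(-\lambda,S_{q,\pm,n}/\mu_{q,n}^2)
=-\log\hat I_{\mu_{q,n},\pm\alpha_q}(\mu_{q,n}\sqrt{-\lambda})+\mu_{q,n}\log(\mu_{q,n}\sqrt{-\lambda})-\mu_{q,n}\log 2-\log\Gamma(\mu_{q,n})+\log\!\left(1\pm\tfrac{\alpha_q}{\mu_{q,n}}\right),
\]
and similarly (by the same computation as in Lemma \ref{s3.l1}) for $S_{q,n}/\mu_{q,n}^2$ using $I_{\mu_{q,n}}$ and $\Gamma(\mu_{q,n}+1)$. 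When one forms the linear combination $2\log\Gamma(-\lambda,S_{q,n}/\mu_{q,n}^2)-\log\Gamma(-\lambda,S_{q,+,n}/\mu_{q,n}^2)-\log\Gamma(-\lambda,S_{q,-,n}/\mu_{q,n}^2)$, the $\mu_{q,n}\log(\mu_{q,n}\sqrt{-\lambda})$ and $\mu_{q,n}\log 2$ terms cancel, $-2\log\Gamma(\mu_{q,n}+1)+2\log\Gamma(\mu_{q,n})=-2\log\mu_{q,n}$, and $\log(1+\alpha_q/\mu_{q,n})+\log(1-\alpha_q/\mu_{q,n})=\log(1-\alpha_q^2/\mu_{q,n}^2)$, producing the first equality in the lemma.

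Next I would use the key algebraic identity
\[
\hat I_{\nu,+\alpha}(w)\cdot \hat I_{\nu,-\alpha}(w)=\bigl(w I'_\nu(w)\bigr)^{2}-\alpha^{2}I_\nu(w)^{2},
\]
which follows directly from $\hat I_{\nu,\pm\alpha}=\pm\alpha I_\nu+w I'_\nu$. Applying it with $w=\mu_{q,n}\sqrt{-\lambda}$ and combining with $-2\log I_{\mu_{q,n}}(w)$ gives
\[
\log\hat I_{\mu_{q,n},+\alpha_q}(w)+\log\hat I_{\mu_{q,n},-\alpha_q}(w)-2\log I_{\mu_{q,n}}(w)
=\log\!\left[(\mu_{q,n}\sqrt{-\lambda})^{2}\frac{I'_{\mu_{q,n}}(w)^{2}}{I_{\mu_{q,n}}(w)^{2}}-\alpha_q^{2}\right].
\]
Now insert Olver's uniform expansions (as recalled in the proof of Lemma \ref{s3.l2}) for $I_\nu(\nu z)$ and $I'_\nu(\nu z)$ with $\nu=\mu_{q,n}$, $z=\sqrt{-\lambda}$: the ratio $(I'_\nu/I_\nu)^{2}$ has leading factor $(1+z^{2})/z^{2}$, so multiplying by $(\nu z)^{2}$ yields $\nu^{2}(1+z^{2})$ times a series in $1/\nu$ whose constant term is $1$. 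Factoring $\nu^{2}(1-\lambda)$ outside the bracket produces $2\log\mu_{q,n}+\log(1-\lambda)$; the first piece cancels the $-2\log\mu_{q,n}$ from the preceding step, leaving $\log(1-\lambda)$ as the constant asymptotic term.

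For the remaining series, expand the logarithm using the same recursion $l_j=a_j-\sum_{k=1}^{j-1}\frac{j-k}{j}a_kl_{j-k}$ as in Lemma \ref{s3.l2}; the contribution from $-\log(1-\alpha_q^{2}/\mu_{q,n}^{2})$ and from the $-\alpha_q^{2}/(\nu^{2}(1-\lambda))$ correction inside the bracket are both $O(1/\nu^{2})$ and merge with the Olver-driven $U_j, V_j$ contributions to define coefficients $\phi_{j,q}(\lambda)$, rational in $w=(1-\lambda)^{-1/2}$, for $1\le j\le 2p-1$, with a uniform remainder $O(\mu_{q,n}^{-2p})$ on $D_{\theta,c'}$. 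The hardest point to monitor, and the conceptually important one, is verifying that \emph{no} $\log\mu_{q,n}$ terms survive: this is essential so that Corollary \ref{c} applies to the combination $2S_q-S_{q,+}-S_{q,-}$. All log-terms track back to the $-2\log\mu_{q,n}$ from $\Gamma(\mu_{q,n}+1)/\Gamma(\mu_{q,n})$ and to $2\log\nu$ arising from pulling $\nu^{2}$ out of the bracket, and these cancel exactly by construction; in the Olver expansions themselves, only powers of $1/\nu$ (not $\log\nu$) appear. This cancellation is the delicate step, but once it is checked the stated formula is complete.
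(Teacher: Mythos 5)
Your proposal is correct and establishes the lemma, but it takes a genuinely different algebraic route from the paper's. The paper applies Olver's uniform expansions to $I_\nu(\nu z)$ and $I'_\nu(\nu z)$ separately, derives an expansion for $\hat I_{\nu,\pm\alpha_q}(\nu z)$ of the form $\sqrt{\nu}(1+z^2)^{1/4}\frac{\e^{\nu\eta}}{\sqrt{2\pi}}\left(1+\sum_j W_{\pm\alpha_q,j}(z)\nu^{-j}+O(\nu^{-2p})\right)$ with $W_{\pm\alpha_q,j}(z)=V_j(z)\pm\frac{\alpha_q}{\sqrt{1+z^2}}U_j(z)$, then takes logarithms of each of the four Gamma functions individually and defines the $\phi_{j,q}$ explicitly in terms of the log-expansion coefficients $l_j,\,l_j^{\pm}$ (equation \eqref{phi_q}). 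You instead use the factorization $\hat I_{\nu,+\alpha}\hat I_{\nu,-\alpha}=(wI'_\nu)^2-\alpha^2 I_\nu^2$ to reduce everything to the single ratio $I'_\nu/I_\nu$ and expand one logarithm; your extraction of $\log(1-\lambda)$ and your cancellation of the $\log\mu_{q,n}$ terms are both correct, and you correctly identify the absence of logarithmic terms as the crucial structural point enabling Corollary \ref{c}. The trade-off is that your formulation is algebraically leaner, but the paper's explicit definition \eqref{phi_q} of $\phi_{j,q}$ in terms of $l_j,\,l^\pm_j$ is invoked directly in the proofs of Lemmas \ref{l14}, \ref{L}, \ref{l15} and Corollary \ref{c44}; to recover those structural facts (that $\phi_{j,q}$ is an odd \emph{polynomial} in $w=(1-\lambda)^{-1/2}$, that $\phi_{j,q}(0)=0$, and the recursion) from your product formulation would require unpacking the ratio back into the individual $l_j,\,l_j^{\pm}$. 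One minor imprecision to watch: you describe $\phi_{j,q}$ as rational in $w$, but they are in fact polynomial (Lemma \ref{l14}), and polynomiality is used for the residue computations downstream.
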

\begin{proof} Using the representations given in Lemmas \ref{s3.l2} and \ref{s4.l1}, we obtain 
\begin{align*}
2\log\Gamma(-\lambda,S_{q,n}/\mu_{q,n}^2)& - \log \Gamma(-\lambda,S_{q,+,n}/\mu_{q,n}^2) - \log
\Gamma(-\lambda,S_{q,-,n}/\mu_{q,n}^2)\\
=&-2\log I_{\mu_{q,n}}(\mu_{q,n}\sqrt{-\lambda})+\log \hat I_{\mu_{q,n},\alpha_q}(\mu_{q,n}\sqrt{-\lambda})+\log \hat
I_{\mu_{q,n,-\alpha_q}}(\mu_{q,n}\sqrt{-\lambda})\\&- 2\log
\mu_{q,n}-\log\left(1-\frac{\alpha_q^{2}}{\mu_{q,n}^2}\right).
\end{align*}

Using the expansion given in Lemma \ref{s3.l2} for  $I_\nu(\nu z)$ and $I'_{\nu}(\nu z)$, we obtain the following expansion for $\hat I_{\nu,\pm \alpha_q}(\nu z)$,
\begin{align*}
\hat I_{\nu,\pm \alpha_q}(\nu z) &= \pm\alpha_q I_{\nu}(\nu z) + \nu z I'_\nu(\nu z)\\
&=\sqrt{\nu}(1+z^2)^\frac{1}{4}\frac{\e^{\nu\sqrt{1+z^2}}\e^{\nu\log\frac{z}{1+\sqrt{1+z^2}}}}{\sqrt{2\pi
}}\left(1+\sum_{j=1}^{2p-1}W_{{\pm\alpha_q},j}(z)\frac{1}{\nu^{j}} + O\left(\frac{1}{\nu^{2p}}\right)\right),
\end{align*} 
where $W_{\pm\alpha_q,j}(z) = V_j(z) \pm \frac{\alpha_q}{\sqrt{1+z^2}}U_j(z)$. Thus,
\begin{align*}
\log \hat I_{\nu,\pm \alpha_q}(\nu z) =& \nu \sqrt{1+z^{2}} + \nu \log z - \nu \log(1+\sqrt{1+z^{2}}) + \log \nu +
\frac{1}{4}\log(1+z^{2})\\
&- \frac{1}{2} \log2\pi\nu + \log\left(1+\sum_{j=1}^{2p-1}W_{{\pm\alpha_q},j}(z)\frac{1}{\nu^{j}} +
O\left(\frac{1}{\nu^{2p}}\right)\right).
\end{align*}

This gives,
\begin{align*}
&2\log\Gamma(-\lambda,S_{q,n}/\mu_{q,n}^2) - \log \Gamma(-\lambda,S_{q,+,n}/\mu_{q,n}^2) - \log
\Gamma(-\lambda,S_{q,-,n}/\mu_{q,n}^2)\\
&= \log(1-\lambda)-2\log\left(1+\sum_{j=1}^{2p-1}\frac{U_{j}(\sqrt{-\lambda})}{\mu_{q,n}^{j}}+O\left(\frac{1}{\mu_{q,n}^{2p}}\right)\right)\\
& +
\log\left(1+\sum_{j=1}^{2p-1} \frac{W_{+\alpha_q,j}(\sqrt{-\lambda})}{\mu_{q,n}^{j}} + O\left(\frac{1}{\mu_{q,n}^{2p}}\right)\right)+\log\left(1+\sum_{j=1}^{2p-1} \frac{W_{-\alpha_q,j}(\sqrt{-\lambda})}{\mu_{q,n}^{j}} + O\left(\frac{1}{\mu_{q,n}^{2p}}\right)\right).
\end{align*}

Using the same expansion for the logarithm as in the proof of Lemma \ref{s3.l2}
\begin{align*}
2\log\Gamma(-\lambda,S_{q,n}/\mu_{q,n}^2) &- \log \Gamma(-\lambda,S_{q,+,n}/\mu_{q,n}^2) - \log
\Gamma(-\lambda,S_{q,-,n}/\mu_{q,n}^2)\\
=& \log(1-\lambda)+\sum_{j=1}^{p} \left(-2 l_{2j-1}(\lambda) + l^{+}_{2j-1}(\lambda) +
l^{-}_{2j-1}(\lambda)\right)\frac{1}{\mu_{q,n}^{2j-1}}\\
&+\sum_{j=1}^{p-1} \left(-2 l_{2j}(\lambda) + l^{+}_{2j}(\lambda) + l^{-}_{2j}(\lambda) +
\frac{\alpha_q^{2j}}{j}\right)\frac{1}{\mu_{q,n}^{2j}} +O\left(\frac{1}{\mu_{q,n}^{2p}}\right),
\end{align*} 
where we denote by $l_j(\lambda)$ the term in the expansion relative to the sequence $ S$ (thus the one containing the  $U_j(z)$) and by   $l^{\pm}_j(\lambda)$ the terms relative to $S_\pm$ (thus the ones containing the $W_{\pm\alpha_q,j}(z)$). Setting
\begin{equation}\label{phi_q}
\begin{aligned}
\phi_{2j-1,q}(\lambda) &=-2 l_{2j-1}(\lambda) + l^{+}_{2j-1}(\lambda) + l^{-}_{2j-1}(\lambda) \\
\phi_{2j,q}(\lambda) &=  -2 l_{2j}(\lambda) + l^{+}_{2j}(\lambda) + l^{-}_{2j}(\lambda)+ \frac{\alpha^{2j}_q}{j},
\end{aligned}
\end{equation}
the result follows.

\end{proof}

\begin{rem} Note that there are no logarithmic terms $\log\mu_{q,n}$ in the asymptotic expansion of the difference of the logarithmic Gamma function given in Lemma \ref{s4.l2}. This permits to apply Corollary \ref{c}.
\end{rem}

Next, we give some results on the functions $\phi_{j,q}(\lambda)$, and on the functions $\Phi_{j,q}(s)$ defined in equation (\ref{fi1}).

\begin{lem}\label{l14} For all $j$ and all $0\leq q\leq p-2$, the functions $\phi_{j,q}(\lambda)$ are odd polynomial in $w=\frac{1}{\sqrt{1-\lambda}}$  
\begin{align*}
\phi_{2j-1,q}(\lambda) &=  \sum^{2j-1}_{k=0}a_{2j-1,q,k}w^{2k+2j-1},\\
\phi_{2j,q}(\lambda) &=  \sum^{2j}_{k=0}a_{2j,q,k}w^{2k+2j}+\frac{\alpha_q^{2j}}{j}.
\end{align*}

The coefficients $a_{j,q,k}$ are completely determined by the coefficients of the expansion given in Lemma \ref{s4.l2}.
\end{lem}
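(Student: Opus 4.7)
The plan is to prove the statement by direct expansion starting from the explicit definition
\begin{align*}
\phi_{2j-1,q}(\lambda) &= -2 l_{2j-1}(\lambda) + l^{+}_{2j-1}(\lambda) + l^{-}_{2j-1}(\lambda),\\
\phi_{2j,q}(\lambda) &= -2 l_{2j}(\lambda) + l^{+}_{2j}(\lambda) + l^{-}_{2j}(\lambda) + \frac{\alpha_q^{2j}}{j},
\end{align*}
given in (\ref{phi_q}) in the proof of Lemma~\ref{s4.l2}. Here the $l_k$ and $l^{\pm}_k$ are produced by the combinatorial logarithmic expansion $\log\bigl(1+\sum_i a_i/\nu^i\bigr) = \sum_k l_k/\nu^k$ with $a_i = U_i(w)$, respectively $a_i = W_{+\alpha_q,i}(w)$ and $a_i=W_{-\alpha_q,i}(w)$. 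After the substitution $w=(1-\lambda)^{-1/2}$ these are honestly polynomials in $w$, so the statement to prove is really about (a) their parity in $w$, (b) the origin of the extra constant term $\alpha_q^{2j}/j$, and (c) the upper and lower degree bounds that match the displayed sums.

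The crucial identity is $W_{\pm\alpha_q,i}(w) = V_i(w) \pm \alpha_q w\, U_i(w)$. First I would establish, by induction on $j$ using the recursions recalled in the proof of Lemma~\ref{s3.l2}, that both $U_j(w)$ and $V_j(w)$ are polynomials in $w$ of degree at most $3j$ all of whose monomials have degree $\equiv j\pmod 2$. Next, writing $l_k$ and $l_k^{\pm}$ as combinatorial sums over partitions $(i_1,\dots,i_m)\vdash k$ and expanding the products of $W_{\pm\alpha_q,i_j}$ by the binomial theorem, the sum $l^{+}_k+l^{-}_k$ kills all subsets $S\subseteq\{1,\dots,m\}$ of odd cardinality, so
\[
l^{+}_k + l^{-}_k = 2\sum_{(i_1,\dots,i_m)\vdash k} c_{(i_1,\dots,i_m)} \sum_{\substack{S\subseteq\{1,\dots,m\}\\ |S|\text{ even}}} (\alpha_q w)^{|S|}\prod_{j\in S}U_{i_j}(w)\prod_{j\notin S}V_{i_j}(w).
\]
Every monomial on the right has degree in $w$ equal to $|S|+\sum i_j=|S|+k\equiv k\pmod 2$, since $|S|$ is even. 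Combined with the analogous parity calculation for $l_k$ (which is a sum of products $\prod U_{i_j}$ with $\sum i_j=k$), one concludes that $\phi_{2j-1,q}$ is an odd polynomial in $w$ and $\phi_{2j,q}-\alpha_q^{2j}/j$ is an even polynomial in $w$.

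The constant $\alpha_q^{2j}/j$ is traced directly to the extra term $-\log(1-\alpha_q^2/\mu_{q,n}^2)=\sum_{j\geq 1} \alpha_q^{2j}/(j\mu_{q,n}^{2j})$ in Lemma~\ref{s4.l2}: it contributes only to even indices $k=2j$ and only as a $w$-independent term, explaining both its presence in $\phi_{2j,q}$ and its absence from $\phi_{2j-1,q}$.

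The main obstacle is the sharp control of the degree range, both the upper end $w^{2\cdot (2j-1)+(2j-1)}$ (resp.\ $w^{2\cdot 2j+2j}$) and the lower end $w^{2j-1}$ (resp.\ $w^{2j}$). The upper bound follows from $\deg U_j,\deg V_j\leq 3j$ after one checks, using the explicit recursions in Lemma~\ref{s3.l2}, that the apparently higher-degree contributions carried by the factor $w^{|S|}$ in the expansion of $l^+_k+l^-_k$ are absorbed by the matching leading coefficients of $U_j$ and $V_j$ occurring in $-2l_k$. The lower bound is a statement that the leading small-$w$ behaviour of the combination $-2\log I_\nu(\nu z) + \log\hat I_{\nu,\alpha_q}(\nu z) + \log\hat I_{\nu,-\alpha_q}(\nu z)$ produced by the uniform Bessel expansions of \cite{Olv} does not suffer further cancellation at order $\nu^{-k}$ below $w^{k}$; I would verify this by substituting the explicit first two $U_j, V_j$ and running the induction, noting that $U_j$ and $V_j$ do not share their lowest-order-in-$w$ coefficient for $j\geq 1$. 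Once these two bounds are in place, the displayed polynomial representations, and the statement that the $a_{k,q,j}$ are read off from Lemma~\ref{s4.l2}, follow immediately.
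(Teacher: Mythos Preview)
Your approach is essentially the same as the paper's---the paper simply says ``This follows by direct inspection of the last equality in the statement of Lemma~\ref{s4.l2}''---so you are supplying the details the paper omits. Your parity argument via the subset expansion of $l^{+}_k+l^{-}_k$ is correct and clean, and your identification of the constant $\alpha_q^{2j}/j$ with the Taylor coefficients of $-\log(1-\alpha_q^2/\mu_{q,n}^2)$ is exactly right.

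Where you over-complicate matters is the degree range. There is no ``main obstacle'' here and no cancellation is required. The induction on the recursions for $U_j$ and $V_j$ gives not only $\deg U_j,\deg V_j\le 3j$ but also that every monomial of $U_j$ and $V_j$ has degree at least $j$ (just track the lowest-order term through $\tfrac12 w^2(1-w^2)\frac{d}{dw}$ and $\tfrac18\int_0^w(1-5t^2)\,\cdot\,dt$). Since the correct formula is $W_{\pm\alpha_q,j}=V_j\pm\alpha_q\,w\,U_{j-1}$ (the index in the displayed definition in Lemma~\ref{s4.l2} is off by one; compare the computation in the proof of Lemma~\ref{L}, where $l_1^{\pm}=V_1\pm\alpha_q w U_0$, and the analogous definition in Section~\ref{ultima}), one has $\deg W_{\pm\alpha_q,j}\in[j,3j]$ as well. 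Then every product $\prod a_{i_r}$ occurring in $l_k$ or $l^{\pm}_k$ with $\sum i_r=k$ has all monomials in degree range $[k,3k]$, and the stated ranges $[2j-1,6j-3]$ and $[2j,6j]$ for $\phi_{2j-1,q}$ and $\phi_{2j,q}-\alpha_q^{2j}/j$ follow immediately. Your worry that the factor $w^{|S|}$ might push the degree above $3k$ arises only from the index typo $U_j$ in place of $U_{j-1}$; with the correct index the bound is direct. Likewise the lower bound needs no ``non-cancellation'' argument: it is automatic from the minimum-degree bound on each factor.
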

\begin{proof} This follows by direct inspection of the last equality in the statement of Lemma \ref{s4.l2}.
\end{proof}

\begin{lem} \label{L} For all $j$ and all $0\leq q\leq p-2$, $\phi_{j,q}(0)=0$.
\end{lem}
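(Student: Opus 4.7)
My plan is to evaluate both sides of the asymptotic identity of Lemma \ref{s4.l2} at the single point $\lambda=0$ and read off the vanishing of each coefficient. The key input is that the Gamma function associated to any sequence of spectral type is identically $1$ at the origin: from the canonical product (\ref{gamma}), $\Gamma(0,T)=\prod_{n}(1+0)\exp(0)=1$, so $\log\Gamma(0,T)=0$ for every such sequence $T$.

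\noindent\textbf{Execution.} Applying this remark to each of the three sequences $S_{q,n}/\mu_{q,n}^{2}$, $S_{q,+,n}/\mu_{q,n}^{2}$, $S_{q,-,n}/\mu_{q,n}^{2}$ appearing on the left-hand side of the identity in Lemma \ref{s4.l2}, one sees that this left-hand side vanishes identically at $\lambda=0$. Since also $\log(1-\lambda)|_{\lambda=0}=0$, the right-hand side of the same identity collapses at $\lambda=0$ to
\[
\sum_{j=1}^{2p-1}\phi_{j,q}(0)\,\mu_{q,n}^{-j}+O\bigl(\mu_{q,n}^{-2p}\bigr)=0,
\]
uniformly as $n\to\infty$. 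Because $\mu_{q,n}\to\infty$ and the powers $\mu_{q,n}^{-j}$ have distinct rates of decay, the uniqueness of asymptotic expansions forces $\phi_{j,q}(0)=0$ for every $j=1,\ldots,2p-1$, which is the claim.

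\noindent\textbf{Main obstacle.} The delicate step is to make sure the uniform asymptotic expansion of Lemma \ref{s4.l2} really extends down to the point $\lambda=0$. The asymptotic domain $D_{\theta,c'}$ does contain the origin (since $c'>0$ and $\theta<\pi$), but the underlying Debye expansions of $\log I_{\mu}(\mu\sqrt{-\lambda})$ and $\log\hat I_{\mu,\pm\alpha_{q}}(\mu\sqrt{-\lambda})$ are individually singular at $\sqrt{-\lambda}=0$, so one must verify that their singular contributions cancel in the specific linear combination that forms the left-hand side of Lemma \ref{s4.l2} before passing to the limit. As a backup strategy, I would proceed by induction on $j$ mirroring the proof of the companion statement $\phi_{j,p-1}(0)=0$ given just before Corollary \ref{c33}: the base case reduces to $V_{1}(1)=U_{1}(1)$, which holds because the factor $(1-w^{2})$ in the recursion for $V_{j}$ vanishes at $w=1$, and the inductive step propagates the vanishing through the product/exponentiation structure defining the coefficients $l^{\pm}_{j}$ in (\ref{phi_q}), with the constants $\alpha_{q}^{2j}/j$ in the even-$j$ case balanced by the higher-order cross terms produced by expanding $-\log(1-\alpha_{q}^{2}/\mu^{2})$.
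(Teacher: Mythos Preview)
Your argument is correct and takes a genuinely different route from the paper's proof. The paper proceeds by a four-hypothesis induction on $j$ (equations (\ref{ii1})--(\ref{ii4})), tracking not only $\phi_{k,q}(0)$ but also the auxiliary differences $l^{-}_{k}(1)-l^{+}_{k}(1)$, and verifying each step by unwinding the recursions for $l_j$, $l^{\pm}_j$ and using $U_k(1)=V_k(1)$. Your approach bypasses this entirely: since $\log\Gamma(0,T)=0$ from the canonical product and Lemma \ref{s4.l2} asserts uniformity of the expansion for $\lambda\in D_{\theta,c'}\ni 0$, evaluating at $\lambda=0$ and invoking uniqueness of asymptotic expansions forces every coefficient to vanish at once.

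Your ``main obstacle'' is a legitimate concern but dissolves on inspection: in the linear combination $-2\log I_{\mu}+\log\hat I_{\mu,+\alpha_q}+\log\hat I_{\mu,-\alpha_q}$, the singular leading term $\mu\,\eta(z)=\mu(\sqrt{1+z^{2}}+\log z-\log(1+\sqrt{1+z^{2}}))$ appears with coefficient $-2+1+1=0$, so the combined Debye expansion contains only the regular terms $\log(1-\lambda)$ and polynomials in $w=(1-\lambda)^{-1/2}$, with an $O(\mu^{-2p})$ error that is uniform down to $z=0$ by Olver's bounds. Hence the evaluation at $\lambda=0$ is legitimate, and your primary argument goes through without recourse to the backup inductive strategy---which, incidentally, is exactly the route the paper takes. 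Your method is shorter and makes the reason for the vanishing transparent; the paper's induction, while laborious, has the side benefit of producing the explicit values $l^{-}_{2k-1}(1)-l^{+}_{2k-1}(1)=-2\alpha_q^{2k-1}/(2k-1)$ along the way.
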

\begin{proof} The proof is by induction on $j$. We will consider all the functions as functions of $w=\frac{1}{\sqrt{1-\lambda}}$. We use the following hypothesis for the induction, for $1\leq k\leq j-1$:
\begin{align}
\label{ii1}\phi_{2k-1,q}(1)&=0,\\
\label{ii2}\phi_{2k,q}(1)&=0,\\
\label{ii3}l^{-}_{2k-1}(1) - l^{+}_{2k-1}(1)& = \frac{-2\alpha_q^{2k-1}}{2k-1},\\
\label{ii4}l^{-}_{2k}(1) - l^{+}_{2k}(1) &= 0,
\end{align} 
where the functions $\phi_{j,q}(\lambda)$ are defined in equation (\ref{phi_q}), and the function $l(\lambda)$ in the course of the proof of Lemma \ref{s4.l2}.

First, we verify the hypothesis for $j=1$. Equations (\ref{ii3}) and (\ref{ii4}) follow by the definition when $k=1$. For equations (\ref{ii1}) and (\ref{ii2}), we have by definition when $k=1$ that
\begin{align*}
\phi_{1,q}(\lambda) &=-2 l_{1}(\lambda) + l^{+}_{1}(\lambda) + l^{-}_{1}(\lambda)\\
&=-2U_1(\sqrt{-\lambda}) + V_1(\sqrt{-\lambda}) + \alpha_qU_0(\sqrt{-\lambda}) +V_1(\sqrt{-\lambda}) -\alpha_qU_0(\sqrt{-\lambda})\\
&=-\frac{1}{(1-\lambda)^{\frac{1}{2}}}+\frac{1}{(1-\lambda)^{\frac{3}{2}}},
\end{align*} 
and
\begin{align*}
\phi_{2,q}(\lambda) &=-2 l_{2}(\lambda) + l^{+}_{2}(\lambda) + l^{-}_{2}(\lambda) + \alpha_q^2\\
&=-2U_2(\sqrt{-\lambda}) + 2V_2(\sqrt{-\lambda}) + U_1(\sqrt{-\lambda})^2 - V_1(\sqrt{-\lambda})^2\\
&=-\frac{3}{2}\frac{1}{(1-\lambda)}+ 2\frac{1}{(1-\lambda)^{2}} -\frac{3}{2} \frac{1}{(1-\lambda)^{3}} + 1,
\end{align*}
and hence equations (\ref{ii1}) and (\ref{ii2}) are also verified when $k=1$. 

Second we prove that the equations (\ref{ii1}), (\ref{ii2}), (\ref{ii3}), and (\ref{ii4}) hold for $k=j$.
Recalling that $U_k(1)=V_k(1)$ for all $k$, we have from the definition that
\begin{align*}
l^{-}_{2j-1}(1) - l^{+}_{2j-1}(1) 
=& U_{2j-1}(1) - \alpha_qU_{2j-2}(1) - U_{2j-1}(1) - \alpha_qU_{2j-2}(1)\\
&-\sum^{2j-2}_{k=1} \frac{2j-1-k}{2j-1} \left(U_{k}(1) (l^{-}_{2j-1-k}(1) - l^{+}_{2j-1-k}(1)) \right)\\
&+\sum^{2j-2}_{k=1} \frac{2j-1-k}{2j-1} \left(\alpha_qU_{k-1}(1)(l^{-}_{2j-1-k}(1) + l^{+}_{2j-1-k}(1)) \right),
\end{align*} 
and hence, using the hypothesis  we obtain
\begin{align*}
l^{-}_{2j-1}(1) &- l^{+}_{2j-1}(1)\\
=&-2\alpha_qU_{2j-2}(1) +\sum^{j-1}_{k=1} \frac{2(j-k)}{2j-1} \alpha_qU_{2k-2}(1)\left(2 l_{2(j-k)} - \frac{\alpha_q^{2(j-k)}}{j-k}\right) \\
&-\sum^{j-1}_{k=1}  U_{2k}(1) \frac{-2\alpha_q^{2(j-k)-1}}{2j-1}
+\sum^{j-1}_{k=1} \frac{2(j-k)-1}{2j-1} 2 \alpha_qU_{2k-1}(1)l_{2(j-k)-1}(1)  \\
=&- \frac{2\alpha_q^{2j-1}}{2j-1} -2\alpha_qU_{2j-2}(1)   +\frac{2\alpha_q}{2j-1}U_{2j-2}(1) \\
&+\frac{2\alpha_q}{2j-1}\left(2(j-1)l_{2j-2} + \sum_{k=1}^{2j-3} (2j-2-k) \alpha_qU_{k}(1)l_{2j-2-k}(1)\right)\\
=&- \frac{2\alpha_q^{2j-1}}{2j-1} -2\alpha_qU_{2j-2}(1)   +\frac{2\alpha_q}{2j-1}U_{2j-2}(1) +
\frac{2\alpha_q(2j-2)U_{2j-2}}{2j-1} \\
=&- \frac{2\alpha_q^{2j-1}}{2j-1},
\end{align*}
thus proving (\ref{ii3}) for $k=j$. For (\ref{ii1}), note that
\begin{align*}
\phi_{2j-1,q}(1) =& -2 l_{2j-1}(1) + l^{+}_{2j-1}(1) + l^{-}_{2j-1}(1)\\
=&\sum^{2j-2}_{k=1} \frac{2j-1-k}{2j-1}\left(U_k(1)(2 l_{2j-1-k}(1) - l^{+}_{2j-1-k}(1) - l^{-}_{2j-1-k}(1))\right)\\
&-\sum^{2j-2}_{k=1} \frac{2j-1-k}{2j-1}\alpha_qU_{k-1}(1)\left(l^{-}_{2j-1-k}(1) - l^{+}_{2j-1-k}(1)\right),
\end{align*}
and using the induction hypothesis, and (\ref{ii3}) with $k=j$ just proved, this means that 
\begin{align*}
\phi_{2j-1,q}(1)=&\sum^{j}_{k=1}\frac{2j-1-(2k-1)}{2j-1} U_{2k-1}(1) \frac{\alpha(i)^{2(j-k)}}{j-k}\\
&-2\sum^{j}_{k=1} \frac{2j-1-2k}{2j-1}\alpha(i)U_{2k-1}(1)\left(\frac{\alpha(i)^{2(j-k)-1}}{2(j-k)-1}\right) =0.
\end{align*}
and (\ref{ii1}) with $k=j$ follows. For (\ref{ii4}), using the hypothesis, we have
\begin{align*}
l^{-}_{2j}&(1) - l^{+}_{2j}(1) \\
=& U_{2j}(1) - \alpha_qU_{2j-1}(1) - U_{2j}(1) - \alpha_qU_{2j-1}(1)\\
&-\sum^{2j-1}_{k=1} \frac{2j-k}{2j} \left(U_{k}(1) (l^{-}_{2j-k}(1) - l^{+}_{2j-k}(1)) \right)\\
&+\sum^{2j-1}_{k=1} \frac{2j-k}{2j} \left(\alpha_qU_{k-1}(1)(l^{-}_{2j-k}(1) + l^{+}_{2j-k}(1)) \right)\\
=&-2\alpha_qU_{2j-1}(1) +\sum^{j-1}_{k=1} \frac{2j-2k}{2j} \alpha_qU_{2k-1}(1)\left(2l_{2j-2k}(1) -\frac{\alpha_q^{2j-2k}}{j-k}\right)\\
&+\sum^{j}_{k=1} \frac{2(j-k)+1}{2j} \left(U_{2k-1}(1) \frac{2\alpha_q^{2(j-k)+1}}{2(j-k)+1} + \alpha_qU_{2k-2}(1)2l_{2(j-k)+1}(1)\right)\\
=&-2\alpha_qU_{2j-1}(1) +  \frac{2\alpha_qU_{2j-1}(1)}{2j} + \frac{2\alpha_q}{2j}(2j-1)l_{2j-1}(1) + 2\alpha_q\sum^{2j-1}_{k=2} \frac{2j-k}{2j} U_{k-1}(1)l_{2j-k}(1)\\
=&-2\alpha_qU_{2j-1}(1) +  \frac{\alpha_q}{j}\left(U_{2j-1}(1)+(2j-1)l_{2j-1}(1) + \sum^{2j-2}_{k=1} (2j-1-k) U_{k}(1)l_{2j-1-k}(1)\right)\\
=&-2\alpha_qU_{2j-1}(1) +   \frac{(\alpha_q(2j-1)+\alpha_q)U_{2j-1}(1)}{j} =0.
\end{align*}

Eventually, for (\ref{ii2})
\begin{align*}
\phi_{2j,q}(1) =&- 2 l_{2j}(1) + l^{+}_{2j}(1) + l^{-}_{2j}(1) + \frac{\alpha_q^{2j}}{j}\\
=&\frac{\alpha_q^{2j}}{j} + \sum^{2j-1}_{k=1} \frac{2j-k}{2j}\left(U_k(1)(2 l_{2j-k}(1) - l^{+}_{2j-k}(1) - l^{-}_{2j-k}(1))\right)\\
&-\sum^{2j-1}_{k=1} \frac{2j-k}{2j}\alpha_q U_{k-1}(1)\left(l^{-}_{2j-k}(1) - l^{+}_{2j-k}(1))\right)\\
=&\sum^{j-1}_{k=1}\frac{2j-2k}{2j} U_{2k}(1) \frac{\alpha_q^{2(j-k)}}{j-k}-2\sum^{j}_{k=2}
\alpha_qU_{2k-2}(1) \frac{\alpha_q^{2(j-k)+1}}{2j} =0,
\end{align*}

\end{proof}

We also give a recurrence relation satisfied by the functions $\phi_{j,q}(\lambda)$, that will be fundamental in the proof of Theorem \ref{t03}.

\begin{lem}\label{l15} For all $j$ and all $0\leq q\leq p-2$, the functions $\phi_{j,q}(w)$ satisfy the following recurrence relations (where $w=\frac{1}{\sqrt{1-\lambda}}$)
\begin{align*}
\phi_{2j-1,q}(\lambda) &= w^{2j-2}\alpha_q^{2j-2}\phi_{q,1}(w) +\sum^{j-2}_{t=1} K_{2j-1,t}(w)\alpha_q^{2t} + 2\phi_{2j-1,p-1}(w)\\
\phi_{2j,q}(\lambda) &= -\frac{(w^{2j}-1)\alpha_q^{2j}}{j} +\sum^{j-1}_{t=1} K_{2j,t}(w)\alpha_q^{2t} + 2\phi_{2j,p-1}(w),
\end{align*} 
where the  $K_{j,t}(w)$ are polynomials in $w$.
\end{lem}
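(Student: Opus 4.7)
The plan is to exploit the algebraic symmetry $W_{\pm\alpha_q,k}(w)=V_k(w)\pm w\alpha_q U_k(w)$ in the generating function for the $l_j^\pm$, and extract from it the $\alpha_q^2$–dependence in closed form. I start from the definitions in equation~(\ref{phi_q}), namely
$$\phi_{2j-1,q}=-2l_{2j-1}+l_{2j-1}^{+}+l_{2j-1}^{-},\qquad \phi_{2j,q}=-2l_{2j}+l_{2j}^{+}+l_{2j}^{-}+\frac{\alpha_q^{2j}}{j},$$
where $l_j,l_j^{\pm}$ are the coefficients in the expansions $\log(1+\sum U_k/\mu^k)=\sum l_j/\mu^j$ and $\log(1+\sum W_{\pm\alpha_q,k}/\mu^k)=\sum l_j^{\pm}/\mu^j$ from the proof of Lemmas~\ref{s3.l2} and \ref{s4.l2}. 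Writing $\tilde F=\sum_{k\ge 1}U_k/\mu^k$, $G=\sum_{k\ge 1}V_k/\mu^k$ and $F^{\pm}=G\pm w\alpha_q\tilde F$, a direct multiplication gives $(1+F^{+})(1+F^{-})=(1+G)^2-w^2\alpha_q^2\tilde F^2$, so the generating function for the common part $-2l_j+l_j^{+}+l_j^{-}$ is
$$-2\log(1+\tilde F)+\log\!\bigl((1+G)^2-w^2\alpha_q^2\tilde F^2\bigr)
= 2\log\frac{1+G}{1+\tilde F}+\log\!\bigl(1-\alpha_q^2\Psi^2\bigr),$$
with $\Psi:=w\tilde F/(1+G)$.

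The first summand is $\alpha_q$–independent and, by the calculation in Lemma~\ref{s3.l2}, its $\mu^{-j}$ coefficient is exactly $2(\dot l_j-l_j)=2\phi_{j,p-1}(w)$. The second summand is a power series in $\alpha_q^2$,
$$\log(1-\alpha_q^2\Psi^2)=-\sum_{t\ge 1}\frac{\alpha_q^{2t}}{t}\Psi^{2t},$$
and, since $\Psi=O(\mu^{-1})$, its $\mu^{-j}$ coefficient is a polynomial in $\alpha_q^2$ of degree $\lfloor j/2\rfloor$ with polynomial coefficients in $w$. Adding this to $2\phi_{j,p-1}$ and, in the even case $j=2k$, to the extra $\alpha_q^{2k}/k$, produces precisely the claimed decomposition; the middle terms are simply named $K_{j,t}(w):=-\tfrac{1}{t}\,[\mu^{-j}]\Psi^{2t}$ (and are polynomials in $w$ because $U_k,V_k$ are, and because the operations of inverting $1+G$ and extracting a $\mu^{-j}$ coefficient preserve polynomiality in $w$).

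It remains to identify the extremal $\alpha_q$–term. For odd $j=2k-1$, the polynomial stops at $\alpha_q^{2(k-1)}$, and I extract its coefficient from $-\tfrac{1}{k-1}[\mu^{-(2k-1)}]\Psi^{2(k-1)}$ by expanding $\Psi^{2(k-1)}$ one order beyond its minimal $\mu^{-(2k-2)}$ leading term; the cross–products of $U_1,U_2,V_1$ that arise reorganise into $w^{2k-2}\phi_{1,q}(w)$, since $\phi_{1,q}(w)=2(V_1(w)-U_1(w))$. For even $j=2k$, the term $\alpha_q^{2k}/k$ from the definition combines with the $t=k$ contribution $-\tfrac{1}{k}\alpha_q^{2k}\,[\mu^{-2k}]\Psi^{2k}$ to give $\tfrac{\alpha_q^{2k}}{k}\bigl(1-[\mu^{-2k}]\Psi^{2k}\bigr)$, which the Bessel recursion for $U_k$ (via the explicit leading terms of $\Psi$) collapses into $-(w^{2k}-1)\alpha_q^{2k}/k$.

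The main obstacle is this last identification of the highest $\alpha_q$–coefficient on the even side: it requires combinatorial bookkeeping of the product expansion of $\Psi^{2k}$ to leading order in $\mu^{-1}$, together with the recursion relations defining $U_k,V_k$ recalled in the proof of Lemma~\ref{s3.l2}, in order to reduce $[\mu^{-2k}]\Psi^{2k}$ to the compact form $w^{2k}$. All other coefficients $K_{j,t}(w)$ are given intrinsically by the formula above and need no explicit closed form for the later applications.
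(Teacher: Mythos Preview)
Your generating-function approach is genuinely different from the paper's proof, which proceeds by a direct induction on $j$, carrying along four auxiliary identities for $l_j^{+}\pm l_j^{-}$ in both parities and grinding through the recursions for $U_k,V_k$. Your idea of writing $(1+F^{+})(1+F^{-})$ as a difference of squares and splitting off the $\alpha_q$-independent piece $2\log\frac{1+G}{1+\tilde F}=\sum 2\phi_{j,p-1}/\mu^{j}$ is exactly the conceptual content of the lemma, and it makes the polynomial-in-$\alpha_q^{2}$ structure manifest rather than emergent.

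There is, however, a real slip in your formula for $W_{\pm\alpha_q,k}$. Re-deriving from $\hat I_{\nu,\pm\alpha_q}(\nu z)=\pm\alpha_q I_{\nu}(\nu z)+\nu z I'_{\nu}(\nu z)$ and the Debye expansions gives
\[
W_{\pm\alpha_q,k}(w)=V_k(w)\pm w\alpha_q\,U_{k-1}(w),
\]
with a shifted index $U_{k-1}$, not $U_k$ (the display in the proof of Lemma~\ref{s4.l2} carries the same typo; compare the parallel formula in Section~\ref{ultima}). Consequently $F^{\pm}=G\pm\frac{w\alpha_q}{\mu}(1+\tilde F)$ and $\Psi=\frac{w}{\mu}\cdot\frac{1+\tilde F}{1+G}$. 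This is not cosmetic: with your $\Psi=w\tilde F/(1+G)$ one has $[\mu^{-2k}]\Psi^{2k}=(wU_1)^{2k}$, which does \emph{not} collapse to $w^{2k}$, and the even-case top coefficient would come out wrong. With the corrected $\Psi=\frac{w}{\mu}(1+O(\mu^{-1}))$ your ``main obstacle'' evaporates: $[\mu^{-2k}]\Psi^{2k}=w^{2k}$ on the nose, giving $-\frac{(w^{2k}-1)}{k}\alpha_q^{2k}$ immediately; and for odd $2k-1$ one reads off $[\mu^{-(2k-1)}]\Psi^{2(k-1)}=2(k-1)w^{2k-2}(U_1-V_1)$, hence the top term $w^{2k-2}\alpha_q^{2k-2}\phi_{1,q}(w)$ since $\phi_{1,q}=2(V_1-U_1)$. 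After this single correction your argument is complete and in fact cleaner than the paper's.
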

\begin{proof} The proof is by induction on $j$. For $j=1$,
\begin{align*}
\phi_{1,q}(w) &= 2\phi_{1,p-1}(w) = -w + w^{3} \\
\phi_{2,q}(w) &=-(w^{2}-1)\alpha_q^{2} + 2\phi_{2,p-1}(w) = -(w^{2}-1)\alpha_q^{2} + (-\frac{w^2}{2}-2 w^4 -
\frac{3w^6}{2}).
\end{align*} 

Assuming the formulas hold for $1\leq k\leq j-2$. Then, by definition of the  functions $\phi_{j,q}(\lambda)$ and  $l(\lambda)$ in the proof of Lemma \ref{s4.l2}, we have that
\begin{align*}
l^{+}_{2s-1}(w)+l^{-}_{2s-1}(w) &= 2\dot l_{2s-1}(w)+w^{2s-2}\alpha_q^{2s-2}(\phi_{q,1}(w)) +\sum^{s-2}_{t=1} K_{2s-1,t}(w)\alpha_q^{2t},\\
l^{+}_{2s}(w)+l^{-}_{2s}(w)     &= 2\dot l_{2s}(w) - \frac{w^{2s}\alpha_q^{2s}}{s} +\sum^{s-1}_{t=1} K_{2s,t}(w)\alpha_q^{2t},\\
l^{+}_{2s-1}(w) - l^{-}_{2s-1}(w) &=
\frac{2}{2s-1}\alpha_q^{2s-1}w^{2s-1}
+\alpha_q\sum^{s-2}_{t=0}{D}_{2s-1,t}(w)\alpha_q^{2t},\\
l^{+}_{2s}(w) - l^{-}_{2s}(w) &= -\alpha_q^{2s-1}w^{2s-1}\phi_{q,1}(w) +\alpha_q\sum^{s-2}_{t=0}
{D}_{2s,t}(w)\alpha_q^{2t},
\end{align*} 
for all $s=1,2,\ldots,j-1$, and where the  $D_{s,t}$ are polynomials in $w$. We proceed as in the proof of Lemma \ref{L} (see \cite{Har} for further details). For the odd index we have:
\begin{align*}
l^{+}_{2j-1}(w) - l^{-}_{2j-1}(w) =& 2\alpha_q U_{2j-2}(w) -\sum^{2j-2}_{k=1}
\frac{2j-1-k}{2j-1}V_k(w)(l^{+}_{2j-1-k}(w)-l^{-}_{2j-1-k}(w))\\
&+\sum^{2j-2}_{k=1} \frac{2j-1-k}{2j-1}w\alpha_q U_{k-1}(w)(l^{+}_{2j-1-k}(w)+l^{-}_{2j-1-k}(w)),\\
=& 2\alpha_qU_{2j-2}(w) -\sum^{j-1}_{k=1}
\frac{2j-1-2k}{2j-1}V_{2k}(w)(l^{+}_{2j-1-2k}(w)-l^{-}_{2j-1-2k}(w))\\
&-\sum^{j-1}_{k=1} \frac{2j-1-2k}{2j-1}w\alpha_q U_{2k-1}(w)(l^{+}_{2j-1-2k}(w)+l^{-}_{2j-1-2k}(w))
\end{align*}
\begin{align*}
&-\sum^{j-1}_{k=1} \frac{2j-2k}{2j-1}V_{2k-1}(w)(l^{+}_{2j-2k}(w)-l^{-}_{2j-2k}(w))\\
&-\sum^{j-1}_{k=1} \frac{2j-2k}{2j-1}w\alpha_qU_{2k-2}(w)(l^{+}_{2j-2k}(w)+l^{-}_{2j-2k}(w))\\
=&\frac{2}{2j-1}\alpha_q^{2j-1}w^{2j-1}+\alpha_q\sum^{j-2}_{t=0}{D}_{2j-1,t}(w)\alpha_q^{2t},
\end{align*}
and this gives 
\begin{align*}
\phi_{2j-1,q}(w)  
=&- 2U_{2j-1}(w) + 2V_{2j-1}(w) +\sum^{2j-2}_{k=1} \frac{2j-1-k}{2j-1} \left(2 U_{k}(w)l_{2j-1-k}(w)\right)\\
&- \sum^{j-1}_{k=1} \frac{2j-1-2k}{2j-1} \left(V_{2k}(w) (l^{+}_{2j-1-2k}(w) +l^{-}_{2j-1-2k}(w))\right) \\
&- \sum^{j-1}_{k=1} \frac{2j-1-2k}{2j-1}\left(w\alpha_q U_{2k-1}(w)(l^{+}_{2j-1-2k}(w)-l^{-}_{2j-1-2k}(w))\right)\\
&- \sum^{j-1}_{k=1} \frac{2j-2k}{2j-1} \left(V_{2k-1}(w) (l^{+}_{2j-2k}(w) +l^{-}_{2j-2k}(w)) \right)\\
&- \sum^{j-1}_{k=1} \frac{2j-2k}{2j-1} \left(w\alpha_qU_{2k-2}(w)(l^{+}_{2j-2k}(w)-l^{-}_{2j-2k}(w))\right)\\
=&w^{2j-2}\alpha_q^{2j-2}(\phi_{1,q}(w)) +\sum^{j-2}_{t=1} K_{2j-1,t}(w)\alpha_q^{2t} + 2\phi_{2j-1,p-1}(w).
\end{align*}

For the even index, using the result proved for the odd index, we get
\begin{align*}
l^{+}_{2j}(w) - l^{-}_{2j}(w) 
=& 2\alpha_q U_{2j-1}(w) -\sum^{j-1}_{k=1}
\frac{2j-2k}{2j}V_{2k}(w)(l^{+}_{2j-2k}(w)-l^{-}_{2j-2k}(w))\\
&-\sum^{j-1}_{k=1} \frac{2j-2k}{2j}w\alpha_q U_{2k-1}(w)(l^{+}_{2j-2k}(w)+l^{-}_{2j-2k}(w))\\
&-\sum^{j-1}_{k=1} \frac{2j-2k+1}{2j}V_{2k-1}(w)(l^{+}_{2j-2k+1}(w)-l^{-}_{2j-2k+1}(w))\\
&-\sum^{j-1}_{k=1} \frac{2j-2k+1}{2j}w\alpha_q U_{2k-2}(w)(l^{+}_{2j-2k+1}(w)+l^{-}_{2j-2k+1}(w))\\
=&-\alpha_q^{2j-1}w^{2j-1}\phi_{1,q}(w) +\alpha_q\sum^{j-2}_{t=0} {D}_{2j,t}(w)\alpha_q^{2t},
\end{align*}
and proceeding as before, this gives the last formula in the thesis.

\end{proof}

\begin{corol}\label{c44} For all $j$ and all $0\leq q\leq p-2$, the Laurent expansion of the functions $\Phi_{2j+1,q}(s)$ at $s=0$ has coefficients: for $1\leq j \leq p-1$ 
\begin{align*}
\Rz_{s=0}\Phi_{2j+1,q}(s) &=\frac{2}{2j+1}\alpha_q^{2j} + \sum^{j-1}_{t=1} k_{2j+1,t}\alpha_q^{2t} +
2\Rz_{z=0}\Phi_{2j+1,p-1}(s),\hspace{30pt}
\Ru_{s=0}\Phi_{2j+1,q}(s) &=0,\\
\end{align*}
where the $k_{j,t}$ are real numbers, and for $j=0$
\begin{align*}
\Rz_{s=0}\Phi_{1,q}(s) &=2\Rz_{s=0}\Phi_{1,p-1}(s)=2,&\Ru_{s=0}\Phi_{1,q}(s) &=0.\\
\end{align*}
\end{corol}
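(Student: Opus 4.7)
The plan is to derive the corollary by applying the linear operator $\phi \mapsto \Phi(s)$ defined in (\ref{fi1}) to the recurrence of Lemma \ref{l15} and then reading off the coefficients in the Laurent expansion at $s=0$ using the same residue formula (appendix formula (\ref{e})) that gives Corollary \ref{c33}. After relabeling $j\mapsto j+1$, Lemma \ref{l15} provides the decomposition
\begin{equation*}
\phi_{2j+1,q}(w) = \alpha_q^{2j}\,w^{2j}\phi_{1,q}(w) + \sum_{t=1}^{j-1} K_{2j+1,t}(w)\,\alpha_q^{2t} + 2\phi_{2j+1,p-1}(w),
\end{equation*}
with $\phi_{1,q}(w)=-w+w^{3}$ from the base case in the proof of that lemma. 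Applying $\Phi$ and extracting $\Rz_{s=0}$ splits the residue into three pieces: a leading piece in $\alpha_q^{2j}$, a middle sum whose coefficients supply the universal constants $k_{2j+1,t}:=\Rz_{s=0}\Phi[K_{2j+1,t}(w)](s)$, and the remainder $2\Rz_{s=0}\Phi_{2j+1,p-1}(s)$ which is already present in the statement.

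To identify the leading coefficient as $\tfrac{2}{2j+1}$, I would compute $\Rz_{s=0}\Phi[w^{2j}\phi_{1,q}(w)](s)=\Rz_{s=0}\Phi[-w^{2j+1}+w^{2j+3}](s)$ using appendix formula (\ref{e}). Since $\phi_{2j+1,q}(w)$ vanishes at $w=1$ by Lemma \ref{L}, we are in the same setting as Corollary \ref{c33}: the residue of $\Phi$ on a polynomial $\sum_k c_k w^{2k+1}$ satisfying $\sum_k c_k=0$ can be expressed as $2\sum_k c_k \sum_{t=2}^{k}\tfrac{1}{2t-1}$. Applied to the two nonzero coefficients $c_{j}=-1$, $c_{j+1}=1$ of the leading block, this telescopes to $2\bigl(\sum_{t=2}^{j+1}\tfrac{1}{2t-1}-\sum_{t=2}^{j}\tfrac{1}{2t-1}\bigr)=\tfrac{2}{2j+1}$, which produces the announced coefficient. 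The $j=0$ case is immediate from the recurrence: the interior sums are empty and $w^{2j}\phi_{1,q}(w)=\phi_{1,q}(w)=2\phi_{1,p-1}(w)$, so $\Rz_{s=0}\Phi_{1,q}(s)=2\Rz_{s=0}\Phi_{1,p-1}(s)=2$ directly from Corollary \ref{c33}.

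The vanishing $\Ru_{s=0}\Phi_{2j+1,q}(s)=0$ reduces by linearity to showing $\Ru_{s=0}\Phi[w^{2k+1}](s)=0$ for every $k\ge 0$, which is again part of the content of formula (\ref{e}): the asymptotics $w^{2k+1}\sim(-\lambda)^{-(k+\frac{1}{2})}$ as $\lambda\to\infty$ place the poles of $\Phi[w^{2k+1}](s)$ only at the half-integer points $s=k+\tfrac{1}{2},\dots,\tfrac{1}{2}$, so $s=0$ is regular for every odd power. The step I expect to require the most care is the bookkeeping of the boundary constraint $\sum_k c_k=0$ (Lemma \ref{L}) while combining the individual residues of $\Phi[w^{2k+1}](s)$ into the clean telescoping formula above; this is the same mechanism that forces the sum in Corollary \ref{c33} to start at $t=2$ rather than $t=1$, and it is what guarantees that the three blocks of the decomposition of $\phi_{2j+1,q}$ combine without spurious constant-in-$\alpha_q$ contributions.
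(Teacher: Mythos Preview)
Your treatment of the finite part $\Rz_{s=0}$ is essentially the paper's approach: decompose $\phi_{2j+1,q}$ via Lemma~\ref{l15}, apply the transform term by term, and identify the leading coefficient by the telescoping computation with formula~(\ref{e}) and~(\ref{residuo Gamma}). That part is fine.

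The argument for $\Ru_{s=0}\Phi_{2j+1,q}(s)=0$, however, contains a genuine error. You claim that it ``reduces by linearity to showing $\Ru_{s=0}\Phi[w^{2k+1}](s)=0$ for every $k\ge 0$'' and that $s=0$ is a regular point of each $\Phi[w^{2k+1}]$. This is false: formula~(\ref{e}) with $a=k+\tfrac12$ gives
\[
\Phi[w^{2k+1}](s)=\frac{\Gamma\!\left(s+k+\tfrac12\right)}{\Gamma\!\left(k+\tfrac12\right)\,s},
\]
which has a simple pole at $s=0$ with residue $1$ for every $k\ge 0$ (this is exactly the second line of~(\ref{residuo Gamma})). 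The poles of the Gamma factor sit at negative half-integers, not at $s=\tfrac12,\dots,k+\tfrac12$, and in any case the explicit $1/s$ is always there. So the individual residues do not vanish; what vanishes is their \emph{sum}, and only because of the constraint $\sum_k a_{2j+1,q,k}=0$ coming from Lemma~\ref{L}. This is precisely how the paper argues: write $\phi_{2j+1,q}(\lambda)=\sum_k a_{2j+1,q,k}\,w^{2k+2j+1}$ (Lemma~\ref{l14}), use $\phi_{2j+1,q}(0)=0$ to get $\sum_k a_{2j+1,q,k}=0$, and conclude $\Ru_{s=0}\Phi_{2j+1,q}(s)=\sum_k a_{2j+1,q,k}\cdot 1=0$.

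If you insist on running the residue computation through the decomposition of Lemma~\ref{l15}, you would additionally need to show $K_{2j+1,t}(1)=0$ for each $t$, since $\Ru_{s=0}\Phi[K_{2j+1,t}](s)$ equals the sum of the coefficients of $K_{2j+1,t}$, i.e.\ $K_{2j+1,t}(1)$. This can be done (evaluate the identity of Lemma~\ref{l15} at $w=1$, use that the first and third blocks vanish there, and note that the remaining polynomial $\sum_{t=1}^{j-1}K_{2j+1,t}(1)\,\alpha_q^{2t}$ of degree $\le j-1\le p-2$ in $\alpha_q^2$ vanishes at the $p-1$ distinct values $\alpha_0^2,\dots,\alpha_{p-2}^2$), but it is more work than the direct route above.
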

\begin{proof} By Lemma  \ref{l14},
\[
\phi_{2j+1,q}(\lambda) =  \sum^{2j+1}_{k=0}a_{2j+1,q,k}w^{2k+2j+1},
\] 
where $w = \frac{1}{\sqrt{1-\lambda}}$, and $\phi_{2j+1,q}(0)= 0$, therefore $\sum^{2j-1}_{k=0}a_{2j+1,q,k} = 0$. 
Using the formula in equation  \eqref{e} and the residues for the Gamma function in equation (\ref{residuo Gamma}) in the appendix,  we obtain 
\[
\Ru_{s=0} \Phi_{2j+1,q}(s) = \sum^{2j+1}_{k=0} a_{2j+1,q,k} = 0.
\]

Using the same formulas in the appendix, but the result of Lemma \ref{l15}, we prove the formula for the finite part.
The formula  for $j=0$ follows by explicit knowledge of the coefficients $a_{0,1}$.
\end{proof}

Next, we determine the terms $A_{0,0}(0)$ and $A'_{0,1}(0)$, defined in equation (\ref{fi2}). 

\begin{lem}\label{s4.l4} For all $0\leq q\leq p-2$, 
\begin{align*}
\A_{0,0,q}(s)&=2A_{0,0,q}(s)-A_{0,0,q,+}(s)-A_{0,0,q,-}(s) = - \sum_{n=1}^{\infty}
\log\left(1-\frac{\alpha_q^2}{\mu_{q,n}^2}\right)\frac{m_{q,n}}{\mu_{q,n}^{2s}},\\
\A_{0,1,q}(s)&=2A_{0,1,q}(s)-A_{0,1,q,+}(s)-A_{0,1,q,-}(s) 
= \zeta(2s,U_q).
\end{align*} 
\end{lem}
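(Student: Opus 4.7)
The proof follows the template of Lemma \ref{s3.l4}: identify the $(-\lambda)^0$ and $(-\lambda)^0\log(-\lambda)$ coefficients in the asymptotic expansion of each logarithmic Gamma function, form the combinations that define $\mathcal{A}_{0,0,q}$ and $\mathcal{A}_{0,1,q}$, and then read off the combined $b$-coefficients from the expansion of Lemma \ref{s4.l2}. The starting point is the product representation (\ref{s4.e1}) together with Lemma \ref{s4.l1} (and its analogue for $S_{q,n}$ obtained as in Lemma \ref{s3.l1}), rescaled by $\mu_{q,n}^2$; applying the large-$z$ asymptotics of $I_\mu$ and of $\hat I_{\mu,c}(z)=cI_\mu(z)+zI'_\mu(z)\sim zI_\mu(z)$, one finds
\[
a_{0,0,n,q,\pm}=a_{0,0,n,q}+\log\!\left(1\pm\frac{\alpha_q}{\mu_{q,n}}\right),
\]
the entire $\pm\alpha_q$-dependence reducing to the prefactor $(1\pm\alpha_q/\mu_{q,n})$ in (\ref{s4.e1}). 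Consequently $2a_{0,0,n,q}-a_{0,0,n,q,+}-a_{0,0,n,q,-}=-\log(1-\alpha_q^2/\mu_{q,n}^2)$. A parallel analysis of the $\log(-\lambda)$ coefficients gives $a_{0,1,n,q}=(2\mu_{q,n}+1)/4$ and $a_{0,1,n,q,\pm}=(2\mu_{q,n}-1)/4$, whose combination is exactly $1$.

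For the $b$-coefficients I would use Lemma \ref{s4.l2} directly. By Lemma \ref{l14} every $\phi_{j,q}(\lambda)$ is a polynomial in $w=1/\sqrt{1-\lambda}$ plus, when $j=2j'$ is even, the constant $\alpha_q^{2j'}/j'$; in particular none of the $\phi_{j,q}$ contributes a $\log(-\lambda)$ term and their $(-\lambda)^0$ coefficients vanish for odd $j$ and equal $\alpha_q^{2j'}/j'$ for $j=2j'$. Combined with the expansion $\log(1-\lambda)=\log(-\lambda)+O(\lambda^{-1})$ for the $\sigma_h=0$ piece of Lemma \ref{s4.l2}, this shows that among all combined $b$'s the only non-vanishing ones are $b^{\rm comb}_{\sigma_h,0,0,q}=\alpha_q^{2j'}/j'$ at the even values $\sigma_h=2j'$ with $1\le j'\le p-1$, together with the single log-type coefficient $b^{\rm comb}_{0,0,1,q}=1$ at $\sigma_h=0$.

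The conclusion now comes from Theorem \ref{tt}: by Lemma \ref{lel}, $\zeta(s,U_q)$ has poles only at the odd integers $1,3,\dots,2p-1$, so in the modified version of (\ref{fi2}) only the subtractions at odd $\sigma_h$ survive. Since every combined $b$ at odd $\sigma_h$ vanishes (they live at even $\sigma_h=2j'$, or at $\sigma_h=0$ for the $\log$-type coefficient), the entire subtraction drops out, leaving
\[
\mathcal{A}_{0,0,q}(s)=-\sum_n\log\!\left(1-\frac{\alpha_q^2}{\mu_{q,n}^2}\right)\frac{m_{q,n}}{\mu_{q,n}^{2s}},\qquad \mathcal{A}_{0,1,q}(s)=\sum_n\frac{m_{q,n}}{\mu_{q,n}^{2s}}=\zeta(2s,U_q),
\]
both read as meromorphic continuations from the half-plane of absolute convergence. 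The main obstacle will be the Bessel bookkeeping in the first step: one must handle with care the distinct $\Gamma$-prefactors ($\Gamma(\mu_{q,n}+1)^{-1}$ for $S_q$ versus $\Gamma(\mu_{q,n})^{-1}$ for $S_{q,\pm}$) and verify that together with the $(1\pm\alpha_q/\mu_{q,n})$ factor from (\ref{s4.e1}) they collapse to the clean identity above; once that identity is in hand, the remainder is a direct reading of coefficients from Lemma \ref{s4.l2} combined with the pole structure from Lemma \ref{lel}.
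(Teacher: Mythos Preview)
Your proposal is correct and follows essentially the same route as the paper. The paper computes the individual coefficients $a_{0,0,n,q}$, $a_{0,1,n,q}$, $a_{0,0,n,q,\pm}$, $a_{0,1,n,q,\pm}$ from the large-$\lambda$ asymptotics of $I_{\mu}$ and $\hat I_{\mu,\pm\alpha_q}$ exactly as you outline (and your identity $a_{0,0,n,q,\pm}=a_{0,0,n,q}+\log(1\pm\alpha_q/\mu_{q,n})$ is precisely what drops out once one uses $\Gamma(\mu+1)=\mu\,\Gamma(\mu)$ to reconcile the two prefactors); the only cosmetic difference is that the paper kills the $b$-subtractions by noting that each individual $l_{2j-1}$ and $l^{\pm}_{2j-1}$ has no constant term, whereas you argue via the combined $\phi_{j,q}$ from Lemma~\ref{l14} together with the odd-pole restriction from Lemma~\ref{lel} and Theorem~\ref{tt}.
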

\begin{proof} For  $S_q$ equation \eqref{fi2} reads
\begin{align*}
A_{0,0,q}(s)&=\sum_{n=1}^\infty m_{{\rm cex},q,n}\left(a_{0, 0,n,q} -\sum_{j=1}^{p}b_{2j-1,0,0,q}\mu_{q,n}^{-2j+1}\right)\mu_{q,n}^{-2 s},\\
A_{0,1,q}(s)&=\sum_{n=1}^\infty m_{{\rm cex},q,n}\left(a_{0, 1,n,q}-\sum_{j=1}^{p}b_{2j-1,0,1,q}\mu_{q,n}^{-2j+1}\right)\mu_{q,n}^{-2s}.
\end{align*} 
for $S_{q,\pm}$:
\begin{align*}
A_{0,0,q,\pm}(s)&=\sum_{n=1}^\infty m_{{\rm cex},q,n}\left(a_{0, 0,n,q,\pm} -\sum_{j=1}^{p}b_{2j-1,0,0,q,\pm}\mu_{q,n}^{-2j+1}\right)\mu_{q,n}^{-2 s},\\
A_{0,1,q,\pm}(s)&=\sum_{n=1}^\infty m_{{\rm cex},q,n}\left(a_{0, 1,n,q,\pm}
-\sum_{j=1}^{p}b_{2j-1,0,1,q,\pm}\mu_{q,n}^{-2j+1}\right)\mu_{q,n}^{-2 s}.
\end{align*}

We need the expansions for large $\lambda$ of $l_{2j-1}(\lambda)$, $l^{\pm}_{2j-1}(\lambda)$,
for $j=1,2,\ldots,p$, $\log\Gamma(-\lambda, S_{q,n}/\mu_{q,n}^2)$ and $\log\Gamma(-\lambda, S_{q,\pm,n}/\mu_{q,n}^2)$. Using classical expansion for Bessel functions and their derivative (see \cite{HMS} or \cite{Har} for details), we obtain 
\begin{align*}
\log\Gamma(-\lambda, S_{q,n}/\mu_{q,n}^2) =& \frac{1}{2}\log
2\pi+\left(\mu_{q,n}+\frac{1}{2}\right)\log\mu_{q,n}-\mu_{q,n}\log2\\
&-\log\Gamma(\mu_{q,n}+1)+\frac{1}{2}\left(\mu_{q,n}+\frac{1}{2}\right)\log(-\lambda) +
O(\e^{-\mu_{q,n}\sqrt{-\lambda}}).
\end{align*}

For $S_{q,\pm}$, by the same expansions in the definition of the function $\hat I$, equation (\ref{s4.e1}), we obtain
\[
\hat I_{\nu,\pm\alpha_q}(z)\sim \frac{\sqrt{z}\e^z}{\sqrt{2\pi}}\left(1+\sum_{k=1}^\infty b_kz^{-k}\right)+O(\e^{-z}),
\] 
and hence
\begin{align*}
\log\Gamma(-\lambda, &S_{q,\pm,n}/\mu_{q,n}^2) = \mu_{q,n}\sqrt{-\lambda}+\frac{1}{2}\log
2\pi+\left(\mu_{q,n}-\frac{1}{2}\right)\log\mu_{q,n}-\mu_{q,n}\log2\\
&-\log\Gamma(\mu_{q,n})+\frac{1}{2}\left(\mu_{q,n}-\frac{1}{2}\right)\log(-\lambda) +
\log\left(1\pm\frac{\alpha_q}{\mu_{q,n}}\right) +O(\e^{-\mu_{q,n}\sqrt{-\lambda}}).
\end{align*}

This gives
\begin{align*}
&a_{0,0,n,q}=\frac{1}{2}\log 2\pi+\left(\mu_{q,n}+\frac{1}{2}\right)\log\mu_{q,n}-\mu_{q,n}\log 2-\log\Gamma(\mu_{q,n}+1),\\
&a_{0,1,n,q}=\frac{1}{2}\left(\mu_{q,n}+\frac{1}{2}\right), \\
&a_{0,0,n,q,\pm}=\frac{1}{2}\log 2\pi+\left(\mu_{q,n}-\frac{1}{2}\right)\log\mu_{q,n}-\log
2^{\mu_{q,n}}\Gamma(\mu_{q,n})
+\log\left(1\pm\frac{\alpha_q}{\mu_{q,n}}\right),\\
&a_{0,1,n,q,\pm}=\frac{1}{2}\left(\mu_{q,n}-\frac{1}{2}\right),
\end{align*}
while the  $b_{2j-1,0,0,q}$, $b_{2j-1,0,0,q,\pm}$ all vanish since the functions $l_{2j-1}(\lambda)$,  $l^{\pm}_{2j-1}(\lambda)$ do not have constant terms. Therefore,
\begin{align*}
2a_{0,0,n,q}-a_{0,0,n,q,+}-a_{0,0,n,q,-}&= -\log\left(1-\frac{\alpha_q^{2}}{\mu^{2}_{q,n}}\right),\\
2a_{0,1,n,q}-a_{0,1,n,q,+}-a_{0,1,n,q,-}&=1,
\end{align*} 
and the thesis follows.

\end{proof}

Applying Theorem \ref{t4} and its corollary, we obtain the values of $t_q(0)$ and $t_q'(0)$. 

\begin{prop}\label{pro3}  For $0\leq q\leq p-2$,
\begin{align*}
t_q(0) &= t_{q,{\rm reg}}(0) + t_{q,{\rm sing}}(0)\\
t'_q(0) &= t'_{q,{\rm reg}}(0) + t'_{q,{\rm sing}}(0),
\end{align*}
where
\begin{align*}
t_{q,{\rm reg}}(0) &=-\zeta(0, U_{q})=-\zeta_{{\rm cex}}\left(0,\tilde\Delta^{(q)}+\alpha_q^2\right),\\
t_{q,{\rm sing}}(0) &=0,\\
t'_{q,{\rm reg}}(0) &=-\A_{q,0,0}(0) - \A'_{q,0,1}(0),\\
t'_{q,{\rm sing}}(0) &=\frac{1}{2}\sum_{j=0}^{p-1}\Rz_{s=0}\Phi_{2j+1,q}(s)\Ru_{s=2j+1}\zeta(s,U_q)\\
&=\frac{1}{2}\sum_{j=0}^{p-1}\Rz_{s=0}\Phi_{2j+1,q}(s)\Ru_{s=2j+1}\zeta_{{\rm cex}}\left(\frac{s}{2},\tilde\Delta^{(q)}+\alpha_q^2\right).
\end{align*}
\end{prop}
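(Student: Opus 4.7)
The proof will follow the same strategy as that of Proposition \ref{pro2}, applying the general machinery of Section \ref{sb2.2} to the linear combination of double sequences $2S_q - S_{q,+} - S_{q,-}$, whose associated zeta function is precisely $t_q(s)$. The plan is to invoke Theorem \ref{t4} through its linear-combination version in Corollary \ref{c}: Proposition \ref{s4.p1} has verified that each of $S_q$, $S_{q,+}$, $S_{q,-}$ is spectrally decomposable over the common base sequence $U_q$ with the same parameters $\kappa=2$, $\ell=2p$, and $\sigma_h=h-1$, and Lemma \ref{s4.l2} shows that the polynomial coefficients $P_{\rho_l}(\lambda)$ of the logarithmic factors in the uniform expansion cancel out in the combination $2S_q - S_{q,+} - S_{q,-}$ (no $\log\mu_{q,n}$ terms survive), which is precisely the hypothesis of Corollary \ref{c} allowing the formulas of Theorem \ref{t4} to be applied term-by-term.

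Next, I would use Remark \ref{rrr4} (which follows from Lemma \ref{lel}, asserting that $\zeta(s,U_q)$ has at most simple poles only at the odd values $s=1,3,\ldots,2p-1$) to reduce the sum over $h$ in Theorem \ref{t4} to those indices with $\sigma_h=2j+1$, $j=0,\ldots,p-1$. At this stage the formulas of Theorem \ref{t4} specialized to the combination $2S_q - S_{q,+} - S_{q,-}$ read
\begin{align*}
t_q(0) &= -\mathcal{A}_{0,1,q}(0) + \tfrac{1}{2}\sum_{j=0}^{p-1}\Ru_{s=0}\Phi_{2j+1,q}(s)\Ru_{s=2j+1}\zeta(s,U_q),\\
t_q'(0) &= -\mathcal{A}_{0,0,q}(0) - \mathcal{A}'_{0,1,q}(0) + \tfrac{\gamma}{2}\sum_{j=0}^{p-1}\Ru_{s=0}\Phi_{2j+1,q}(s)\Ru_{s=2j+1}\zeta(s,U_q)\\
&\quad + \tfrac{1}{2}\sum_{j=0}^{p-1}\Rz_{s=0}\Phi_{2j+1,q}(s)\Ru_{s=2j+1}\zeta(s,U_q) + \sum'_{j}\Ru_{s=0}\Phi_{2j+1,q}(s)\Rz_{s=2j+1}\zeta(s,U_q).
\end{align*}

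The key simplification is that, by Corollary \ref{c44}, $\Ru_{s=0}\Phi_{2j+1,q}(s)=0$ for all $0\leq j\leq p-1$; this kills the $\gamma$-term and the primed sum in the $t_q'(0)$ formula, and also kills the entire singular contribution to $t_q(0)$, proving $t_{q,\rm sing}(0)=0$. For the regular parts, Lemma \ref{s4.l4} identifies $\mathcal{A}_{0,1,q}(0) = \zeta(0,U_q) = \zeta_{\rm cex}(0,\tilde\Delta^{(q)}+\alpha_q^2)$ by Lemma \ref{lel}, yielding $t_{q,\rm reg}(0) = -\zeta(0,U_q)$, while $t'_{q,\rm reg}(0) = -\mathcal{A}_{0,0,q}(0)-\mathcal{A}'_{0,1,q}(0)$ follows immediately from the formula above. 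The remaining term gives $t'_{q,\rm sing}(0)=\tfrac{1}{2}\sum_j \Rz_{s=0}\Phi_{2j+1,q}(s)\Ru_{s=2j+1}\zeta(s,U_q)$, and applying Lemma \ref{lel} identifies this residue in terms of $\zeta_{\rm cex}(s/2,\tilde\Delta^{(q)}+\alpha_q^2)$.

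The only delicate point is the applicability of Corollary \ref{c}: one needs to check that the polynomial parts $P_{\rho_l}$ appearing in condition (2) of Definition \ref{spdec} for each individual sequence $S_q$, $S_{q,+}$, $S_{q,-}$ indeed cancel under the combination $2(\cdot)_q - (\cdot)_{q,+} - (\cdot)_{q,-}$. This is precisely the content of the absence of $\log\mu_{q,n}$ in the asymptotic expansion displayed in Lemma \ref{s4.l2}, and so no additional verification is required. Aside from this, the proof is a bookkeeping exercise combining Proposition \ref{s4.p1}, Lemma \ref{s4.l2}, Corollary \ref{c44}, and Lemma \ref{s4.l4}, with no real obstacle — the structural work having already been done in the preceding lemmas.
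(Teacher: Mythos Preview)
Your proposal is correct and follows essentially the same approach as the paper: both apply Theorem \ref{t4} via Corollary \ref{c} to the combination $2S_q-S_{q,+}-S_{q,-}$, invoking Proposition \ref{s4.p1} and Lemma \ref{s4.l2} to justify the spectral decomposition, Remark \ref{rrr4} to restrict to odd $\sigma_h$, Corollary \ref{c44} to kill the residues of $\Phi_{2j+1,q}$ at $s=0$, and Lemma \ref{s4.l4} for the regular parts. If anything, your write-up is more explicit than the paper's in displaying the intermediate formulas from Theorem \ref{t4} before the simplifications.
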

\begin{proof} By definition in equations (\ref{z1}) and (\ref{tq}),
\begin{align*}
t_{q}(0)=&2Z_{q}(0)- Z_{q,+}(0)-Z_{q,-}(0),\\
t_{q}'(0)=&2Z_{q}'(0)- Z_{q,+}'(0)-Z_{q,-}'(0).
\end{align*}
where $Z_{q}(s)=\zeta(s,S_{q})$, and $ Z_{q,\pm}(s)=\zeta(s, S_{q,\pm})$. By Proposition \ref{s4.p1} and Lemma \ref{s4.l2}, we can apply Theorem \ref{t4} and its Corollary to the linear combination above of these double zeta functions. The regular part of $2Z_{q}(0)- Z_{q,+}(0)-Z_{q,-}(0)$ is then given in Lemma \ref{s4.l4}, while the singular part vanishes, since, by Corollary \ref{c44}, the residues of the functions $\Phi_{k,q}(s)$ at $s=0$ vanish. 
The regular part of $2Z_{q}'(0)- Z_{q,+}'(0)-Z_{q,-}'(0)$ again follows by Lemma \ref{s3.l4}. For the singular part,   since by Proposition \ref{s4.p1}, $\kappa=2$, $\ell=2p$, and $\sigma_h=h-1$, with $0\leq h\leq 2p$, by Remark \ref{rrr4} we need only the odd values of $h-1=2j+1$, $0\leq j\leq p-1$, and this gives the formula stated for $t_{p-1,{\rm sing}}'(0)$.
\end{proof}

\section{The analytic torsion, and the proof of Theorem \ref{t01}}
\label{s5}

In this section we collect all the results obtained in the previous one in order to produce our formulas for the analytic torsion, thus proving Theorem \ref{t01}, that follows from Propositions \ref{ppaa} and \ref{ppoo} below.  
By equation (\ref{ttt}), the  torsion is 
\begin{align*}
\log T(C_l W)=t'(0)=& \frac{\log l^{2}}{2} \left(\sum_{q=0}^{p-1} (-1)^{q+1} r_qz_q(0)+\sum^{p-1}_{q=0}(-1)^{q} t_q(0)\right) \\
&+ \frac{1}{2} \left(\sum_{q=0}^{p-1} (-1)^{q+1} r_q z_q'(0)+\sum^{p-1}_{q=0}(-1)^{q} t_q'(0)\right).
\end{align*}

However, it is convenient to split the torsion in {\it regular} and {\it singular} part, accordingly to remark \ref{rqr} and the results in Propositions \ref{pro2} and \ref{pro3}. First, observe that the functions $z_q(s)$ where studied in Section \ref{ss2}, where it is showed that  there is no singular contribution to $z_q(0)$ and $z'_q(0)$. So $z_q(0)=z_{q,{\rm reg}}(0)$, and $z_q'(0)=z_{q,{\rm reg}}'(0)$. Therefore, we set
\[
\log T(C_l W)=\log T_{\rm reg}(C_l W)+\log T_{\rm sing}(C_l W),
\]
with
\begin{align}
\label{ttrr}\log T_{\rm reg}(C_l W)=t_{\rm reg}'(0)=& \frac{\log l^{2}}{2} \left(\sum_{q=0}^{p-1} (-1)^{q+1} r_qz_q(0)+\sum^{p-1}_{q=0}(-1)^{q} t_{q,{\rm reg}}(0)\right) \\
\nonumber&+ \frac{1}{2} \left(\sum_{q=0}^{p-1} (-1)^{q+1} r_q z_q'(0)+\sum^{p-1}_{q=0}(-1)^{q} t_{q,{\rm reg}}'(0)\right),\\
\label{ttss}\log T_{\rm sing}(C_l W)=t_{\rm sing}'(0)=& \frac{\log l^{2}}{2} \sum^{p-1}_{q=0}(-1)^{q} t_{q,{\rm sing}}(0) + \frac{1}{2} \sum^{p-1}_{q=0}(-1)^{q} t_{q,{\rm sing}}'(0).
\end{align}


\begin{lem} \label{lp11} For all $0\leq q\leq p-1$,
\begin{align*}
z_q(0)&=-\frac{1}{2},\\
z_q'(0)&=\log 2+\log (p-q).
\end{align*}
\end{lem}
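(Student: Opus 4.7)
The strategy is to recognize $z_q(s)$ as a simple difference of the basic Bessel-type zeta functions analyzed in Section \ref{ss2}, for which the values at $s=0$ and the derivatives at $s=0$ are already tabulated.

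First I would rewrite the summand using $\alpha_q = q-p+1$, so that $-\alpha_{q-1} = p-q$ and $-\alpha_q = p-q-1$. Comparing with the definition of the non-homogeneous quadratic Bessel zeta function in Section \ref{ss2} with $q=0$ and $l=1$, namely $z(s,\nu,0,1)=\sum_{k=1}^\infty j_{\nu,k}^{-2s}$, one obtains
\[
z_q(s) = z(s,p-q,0,1) - z(s,p-q-1,0,1).
\]

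Next I would just substitute the values recalled in equation (\ref{p00}) and in the $q\to 0$ limit immediately after it. From $z(0,\nu,0,1)=-\tfrac{1}{2}\left(\nu+\tfrac{1}{2}\right)$,
\[
z_q(0) = -\tfrac{1}{2}\left(p-q+\tfrac{1}{2}\right) + \tfrac{1}{2}\left(p-q-1+\tfrac{1}{2}\right) = -\tfrac{1}{2}.
\]

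Then from $z'(0,\nu,0,1) = -\log \dfrac{\sqrt{\pi}}{2^{\nu-1/2}\Gamma(\nu+1)}$,
\[
z_q'(0) = \log \dfrac{2^{p-q-1/2}\Gamma(p-q+1)}{2^{p-q-3/2}\Gamma(p-q)} = \log 2 + \log(p-q),
\]
using $\Gamma(p-q+1) = (p-q)\Gamma(p-q)$. There is no real obstacle here: the lemma is a direct specialization of the formulas for $z(s,\nu,0,l)$ from Section \ref{ss2}, and the only care required is in the bookkeeping of the indices $\alpha_{q-1}$ and $\alpha_q$.
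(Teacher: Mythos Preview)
Your proof is correct and follows exactly the paper's approach: the paper's own proof is the single line ``This follows by equation (\ref{p00}),'' and you have simply unpacked that reference with the correct index bookkeeping $-\alpha_{q-1}=p-q$, $-\alpha_q=p-q-1$.
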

\begin{proof} This follows by equation (\ref{p00}).
\end{proof}

\begin{lem}\label{lp2} 
\begin{align*}
t_{q,{\rm reg}}(0)&=-\zeta_{\rm cex}(0, \tilde \Delta^{(q)}),& 0\leq q\leq p-2,\\
t_{q,{\rm reg}}'(0)&=-\zeta_{\rm cex}'(0, \tilde \Delta^{(q)}),& 0\leq q\leq p-2,\\
t_{p-1,{\rm reg}}(0)&=-\frac{1}{2}\zeta_{\rm cex}(0, \tilde \Delta^{(p-1)}),\\
t_{p-1,{\rm reg}}'(0)&=-\frac{1}{2}\zeta_{\rm cex}'(0, \tilde \Delta^{(p-1)}).
\end{align*}
\end{lem}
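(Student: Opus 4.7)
The case $q = p-1$ is immediate: since $\alpha_{p-1} = 0$, the sequence $U_{p-1}$ consists of the square roots of the coexact eigenvalues of $\tilde\Delta^{(p-1)}$, and Lemma \ref{lel} collapses to $\zeta(s, U_{p-1}) = \zeta_{\rm cex}(s/2, \tilde\Delta^{(p-1)})$. Evaluating at $s = 0$ and differentiating gives $\zeta(0, U_{p-1}) = \zeta_{\rm cex}(0, \tilde\Delta^{(p-1)})$ and $\zeta'(0, U_{p-1}) = \tfrac{1}{2}\zeta_{\rm cex}'(0, \tilde\Delta^{(p-1)})$; substituting into the formulas of Proposition \ref{pro2} yields the two equalities for $q = p-1$.

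For $0 \leq q \leq p-2$, start from Proposition \ref{pro3} together with Lemma \ref{lel}, which give $t_{q,{\rm reg}}(0) = -\zeta_{\rm cex}(0, \tilde\Delta^{(q)} + \alpha_q^2)$. The first step of the plan is to show that both $\zeta_{\rm cex}(0, \tilde\Delta^{(q)} + \alpha_q^2)$ and $\zeta_{\rm cex}(0, \tilde\Delta^{(q)})$ vanish. Indeed, since $W$ is closed of odd dimension $m = 2p-1$, the heat trace $\operatorname{tr}_{\rm cex}(\e^{-t\tilde\Delta^{(q)}})$ has, by Proposition \ref{coeff}, a small-$t$ expansion involving only integer powers of $t$ scaled by $t^{-m/2}$; the constant term in $t$, which by Proposition \ref{l3.1} controls $\zeta(0)$ modulo the dimension of the kernel, vanishes because $m/2$ is a half-integer. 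Multiplying the heat trace by the power series $\e^{-t\alpha_q^2}$ preserves this property, and since $\tilde\Delta_{\rm cex}^{(q)}$ has no kernel, both zeta values are zero.

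For the derivative, Proposition \ref{pro3}, Lemma \ref{s4.l4}, and Lemma \ref{lel} give $\A'_{0,1,q}(0) = 2\zeta'(0, U_q) = \zeta_{\rm cex}'(0, \tilde\Delta^{(q)} + \alpha_q^2)$, so the claim reduces to the identity
\begin{equation*}
\A_{0,0,q}(0) = \zeta_{\rm cex}'(0, \tilde\Delta^{(q)}) - \zeta_{\rm cex}'(0, \tilde\Delta^{(q)} + \alpha_q^2).
\end{equation*}
To establish this, I would apply the Frullani formula $-\log(1 - \alpha_q^2/\mu_{q,n}^2) = \int_0^\infty (\e^{-t\lambda_{q,n}} - \e^{-t\mu_{q,n}^2})\,t^{-1}\,dt$ to the expression for $\A_{0,0,q}(s)$ in Lemma \ref{s4.l4}. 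Interchanging sum and integral (valid for $\Re(s)$ large and then extended by analytic continuation to $s = 0$) recasts $\A_{0,0,q}(0)$ as the regularized integral
\begin{equation*}
\int_0^\infty \bigl[\operatorname{tr}_{\rm cex}(\e^{-t\tilde\Delta^{(q)}}) - \operatorname{tr}_{\rm cex}(\e^{-t(\tilde\Delta^{(q)} + \alpha_q^2)})\bigr]\frac{dt}{t}.
\end{equation*}
The very same integral arises on the other side: subtracting the Mellin representations $\zeta_{\rm cex}(s, A) = \Gamma(s)^{-1}\int_0^\infty t^{s-1}\operatorname{tr}_{\rm cex}(\e^{-tA})dt$ for $A = \tilde\Delta^{(q)}$ and $A = \tilde\Delta^{(q)} + \alpha_q^2$ and using $\Gamma(s)^{-1} = s + O(s^2)$ to differentiate at $s = 0$ yields precisely the above integral. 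This proves the identity and hence the lemma.

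The principal technical obstacle will be the justification of the analytic continuations and of the interchanges of summation and integration. Both the Frullani integral and the Mellin integral of the heat-trace difference exhibit a $t^{-m/2}$ singularity at $t = 0$, so the integrals diverge for $p \geq 2$ and must be interpreted as finite parts of analytic continuations from the half-plane $\Re(s) > m/2 - 1$. Here the odd-dimensional hypothesis is decisive: the poles generated by the small-$t$ expansion of the heat trace sit at half-integer values of $s$ and therefore never at $s = 0$, which guarantees that the two regularized integrals coincide and are finite, allowing the two sides of the key identity to be matched unambiguously.
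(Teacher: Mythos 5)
Your handling of $q = p-1$ and your reduction of the derivative identity to $\A_{0,0,q}(0) = \zeta'_{\rm cex}(0,\tilde\Delta^{(q)}) - \zeta'_{\rm cex}(0,\tilde\Delta^{(q)}+\alpha_q^2)$ are both correct. But the argument you give for $t_{q,{\rm reg}}(0)$ when $0\leq q\leq p-2$ contains a concrete error: the claim that $\zeta_{\rm cex}(0,\tilde\Delta^{(q)})=0$. You reason that the constant term of the coexact heat trace vanishes because $m$ is odd. That holds for the full trace ${\rm tr}\,\e^{-t\tilde\Delta^{(k)}}$, whose coefficient of $t^0$ is $e_{k,m}=0$; but Hodge theory gives ${\rm tr}_{\rm cex}\,\e^{-t\tilde\Delta^{(q)}} = \sum_{k=0}^q(-1)^{q-k}\bigl({\rm tr}\,\e^{-t\tilde\Delta^{(k)}} - b_k\bigr)$, where $b_k=\dim\H^k(W)$, and the subtracted Betti numbers contribute $-\sum_{k=0}^q(-1)^{q-k}b_k$ to the constant coefficient. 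This is precisely $\zeta_{\rm cex}(0,\tilde\Delta^{(q)})$, and it is generally nonzero: for $W=S^{2p-1}$ it equals $(-1)^{q+1}$, a fact the paper records explicitly in Section 8.4. The lemma does not need either zeta value to vanish, only the equality $\zeta_{\rm cex}(0,\tilde\Delta^{(q)}+\alpha_q^2)=\zeta_{\rm cex}(0,\tilde\Delta^{(q)})$. That follows from the binomial expansion $\zeta_{\rm cex}(s,\tilde\Delta^{(q)}+\alpha_q^2)=\sum_{j\geq 0}\binom{-s}{j}\alpha_q^{2j}\zeta_{\rm cex}(s+j,\tilde\Delta^{(q)})$, together with the odd-dimensional fact that the poles of $\zeta_{\rm cex}(\cdot,\tilde\Delta^{(q)})$ lie at half-integers, so every term with $j\geq1$ is the product of $\binom{-s}{j}=O(s)$ and a finite quantity, hence vanishes at $s=0$.

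Your Frullani--Mellin sketch for the derivative is a plausible alternative to the paper's proof, which instead combines $\A_{0,0,q}(s)$ with $\A'_{0,1,q}(s)$ into $\sum_n\log\lambda_{q,n}\,m_{{\rm cex},q,n}\,\mu_{q,n}^{-2s}$ and then expands $\mu_{q,n}^{-2s}$ binomially. As written, though, your sketch leaves a gap: after interchanging sum and integral in the Frullani representation, the weight $\mu_{q,n}^{-2s}$ remains inside the sum, so what you obtain is not the Mellin transform $\int_0^\infty t^{s-1}\bigl[{\rm tr}_{\rm cex}\,\e^{-t\tilde\Delta^{(q)}} - {\rm tr}_{\rm cex}\,\e^{-t(\tilde\Delta^{(q)}+\alpha_q^2)}\bigr]dt$. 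These are different meromorphic functions of $s$, and showing that their analytic continuations coincide at $s=0$ requires further argument, most naturally the same binomial expansion, which then makes the integral detour superfluous.
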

\begin{proof} The first and the third formulas follows by  Propositions \ref{pro2} and \ref{pro3}, and the fact that for the zeta function associated to any sequence $S$, and any number $b$, $\zeta(0,S+b)=\zeta(0,S)$. For the derivatives, when $0\leq q\leq p-2$, by Proposition \ref{pro3},
\[
t_{q,{\rm reg}}'(0)=-\A_{0,0,q}(0)-\A_{0,1,q}'(0).
\]

By Lemma \ref{s4.l4}
\begin{align*}
\A_{0,0,q}(s)&= - \sum_{n=1}^{\infty}
\log\left(1-\frac{\alpha_q^2}{\mu_{q,n}^2}\right)\frac{m_{{\rm cex},q,n}}{\mu_{q,n}^{2s}},\\
\A_{0,1,q}(s)&= \zeta(2s,U_q)= \sum_{n=1}^{\infty} \frac{m_{{\rm cex},q,n}}{\mu_{q,n}^{2s}}.\\ 
\end{align*}

Recalling that $\mu_{q,n}=\sqrt{\lambda_{q,n}+\alpha_q}$, and expanding the binomial, we obtain
\begin{align*}
-\A_{0,0,q}(s)-\A_{0,1,q}'(s)&=   \sum^{\infty}_{n=1} \log
\left(1-\frac{\alpha_q^2}{\mu_{q,n}^2}\right) \frac{m_{{\rm cex},q,n}}{\mu_{q,n}^{2s}}
- \sum^{\infty}_{n=1} \frac{m_{{\rm cex},q,n}}{\mu_{q,n}^{2s}}\log \mu_{q,n}^2\\
&=  \sum_{n=1}^{\infty} \log\lambda_{q,n} \frac{m_{{\rm cex},q,n}}{\mu_{q,n}^{2s}}\\
&=  \sum^{\infty}_{n=1} \log\lambda_{q,n} \sum^{\infty}_{j=0}
\binom{-s}{j}\frac{m_{{\rm cex},q,n}}{\lambda_{q,n}^{s+j}}
\alpha_q^{2j}\\
&= -\sum^{\infty}_{j=0} \binom{-s}{j} \zeta_{\rm ccl}'(s+j,\tilde\Delta^{(q)}) \alpha_q^{2j},
\end{align*}
that gives the second formula. Eventually, the result for $t'_{p-1,{\rm reg}}(0)$ follows by Proposition \ref{pro2} and the fact that $\alpha_{p-1}=0$ since the dimension is $m=2p-1$.
\end{proof}

\begin{prop}\label{ppaa}
\begin{align*}
\log T_{\rm reg}(C_l W)=&\frac{1}{2} \sum_{q=0}^{p-1} (-1)^{q} r_q\log \frac{l}{2}
-\frac{1}{2}\sum_{q=0}^{p-1} (-1)^{q} r_q\log (p-q)+\frac{1}{2} \log T(W,g)\\
&-\left(\sum_{q=0}^{p-2} (-1)^q \zeta_{\rm ccl}(0, \tilde\Delta^{(q)})+\frac{1}{2}(-1)^{p-1}\zeta_{\rm ccl}(0, \tilde\Delta^{(p-1)})\right)\log l\\
=&\frac{1}{2} \sum_{q=0}^{p-1} (-1)^{q} r_q\log \frac{l}{2}
-\frac{1}{2}\sum_{q=0}^{p-1} (-1)^{q} r_q\log (p-q)+\frac{1}{2} \log T(W,l^2g),
\end{align*}
where $r_q={\rm rk}\H_q(\b C_lW;\Q)$
\end{prop}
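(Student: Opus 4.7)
The plan is essentially a bookkeeping exercise: substitute the explicit values from Lemmas \ref{lp11} and \ref{lp2} into the formula (\ref{ttrr}) for $\log T_{\rm reg}(C_l W)$, collect the pieces, and recognize the result. Concretely, I would first replace $z_q(0) = -\frac{1}{2}$ and $z_q'(0) = \log 2 + \log(p-q)$ in the two sums involving the Betti ranks $r_q$. The $z_q(0)$ contribution yields $\frac{\log l}{2}\sum_{q=0}^{p-1}(-1)^q r_q$, while the $z_q'(0)$ contribution splits into $-\frac{\log 2}{2}\sum_{q=0}^{p-1}(-1)^q r_q - \frac{1}{2}\sum_{q=0}^{p-1}(-1)^q r_q \log(p-q)$. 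Combining the first of these with the $\log l$ term produces the clean factor $\frac{1}{2}\sum_{q=0}^{p-1}(-1)^q r_q \log\frac{l}{2}$, accounting for the first line of the target identity.

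Next I would insert the values of $t_{q,\rm reg}(0)$ and $t_{q,\rm reg}'(0)$ from Lemma \ref{lp2}. The derivative contributions give
\[
\tfrac{1}{2}\sum_{q=0}^{p-1}(-1)^q t_{q,\rm reg}'(0) = -\tfrac{1}{2}\sum_{q=0}^{p-2}(-1)^q \zeta'_{\rm cex}(0,\tilde\Delta^{(q)}) + \tfrac{(-1)^p}{4}\zeta'_{\rm cex}(0,\tilde\Delta^{(p-1)}),
\]
and since on a closed manifold $\zeta_{\rm cex} = \zeta_{\rm ccl}$ on the nonzero spectrum, this is exactly $\frac{1}{2}\log T(W,g)$ by the odd-dimensional formula in equation (\ref{odd}). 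The $t_{q,\rm reg}(0)$ contributions, multiplied by $\frac{\log l^2}{2} = \log l$, give precisely the remaining term $-\left(\sum_{q=0}^{p-2}(-1)^q \zeta_{\rm ccl}(0,\tilde\Delta^{(q)}) + \frac{(-1)^{p-1}}{2}\zeta_{\rm ccl}(0,\tilde\Delta^{(p-1)})\right)\log l$. Assembling all four pieces yields the first equality of the proposition.

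For the second equality, I would invoke the elementary scaling behaviour of the Laplacian: on $q$-forms, $\Delta^{(q)}_{l^2 g} = l^{-2}\Delta^{(q)}_g$, so $\zeta(s,\Delta^{(q)}_{l^2 g}) = l^{2s}\zeta(s,\Delta^{(q)}_g)$ and therefore $\zeta'(0,\Delta^{(q)}_{l^2 g}) = \zeta'(0,\Delta^{(q)}_g) + 2\log l\cdot \zeta(0,\Delta^{(q)}_g)$, with the same identity for the closed/coclosed pieces. Feeding this into the formula (\ref{odd}) for $T(W,l^2 g)$ gives
\[
\tfrac{1}{2}\log T(W,l^2 g) = \tfrac{1}{2}\log T(W,g) - \log l\left(\sum_{q=0}^{p-2}(-1)^q\zeta_{\rm ccl}(0,\tilde\Delta^{(q)}) + \tfrac{(-1)^{p-1}}{2}\zeta_{\rm ccl}(0,\tilde\Delta^{(p-1)})\right),
\]
which is precisely the combination that appears in the first line, so substituting back converts the first expression into the second.

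There is no real obstacle here; the only step requiring any thought is matching signs in the derivative block to reproduce exactly the formula (\ref{odd}) for an odd dimensional closed manifold, in particular handling the $q = p-1$ endpoint where $t_{p-1,\rm reg}'(0)$ carries the extra factor $\frac{1}{2}$ that combines with $(-1)^{p-1}$ to yield the $\frac{(-1)^p}{4}\zeta'_{\rm ccl}(0,\tilde\Delta^{(p-1)})$ needed by (\ref{odd}). Once that one sign chase is done, the rest is pure substitution.
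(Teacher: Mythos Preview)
Your proposal is correct and follows essentially the same approach as the paper's own proof: substitute the values from Lemmas \ref{lp11} and \ref{lp2} into equation (\ref{ttrr}), recognize the derivative block as $\frac{1}{2}\log T(W,g)$ via the second identity in (\ref{odd}), and obtain the second equality by the scaling $\zeta'(0,l^{-2}S) = \zeta'(0,S) + \zeta(0,S)\log l^2$. The paper's version is slightly terser, but the logical steps are identical.
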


\begin{proof} Substitution in the formula in equation (\ref{ttrr}) of the values given in Lemmas \ref{lp11} and \ref{lp2} gives
\begin{align*}
\log T_{\rm reg}(C_l W)=&\frac{1}{2} \sum_{q=0}^{p-1} (-1)^{q} r_q\log \frac{l}{2}
-\frac{1}{2}\sum_{q=0}^{p-1} (-1)^{q} r_q\log (p-q)\\
&-\left(\sum_{q=0}^{p-2} (-1)^q \zeta_{\rm ccl}(0, \tilde\Delta^{(q)})+\frac{1}{2}(-1)^{p-1}\zeta_{\rm ccl}(0, \tilde\Delta^{(p-1)})\right)\log l\\
&+\frac{1}{4}\left(2\sum_{q=0}^{p-2} (-1)^{q+1} \zeta'_{\rm ccl}(0, \tilde\Delta^{(q)})+(-1)^p\zeta_{\rm ccl}'(0, \tilde\Delta^{(p-1)})\right).\\
\end{align*}

By the second formula in equation (\ref{odd})
\[
\frac{1}{4}\left(2\sum_{q=0}^{p-2} (-1)^{q+1} \zeta'_{\rm ccl}(0, \tilde\Delta^{(q)})+(-1)^p\zeta_{\rm ccl}'(0, \tilde\Delta^{(p-1)})\right)=\frac{1}{2}\log T(W,g),
\]
and this gives the first formula stated. For the second formula, note that the boundary of the cone $\b C_l W$ is the manifold $W$ with metric $l^2 g$. The restriction of the Laplace operator on the boundary is then $\Delta_{\b C_l W}=\frac{\tilde\Delta}{l^2}$. Since for the zeta function associated to any sequence $S$, and any number $a$, 
\[
\zeta'(0,aS)=-\zeta(0,S)\log a+\zeta'(0,S),
\]
a simple calculation shows that
\begin{align*}
&-\left(\sum_{q=0}^{p-2} (-1)^q \zeta_{\rm ccl}(0, \tilde\Delta^{(q)})+\frac{1}{2}(-1)^{p-1}\zeta_{\rm ccl}(0, \tilde\Delta^{(p-1)})\right)\log l^2\\
&+\frac{1}{2}\left(2\sum_{q=0}^{p-2} (-1)^{q+1} \zeta'_{\rm ccl}(0, \tilde\Delta^{(q)})+(-1)^p\zeta_{\rm ccl}'(0, \tilde\Delta^{(p-1)})\right)\\
=&t(0,W)\log l^2+t'(0,W)=\log T(\b C_l W).
\end{align*}

\end{proof}

\begin{prop}\label{ppoo}
\begin{align*}
\log T_{\rm sing}(C_l W)=& \frac{1}{2}\sum_{q=0}^{p-1} (-1)^q \sum_{j=0}^{p-1}\Rz_{s=0}\Phi_{2j+1}(s)\Ru_{s=j+\frac{1}{2}}\zeta_{\rm cex}\left(s,\tilde\Delta^{(q)}+\alpha_q^2\right)\\
=&\frac{1}{2}\sum_{q=0}^{p-1} (-1)^q \sum_{j=0}^{p-1}\Rz_{s=0}\Phi_{2j+1}(s)\sum^{q}_{l=0}(-1)^{l}\Ru_{s=j+\frac{1}{2}}\zeta\left(s,\tilde \Delta^{(l)}+\alpha_q^2\right)\\
=& \frac{1}{2}\sum_{q=0}^{p-1} (-1)^q\sum_{j=0}^{p-1}\sum^{j}_{k=0} \Rz_{s=0}\Phi_{2k+1,q}(s) \binom{-\frac{1}{2}-k}{j-k}
\Ru_{s=j+\frac{1}{2}}\zeta_{cex}\left(s,\tilde \Delta^{(q)}\right)\alpha_q^{2(j-k)}\\
=&\frac{1}{2}\sum_{q=0}^{p-1}  \sum_{j=0}^{p-1}\sum^{j}_{k=0} \Rz_{s=0}\Phi_{2k+1,q}(s)
\binom{-\frac{1}{2}-k}{j-k} \sum^{q}_{l=0}(-1)^{l}\Ru_{s=j+\frac{1}{2}}\zeta\left(s,\tilde \Delta^{(l)}\right)\alpha_q^{2(j-k)}.
\end{align*}

\end{prop}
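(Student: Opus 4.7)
The plan is to substitute into the decomposition (\ref{ttss}) of $\log T_{\rm sing}(C_lW)$ the explicit values of $t_{q,{\rm sing}}(0)$ and $t'_{q,{\rm sing}}(0)$ furnished by Propositions \ref{pro2} and \ref{pro3}. Both propositions give $t_{q,{\rm sing}}(0)=0$, so the coefficient of $\log l^2$ in (\ref{ttss}) drops out and only the derivative part survives. The formulas for $t'_{q,{\rm sing}}(0)$ in Propositions \ref{pro2} and \ref{pro3} have the same shape, a finite linear combination of products $\Rz_{s=0}\Phi_{2j+1,q}(s)\,\Ru_{s=2j+1}\zeta(s,U_q)$ which in both cases rewrites through $\zeta(s,U_q)=\zeta_{\rm cex}(s/2,\tilde\Delta^{(q)}+\alpha_q^2)$ (for $q=p-1$ one uses $\alpha_{p-1}=0$). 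I would then convert the residue via $\Ru_{s=2j+1}f(s/2)=2\,\Ru_{u=j+1/2}f(u)$: the resulting factor $2$ combines with the two prefactors $\tfrac12$ (one from Propositions \ref{pro2}-\ref{pro3}, one from (\ref{ttss})) to leave an overall prefactor $\tfrac12$, which immediately produces the first equality of the proposition.

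For the second equality I would invoke Hodge duality on the closed odd dimensional $W$. The isometry (\ref{iso1}) identifies the eigenspaces of coexact $q$-forms with those of closed $(q+1)$-forms on the positive spectrum, giving $\zeta_{\rm cex}(u,\tilde\Delta^{(q)})=\zeta(u,\tilde\Delta^{(q)})-\zeta_{\rm cex}(u,\tilde\Delta^{(q-1)})$. An immediate induction on $q$ yields
\[
(-1)^q\zeta_{\rm cex}(u,\tilde\Delta^{(q)})=\sum_{l=0}^{q}(-1)^l\zeta(u,\tilde\Delta^{(l)}),
\]
and this identity is preserved by the shift $\tilde\Delta^{(q)}\mapsto\tilde\Delta^{(q)}+\alpha_q^2$, since a constant spectral shift commutes with the Hodge decomposition. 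Substituting into the first form and absorbing the $(-1)^q$ into the sum produces the second form.

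For the third and fourth equalities the key tool is the binomial series
\[
\zeta_{\rm cex}(s,\tilde\Delta^{(q)}+\alpha_q^2)=\sum_{k=0}^{\infty}\binom{-s}{k}\alpha_q^{2k}\,\zeta_{\rm cex}(s+k,\tilde\Delta^{(q)}),
\]
valid as an identity of meromorphic functions after truncation at any large $N$, since the tail is entire. By Proposition \ref{l3.1} the poles of $\zeta_{\rm cex}(u,\tilde\Delta^{(q)})$ lie at $u\in\{\tfrac12,\tfrac32,\dots,p-\tfrac12\}$, hence only the terms with $j+k\le p-1$ contribute a pole at $s=j+\tfrac12$. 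Reading off this residue and reindexing through $i=j+k$, then relabelling the summation variables, converts the single sum over $j$ into the double sum $\sum_{j=0}^{p-1}\sum_{k=0}^{j}$ with coefficient $\binom{-\tfrac12-k}{j-k}\alpha_q^{2(j-k)}$ attached to $\Rz_{s=0}\Phi_{2k+1,q}(s)\Ru_{s=j+1/2}\zeta_{\rm cex}(s,\tilde\Delta^{(q)})$. Combining this with the Hodge identity of the previous paragraph gives the final form.

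The main obstacle I foresee is the bookkeeping of the binomial reindexing, together with the consistency check at $q=p-1$: the factor $\alpha_{p-1}^{2(j-k)}$ vanishes unless $j=k$, collapsing the double sum to its diagonal and recovering exactly the formula for $t'_{p-1,{\rm sing}}(0)$ of Proposition \ref{pro2}. This check should be routine once the correct indexing is fixed, but is the only place where a slip would propagate through all four equivalent forms.
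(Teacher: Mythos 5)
Your proposal follows the same route as the paper: substitute the values from Propositions \ref{pro2} and \ref{pro3} into (\ref{ttss}), rescale the residues by the change $s\mapsto s/2$, use the Hodge-duality identity $(-1)^q\zeta_{\rm cex}(u,\tilde\Delta^{(q)})=\sum_{l=0}^q(-1)^l\zeta(u,\tilde\Delta^{(l)})$, and expand the residues of the shifted zeta function via the binomial series and the pole structure from Proposition \ref{l3.1} (equivalently, Lemma \ref{lel}), then reindex the double sum. The argument is correct and is essentially the paper's own, with the ``some combinatorics'' step made explicit; the only discrepancy is a superfluous factor $(-1)^q$ in the paper's second displayed equality, which your derivation correctly removes when passing from the coexact to the full zeta functions.
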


\begin{proof} The first formula  follows by substitution in equation (\ref{ttss}) of the values given in Propositions \ref{pro2} and \ref{pro3}, and observing that, for the zeta function associated to any sequence $S$
\[
a\Ru_{s=s_0}\zeta(as,S)=\Ru_{s=as_0}\zeta(s,S).
\]

The second by duality, see Section \ref{forms},
\begin{align*}
\zeta_{ccl}\left(s,\tilde \Delta^{(q)}\right) &= \zeta\left(s,\tilde \Delta^{(q)}\right) - \zeta_{cl}\left(s,\tilde
\Delta^{(q)}\right)= \zeta\left(s,\tilde \Delta^{(q)}\right) - \zeta_{ccl}\left(s,\tilde\Delta^{(q-1)}\right)= \sum^{q}_{k=0} (-1)^{q+k}\zeta\left(s,\tilde \Delta^{(k)}\right).
\end{align*}

The third formula follows by Lemmas \ref{lp1} and \ref{lel}, and some combinatorics,   and the last by the previous ones.
\end{proof}


\section{The proof of Theorem \ref{t02}}
\label{s6}

On order to prove Theorem \ref{t02} we calculate the regular and the singular parts of the torsion in the case $W=S^m_{\sin\alpha}$, according to  Propositions \ref{ppaa} and \ref{ppoo}. Recall we are considering the absolute BC case. The result for the regular part follows easily, the one for the singular part requires more works, that will be developed in the following subsections. Here we recall the underlying geometric setting. Let $S^m_b$ be the sphere of radius  $b>0$ in $\R^{m+1}$, $S^{m}_b=\{x\in\R^{m+1}~|~|x|=b\}$ (we simply write $S^m$
for $S^m_1$). Let  $C_l S^m_{\sin\alpha}$ denotes the cone of angle $\alpha$ over $S^m_{\sin\alpha}$ in $\R^{m+2}$. We embed $C_l S^m_{\sin\alpha}$ in $\R^{m+2}$ as the subset of the segments joining the origin to the sphere $S^m_{l\sin\alpha}\times \{(0,\dots,0,l\cos\alpha)\}$. We parametrize the cone by
\begin{equation*}\label{}C_{l}S_{\sin\alpha}^{m}=\left\{
\begin{array}{rcl}
x_1&=&r \sin{\alpha} \sin{\theta_m}\sin{\theta_{m-1}}\cdots\sin{\theta_3}\sin{\theta_2}\cos{\theta_1} \\[8pt]
x_2&=&r \sin{\alpha} \sin{\theta_m}\sin{\theta_{m-1}}\cdots\sin{\theta_3}\sin{\theta_2}\sin{\theta_1} \\[8pt]
x_3&=&r \sin{\alpha} \sin{\theta_m}\sin{\theta_{m-1}}\cdots\sin{\theta_3}\cos{\theta_2} \\[8pt]
&\vdots& \\
x_{m+1}&=&r \sin{\alpha} \cos{\theta_m} \\[8pt]
x_{m+2}&=&r \cos{\alpha}
\end{array}
\right.
\end{equation*}
with $r \in [0,l]$, $\theta_1 \in [0,2\pi]$, $\theta_2,\ldots,\theta_m \in [0,\pi]$, and where  $\alpha$ is a fixed positive real number and  $0<a=\frac{1}{\nu}= \sin{\alpha}\leq 1$. The induced metric is  ($r>0$)
\begin{align*}
g_E &=dr \otimes dr + r^2  g_{S^{m}_{a^2}}\\
&= dr\otimes dr + r^2  a^2\left(\sum^{m-1}_{i=1} \left(\prod^{m}_{j=i+1} \sin^2{\theta_j}\right) d\theta_i \otimes
d\theta_i + d\theta_m \otimes d\theta_m\right),
\end{align*}
and $\sqrt{|\det
g_E|}=(r\sin\alpha)^{m}(\sin\theta_m)^{m-1}(\sin\theta_{m-1})^{m-2}\cdots(\sin\theta_3)^{2}(\sin\theta_2)$.

\subsection{The regular part of the torsion}

\begin{prop} 
\[
\log T_{\rm reg}(C_l S^{2p-1}_{\sin\alpha})=\frac{1}{2} \log{\rm Vol} (C_l S^{2p-1}_{\sin\alpha}).
\]
\end{prop}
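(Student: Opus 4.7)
The plan is to specialize Proposition \ref{ppaa} to $W = S^{2p-1}_{\sin\alpha}$ and then reduce to a closed-form evaluation of the analytic torsion of a round odd-dimensional sphere plus an elementary cone-volume computation.

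First I would exploit the very simple rational homology of the sphere: since $S^{2p-1}$ has $H_0=\Q$ and all other rational homology concentrated in the top degree $2p-1$, the ranks appearing in Proposition \ref{ppaa} are $r_0 = {\rm rk}\,H_0(\b C_l S^{2p-1}_{\sin\alpha};\Q)=1$ and $r_q=0$ for $1\le q\le p-1$. Substitution collapses the two sums into a single term and yields
$$
\log T_{\rm reg}(C_l S^{2p-1}_{\sin\alpha}) = \frac{1}{2}\log\frac{l}{2} - \frac{1}{2}\log p + \frac{1}{2}\log T(S^{2p-1}_{\sin\alpha},l^2g),
$$
where, by the remarks of Section \ref{Lap1.1}, $(S^{2p-1}_{\sin\alpha},l^2g)$ is isometric to the round sphere $S^{2p-1}_{l\sin\alpha}$ of radius $l\sin\alpha$, which is precisely $\b C_l S^{2p-1}_{\sin\alpha}$.

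Next I would compute the volume of the cone directly from its metric: from $g=dx\otimes dx+x^2g_{S^{2p-1}_{\sin\alpha}}$, the volume element is $x^{2p-1}\sqrt{|g_{S^{2p-1}_{\sin\alpha}}|}\,dx\wedge dy$, so
$$
{\rm Vol}(C_l S^{2p-1}_{\sin\alpha}) = \int_0^l x^{2p-1}dx\cdot {\rm Vol}(S^{2p-1}_{\sin\alpha}) = \frac{l^{2p}}{2p}{\rm Vol}(S^{2p-1}_{\sin\alpha}) = \frac{l}{2p}{\rm Vol}(S^{2p-1}_{l\sin\alpha}),
$$
where the last equality uses the scaling ${\rm Vol}(S^{2p-1}_{l\sin\alpha}) = l^{2p-1}{\rm Vol}(S^{2p-1}_{\sin\alpha})$.

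The one genuinely nontrivial ingredient is the closed-form identity
$$
\log T(S^{2p-1}_a) = \log {\rm Vol}(S^{2p-1}_a),
$$
valid for any $a>0$. This is standard for odd-dimensional round spheres, and can be obtained by combining the scaling behavior of $\zeta'(0,\Delta^{(q)})$ under $g\mapsto a^2 g$ with the exact evaluation of the coexact zeta functions on $S^{2p-1}$, using Hodge duality to pair up the contributions in equation (\ref{odd}). The low-dimensional instances $p=1,2,3$ are worked out explicitly in \cite{HMS} and \cite{HS}, and the general-$p$ computation is a direct extension of the same technique.

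Granted the identity for the round sphere, combining the three steps gives
$$
\log T_{\rm reg}(C_l S^{2p-1}_{\sin\alpha}) = \frac{1}{2}\log\frac{l}{2p} + \frac{1}{2}\log {\rm Vol}(S^{2p-1}_{l\sin\alpha}) = \frac{1}{2}\log\frac{l\cdot {\rm Vol}(S^{2p-1}_{l\sin\alpha})}{2p} = \frac{1}{2}\log {\rm Vol}(C_l S^{2p-1}_{\sin\alpha}),
$$
as required. The main obstacle is only the sphere torsion identity; once it is in hand, the remainder of the argument is a straightforward substitution of homology ranks into Proposition \ref{ppaa} together with an elementary integration for the cone volume.
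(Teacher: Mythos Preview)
Your argument is correct and follows essentially the same route as the paper: specialize Proposition \ref{ppaa} using $r_0=1$, $r_q=0$ for $1\le q\le p-1$, invoke the identity $T(S^{2p-1}_a)={\rm Vol}(S^{2p-1}_a)$ for the round odd sphere, and combine with the elementary cone-volume computation. The only minor difference is that the paper simply cites \cite{MS} for the sphere torsion identity in all odd dimensions, whereas you describe it as an extension of the low-dimensional cases in \cite{HMS}, \cite{HS}; since the general result is already available in \cite{MS}, you may as well quote it directly rather than sketch a reproof.
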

\begin{proof} By Proposition \ref{ppaa}, when $W= S^{2p-1}_{\sin\alpha}$ with the standard Euclidean metric $g_E$, 
\[
\log T_{\rm reg}(C_l S^{2p-1}_{\sin\alpha})=\frac{1}{2}\log\frac{l}{2}-\frac{1}{2}\log p+\frac{1}{2}\log  T ( S^{2p-1}_{\sin\alpha},l^2 g_E).
\]

By \cite{MS}, $  T ( S^{2p-1}_{\sin\alpha}, l^2g_E)={\rm Vol}( S^{2p-1}_{l\sin\alpha},g_E)$, and this proves the proposition since, if $W$ has metric $g$ and dimension $m$, then
\[
{\rm Vol} (C_l W)=\int_{C_l W} \sqrt{\det (x^2g)}dx\wedge dvol_g=\int_0^l x^m\int_W dvol_g=\frac{l^{m+1}}{m+1}{\rm Vol} (W,g),
\]
and
\[
{\rm Vol} (S_b^m,g_E)=\frac{2\pi^\frac{m+1}{2}b^m}{\Gamma\left(\frac{m+1}{2}\right)}.
\]

\end{proof}

\subsection{The conjecture for the singular part}

Assuming that the formula for the anomaly boundary term $A_{\rm BM,abs}(\b W)$ of Br\"uning and Ma \cite{BM} is valid in the case of $C_l S^{2p-1}_{\sin\alpha}$, we computed  in \cite{HS} (note the slight different notation), by applying the definition given equation (\ref{anom}) of in Section \ref{cm}, that
\begin{align*}
 A_{\rm BM,abs}(\b C_l S^{2p-1}_{\sin\alpha})=\sum_{j=0}^{p -1} \frac{2^{p-j}}{j!(2(p-j)-1)!!} \sum_{h=0}^{j} \binom{j}{h}
\frac{(-1)^{h}\nu^{-2(p-j+h)+1}}{(2(p-j+h)-1)} \frac{ (2p-1)!}{4^{p} (p-1)!},
\end{align*}
where $\frac{1}{\nu}=\sin\alpha$. Our purpose now is to prove that (this was proved in \cite{HS} for $m<4$)
\beq\label{conj}
\begin{aligned}
\log T_{\rm sing}( C_l &S^{2p-1}_{\sin\alpha})= A_{\rm BM,abs}(\b C_l S^{2p-1}_{\sin\alpha})
\end{aligned}
\eeq
where  $\log T_{\rm sing}( C_l S^{2p-1}_{\sin\alpha})$ is given in Proposition \ref{ppoo}. For it is convenient to rewrite the second term as follows:
\begin{align*}
 A_{\rm BM,abs}(\b C_l S^{2p-1}_{\sin\alpha})
&=\sum_{j=0}^{p -1} \frac{2^{p-j}}{j!(2(p-j)-1)!!} \sum_{h=0}^{j} \binom{j}{h}
\frac{(-1)^{h}\nu^{-2(p-j+h)+1}}{(2(p-j+h)-1)} \frac{ (2p-1)!}{4^{p} (p-1)!}\\
&=\sum_{j=0}^{p-1} \frac{2^{j+1}}{(p-1-j)!(2j+1)!!} \sum_{h=0}^{p-1-j} \binom{p-1-j}{h}
\frac{(-1)^{h}\nu^{-2(j+1+h)+1}}{2(j+1+h)-1} \frac{ (2p-1)!}{4^{p} (p-1)!}\\
&=\frac{(2p-1)!}{4^p (p-1)!}\sum_{k=0}^{p-1} \frac{1}{(2k+1)\nu^{2k+1}} \sum^{k}_{j=0}
\frac{(-1)^{k-j}2^{j+1}}{(p-1-j)!(2j+1)!!}\binom{p-1-j}{k-j}\\
&=\frac{(2p-1)!}{4^p (p-1)!}\sum_{k=0}^{p-1} \frac{1}{(p-1-k)!(2k+1)} \sum^{k}_{j=0}
\frac{(-1)^{k-j}2^{j+1}}{(k-j)!(2j+1)!!}\frac{1}{\nu^{2k+1}},
\end{align*}

\subsection{The eigenvalues of the Laplacian over $C_lS_{\sin\alpha}^{m}$}

Let $\Delta$ be the self adjoint extension of the formal Laplace operator  on $C_l S_{\sin\alpha}^{m}$ as defined in section \ref{Lap1.3}. Then, the positive part of the spectrum of $\Delta$ (with absolute BC) is given in Lemma \ref{l3}, once we know the eigenvalues of the restriction of the Laplacian on the section and their coexact multiplicity, according to Lemma \ref{l2}. These information are available by work of Ikeda and Taniguchi \cite{IT}. The eigenvalues of the Laplacian on $q$-forms on $S^{2p-1}_{\sin\alpha}$ are
\[
\left\{
  \begin{array}{ll}
    \lambda_{0,n} = \nu^2 n(n+2p-2), & \\
    \lambda_{q,n} = \nu^2(n+q)(n+2p-q-2), & 1\leq q < p-2,     \\
    \lambda_{p-2,n} = \nu^2((n-1+p)^2-1), &  \\
    \lambda_{p-1,n} = \nu^2(n-1+p)^2, &  \\
  \end{array}
\right.
\]
with coexact multiplicty
\begin{align*}
    &m_{{\rm cex},0,n} = \frac{2}{(2p-2)!} \prod_{j=2}^{p}(n-1+j)(2p+n-1-j),  \\
    &m_{{\rm cex},q,n} = \frac{2}{q!(2p-q-2)!} \prod^{p}_{\substack{j=1,\\j\neq q+1}} (n-1+j)(2p+n-1-j), ~ 1\leq q < p-2,     \\
    &m_{{\rm cex},p-2,n} = \frac{2}{(p-2)! p!} \prod^{p}_{\substack{j=1 \\ j\neq p-1}} (n-1+j)(2p+n-1-j),   \\
    &m_{{\rm cex},p-1,n} = \frac{2}{[(p-1)!]^2} \prod^{p-1}_{j=1} (n-1+j)(2p+n-1-j),   \\
\end{align*}
thus the indices  $\mu_{q,n}$ are
\[
\left\{
  \begin{array}{ll}
    \mu_{0,n} = \sqrt{\nu^2(n(n+2p-2)) + (p-1)^2}, & \\
    \mu_{q,n} = \sqrt{\nu^2(n+q)(n+2p-q-2)+\alpha_q^2}, & 1\leq q < p-2,     \\
    \mu_{p-2,n} = \sqrt{\nu^2((n-1+p)^2-1)+ 1},&  \\
    \mu_{p-1,n} = \nu(n-1+p). &  \\
    \end{array}
\right.
\]

\subsection{Some combinatorics}
\label{sconj1}

We introduce some notation. Let 
\[
U_{q,S^{2p-1}} = \{m_{{\rm cex},q,n}: \lambda_{q,n,S^{2p-1}}\},
\]
denotes the sequence of the eigenvalues of the coexact $q$-forms of the Laplace operator over the sphere of dimension $2p-1$ and radius $1$. Let $a_1,\ldots,a_m$ be a finite sequence of real numbers. Then, 
\[
\prod^{m}_{j=1} (x + a_j) = \sum^{m}_{j=0} e_{m-j}(a_1,\ldots,a_m) x^j
\]
where the  $e_1,\ldots,e_m$ are elementary symmetric polynomials in $a_1,\ldots,a_m$. Let define the numbers:
\[
d^{q}_{j}:=(j-q-1)(2p-q-j-1),
\]
for $q=0,\ldots,p-1,j\neq q+1$, and
\[
d^{q}:=(d^{q}_{1},d^{q}_{2},\ldots,\hat d^{q}_{q+1},\ldots,d^{q}_{p}),
\]
where, as usual,  the  hat means the underling term is delated. 

\begin{lem}\label{lema Up-1S^2p-1} The sequence  $U_{p-1}$ is a totally regular sequence of spectral type with infinite order, exponent and genus: $e(U_{p-1}) = g(U_{p-1})= 2p-1$, and
\[
\zeta(s,U_{p-1}) =  \frac{2\nu^{-s}}{(p-1)!^2} \sum^{p-1}_{j=0} e_{p-1-j}(d^{p-1}) \zeta_R(s-2j).
\]
\end{lem}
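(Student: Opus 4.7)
The first assertion (exponent, genus, and totally regular spectral type with infinite order) follows by specializing Lemma \ref{lp1} to the case $W = S^{2p-1}_{\sin\alpha}$, since the hypotheses there apply to any compact connected orientable Riemannian manifold of dimension $2p-1$. So the substantive content is the explicit evaluation of $\zeta(s,U_{p-1})$, and my plan is a direct computation.

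First I would substitute into the defining series $\zeta(s,U_{p-1}) = \sum_{n=1}^\infty m_{{\rm cex},p-1,n}\, \mu_{p-1,n}^{-s}$ the explicit values $\mu_{p-1,n} = \nu(n+p-1)$ and
\[
m_{{\rm cex},p-1,n} = \frac{2}{[(p-1)!]^2}\prod_{j=1}^{p-1}(n-1+j)(2p+n-1-j).
\]
The natural change of variable is $k = n+p-1$, under which each factor collapses to $(k-p+j)(k+p-j) = k^{2} - (p-j)^{2}$, so the product becomes $\prod_{j=1}^{p-1}\bigl(k^{2} - (p-j)^{2}\bigr)$, and the sum ranges over $k \geq p$.

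Next I would recognize the coefficients of this polynomial in $k^{2}$ in terms of the elementary symmetric functions. Indeed, $d^{p-1}_i = (i-p)(p-i) = -(p-i)^{2}$ for $i=1,\dots,p-1$, so by the definition preceding the lemma,
\[
\prod_{j=1}^{p-1}\bigl(k^{2} - (p-j)^{2}\bigr) \;=\; \prod_{i=1}^{p-1}\bigl(k^{2} + d^{p-1}_i\bigr) \;=\; \sum_{j=0}^{p-1} e_{p-1-j}(d^{p-1})\, k^{2j}.
\]
The key observation is that this polynomial vanishes at every $k \in \{1,2,\dots,p-1\}$, because for such $k$ the factor indexed by $j = p-k$ is zero. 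Consequently, the lower limit of the $k$-sum may be lowered from $p$ to $1$ without changing its value.

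Interchanging the finite sum over $j$ with the (absolutely convergent, for $\Re(s)$ large) sum over $k$ and recognizing $\sum_{k=1}^\infty k^{2j-s} = \zeta_R(s-2j)$ yields
\[
\zeta(s,U_{p-1}) \;=\; \frac{2\nu^{-s}}{[(p-1)!]^{2}} \sum_{j=0}^{p-1} e_{p-1-j}(d^{p-1})\, \zeta_R(s-2j),
\]
as claimed; the identity then extends to all $s$ by meromorphic continuation. The only delicate point is the cancellation step, but it is immediate from the factored form of $m_{{\rm cex},p-1,n}$, so no real obstacle is expected.
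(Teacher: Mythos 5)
Your proof is correct and follows essentially the same route as the paper: factor out $\nu^{-s}$, shift the summation index so each factor of the multiplicity becomes $k^2 - (p-j)^2$, recognize the resulting polynomial as $\sum_{j=0}^{p-1}e_{p-1-j}(d^{p-1})k^{2j}$, and use the vanishing of this polynomial at $k=1,\dots,p-1$ to extend the sum down to $k=1$ and read off the $\zeta_R(s-2j)$. No substantive difference.
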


\begin{proof} The first part of the statement follows from Lemma \ref{lp1}. In order to prove the formula, note that $\zeta(s,U_{p-1}) = \nu^{-s}\zeta\left(\frac{s}{2}, U_{p-1,S^{2p-1}}\right)$, where
\[
\zeta\left(\frac{s}{2}, U_{p-1,S^{2p-1}}\right) = \sum^{\infty}_{n=1}
\frac{m_{{\rm cex},p-1,n}}{\lambda_{p-1,n,S^{2p-1}}^{\frac{s}{2}}} = \sum^{\infty}_{n=1} \frac{m_{{\rm cex},p-1,n}}{(n+p-1)^{s}}.
\] 

Shifting  $n$ to $n-p+1$, and observing that the numbers $1,\dots,p-1$ are roots of the polynomial $\sum^{p-1}_{j=0}
e_{p-1-j}(d^{p-1})n^{2j}$, we obtain 
\begin{align*}
\zeta(s,U_{p-1}) &=\nu^{-s}\sum^{\infty}_{n=p} \frac{m_{{\rm cex},p-1,n-p+1}}{n^{s}}=\frac{2\nu^{-s}}{(p-1)!^2}\sum^{\infty}_{n=p} \frac{\prod^{p-1}_{j=1} n^{2}-(p-j)^{2}}{n^{s}}\\
&=\frac{2\nu^{-s}}{(p-1)!^2}\sum^{p-1}_{j=0}    e_{p-1-j}(d^{p-1})\zeta_{R}(s-2j).
\end{align*}
\end{proof}

Note that, using the formula of the lemma,  $\zeta(s,U_{p-1})$ has an expansion near $s=
2k+1$, with $k=0,1,\dots,p-1$, of the following type:
\begin{equation*}
\begin{aligned}
\zeta(s,U_{p-1})&=\frac{2}{\nu^{2k+1}(p-1)!^2} e_{p-1-k}(d^{p-1})\frac{1}{s-2k-1}+ L_{p-1,2k+1}(s),
\end{aligned}
\end{equation*} 
where the  $L_{p-1,2k+1}(s)$ are regular function for $k=0,1,\dots,p-1$.

\begin{corol}\label{residuo zeta Up-1S^2p-1} The function $\zeta(s,U_{p-1})$ has simple poles at  $s=2k+1$, for $k=0,1,\dots,p-1$, with residues
\[
\Ru_{s=2k+1} \zeta(s,U_{p-1}) = \frac{2}{\nu^{2k+1}(p-1)!^2} e_{p-1-k}(d^{p-1}).
\]
\end{corol}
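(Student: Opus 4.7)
The plan is essentially to read off the residues directly from the explicit formula for $\zeta(s,U_{p-1})$ provided by Lemma \ref{lema Up-1S^2p-1}. That lemma expresses the zeta function as a finite linear combination of shifted Riemann zeta functions,
\[
\zeta(s,U_{p-1}) = \frac{2\nu^{-s}}{(p-1)!^2} \sum^{p-1}_{j=0} e_{p-1-j}(d^{p-1}) \zeta_R(s-2j),
\]
and the pole structure of the left-hand side is entirely dictated by the classical pole structure of the Riemann zeta function.

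First I would recall that $\zeta_R(z)$ is meromorphic on $\C$ with a unique simple pole at $z=1$ of residue $1$, so each summand $\zeta_R(s-2j)$ contributes a unique simple pole at $s = 2j+1$ of residue $1$ (the prefactor $\nu^{-s}$ being entire). Next, since the locations $s=2j+1$ for $j=0,1,\dots,p-1$ are pairwise distinct, no two terms in the sum can combine to produce either a cancellation or a higher order pole. Consequently $\zeta(s,U_{p-1})$ is meromorphic with simple poles exactly at $s=2k+1$, $k=0,\dots,p-1$, and
\[
\Ru_{s=2k+1} \zeta(s,U_{p-1}) = \frac{2\nu^{-(2k+1)}}{(p-1)!^2} e_{p-1-k}(d^{p-1}) \cdot \Ru_{s=2k+1}\zeta_R(s-2k) = \frac{2}{\nu^{2k+1}(p-1)!^2} e_{p-1-k}(d^{p-1}),
\]
which is the claimed formula.

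There is really no obstacle here; the work has already been done in Lemma \ref{lema Up-1S^2p-1}, where the decomposition of $\zeta(s,U_{p-1})$ in terms of elementary symmetric polynomials $e_{p-1-j}(d^{p-1})$ and shifted $\zeta_R$ was carried out by expanding the polynomial $\prod_{j=1}^{p-1}(n^2-(p-j)^2)$ appearing in the coexact multiplicity. The only verification needed is the trivial one that the $p$ shifted Riemann zeta functions have disjoint pole sets inside the strip of interest, which is immediate from the shifts being even integers. Thus the corollary follows by a one-line residue calculation from the lemma.
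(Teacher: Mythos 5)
Your proof is correct and matches the paper's own argument: the paper likewise deduces the residues directly from the explicit decomposition in Lemma \ref{lema Up-1S^2p-1}, noting the expansion $\zeta(s,U_{p-1})=\frac{2}{\nu^{2k+1}(p-1)!^2} e_{p-1-k}(d^{p-1})\frac{1}{s-2k-1}+L_{p-1,2k+1}(s)$ with $L_{p-1,2k+1}$ regular, and reading off the residue at $s=2k+1$.
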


\begin{lem} \label{l26} The sequence $U_q$ is a totally regular sequence of spectral type with infinite order, exponent and genus:
$\ec(U_{q})=\ge(U_{q})=2p-1$, and (where $i=\sqrt{-1}$)
\[
\zeta(s,U_{q})=\frac{2\nu^{-s}}{q!(2p-q-2)!}\sum_{t=0}^{\infty}\binom{-\frac{s}{2}}{t} \sum^{p-1}_{j=0}
e_{p-1-j}(d^{q}) z\left(\frac{s+2t-2j}{2},i \alpha_q \right)\frac{\alpha_q^{2t}}{\nu^{2t}}.
\] 

The function $\zeta(s,U_q)$ has simple poles at $s=2(p-k)-1$, with $k=0,1,2,\ldots$.
\end{lem}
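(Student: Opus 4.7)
The first part of the statement---that $U_q$ is a totally regular sequence of spectral type with infinite order and $\ec(U_q)=\ge(U_q)=2p-1$---is an immediate specialization of Lemma \ref{lel} to the case $W=S^{2p-1}_{\sin\alpha}$, where $\dim W=2p-1$. Only the closed-form formula and the pole locations require real work, and my plan is to obtain them by massaging the Ikeda--Taniguchi spectral data into a form that makes the dependence on $\alpha_q$ and $\nu$ transparent.

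The strategy rests on a single algebraic identity. Setting $\lambda=(n+q)(n+2p-q-2)=\lambda_{q,n}/\nu^{2}$ and $N=n+p-1$, one has $\lambda=N^{2}-\alpha_q^{2}$ since $\alpha_q=q-p+1$. A short computation then yields
\[
(n-1+j)(2p+n-1-j)=N^{2}-(p-j)^{2}=\lambda+d^{q}_{j},
\]
using $(p-j)^{2}-\alpha_q^{2}=-d^{q}_{j}$. Taking the product over $j\in\{1,\dots,p\}\setminus\{q+1\}$ and expanding in elementary symmetric polynomials immediately gives the polynomial form
\[
m_{{\rm cex},q,n}=\frac{2}{q!(2p-q-2)!}\sum_{j=0}^{p-1}e_{p-1-j}(d^{q})\,\lambda^{j}.
\]

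Next I would expand $\mu_{q,n}^{-s}=(\nu^{2}\lambda+\alpha_q^{2})^{-s/2}$ by the generalized binomial theorem to separate the $\alpha_q$-dependence from the spectral variable:
\[
\mu_{q,n}^{-s}=\nu^{-s}\sum_{t=0}^{\infty}\binom{-s/2}{t}\frac{\alpha_q^{2t}}{\nu^{2t}}\,\lambda^{-s/2-t}.
\]
Substituting both expansions into $\zeta(s,U_q)=\sum_{n}m_{{\rm cex},q,n}\mu_{q,n}^{-s}$, interchanging the three (finite or absolutely convergent, for $\Re(s)$ large) sums, and recognizing that
\[
\sum_{n\geq 1}\lambda^{-s/2-t+j}=\sum_{N\geq p}(N^{2}-\alpha_q^{2})^{-(s+2t-2j)/2}=z\!\left(\tfrac{s+2t-2j}{2},\,i\alpha_q\right)
\]
(using $(i\alpha_q)^{2}=-\alpha_q^{2}$ so that the non-homogeneous quadratic zeta of Section \ref{ss2} applies with the imaginary parameter $i\alpha_q$) produces exactly the claimed formula.

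For the pole structure, each function $z\bigl((s+2t-2j)/2,i\alpha_q\bigr)$ inherits from Section \ref{ss2} a simple pole at $s=2j-2t+1$ (and regular values at negative half-integers, contributing no further poles to $\zeta(s,U_q)$). Summing over $0\leq j\leq p-1$ and $t\geq 0$, the surviving singularities occur precisely at the odd integers $s=2(p-k)-1$ with $k=0,1,2,\dots$ The main technical step where I expect the most care is the justification that the series over $t$---which arises from the binomial expansion and is infinite---indeed defines the meromorphic continuation of $\zeta(s,U_q)$ with the stated poles, rather than introducing spurious ones: this requires noting that for any fixed $s$ outside the pole locus, the coefficients $\binom{-s/2}{t}\alpha_q^{2t}/\nu^{2t}$ decay geometrically in $t$ while $z\bigl((s+2t-2j)/2,i\alpha_q\bigr)$ is asymptotically a small correction of $\zeta_R(s+2t-2j)$, tending to $1$ as $t\to\infty$, so the tail converges and contributes no new singularities.
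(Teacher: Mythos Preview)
Your approach is the paper's own: express the multiplicity as a polynomial in $\lambda=N^{2}-\alpha_q^{2}$ via elementary symmetric functions in the $d^{q}_{j}$, binomially expand $\mu_{q,n}^{-s}=(\nu^{2}\lambda+\alpha_q^{2})^{-s/2}$, and identify the resulting sums with $z(\cdot,i\alpha_q)$. The paper packages the binomial step through an auxiliary sequence $H_{q,h}=\{m_{{\rm cex},q,n}:\sqrt{\lambda_{q,n,S^{2p-1}}+h}\}$ and relates $\zeta(s,U_q)$ to $\zeta(s,H_{q,\alpha_q^{2}/\nu^{2}})$, but the underlying manipulation is identical to yours.

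One step you gloss over that the paper does address: your identification
\[
\sum_{N\geq p}(N^{2}-\alpha_q^{2})^{-(s+2t-2j)/2}=z\!\left(\tfrac{s+2t-2j}{2},\,i\alpha_q\right)
\]
is not literally true. The two sides differ by the terms $N=1,\dots,p-1$, and moreover $z(\sigma,i\alpha_q)$ is defined by analytic continuation (the literal series $\sum_{N\geq 1}(N^{2}-\alpha_q^{2})^{-\sigma}$ has a divergent term at $N=|\alpha_q|$). The paper resolves this by observing that the multiplicity polynomial $\prod_{k\neq q+1}(N^{2}-(p-k)^{2})$ vanishes at each such $N$ except $N=|\alpha_q|$, so once you sum over $j$ with the coefficients $e_{p-1-j}(d^{q})$ the discrepancy disappears. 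Without this observation your identification with $z$ remains formal. A smaller point: $z(\sigma,i\alpha_q)$ actually has simple poles at every $\sigma=\tfrac{1}{2}-k$ for $k\geq 0$, not only at $\sigma=\tfrac{1}{2}$ (this is what the expansion $z(s,a)=\sum_{k}\binom{-s}{k}a^{2k}\zeta_{R}(2s+2k)$ gives, and what Section~\ref{ss2} states for the Bessel analogue); your conclusion about the pole locus of $\zeta(s,U_q)$ is nonetheless correct, since these extra poles still translate to odd integers at most $2p-1$.
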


\begin{proof} The first statement follows by Lemma \ref{lel}. For the second one, consider the sequence $H_{q,h}=\left\{m_{{\rm cex},q,n}:\sqrt{\lambda_{q,n,S^{2p-1}}+h}\right\}_{n=1}^{\infty}$. Then $\zeta(s,U_q)=\nu^{-s}\zeta(s,H_{q,\frac{\alpha_q^2}{\nu^2}})$, and
\begin{align*}
\zeta(s,H_{q,h}) &= \sum^{\infty}_{n=1} \frac{m_{{\rm cex},q,n}}{(\lambda_{q,n,S^{2p-1}}+h)^{\frac{s}{2}}}= \sum^{\infty}_{n=1} \sum_{t=0}^{\infty} \binom{-\frac{s}{2}}{t}
\frac{m_{{\rm cex},q,n}}{\lambda_{q,n,S^{2p-1}}^{\frac{s}{2}+t}} h^{t}=\sum_{t=0}^{\infty} \binom{-\frac{s}{2}}{t}\zeta(s+2t,H_{q,0})h^{t}.
\end{align*} 

 Next observe that the zeta function associated to the sequence $H_{q,0}$ is
\begin{align*}
\zeta(2s,H_{q,0}) &= \zeta(s, U_{q,S^{2p-1}}) =\sum^{\infty}_{n=1} \frac{m_{{\rm cex},q,n}}{\lambda_{q,n,S^{2p-1}}^{s}}=\sum^{\infty}_{n=p} \frac{m_{q,n-p+1}}{\lambda_{q,n-p+1,S^{2p-1}}^{s}}\\
&=\frac{2}{q!(2p-q-2)!}\sum^{\infty}_{n=p} \frac{\prod^{p}_{\substack{j=1,\\j\neq q+1}}(n^2 -
(p-j)^2)}{(n^2-\alpha_q^2)}.
\end{align*}

Recall that $\alpha_q^2 = d^q_p$, and note that
\begin{align*}
\sum^{p-1}_{j=0} e_{p-j-1}(d^{q})(n^2-\alpha_q^2)^j &= \sum^{p-1}_{j=0}
e_{p-j-1}(d^q)(n^2-d^q_p)^j= \prod^{p}_{\substack{j=1,\\j\neq q+1}} (n^2 - d^{q}_p + d^q_j)= \prod^{p}_{\substack{j=1,\\j\neq q+1}} (n^2 - (p-j)^2),
\end{align*}
and that the numbers  $n=1,2,\ldots,-\alpha_q$ are roots of this polynomial. Therefore, we can write
\begin{align*}
\zeta(2s,H_{q,0})
&=\frac{2}{q!(2p-q-2)!} \sum^{p-1}_{j=0} e_{p-1-j}(d^{q})
\left(z(s-j,i\alpha_q )-\sum^{p-q-2}_{n=1} (n^2 - \alpha_q^2)^{-s+j}\right)\\
&=\frac{2}{q!(2p-q-2)!} \sum^{p-1}_{j=0} e_{p-1-j}(d^{q}) z(s-j,i\alpha_q ),
\end{align*} 
and 
\[
z(s-j,i\alpha_q ) = \sum_{n=1}^{\infty} \frac{1}{(n^2-\alpha_q^2)^{s-j}}.
\]

Expanding the binomial, as in  \cite{Spr4}, Section 2, $z(s,a) = \sum^{\infty}_{k=0} \binom{-s}{k} a^{2k} \zeta_R(2s+2k)$, 
and hence  $z(s,a)$ has simple poles at  $s=\frac{1}{2}-k$,  $k=0,1,2,\dots$. Since
\[
\zeta(2s,H_{q,0}) = \frac{2}{q!(2p-q-2)!} \sum^{p-1}_{j=0} e_{p-1-j}(d^{q}) z(s-j,i\alpha_q ),
\] 
$\zeta(2s,H_{q,0})$ has simple poles at $s = \frac{1}{2} + p-1 - k$, $k=0,1,2,\dots$, $\zeta(s,H_{q,0})$ has simple poles at $s=2(p-k)-1$, $k=0,1,2,\ldots$, and this completes the proof.
\end{proof}

\begin{corol}\label{r7} The function $\zeta(s,U_q)$ has simple poles at $s=2k+1$, for $k=0,1,\ldots,p-1$, with residues
\[
\Ru_{s=2k+1} \zeta(s,U_q) = \frac{2\nu^{-2k-1}}{q!(2p-q-2)!}\sum^{p-1-k}_{t=0}
\frac{1}{\nu^{2t}}\binom{-\frac{2k+1}{2}}{t} \sum^{p-1}_{j=k+t} e_{p-1-j}(d^{q})
\binom{-\frac{1}{2}}{j-k-t}\alpha_q^{2(j-k)}
\]
\end{corol}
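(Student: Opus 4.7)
The plan is to start from the explicit formula for $\zeta(s,U_q)$ given in Lemma \ref{l26} and extract the residue at each $s=2k+1$ by isolating the pole contribution of the inner function $z\bigl(\tfrac{s+2t-2j}{2},i\alpha_q\bigr)$. Recall from the proof of Lemma \ref{l26} the expansion
\[
z(u,a)=\sum_{n=0}^{\infty}\binom{-u}{n}a^{2n}\zeta_R(2u+2n),
\]
so that $z(u,i\alpha_q)$ has a simple pole at $u=\tfrac{1}{2}-n$ for each $n\geq 0$, coming from the pole of $\zeta_R$ at $1$; the residue there equals $\tfrac{1}{2}\binom{n-1/2}{n}(i\alpha_q)^{2n}$. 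Using the standard identity $\binom{n-1/2}{n}=(-1)^n\binom{-1/2}{n}$ together with $(i\alpha_q)^{2n}=(-1)^n\alpha_q^{2n}$, this residue simplifies to $\tfrac{1}{2}\binom{-1/2}{n}\alpha_q^{2n}$.

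For the shifted variable $u=(s+2t-2j)/2$, the condition $u=\tfrac{1}{2}-n$ corresponds to $s=2(j-t-n)+1$, so the pole at $s=2k+1$ occurs precisely when $n=j-t-k\geq 0$. Hence in the double sum of Lemma \ref{l26}, only pairs $(t,j)$ with $j\geq t+k$ contribute; because $j\leq p-1$, this forces $t\leq p-1-k$, which explains the termination of the outer $t$-sum in the claim. Since the linear change of variable $s\mapsto (s+2t-2j)/2$ has derivative $\tfrac{1}{2}$, residues transform by multiplication by $2$, and therefore
\[
\Ru_{s=2k+1}z\!\left(\tfrac{s+2t-2j}{2},i\alpha_q\right)=\binom{-1/2}{\,j-t-k\,}\alpha_q^{2(j-t-k)}.
\]
Multiplying by the prefactors $\nu^{-s}$ and $\binom{-s/2}{t}$ (both regular at $s=2k+1$ and contributing $\nu^{-2k-1}$ and $\binom{-(2k+1)/2}{t}$, respectively), and combining $\alpha_q^{2(j-t-k)}\cdot\alpha_q^{2t}=\alpha_q^{2(j-k)}$, produces exactly the stated formula.

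The main (minor) obstacle is the sign bookkeeping: one must verify carefully that the two factors of $(-1)^n$ (from $\binom{n-1/2}{n}$ and from $(i)^{2n}$) cancel, so that the final expression is the non-alternating combination $\binom{-1/2}{\cdot}\alpha_q^{\cdot}$, and one must also justify exchanging the order of summation (first in $n$, then in $t,j$) near the pole. The uniform convergence of the series in Lemma \ref{l26} on compact subsets of the complement of the pole set makes the term-by-term residue extraction legitimate, and the vanishing of the residue of $\zeta(s,U_q)$ at all other points of the form $s=2(p-k)-1$ with $k\geq p$ follows from the finite range of $j$ in the outer sum.
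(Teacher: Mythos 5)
Your proof is correct and follows essentially the same route as the paper: both start from the representation of $\zeta(s,U_q)$ in Lemma~\ref{l26}, locate the poles of the inner function $z(u,i\alpha_q)$ at $u=\tfrac12-n$ via the binomial expansion in terms of $\zeta_R$, and assemble the residues through the $t$- and $j$-sums (the paper's version packages the intermediate step as a residue formula for $\zeta(2s,H_{q,0})$ and then says ``the thesis follows''; you simply carry the bookkeeping through to the end, making the identity $(-1)^n\binom{n-1/2}{n}=\binom{-1/2}{n}$ and the factor~$2$ from the variable change explicit). One small caveat: your final aside about residues ``vanishing'' at $s=2(p-k)-1$ with $k\ge p$ (i.e.\ negative odd integers) does not actually follow from the finite range of $j$ and is not part of what the corollary asserts, so it should be dropped; the core residue computation at $s=2k+1$, $0\le k\le p-1$, is sound.
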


\begin{proof} Since the value of the residue of the Riemann zeta function at $s=1$ is 1, 
\begin{align*}
&\Ru_{s=\frac{1}{2} - k } z(s-j,a) = \Ru_{s=\frac{1}{2}- j - k } z(s,a) = \binom{-\frac{1}{2}+j+k}{j+k}
\frac{a^{2j+2k}}{2},
\end{align*} 
for $k=0,1,2,\ldots$. Considering $\zeta(2s,H_{q,0})$, we have, for $k=0,1,\dots,p-1$, 
\begin{align*}
\Ru_{s=\frac{1}{2}+k}\zeta(2s,H_{q,0}) &=\frac{2}{q!(2p-q-2)!} \sum^{p-1}_{j=k} e_{p-1-j}(d^{q})(-1)^{j-k}
\binom{-\frac{1}{2}+j-k}{j-k}\frac{\alpha_q^{2j-2k}}{2},
\end{align*}
and the thesis follows.

\end{proof}

The result contained in the next lemma follows by geometric reasons. However, we present here a purely combinatoric proof.
\begin{lem} For all $0\leq q\leq p-1$,  $\zeta(0, U_{q,S^{2p-1}}) = (-1)^{q+1}$.
\end{lem}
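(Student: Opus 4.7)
My approach is to evaluate the zeta function directly from its definition, using the structural formula derived in the proof of Lemma \ref{l26} and the classical values $\zeta_R(-2k)=0$ for $k\geq 1$, $\zeta_R(0)=-\tfrac12$. Write $m=|\alpha_q|=p-q-1$. After shifting $n\mapsto n-p+1$, the proof of Lemma \ref{l26} gives
\[
\zeta(s,U_{q,S^{2p-1}})=\frac{2}{q!(2p-q-2)!}\sum_{n=p}^{\infty}\frac{f(n)}{(n^{2}-m^{2})^{s}},\qquad f(n)=\prod_{t\in\{0,\ldots,p-1\}\setminus\{m\}}(n^{2}-t^{2}).
\]
The plan is to expand $f(n)/(n^2-m^2)^s$ in two successive binomial expansions, bring the sum over $n$ inside as a Riemann zeta function plus a finite boundary correction, and then collapse the result using the polynomial identity
\[
\sum_{j=0}^{p-1}e_{p-1-j}(d^{q})(n^{2}-m^{2})^{j}=f(n).
\]

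First I would use $f(n)=\sum_{j}e_{p-1-j}(d^{q})(n^{2}-m^{2})^{j}$ and further expand $(n^{2}-m^{2})^{j-s}=\sum_{k=0}^{\infty}\binom{j-s}{k}(-m^{2})^{k}n^{2(j-s-k)}$, which is legal since $n\geq p>m$. Interchanging orders yields $\sum_{n=p}^{\infty}n^{2(j-s-k)}=\zeta_R(2s+2k-2j)-\sum_{n=1}^{p-1}n^{2(j-s-k)}$. Now set $s=0$: the factor $\binom{j}{k}$ kills $k>j$, while $\zeta_R(-2(j-k))$ vanishes for $0\leq k<j$ and equals $-\tfrac12$ at $k=j$. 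A short computation, using $\sum_{k=0}^{j}\binom{j}{k}(-m^{2})^{k}n^{2(j-k)}=(n^{2}-m^{2})^{j}$, collapses the inner $(k,n)$ sums to $-\tfrac12(-m^{2})^{j}-\sum_{n=1}^{p-1}(n^{2}-m^{2})^{j}$.

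Summing over $j$ against $e_{p-1-j}(d^{q})$ and invoking the polynomial identity once more converts this to
\[
\zeta(0,U_{q,S^{2p-1}})=\frac{2}{q!(2p-q-2)!}\Bigl[-\tfrac12 f(0)-\sum_{n=1}^{p-1}f(n)\Bigr].
\]
Since $f$ has simple zeros at all $t\in\{0,\ldots,p-1\}\setminus\{m\}$, only the $n=m$ term of the boundary sum survives (when $m\geq 1$, i.e., $q<p-1$), and only $f(0)$ is non-zero when $m=0$ (i.e., $q=p-1$). A direct factorial computation of the two remaining evaluations gives
\[
f(m)=\prod_{t\neq m}(m^{2}-t^{2})=\frac{(-1)^{q}q!(2p-q-2)!}{2},\qquad f(0)\big|_{q=p-1}=(-1)^{p-1}\bigl((p-1)!\bigr)^{2}.
\]
Substituting these in the two cases yields $(-1)^{q+1}$ in both, proving the lemma.

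\textbf{Where the work lies.} The step I expect to be most delicate is the combined $(j,k,n)$ manipulation at $s=0$: several terms that look like they should survive (and would blow up) actually cancel thanks to $\zeta_R(-2k)=0$, and the remaining $(-p+\tfrac12)+(p-1)=-\tfrac12$ miracle lets the polynomial identity apply cleanly. The factorial identity $f(m)=\tfrac{(-1)^{q}q!(2p-q-2)!}{2}$ is a routine but tedious product splitting into four factorial blocks around $t=m$, and is the other computational hinge.
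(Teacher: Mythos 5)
Your proof is correct. It follows the same overall strategy as the paper (shift $n\mapsto n-p+1$, apply the polynomial identity $f(n)=\sum_{j}e_{p-1-j}(d^{q})(n^{2}-m^{2})^{j}$, and exploit $\zeta_{R}(0)=-\tfrac12$ together with the trivial zeros of $\zeta_{R}$ at negative even integers), but you consolidate the computation in a cleaner way: rather than introducing the auxiliary function $\zeta_{t,c}(s)$ and the precomputed values of the quadratic zeta $z(s,a)$, you expand $(n^{2}-m^{2})^{j-s}$ directly by the binomial series, justify the interchange by $n\ge p>m$, and land on the compact identity
\[
\zeta(0,U_{q,S^{2p-1}})=\frac{2}{q!\,(2p-q-2)!}\left(-\tfrac12 f(0)-\sum_{n=1}^{p-1}f(n)\right).
\]
This reformulation has two concrete advantages over the paper's presentation. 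First, the two separate observations the paper invokes to kill the ``correction term'' (the roots $n=1,\dots,-\alpha_{q}$ of the polynomial, and the identity $\sum_{j}e_{p-1-j}(d^{q})(-\alpha_{q}^{2})^{j}=0$) are here subsumed into the single remark that $f$ vanishes at every integer in $\{0,\dots,p-1\}\setminus\{m\}$, so only one boundary term survives. Second, your version handles the $q=p-1$ edge case cleanly, whereas the paper's rewriting requires interpreting $\sum_{n=1}^{p-q-2}$ with a negative upper limit as contributing $p-q-2=-1$ (rather than the usual empty-sum convention of $0$) in order for the algebra to close; your two-case split ($m\ge 1$ picks up $-f(m)$, $m=0$ picks up $-\tfrac12 f(0)$) avoids this entirely. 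The factorial evaluation $f(m)=\tfrac{(-1)^{q}}{2}q!\,(2p-q-2)!$ is correct, as I have verified. One small point of rigor you might make explicit: after the interchange of the sums over $k$ and $n$, the $k$-series is $\sum_{k}\binom{j-s}{k}(-m^{2})^{k}\sum_{n\ge p}n^{2(j-s-k)}$, whose terms decay like $(m/p)^{2k}$, so it converges uniformly for $s$ near $0$; it is this that licenses passing to $s=0$ termwise and then using $\binom{j}{k}=0$ for $k>j$.
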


\begin{proof}
Consider the function
\[
  \zeta_{t,c}(s) = \sum_{n=1}^{\infty} \frac{1}{(n(n+2t))^{s-c}}= \sum^{\infty}_{n=t+1} \frac{1}{(n^2 - t^2)^{s-c}}.
\] 

Since 
\begin{align*}
z(s-c,it) &= \sum^{\infty}_{n=1}  \frac{1}{(n^2 - t^2)^{s-c}}=\sum^{\infty}_{j=0} \binom{-s+c}{j} (-1)^{j} t^{2j} \zeta_{R}(2s+2j-2c), 
\end{align*} 
we have when $s=0$, that $z(-c,it) = (-1)^{c} t^{2c} \zeta_{R}(0) = (-1)^{c+1} \frac{t^{2c}}{2}$, and hence
\[
\zeta_{t,c}(s) = z(s-c,it) -\sum_{n=1}^{t} \frac{1}{(n^2 - t^2)^{s-c}},
\] 
and for $c=0$ and $s=0$ $\zeta_{t,0}(0) = - \frac{1}{2} - t$. Next, consider $c>0$, then:
\[
\zeta_{t,c}(0) =  (-1)^{c+1} \frac{t^{2c}}{2} - \sum^{t-1}_{n=1} (n^2 - t^2)^c.
\]

For  $q=0,\ldots,p-1$, we have
\begin{align*}
\zeta(s, U_{q,S^{2p-1}}) &= \sum^{\infty}_{n=1} \frac{m_{{\rm cex},q,n}}{\lambda_{q,n,S^{2p-1}}}=\sum_{n=1}^{\infty}
\frac{m_{{\rm cex},q,n}}{((n+q)(n+2p-q-2))^s}\\
&=\sum_{n=q+1}^{\infty} \frac{m_{{\rm cex},q,n-q}}{(n(n-2\alpha_q))^s}.
\end{align*} 

Recalling the relation given in Section \ref{sconj1}
\begin{align*}
m_{{\rm cex},q,n-q} &= \frac{2}{q!(2p-q-2)!}\prod^{p}_{\substack{j=1,\\j\neq q+1}} (n-q-1+j)(n+2p-q-1-j)\\
&=\frac{2}{q!(2p-q-2)!}\prod^{p}_{\substack{j=1,\\j\neq q+1}} n(n-2\alpha_q)+ d^{q}_{j} \\
&=\frac{2}{q!(2p-q-2)!} \sum^{p-1}_{j=0} e_{p-1-j}(d^{q})(n(n-2\alpha_q))^j.
\end{align*}
Thus
\begin{align*}
\zeta(s, U_{q,S^{2p-1}}) &= \frac{2}{q!(2p-q-2)!} \sum_{j=0}^{p-1} e_{p-j-1}(d^q)
\left(\zeta_{-\alpha_q,j}(s) - \sum^{q}_{n=1} \frac{1}{(n(n-2\alpha_q))^{s-j}}\right)\\
&= \frac{2}{q!(2p-i-2)!} \sum_{j=0}^{p-1} e_{p-j-1}(d^q) \zeta_{-\alpha_q,j}(s).
\end{align*} 
where
\[
\sum_{j=0}^{p-1} e_{p-j-1}(d^q) \frac{1}{(n(n+2p-2q-2))^{s-j}} = 0,
\] 
for  $1\leq n \leq q$, by result of \cite{WY}. For $s=0$, we obtain
\begin{align*}
\zeta(0,U_{q,S^{2p-1}}) =& \frac{2}{q!(2p-q-2)!} \sum_{j=0}^{p-1} e_{p-j-1}(d^q)
\zeta_{-\alpha_q,j}(0)\\
=&\frac{2}{q!(2p-q-2)!} \left(e_{p-1}(d^{q})\left(-\frac{1}{2} -((p-q-2)+1) \right) \right.\\
&\left.+\sum_{j=1}^{p-1} e_{p-j-1}(d^q)\left((-1)^{j+1} \frac{\alpha_q^{2j}}{2} -
\sum^{p-q-2}_{n=1}(n^2 -\alpha_q^2)^j\right)\right)\\
\end{align*}
\begin{align*}
=&\frac{2}{q!(2p-q-2)!} \left(-e_{p-1}(d^{q}) \right.\\
&\left.+ \sum^{p-1}_{j=0}
e_{p-j-1}(d^{q})\left((-1)^{j+1} \frac{\alpha_q^{2j}}{2} - \sum_{n=1}^{p-q-2}(n^2-\alpha_q^2)^j\right)\right)\\
=&\frac{2}{q!(2p-q-2)!} \left((-1)^{q+1}\frac{q!(2p-q-2)!}{2} \right.\\
&\left.+ \sum^{p-1}_{j=0} e_{p-j-1}(d^{q})\left((-1)^{j+1} \frac{\alpha_q^{2j}}{2} -
\sum_{n=1}^{p-q-2}(n^2-\alpha_q^2)^j\right)\right).
\end{align*}

To conclude the proof, note that the second term vanishes. For first, as showed in the proof of Lemma \ref{l26}, the numbers $n=1,2,\ldots,-\alpha_q$ are roots of the polynomial $\sum^{p-1}_{j=0} e_{p-j-1}(d^{q}) (n^2-\alpha_q^2)^j$,  
and second: 
\begin{align*}
\sum^{p-1}_{j=0} e_{p-j-1}(d^{q})(-1)^j\alpha_q^{2j}&= \sum^{p-1}_{j=0}
e_{p-j-1}(d^{q})(-d^q_p)^{j}=  \prod^{p}_{\substack{j=1,\\j\neq q+1}} (-d^{q}_{p} + d^{q}_{j}) =-\prod^{p}_{\substack{j=1,\\j\neq i+1}} (p-j)^2= 0.
\end{align*} 
\end{proof}

\subsection{The proof of the conjecture} We need some notation. Set
\begin{align*}
D(q,k,t) =&\frac{2}{q!(2p-q-2)!}\binom{-\frac{2k+1}{2}}{t} \sum^{p-1}_{l=k+t} e_{p-1-l}(d^{q})(-1)^{l-k}
\binom{-\frac{1}{2}-k-t+l}{l-k-t}\alpha_q^{2(l-k)},  \\
F(q,k) =& \Rz_{s=0}\Phi_{2k+1,q}(s), \hspace{40pt}1\leq k \leq p-1,
\end{align*}
for $0\leq q \leq p-1$,  $F(q,0) = 2$ for $0\leq q \leq p-2$, and $F(p-1,0) = 1$. Then, by Corollary \ref{r7}, the residues of  $\zeta(s,U_q)$, for $0\leq q \leq p-2$, are
\[
\Ru_{s=2k+1}\zeta(s,U_q ) = \frac{1}{\nu^{2k+1}} \sum^{p-1-k}_{t=0} \frac{1}{\nu^{2t}}D(q,k,t),
\]
for $k=0,\ldots,p-1$, and when $q=p-1$:
\[
\Ru_{s=2k+1} \zeta(s,U_{p-1}) = \frac{1}{\nu^{2k+1}} D(p-1,k,0),
\]
with $k=0,\ldots,p-1$. Now, for $0\leq q \leq p-1$, it is easy to see that
\begin{equation*}
\Rz_{s=0}\Phi_{2k+1,q}(s)\Ru_{s=2k+1}\zeta(s,U_q ) =\frac{F(q,k)}{\nu^{2k+1}} \sum^{p-1-k}_{t=0}
\frac{1}{\nu^{2t}}D(q,k,t)
\end{equation*} 
and hence
\[
t'_{q,{\rm sing}}(0) = \frac{1}{2}\sum^{p-1}_{k=0}\Rz_{s=0}\Phi_{2k+1,q}(s)\Ru_{s=2k+1}\zeta(s,U_q)
=\frac{1}{2}\sum^{p-1}_{k=0}\frac{F(q,k)}{\nu^{2k+1}} \sum^{p-1-k}_{t=0} \frac{1}{\nu^{2t}}D(q,k,t).
\]

On the other side, set:
\[
A_{\rm BM,abs}(C_l S^{2p-1}_{\sin\alpha})=  \sum^{p-1}_{k=0} \frac{1}{\nu^{2k+1}} \tilde Q_p(k), \hspace{30pt} \tilde Q_p(k) = \sum^{k}_{j=0} N_j(p,k),
\] 
where
\[
N_j(p,k) =\frac{(2p-1)!}{4^p (p-1)!}\frac{1}{(p-1-k)!(2k+1)}\frac{(-1)^{k-j}2^{j+1}}{(k-j)!(2j+1)!!}.
\]

\begin{lem} $\frac{1}{2} \sum^{p-1}_{q=0} (-1)^{q} t'_{q,{\rm sing}}(0)$ is an odd polynomial in $\frac{1}{\nu}$.
\end{lem}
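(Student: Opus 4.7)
My plan is that this lemma is essentially a direct reading of the explicit formula for $t'_{q,\text{sing}}(0)$ derived earlier, together with the explicit residue formulas of Corollaries \ref{residuo zeta Up-1S^2p-1} and \ref{r7}. I do not anticipate any conceptual obstacle; the content is a careful bookkeeping of the $\nu$-dependence.

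First, I would recall from Propositions \ref{pro2} and \ref{pro3} (and the notation introduced in the sentence preceding the lemma) that for every $0\le q\le p-1$,
\[
t'_{q,\text{sing}}(0) = \frac{1}{2}\sum_{k=0}^{p-1} F(q,k)\,\Ru_{s=2k+1}\zeta(s,U_q),
\]
where $F(q,k)=\Rz_{s=0}\Phi_{2k+1,q}(s)$. Both $F(q,k)$ and the quantities $\alpha_q=q-p+1$ and $d^{q}_{j}=(j-q-1)(2p-q-j-1)$ entering the definition of $D(q,k,t)$ are integers (or elementary symmetric polynomials of integers), hence independent of $\nu$.

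Next, I would invoke the residue computations: Corollary \ref{r7} for $0\le q\le p-2$ and Corollary \ref{residuo zeta Up-1S^2p-1} for $q=p-1$. Both give, after factoring out the power of $\nu$,
\[
\Ru_{s=2k+1}\zeta(s,U_q) \;=\; \frac{1}{\nu^{2k+1}}\sum_{t=0}^{p-1-k}\frac{D(q,k,t)}{\nu^{2t}},
\]
in which the coefficients $D(q,k,t)$ are independent of $\nu$ (for $q=p-1$ only the $t=0$ term survives, which is a special case of the general formula). Substituting this back yields
\[
t'_{q,\text{sing}}(0) \;=\; \frac{1}{2}\sum_{k=0}^{p-1}\sum_{t=0}^{p-1-k}\frac{F(q,k)\,D(q,k,t)}{\nu^{\,2(k+t)+1}}.
\]

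Since $0\le k+t\le p-1$, the exponent $2(k+t)+1$ runs only through the odd integers $\{1,3,5,\dots,2p-1\}$. Therefore each $t'_{q,\text{sing}}(0)$ is already a polynomial in $1/\nu$ containing only odd powers (and of degree at most $2p-1$), and the alternating sum $\tfrac{1}{2}\sum_{q=0}^{p-1}(-1)^{q}\,t'_{q,\text{sing}}(0)$ inherits this property. This proves the lemma. The only step requiring care is to keep track of the two different values $F(q,0)=2$ for $q\le p-2$ and $F(p-1,0)=1$ coming from Corollaries \ref{c33} and \ref{c44}, but this affects only the coefficients, not the parity of the exponents.
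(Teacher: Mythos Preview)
Your argument is correct and follows essentially the same route as the paper: both use the formula $t'_{q,\text{sing}}(0)=\frac{1}{2}\sum_{k}F(q,k)\,\Ru_{s=2k+1}\zeta(s,U_q)$ together with the residue expressions from Corollaries \ref{residuo zeta Up-1S^2p-1} and \ref{r7} to see that only the odd powers $\nu^{-1},\nu^{-3},\dots,\nu^{-(2p-1)}$ can occur. The paper additionally carries out the change of index $k+t\mapsto k$ explicitly so as to display the coefficient of each $\nu^{-(2k+1)}$ in the form $Q_p(k)=\sum_{j=0}^{k}M_j(p,k)$, which is the notation used in the subsequent comparison with $A_{\rm BM,abs}$; you may want to record that rearrangement as well, since it is what the next lemmas rely on.
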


\begin{proof} This follows by rearrangement of the finite sum:
\begin{align*}
\frac{1}{2} \sum^{p-1}_{q=0} (-1)^{q} t'_{q,{\rm sing}}(0) &=\frac{1}{4}\sum^{p-1}_{q=0} (-1)^{q}
\sum^{p-1}_{k=0}F(q,k) \sum^{p-1-k}_{t=0}
\frac{1}{\nu^{2(t+k)+1}}D(q,k,t) \\
&=\frac{1}{4} \sum^{p-1}_{k=0}\frac{1}{\nu^{2k+1}}  \sum^{p-1}_{q=0} (-1)^{q} \sum^{k}_{j=0}F(q,j)D(q,j,k-j)\\
&=\frac{1}{4} \sum^{p-1}_{k=0}\frac{1}{\nu^{2k+1}}   \sum^{k}_{j=0} \sum^{p-1}_{q=0} (-1)^{q}F(q,j)D(q,j,k-j).
\end{align*}
\end{proof}

Then, set:
\[
\frac{1}{2} \sum^{p-1}_{q=0} (-1)^{q} t'_{q,{\rm sing}}(0)= \sum^{p-1}_{k=0} \frac{1}{\nu^{2k+1}} Q_p(k), \hspace{30pt} Q_p(k)=
\sum^{k}_{j=0} M_j(p,k),
\] 
where
\begin{align*}
 M_j(p,k) &= \sum^{p-1}_{q=0} (-1)^{q}F(q,j)D(q,j,k-j)\\
&=\sum^{p-1}_{q=0} (-1)^{q}\frac{2F(q,j)}{4(2p-2)!}\binom{2p-2}{q}\binom{-\frac{1}{2}-j}{k-j}\alpha_q^{-2j}
\sum^{p-1}_{l=k} e_{p-1-l}(d^{q})\alpha_q^{2l} \binom{-\frac{1}{2}}{l-k}.
\end{align*}

This shows that all we need to prove to prove the conjecture is the identity: $M_{j}(p,k) = N_{j}(p,k)$. This is in the next two lemmas. Before, we need some further notation and combinatorics. First, recall that if
\[
f_h(x) = e_h\left(x^2-(p-1)^2,x^2-(p-2)^2,\ldots,x^2-1^2,x^2\right),
\] 
then $f_h(\alpha_q) = e_h(d^q)$, and $f_h(x)$, for $h\geq 1$, is a polynomial of the following type:
\beq\label{F}
\begin{aligned}
f_h(x) &=\sum_{0\leq j_1\leq j_2 \leq \ldots \leq j_h \leq p-1} (x^2 - j_1^2)(x^2 - j_2^2)\ldots(x^2 - j_h^2) =\binom{p}{h} x^{2h} + \sum^{h-1}_{s=0} c^{h}_s x^{2s}.
\end{aligned} 
\eeq

Second, we have the following four identities. The first three can be found in  \cite{GZ},  $0.151.4$, $0.154.5$ and $0.154.6$, but see \cite{Kra} for a proof. The fourth is in \cite{GR}, equation (5.3).

\begin{align}
\label{ida1}\sum^{n}_{k=0}  \frac{(-1)^{k}}{(2n)!}\binom{2n}{k} =&  \frac{(-1)^{n}}{(2n)!}\binom{2n-1}{n}= \frac{(-1)^{n}}{2(2n)!}\binom{2n}{n},\\
\label{idA2}\sum^{n}_{k=0} (-1)^{n} \binom{n}{k} (\alpha + k)^{n} =& (-1)^{n}n!\\
\label{idA3}\sum^{N}_{k=0} (-1)^{n} \binom{N}{k} (\alpha + k)^{n-1} =& 0,\\
\label{idB}\sum^{n}_{l=0}  \binom{n+1}{l+1}\binom{-\frac{1}{2}}{l-k} =& \binom{n+\frac{1}{2}}{n-k} =
\frac{(2n+1)!!}{2^{n-k}(n-k)!(2k+1)!!}.
\end{align}
with $1\leq n \leq N$ and $\alpha \in \R$.

\begin{lem}\label{lema M0pk} For  $0\leq k \leq p-1$, we have that $M_{0}(p,k) = N_{0}(p,k)$.
\end{lem}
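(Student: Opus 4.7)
The plan is to exploit the symmetry $\alpha_q=-\alpha_{2p-2-q}$ to extend the sum defining $M_0(p,k)$ from $q=0,\dots,p-1$ to the full range $q=0,\dots,2p-2$, converting it into a finite difference of a polynomial of exact degree $2p-2$ in $q$ to which the identities (\ref{idA2})--(\ref{idA3}) apply. First I would observe that
$$H_l(q):=(-1)^q\binom{2p-2}{q}f_{p-1-l}(\alpha_q)\alpha_q^{2l}$$
is invariant under $q\mapsto 2p-2-q$, since $\alpha_{2p-2-q}=-\alpha_q$, the polynomial $f_{p-1-l}$ is even by (\ref{F}), and $2l$ is even. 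Combined with the precise values $F(q,0)=2$ for $0\leq q\leq p-2$ and $F(p-1,0)=1$, this yields the crucial identity
$$\sum_{q=0}^{p-1}F(q,0)\,H_l(q)=\sum_{q=0}^{2p-2}H_l(q).$$

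Next, writing $\alpha_q=q-(p-1)$, the expression $f_{p-1-l}(\alpha_q)\alpha_q^{2l}$ is a polynomial in $q$ of degree exactly $2p-2$ whose leading coefficient is $\binom{p}{p-1-l}$, by (\ref{F}). Applying (\ref{idA3}) to annihilate all monomials of degree $<2p-2$ and (\ref{idA2}) to evaluate the top-degree monomial gives
$$\sum_{q=0}^{2p-2}(-1)^q\binom{2p-2}{q}f_{p-1-l}(\alpha_q)\alpha_q^{2l}=(2p-2)!\binom{p}{p-1-l}.$$
Substituting this into the definition of $M_0(p,k)$ (with $F(q,0)$ absorbed as above) and shifting $l\mapsto l+k$, the remaining sum collapses by the Vandermonde convolution
$$\sum_{l=0}^{p-1-k}\binom{-1/2}{l}\binom{p}{p-1-k-l}=\binom{p-1/2}{p-1-k},$$
producing $M_0(p,k)=\tfrac12\binom{-1/2}{k}\binom{p-1/2}{p-1-k}$.

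Finally, I would unwind the generalized binomials using $\binom{-1/2}{k}=(-1)^k(2k)!/(4^k(k!)^2)$ and $\binom{p-1/2}{p-1-k}=(2p)!/(2^{2p-1-k}p!(2k+1)!!(p-1-k)!)$, then apply $(2k)!/(k!(2k+1)!!)=2^k/(2k+1)$, and check directly that the outcome coincides with
$$N_0(p,k)=\frac{2(-1)^k(2p-1)!}{2^{2p}(p-1)!\,k!\,(p-1-k)!\,(2k+1)}.$$

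The main obstacle is the symmetrization step: one must verify that the exceptional value $F(p-1,0)=1$ (as opposed to $2$) is precisely what is needed so that the asymmetric sum over $\{0,\dots,p-1\}$ rewrites as the full alternating binomial sum over $\{0,\dots,2p-2\}$. Once this matching is established, the remainder is a routine application of the finite-difference identity and Vandermonde's convolution.
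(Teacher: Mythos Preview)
Your proof is correct and rests on the same core ingredients as the paper: the finite-difference identities (\ref{idA2})--(\ref{idA3}) to isolate the top-degree contribution $\binom{p}{p-1-l}\alpha_q^{2p-2}$, followed by a Vandermonde-type collapse of the $l$-sum (your Vandermonde convolution is exactly the paper's identity (\ref{idB}) with $n=p-1$). The one organizational difference is that the paper splits into the cases $k\neq 0$ and $k=0$: for $k\neq 0$ the exceptional value $F(p-1,0)=1$ is harmless because $\alpha_{p-1}=0$ makes the $q=p-1$ summand vanish anyway, whereas for $k=0$ the paper must track the $l=0$ constant term $c_0^{p-1}$ through a separate explicit computation involving (\ref{ida1}). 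Your symmetrization $q\mapsto 2p-2-q$, which extends the sum to the full range $\{0,\dots,2p-2\}$ and shows precisely why the halved weight at $q=p-1$ is the right one, absorbs this case distinction in a single stroke and is the cleaner formulation.
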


\begin{proof} Since $j=0$, 
\begin{align*}
M_0(p,k) =& \sum^{p-1}_{q=0} (-1)^{q}\frac{2F(q,0)}{4(2p-2)!}\binom{2p-2}{q}\binom{-\frac{1}{2}}{k}
\sum^{p-1}_{l=k} e_{p-1-l}(d^{q})\alpha_q^{2l} \binom{-\frac{1}{2}}{l-k},\\
N_0(p,k) =&\frac{(2p-1)!}{2^{2p-1}
(p-1)!}\frac{1}{(p-1-k)!(2k+1)}\frac{(-1)^{k}}{k!}.
\end{align*}

Consider first $k\neq 0$. Then,
\begin{align*}
M_0(p,k) =& \binom{-\frac{1}{2}}{k}\sum^{p-1}_{q=0} (-1)^{q}\frac{1}{(2p-2)!}\binom{2p-2}{q}
\sum^{p-1}_{l=k} f_{p-1-l}(\alpha_q)\alpha_q^{2l} \binom{-\frac{1}{2}}{l-k}\\
=& \binom{-\frac{1}{2}}{k}\sum^{p-1}_{q=0} (-1)^{q}\frac{1}{(2p-2)!}\binom{2p-2}{q}
\sum^{p-1}_{l=k} \binom{p}{p-1-l}\alpha_q^{2p-2-2l}\alpha_q^{2l} \binom{-\frac{1}{2}}{l-k}\\
&+\binom{-\frac{1}{2}}{k}\sum^{p-1}_{q=0} (-1)^{q}\frac{1}{(2p-2)!}\binom{2p-2}{q}
\sum^{p-2}_{l=k} \sum^{p-2-l}_{s=0}c^{p-1-l}_s \alpha_q^{2s}\alpha_q^{2l} \binom{-\frac{1}{2}}{l-k}\\
=& \binom{-\frac{1}{2}}{k}\sum^{p-1}_{q=0} (-1)^{q}\frac{1}{(2p-2)!}\binom{2p-2}{q}\alpha_q^{2p-2}
\sum^{p-1}_{l=k} \binom{p}{p-1-l} \binom{-\frac{1}{2}}{l-k}\\
&+\sum^{p-2}_{l=k}\sum^{p-2-l}_{s=0}c^{p-1-l}_s\binom{-\frac{1}{2}}{k}\binom{-\frac{1}{2}}{l-k}\sum^{p-1}_{q=0}
(-1)^{q}\frac{1}{(2p-2)!}\binom{2p-2}{q} \alpha_q^{2s+2l}.
\end{align*}

Using the identity in equation (\ref{idA3}), the second term in the last line vanishes since  $2s+2l<2p-2$. Thus,
\begin{align*}
M_0(p,k) &= \binom{-\frac{1}{2}}{k}\sum^{p-1}_{q=0} (-1)^{q}\frac{1}{(2p-2)!}\binom{2p-2}{q}\alpha_q^{2p-2}
\sum^{p-1}_{l=k} \binom{p}{p-1-l} \binom{-\frac{1}{2}}{l-k}\\
&= \frac{1}{2}\binom{-\frac{1}{2}}{k}
\sum^{p-1}_{l=k} \binom{p}{p-1-l} \binom{-\frac{1}{2}}{l-k}\\
&= \frac{1}{2}\binom{-\frac{1}{2}}{k}\binom{p}{k+1}\frac{(k+1)!}{p!} \frac{(2p-1)!!}{(2k+1)!!}\frac{2^{k+1}}{2^p}\\
&= \frac{(-1)^k}{k!}\frac{1}{(p-k-1)!}\frac{(2p-1)!}{(2k+1)}\frac{1}{2^{2p-1}(p-1)!}= N_0(p,k).
\end{align*}

Next, consider $k=0$. Then,
\begin{align*}
M_0(p,0) =& \sum^{p-2}_{q=0} (-1)^{q}\frac{1}{(2p-2)!}\binom{2p-2}{q}
\sum^{p-1}_{l=0} f_{p-1-l}(\alpha_q)\alpha_q^{2l} \binom{-\frac{1}{2}}{l} + \frac{1}{2}\\
=& \sum^{p-1}_{q=0} (-1)^{q}\frac{1}{(2p-2)!}\binom{2p-2}{q}
\sum^{p-1}_{l=0} f_{p-1-l}(\alpha_q)\alpha_q^{2l} \binom{-\frac{1}{2}}{l} -1 + \frac{1}{2}\\
=& \sum^{p-1}_{q=0} (-1)^{q}\frac{1}{(2p-2)!}\binom{2p-2}{q}
\sum^{p-1}_{l=0} \binom{p}{p-1-l}\alpha_q^{2p-2} \binom{-\frac{1}{2}}{l}\\
&+\sum^{p-1}_{q=0} (-1)^{q}\frac{1}{(2p-2)!}\binom{2p-2}{q}
c^{p-1}_0 -\frac{1}{2}\\
 =& \sum^{p-1}_{q=0} (-1)^{q}\frac{1}{(2p-2)!}\binom{2p-2}{q}
\sum^{p-1}_{l=0} \binom{p}{p-1-l}\alpha_q^{2p-2} \binom{-\frac{1}{2}}{l}\\
&+\frac{(-1)^{p-1}}{2(2p-2)!}\binom{2p-2}{p-1} (-1)^{p-1}(p-1)!(p-1)! - \frac{1}{2}\\
=& \sum^{p-1}_{q=0} (-1)^{q}\frac{1}{(2p-2)!}\binom{2p-2}{q} \sum^{p-1}_{l=0} \binom{p}{p-1-l}\alpha_q^{2p-2}
\binom{-\frac{1}{2}}{l} +\frac{1}{2} - \frac{1}{2}\\
=& \sum^{p-1}_{q=0} (-1)^{q}\frac{1}{(2p-2)!}\binom{2p-2}{q} \alpha_q^{2p-2} \sum^{p-1}_{l=0} \binom{p}{p-1-l}
\binom{-\frac{1}{2}}{l}\\
=& \frac{1}{2} \sum^{p-1}_{l=0} \binom{p}{p-1-l} \binom{-\frac{1}{2}}{l}= \frac{1}{2^{p}} \frac{(2p-1)!!}{(p-1)!} =
\frac{2p-1}{2^{2p-1}}\binom{2p-2}{p-1} = N_0(p,0)
\end{align*}

\end{proof}

\begin{lem}\label{Mjpk} For  $1\leq j \leq p-1$, we have that $M_{j}(p,k) = N_{j}(p,k)$.
\end{lem}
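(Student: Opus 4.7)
The plan is to mimic the strategy of Lemma \ref{lema M0pk}, handling the $q$-dependence of $F(q,j)=\Rz_{s=0}\Phi_{2j+1,q}(s)$ by substituting the recursive formula of Corollary \ref{c44}:
\[
F(q,j) = \frac{2}{2j+1}\alpha_q^{2j} + \sum_{t=1}^{j-1} k_{2j+1,t}\,\alpha_q^{2t} + 2F(p-1,j).
\]
Inserting this into the definition of $M_j(p,k)$ splits the sum over $q$ into $j+1$ pieces indexed by $r\in\{0,1,\dots,j\}$, each a scalar multiple of
\[
P_r(p,k) \;=\; \sum_{q=0}^{p-1}(-1)^q\binom{2p-2}{q}\alpha_q^{2(r-j)}\sum_{l=k}^{p-1}e_{p-1-l}(d^q)\,\alpha_q^{2l}\,\binom{-\tfrac{1}{2}}{l-k}.
\]
First I would expand $e_{p-1-l}(d^q)=f_{p-1-l}(\alpha_q)$ using the polynomial form \eqref{F}, isolating the leading monomial $\binom{p}{p-1-l}\alpha_q^{2(p-1-l)}$ from the lower-order remainder $\sum_{s=0}^{p-2-l}c_s^{p-1-l}\alpha_q^{2s}$. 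After collecting powers of $\alpha_q$, each $P_r(p,k)$ becomes a linear combination of moments
\[
\Sigma(n) \;=\; \sum_{q=0}^{p-1}(-1)^q\binom{2p-2}{q}\alpha_q^{2n}.
\]

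The central combinatorial observation is that $\alpha_q=q-p+1$ pairs antisymmetrically under $q\mapsto 2p-2-q$ while $\alpha_q^{2n}$ is even, and the central term $q=p-1$ contributes $0$ whenever $n\geq 1$. A symmetrization argument combined with identities \eqref{idA2}-\eqref{idA3} then yields $\Sigma(n)=0$ for $1\le n<p-1$, together with $\Sigma(p-1)=\tfrac12 (2p-2)!$. Now the highest $\alpha_q$-exponent appearing in $P_r(p,k)$ is $2(p-1-j+r)$, achieved only by the leading term of $f_{p-1-l}$ at $l=p-1$. This exponent reaches $2(p-1)$ only in the case $r=j$; for $r<j$ every moment is of the vanishing type, so the entire contribution of the $k_{2j+1,t}$-terms and of $2F(p-1,j)$ is killed. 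Consequently only the leading piece $\frac{2}{2j+1}\alpha_q^{2j}$ of $F(q,j)$ survives, and within that piece only the monomial $\binom{p}{p-1-l}\alpha_q^{2(p-1-l)}$ from $f_{p-1-l}(\alpha_q)$ contributes.

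Collecting the survivors and using $\Sigma(p-1)=\tfrac12(2p-2)!$, I arrive at
\[
M_j(p,k) \;=\; \frac{1}{2(2j+1)}\binom{-\tfrac{1}{2}-j}{k-j}\sum_{l=k}^{p-1}\binom{p}{p-1-l}\binom{-\tfrac{1}{2}}{l-k}.
\]
Identity \eqref{idB} with $n=p-1$ evaluates the remaining sum as $(2p-1)!!/(2^{p-1-k}(p-1-k)!(2k+1)!!)$. A direct simplification of
\[
\binom{-\tfrac{1}{2}-j}{k-j} \;=\; \frac{(-1)^{k-j}}{2^{k-j}(k-j)!}\cdot\frac{(2k-1)!!}{(2j-1)!!},
\]
together with $(2j+1)(2j-1)!!=(2j+1)!!$ and the standard conversion $(2p-1)!=(2p-1)!!\cdot 2^{p-1}(p-1)!$, transforms this expression into exactly the closed form $N_j(p,k)$ given in the statement.

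The main obstacle is the double vanishing step: one must simultaneously verify that \emph{every} one of the intermediate contributions in the recursion for $F(q,j)$ (namely the coefficients $k_{2j+1,t}$, $1\le t\le j-1$, and the constant $2F(p-1,j)$) produces pieces whose total $\alpha_q$-degree is strictly below $2(p-1)$, and also that the lower-order remainder in the expansion of each $f_{p-1-l}(\alpha_q)$ likewise never reaches that critical degree. Both of these reduce to the single degree count $2(p-1-j+r)+2s+2l\le 2(p-1)$ with strict inequality unless $r=j$, $s=p-1-l$; once this is established, Lemma \ref{Mjpk} follows by the combinatorial simplification above.
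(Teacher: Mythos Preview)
Your strategy is the paper's strategy: substitute the polynomial form of $F(q,j)$ from Corollary~\ref{c44}, expand $e_{p-1-l}(d^q)=f_{p-1-l}(\alpha_q)$ via \eqref{F}, and kill everything except the leading piece using $\Sigma(n)=0$ for $1\le n\le p-2$ together with $\Sigma(p-1)=\tfrac12(2p-2)!$. The closing combinatorial simplification is correct.

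There is, however, a gap in the claim ``for $r<j$ every moment is of the vanishing type'', visible when $k=j$. Two things go wrong simultaneously. First, Corollary~\ref{c44} is stated only for $0\le q\le p-2$; at $q=p-1$ one has $\alpha_{p-1}=0$, and the polynomial formula evaluates to $k_{2j+1,0}=2F(p-1,j)$ rather than the true value $F(p-1,j)$. Second, the piece $r=0$ (the constant $2F(p-1,j)$) paired with the constant term $c_0^{\,p-1-j}$ of $f_{p-1-j}$ at $l=j$ produces $\alpha_q^{0}$, so the moment $\Sigma(0)=\tfrac{(-1)^{p-1}}{2}\binom{2p-2}{p-1}$ enters, and this is \emph{not} of vanishing type. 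These two defects cancel exactly---both contribute
\[
(-1)^{p-1}\,\frac{F(p-1,j)}{2(2p-2)!}\binom{2p-2}{p-1}\,e_{p-1-j}(d^{p-1}),
\]
using $c_0^{\,p-1-j}=f_{p-1-j}(0)=e_{p-1-j}(d^{p-1})$---which is why your final formula is right; but the argument as written does not exhibit this cancellation. The paper handles the extreme case $j=k=p-1$ by explicitly splitting off the $q=p-1$ summand and restoring the correct $F(p-1,p-1)$ there; you need the analogous bookkeeping for every $j=k$. For $j<k$ your outline is sound: then $l\ge k>j$ forces every $\alpha_q$-exponent to be at least $2(k-j)\ge 2$, so neither issue arises and the paper's third case goes through exactly as you describe.
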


\begin{proof} Note that $j\leq k$, and hence $1\leq j \leq k \leq p-1$. Recall that
\[
F(q,j) = \frac{2}{2j+1}\alpha_q^{2j}+\sum^{j-1}_{t=1} k_{2j+1,t} \alpha_q^{2t} + 2\Rz_{z=0}\Phi_{2j+1,p-1}(s), 
\] 
by Corollary \ref{c44}. Set $k_{2j+1,0} = 2\Rz_{z=0}\Phi_{2j+1,p-1}(s)$.  We split the proof in three cases. First, for $j = k<p-1$, we have
\begin{align*}
M_j(p,j) =& \sum^{p-1}_{q=0} (-1)^{q}\frac{F(q,j)}{2(2p-2)!}\binom{2p-2}{q}
\sum^{p-1}_{l=j} e_{p-1-l}(d^{q})\alpha_q^{2l-2j} \binom{-\frac{1}{2}}{l-j}\\
=& \sum^{p-1}_{q=0} (-1)^{q}\frac{\alpha_q^{2j}}{(2j+1)(2p-2)!}\binom{2p-2}{q}
\sum^{p-1}_{l=j} f_{p-1-l}(\alpha_q)\alpha_q^{2l-2j} \binom{-\frac{1}{2}}{l-j}\\
&+\sum^{j-1}_{t=0} k_{t,j} \sum^{p-1}_{q=0} (-1)^{q}\frac{\alpha_q^{2t}}{(2p-2)!}\binom{2p-2}{q} \sum^{p-1}_{l=j}
f_{p-1-l}(\alpha_q)\alpha_q^{2l-2j} \binom{-\frac{1}{2}}{l-j}
\end{align*}

Using the formula in equation (\ref{F}) for the functions 
 $f_{p-1-l}(\alpha_q)$, we get
\begin{align*}
M_j(p,j)=& \sum^{p-1}_{q=0} (-1)^{q}\frac{1}{(2j+1)(2p-2)!}\binom{2p-2}{q}
\sum^{p-1}_{l=j} \binom{p}{p-1-l}\alpha_q^{2p-2-2l}\alpha_q^{2l} \binom{-\frac{1}{2}}{l-j}\\
&+\sum^{p-1}_{q=0} (-1)^{q}\frac{1}{(2j+1)(2p-2)!}\binom{2p-2}{q}
\sum^{p-2}_{l=j} \sum^{p-2-l}_{s=0}c_s \alpha_q^{2s + 2l} \binom{-\frac{1}{2}}{l-j}\\
&+\sum^{j-1}_{t=0} k_{2j+1,t} \sum^{p-1}_{q=0} (-1)^{q}\frac{1}{(2p-2)!}\binom{2p-2}{q}
\sum^{p-1}_{l=j} \binom{p}{p-1-l}\alpha_q^{2p-2+2t-2j} \binom{-\frac{1}{2}}{l-j}\\
&+\sum^{j-1}_{t=0} k_{2j+1,t} \sum^{p-1}_{q=0} (-1)^{q}\frac{1}{(2p-2)!}\binom{2p-2}{q}
\sum^{p-2}_{l=j} \sum^{p-2-l}_{s=0}c_s\alpha_q^{2s+2l+2t-2j} \binom{-\frac{1}{2}}{l-j}\\
= &\frac{1}{(2j+1)}\sum^{p-1}_{q=0} (-1)^{q}\frac{1}{(2p-2)!}\binom{2p-2}{q}\alpha_q^{2p-2}
\sum^{p-1}_{l=j} \binom{p}{p-1-l} \binom{-\frac{1}{2}}{l-j}\\
=& \frac{1}{2(2j+1)}\sum^{p-1}_{l=j} \binom{p}{p-1-l} \binom{-\frac{1}{2}}{l-j}
=\frac{1}{2(2j+1)}\frac{1}{(p-1-j)!}\frac{(2p-1)!!}{(2j+1)!!}\frac{2^{j+1}}{2^{p}}\\
&= \frac{1}{(2j+1)(p-1-j)!}\frac{(2p-1)!}{(2j+1)!!}\frac{2^{j}}{2^{2p-1}}\frac{}{(p-1)!} =N_j(p,j),
\end{align*}
where the first three terms in the first equation vanish because $s+l<p-1$ and $t-j\leq-1$. The second case is
$j=k=p-1$. Then,
\begin{align*}
M_{p-1}(p,p-1) =& \sum^{p-1}_{q=0} (-1)^{q}\frac{F(q,p-1)}{2(2p-2)!}\binom{2p-2}{q}\\
=& \sum^{p-1}_{q=0} (-1)^{q}\frac{\alpha_q^{2p-2}}{(2p-1)(2p-2)!}\binom{2p-2}{q}\\
&+\sum^{p-2}_{t=0} k_{2j+1,t}
\sum^{p-2}_{q=0} (-1)^{q}\frac{\alpha_q^{2t}}{2(2p-2)!}\binom{2p-2}{q}\\
&+ \sum^{p-2}_{t=0} \frac{ k_{2j+1,t}}{2} (-1)^{p-1}\frac{\alpha_{p-1}^{2t}}{2(2p-2)!}\binom{2p-2}{p-1}\\
= &\frac{1}{2(2p-1)}+ k_{2j+1,0} \sum^{p-1}_{q=0} (-1)^{q}\frac{1}{2(2p-2)!}\binom{2p-2}{q}\\
&-k_{2j+1,0}(-1)^{p-1}\frac{1}{2(2p-2)!}\binom{2p-2}{p-1}\\
&+ \frac{k_{2j+1,0}}{2} (-1)^{p-1}\frac{1}{2(2p-2)!}\binom{2p-2}{p-1}\\
 =& \frac{1}{2(2p-1)}+ \frac{k_{2j+1,0}}{2} \frac{(-1)^{p-1}}{(p-1)!(p-1)!} -
\frac{k_{2j+1,0}}{2}\frac{(-1)^{p-1}}{(p-1)!(p-1)!}\\
=& \frac{1}{2(2p-1)} = N_{p-1}(p,p-1).
\end{align*}

The last case is  $1\leq j <k$. Then,
\begin{align*}
M_j(p,k) =& \sum^{p-1}_{i=0} (-1)^{q}\frac{2F(q,j)}{4(2p-2)!}\binom{2p-2}{q}\binom{-\frac{1}{2}-j}{k-j}\alpha_q^{-2j}
\sum^{p-1}_{l=k} e_{p-1-l}(d^{q})\alpha_q^{2l} \binom{-\frac{1}{2}}{l-k},\\
=& \binom{-\frac{1}{2}-j}{k-j}\sum^{p-1}_{q=0} (-1)^{q}\frac{1}{(2j+1)(2p-2)!}\binom{2p-2}{q}
\sum^{p-1}_{l=k} \binom{p}{p-1-l}\alpha_q^{2p-2-2l}\alpha_q^{2l} \binom{-\frac{1}{2}}{l-k}\\
&+\binom{-\frac{1}{2}-j}{k-j}\sum^{p-1}_{q=0} (-1)^{q}\frac{1}{(2j+1)(2p-2)!}\binom{2p-2}{q}
\sum^{p-2}_{l=k} \sum^{p-2-l}_{s=0}c_s \alpha_q^{2s + 2l} \binom{-\frac{1}{2}}{l-k}\\
&+\binom{-\frac{1}{2}-j}{k-j}\sum^{j-1}_{t=0} k_{2j+1,t} \sum^{p-1}_{q=0} (-1)^{q}\frac{1}{(2p-2)!}\binom{2p-2}{q}
\sum^{p-1}_{l=k} \binom{p}{p-1-l}\alpha_q^{2p-2+2t-2j} \binom{-\frac{1}{2}}{l-k}\\
&+\binom{-\frac{1}{2}-j}{k-j}\sum^{j-1}_{t=0} k_{2j+1,t} \sum^{p-1}_{q=0} (-1)^{q}\frac{1}{(2p-2)!}\binom{2p-2}{q}
\sum^{p-2}_{l=k} \sum^{p-2-l}_{s=0}c_s\alpha_q^{2s+2l+2t-2j} \binom{-\frac{1}{2}}{l-k}\\
\end{align*}
\begin{align*}
=& \binom{-\frac{1}{2}-j}{k-j}\sum^{p-1}_{q=0} (-1)^{q}\frac{1}{(2j+1)(2p-2)!}\binom{2p-2}{q}\alpha_q^{2p-2}
\sum^{p-1}_{l=k} \binom{p}{p-1-l} \binom{-\frac{1}{2}}{l-k}\\
=& \binom{-\frac{1}{2}-j}{k-j}\frac{1}{2(2j+1)}\sum^{p-1}_{l=k} \binom{p}{p-1-l} \binom{-\frac{1}{2}}{l-k}\\
= &\binom{-\frac{1}{2}-j}{k-j}\frac{1}{2(2j+1)}\frac{(2p-1)!!}{(p-1-k)!(2k+1)!!2^{p-k-1}}\\
= &\frac{(-1)^{k-j}}{(k-j)!}\frac{2^{j}}{2^{k}}\frac{(2k-1)!!}{(2j-1)!!}\frac{1}{2(2j+1)}\frac{(2p-1)!!}{(p-1-k)!(2k+1)!!2^{p-k-1}}\\
=& \frac{(-1)^{k-j}}{(k-j)!}\frac{2^{j}}{2^{2p-1}(p-1)!}\frac{1}{(2j+1)!!}\frac{(2p-1)!}{(p-1-k)!(2k+1)} = N_j(p,k).
\end{align*}

\end{proof}

\section{The proof of Theorem \ref{t03}: low dimensional cases}
\label{s7}

We prove the two cases  $m=2p-1=3$, and $m=2p-1=5$ independently, in the following two subsections. In the last subsection we give some remarks on the general case. Even if the proofs of the two cases $m=3$ and $m=5$ follow the same line, we prefer to give details separately, for two reasons: first, in order to improve readability, and second to avoid a problem that will be made clear in Section \ref{gene} below. In each case the proof is in two parts: in the first we compute the anomaly boundary term, as defined in Section \ref{Lap1.2}, in the second we compute the singular term in the analytic torsion, using Proposition \ref{ppoo}.

\subsection{Case $m=3$}
\subsubsection{Part 1} Since $m=3$, the unique terms that give non trivial contribution in the Berezin integral appearing in equation (\ref{ebm1}) are those homogeneous of degree 3. By definition
\[
\e^{-\frac{1}{2}\hat{\tilde\Omega}-u^2 \S_1^2}=1-\frac{1}{2}\hat{\tilde\Omega}-u^2 \S_1^2+\dots,
\]
(recall that $\Theta=\tilde\Omega$, see Section \ref{Lap1.2}) and therefore the terms of degree 3 in 
\[
\e^{-\frac{1}{2}\hat{\tilde\Omega}-u^2 \S^2_1}\sum_{k=1}^\infty \frac{1}{\Gamma\left(\frac{k}{2}+1\right)}u^{k-1} \S_1^k,
\]
are
\[
-\frac{2}{3\sqrt{\pi}} u^2 \S_1^3-\frac{1}{\sqrt{\pi}} \hat{\tilde\Omega} S_1.
\]

Applying the definition in equation (\ref{ebm1}), this gives
\beq\label{n}\begin{aligned}
B(\nabla_1)&=\frac{1}{2}\int_0^1\int^B
\e^{-\frac{1}{2}\hat{\tilde \Omega}-u^2 \mathcal{S}_j^2}\sum_{k=1}^\infty \frac{1}{\Gamma\left(\frac{k}{2}+1\right)}u^{k-1}
\mathcal{S}_j^k du\\
&=\frac{1}{2}\int_0^1\int^B \left(-\frac{2}{3\sqrt{\pi}} u^2 \S_1^3-\frac{1}{\sqrt{\pi}} \hat{\tilde\Omega} S_1\right)du\\
&=-\frac{1}{2\sqrt{\pi}}\int^B \hat{\tilde\Omega} S_1-\frac{1}{9\sqrt{\pi}}\int^B \S_1^3.
\end{aligned}
\eeq

By equation (\ref{sr1})
\beq\label{tot}
\S_1^3=-\frac{1}{8}\left(\sum_{k=1}^{m}b^*_k\wedge\hat e^*_{k}\right)^3=\frac{3}{4} dvol_g \wedge \hat e_1^*\wedge\hat e_2^*\wedge\hat e_3^*.
\eeq

This follows by direct calculation. For example
\beq\label{ss1}\begin{aligned}
\S_1^2&=\frac{1}{4}\left(\sum_{k=1}^{3}b^*_k\wedge\hat e^*_{k}\right)^2\\
&=\frac{1}{4}(b_1^*\wedge\hat e_1^*+b_2^*\wedge\hat e_2^*+b_3^*\wedge\hat e_3^*)^2\\
&=\frac{1}{4}(b_1^*\wedge\hat e_1^*\wedge b_2^*\wedge\hat e_2^*+\dots +b_2^*\wedge\hat e_2^*\wedge b_1^*\wedge\hat e_1^*+\dots)\\
&=\frac{1}{4}(-b_1^*\wedge b_2^*\wedge \hat e_1^*\wedge\hat e_2^*+\dots -b_2^*\wedge b_1^*\wedge \hat e_2^*\wedge\hat e_1^*+\dots)\\
&=\frac{1}{4}(- 2 b_1^*\wedge b_2^*\wedge \hat e_1^*\wedge\hat e_2^*+\dots )\\
&=-\frac{1}{2}\sum_{j<k=1}^3 b_j^*\wedge b_k^*\wedge\hat e_j^*\wedge\hat e_k^*,
\end{aligned}
\eeq
while
\[
(b_1^*\wedge b_2^*\wedge \hat e_1^*\wedge \hat e_2^*)\wedge(b_3^*\wedge \hat e_3^*)
=b_1^*\wedge b_2^*\wedge  b_3^*\wedge \hat e_1^*\wedge\hat e_2^*\wedge \hat e_3^*.
\]

Thus,
\[
\int^B \S^3_1=\frac{3}{4\pi^\frac{3}{2}}d vol_g.
\]

By equations (\ref{sr1}) and (\ref{pippo}), 
\begin{align*}
\hat{\tilde\Omega}\S_1&=-\frac{1}{4}\left(\sum_{k,l=1}^{3}\tilde\Omega_{kl} \wedge \hat  e^*_{k}\wedge  \hat e^*_{l}\right)
\wedge\left(\sum_{k=1}^{3}b^*_k\wedge\hat e^*_{k}\right).\\
\end{align*}

Direct calculations give
\begin{align*}
\hat{\tilde\Omega}\S_1=&-\frac{1}{2}(\Omega_{23}\wedge b_1^*-\Omega_{13}\wedge b_2^*+\Omega_{12}\wedge b_3^*)\wedge
\hat e_1^*\wedge \hat e_2^*\wedge \hat e_3^*\\
=&-\frac{1}{2}(R_{2332}+R_{1331}+R_{1221})\hat e_1^*\wedge \hat e_2^*\wedge \hat e_3^*\\
=&-\frac{1}{4}\tilde\tau \hat e_1^*\wedge \hat e_2^*\wedge \hat e_3^*,
\end{align*}
and hence
\[
\int^B \hat{\tilde\Omega}\S_1 =\frac{1}{4\pi^\frac{3}{2}}\sum_{k,l=1}^3 \tilde R_{kllk} dvol_g.
\]

Substitution in equation (\ref{n}) gives
\begin{align*}
B(\nabla_1)&=\frac{1}{4\sqrt{\pi}}\int^B \hat{\tilde\Omega} S_1-\frac{1}{9\sqrt{\pi}}\int^B \S_1^3\\
&=\frac{1}{8\pi^2}\tilde\tau dvol_g-\frac{1}{12\pi^2} dvol_g.
\end{align*}

By the formula in equation (\ref{anom}), the anomaly boundary term is
\[
A_{\rm BM,abs}(\b C_lW)=\frac{1}{16\pi^2}\int_{\b C_lW}\tilde\tau dvol_g-\frac{1}{24\pi^2} \int_{\b C_lW} dvol_g.
\]

\subsubsection{Part 2} By Proposition \ref{ppoo}, with $p=2$, 
\begin{align*}
\log T_{\rm sing}(C_l W)=& \frac{1}{2}\sum_{q=0}^{1} (-1)^q \sum_{j=0}^{1}\Rz_{s=0}\Phi_{2j+1,q}(s)\Ru_{s=j+\frac{1}{2}}\zeta_{\rm cex}\left(s,\tilde\Delta^{(q)}+\alpha_q^2\right).
\end{align*}

Since $p=2$, $\alpha_0=-1$ and $\alpha_1=0$. Since there are no exact 0-forms
\[
\zeta_{\rm cex}\left(s,\tilde\Delta^{(0)}+\alpha_0^2\right)=\zeta\left(s,\tilde\Delta^{(0)}+\alpha_0^2\right).
\]

By Lemma \ref{lel}, 
\begin{align*}
\Ru_{s=\frac{3}{2}}\zeta\left(s,\tilde\Delta^{(0)}+\alpha_0^2\right)&=\Ru_{s=\frac{3}{2}}\zeta\left(s,\tilde\Delta^{(0)}\right),\\
\Ru_{s=\frac{1}{2}}\zeta\left(s,\tilde\Delta^{(0)}+\alpha_0^2\right)&=\Ru_{s=\frac{1}{2}}\zeta\left(s,\tilde\Delta^{(0)}\right)-\frac{1}{2}\Ru_{s=\frac{3}{2}}\zeta\left(s,\tilde\Delta^{(0)}\right).
\end{align*}

By duality (see Section \ref{forms}) 
\[
\zeta_{\rm cex}(s,\tilde\Delta^{(1)})=\zeta(s,\tilde\Delta^{(1)})-\zeta_{\rm ex}(s,\tilde\Delta^{(1)})=
\zeta(s,\tilde\Delta^{(1)})-\zeta_{\rm cex}(s,\tilde\Delta^{(0)}),
\]
and also
\begin{align*}
\Ru_{s=\frac{1}{2}}\zeta(s,\tilde\Delta^{(1)})=-3\Ru_{s=\frac{1}{2}}\zeta(s,\tilde\Delta^{(0)}),\\
\Ru_{s=\frac{3}{2}}\zeta(s,\tilde\Delta^{(1)})=3\Ru_{s=\frac{3}{2}}\zeta(s,\tilde\Delta^{(0)}).\\
\end{align*}

Putting all together, we obtain
\begin{align*}
&\log T_{\rm sing}(C_l W)=\frac{1}{2} \left(\Rz_{s=0}\Phi_{1,0}(s)+\Rz_{s=0}\Phi_{1,1}(s)+3\Rz_{s=0}\Phi_{1,1}(s)\right)\Ru_{s=\frac{1}{2}}\zeta(s,\tilde\Delta^{(0)})\\
&+\frac{1}{2}\left(\Rz_{s=0}\Phi_{3,1}(s)+\Rz_{s=0}\Phi_{3,0}(s)-\frac{1}{2}\Rz_{s=0}\Phi_{1,0}(s)-3\Rz_{s=0}\Phi_{3,1}(s)\right)\Ru_{s=\frac{3}{2}}\zeta(s,\tilde\Delta^{(0)}).
\end{align*}

By Corollaries \ref{c33} (when $q=1$), and \ref{c44} (when $q=0$)
\begin{align*}
\Rz_{s=0}\Phi_{1,1}(s)&=1,&\Rz_{s=0}\Phi_{3,1}(s)&=\frac{2}{315},\\
\Rz_{s=0}\Phi_{1,0}(s)&=2,&\Rz_{s=0}\Phi_{3,0}(s)&=\frac{214}{315}.
\end{align*}

This gives
\[
\log T_{\rm sing}(C_l W)=3\Ru_{s=\frac{1}{2}}\zeta(s,\tilde\Delta^{(0)})-\frac{1}{6}\Ru_{s=\frac{3}{2}}\zeta(s,\tilde\Delta^{(0)}),
\]
and by Propositions \ref{coeff} and \ref{l3.1} 
\[
\log T_{\rm sing}(C_l W)=\frac{1}{16\pi^2}\int_{\b C_lW}\tilde\tau dvol_g-\frac{1}{24\pi^2}\int_{\b C_lW} dvol_g.
\]

\subsection{Case $m=5$}
\subsubsection{Part 1} Since $m=5$, the unique terms that gives non trivial contribution in the Berezin integral appearing in equation (\ref{ebm1}) are those homogeneous of degree 5. After some calculation, the terms of degree 3 in 
\[
\e^{-\frac{1}{2}\hat{\tilde\Omega}-u^2 \S^2_1}\sum_{k=1}^\infty \frac{1}{\Gamma\left(\frac{k}{2}+1\right)}u^{k-1} \S_1^k,
\]
are
\[
-\frac{1}{5\sqrt{\pi}} u^4 \S_1^5+\frac{1}{3\sqrt{\pi}} u^2 \hat{\tilde\Omega} S^3_1+\frac{1}{4\sqrt{\pi}}  \hat{\tilde\Omega}^2 S_1.
\]

Integration in $u$, as in  equation (\ref{ebm1}) gives
\beq\label{n1}\begin{aligned}
B(\nabla_1)
&=\frac{1}{2\sqrt{\pi}}\int^B \left(\frac{1}{25}  \S_1^5+\frac{1}{9} u^2 \hat{\tilde\Omega} S^3_1+\frac{1}{4}  \hat{\tilde\Omega}^2 S_1\right).
\end{aligned}
\eeq

We calculate the three terms appearing in the integrand.

Proceeding as for equations (\ref{ss1}), starting from equation (\ref{sr1}), we obtain
\begin{align*}
\S_1^2&=-\frac{1}{2}\sum_{j<k=1}^5 b_j^*\wedge b_k^*\wedge\hat e_j^*\wedge\hat e_k^*,
\end{align*}
and
\begin{align*}
\S_1^5&=(\S_1^2)^2 \S_1=-\frac{1}{2^5} \left(-\frac{1}{2}\sum_{j<k=1}^5 b_j^*\wedge b_k^*\wedge\hat e_j^*\wedge\hat e_k^*\right)^2\sum_{k=1}^{5}b^*_k\wedge\hat e^*_{k}\\
&=-\frac{5}{4} dvol_g \wedge \hat e_1^*\wedge\hat e_2^*\wedge\hat e_3^*\wedge \hat e_4^*\wedge \hat e_5^*,
\end{align*}
and recalling the definition in  (\ref{pippo}) of $\hat{\tilde \Omega}$,
\begin{align*}
\hat{\tilde\Omega}\S^3_1&=\frac{3}{8}\left(\sum_{k,l=1}^{3}\tilde\Omega_{kl} \wedge \hat  e^*_{k}\wedge  \hat e^*_{l}\right)
\wedge dvol_g \wedge \hat e_1^*\wedge \hat e_2^*\wedge \hat e_3^*\wedge \hat e_4^*\wedge \hat e_5^*\\
&=\frac{3}{8}\tilde\tau \wedge dvol_g  \wedge \hat e_1^*\wedge \hat e_2^*\wedge \hat e_3^*\wedge \hat e_4^*\wedge \hat e_5^*.
\end{align*}

The last term is
\begin{align*}
\hat{\tilde\Omega}^2 \S_1&=-\frac{1}{2^5} \left(\sum_{k,l=1}^{3}\tilde\Omega_{kl} \wedge \hat  e^*_{k}\wedge  \hat e^*_{l}\right)^2\left(\sum_{k=1}^{5}b^*_k\wedge\hat e^*_{k}\right).
\end{align*}

This term requires some noisy explicit calculations, that we omit here. The result is
\begin{align*}
\hat{\tilde\Omega}^2 \S_1&=-\frac{1}{2^5} \left(4|\tilde R|^2-16|\tilde Ric |^2+4\tilde \tau^2\right)dvol_g \wedge \hat e_1^*\wedge \hat e_2^*\wedge \hat e_3^*\wedge \hat e_4^*\wedge \hat e_5^*\\
&=\left(-\frac{1}{8} |\tilde R|^2+\frac{1}{2}|\tilde Ric|^2-\frac{1}{8}\tilde \tau^2\right)dvol_g \wedge \hat e_1^*\wedge \hat e_2^*\wedge \hat e_3^*\wedge \hat e_4^*\wedge \hat e_5^*.
\end{align*}

Substitution in equation (\ref{n1}) gives
\begin{align*}
B(\nabla_1)&=\frac{3dvol_g}{40\pi^3}-\frac{\tilde\tau dvol_g}{48\pi^3} +\frac{|\tilde R|^2dvol_g}{64\pi^3}  
-\frac{|\tilde Ric|^2dvol_g}{16\pi^3} +\frac{\tilde \tau^2dvol_g}{64\pi^3}.
\end{align*}

By the formula in equation (\ref{anom}), the anomaly boundary term is
\begin{align*}
A_{\rm BM,abs}(\b C_l W)=&\frac{3}{80\pi^3}\int_{\b C_lW} dvol_g-\frac{1}{96\pi^3} \int_{\b C_lW} \tilde\tau dvol_g+\frac{1}{128\pi^3} \int_{\b C_lW} |\tilde R|^2 dvol_g\\
&-\frac{1}{32\pi^3} \int_{\b C_lW} |\tilde Ric |^2 dvol_g+\frac{1}{128\pi^3}\int _{\b C_lW} \tilde \tau^2 dvol_g.
\end{align*}

\subsubsection{Part 2} By Proposition \ref{ppoo}, with $p=3$, 
\begin{align*}
\log T_{\rm sing}(C_l W)
=& \frac{1}{2}\sum_{q=0}^{2} (-1)^q \sum_{j=0}^{2}\Rz_{s=0}\Phi_{2j+1,q}(s)\Ru_{s=j+\frac{1}{2}}\zeta_{\rm cex}\left(s,\tilde\Delta^{(q)}+\alpha_q^2\right).
\end{align*}

Since $p=3$, $\alpha_0=-2$,  $\alpha_1=-1$, and $\alpha_2=0$. 
By duality (see Section \ref{forms}) 
\begin{align*}
\zeta_{\rm cex}(s,\tilde\Delta^{(0)})&=\zeta(s,\tilde\Delta^{(0)}),\\
\zeta_{\rm cex}(s,\tilde\Delta^{(1)}&=\zeta(s,\tilde\Delta^{(1)})-\zeta_{\rm ex}(s,\tilde\Delta^{(1)})=
\zeta(s,\tilde\Delta^{(1)})-\zeta_{\rm cex}(s,\tilde\Delta^{(0)}),\\
\zeta_{\rm cex}(s,\tilde\Delta^{(2)})&=\zeta(s,\tilde\Delta^{(2)})-\zeta_{\rm ex}(s,\tilde\Delta^{(2)})=
\zeta(s,\tilde\Delta^{(2)})-\zeta_{\rm cex}(s,\tilde\Delta^{(1)})\\
&=
\zeta(s,\tilde\Delta^{(2)})-\zeta(s,\tilde\Delta^{(1)})+\zeta(s,\tilde\Delta^{(0)}).
\end{align*}

By Lemma \ref{lel}, with $q=0$ and $1$,
\begin{align*}
\Ru_{s=\frac{5}{2}}\zeta\left(s,\tilde\Delta^{(q)}+\alpha_q^2\right)&=\Ru_{s=\frac{5}{2}}\zeta\left(s,\tilde\Delta^{(q)}\right),\\
\Ru_{s=\frac{3}{2}}\zeta\left(s,\tilde\Delta^{(q)}+\alpha_q^2\right)&=\Ru_{s=\frac{3}{2}}\zeta\left(s,\tilde\Delta^{(q)}\right)
-\frac{3}{2}\Ru_{s=\frac{5}{2}}\zeta\left(s,\tilde\Delta^{(q)}\right),\\
\Ru_{s=\frac{1}{2}}\zeta\left(s,\tilde\Delta^{(q)}+\alpha_q^2\right)&=\Ru_{s=\frac{1}{2}}\zeta\left(s,\tilde\Delta^{(q)}\right)-\frac{1}{2}\Ru_{s=\frac{3}{2}}\zeta\left(s,\tilde\Delta^{(q)}\right)
+\frac{3}{8}\Ru_{s=\frac{5}{2}}\zeta\left(s,\tilde\Delta^{(q)}\right).
\end{align*}

By Proposition \ref{coeff}
\begin{align*}
\Ru_{s=\frac{1}{2}}\zeta(s,\tilde\Delta^{(0)})&=\frac{e_{0,4}}{\Gamma(1/2)}=\frac{1}{2^5 3^2 5\pi^3}\left(5\int_{\b C_lW} \tilde\tau^2 dvol_g-2\int_{\b C_lW} |\tilde Ric|^2 dvol_g+2\int_{\b C_lW} |\tilde R|^2 dvol_g\right),\\
\Ru_{s=\frac{3}{2}}\zeta(s,\tilde\Delta^{(0)})&=\frac{e_{0,2}}{\Gamma(3/2)}=\frac{1}{96\pi^3}\int_{\b C_lW} \tilde\tau dvol_g,\\
\Ru_{s=\frac{5}{2}}\zeta(s,\tilde\Delta^{(0)})&=\frac{e_{0,0}}{\Gamma(5/2)}=\frac{1}{24\pi^3}\int_{\b C_lW}  dvol_g;
\end{align*}

\begin{align*}
\Ru_{s=\frac{1}{2}}\zeta(s,\tilde\Delta^{(1)})&=\frac{e_{1,4}}{\Gamma(1/2)}\\
&=\frac{1}{2^5 3^2 5\pi^3}\left(-35\int_{\b C_lW} \tilde\tau^2 dvol_g+170\int_{\b C_lW} |\tilde Ric|^2 dvol_g-20\int_{\b C_lW} |\tilde R|^2 dvol_g\right),\\
\Ru_{s=\frac{3}{2}}\zeta(s,\tilde\Delta^{(1)})&=\frac{e_{1,2}}{\Gamma(3/2)}=-\frac{1}{96\pi^3}\int_{\b C_lW} \tilde\tau dvol_g
=-\Ru_{s=\frac{3}{2}}\zeta(s,\tilde\Delta^{(0)}),\\
\Ru_{s=\frac{5}{2}}\zeta(s,\tilde\Delta^{(1)})&=\frac{e_{1,0}}{\Gamma(5/2)}=\frac{5}{24\pi^3}\int_{\b C_lW}  dvol_g=5\Ru_{s=\frac{5}{2}}\zeta(s,\tilde\Delta^{(0)});
\end{align*}

\begin{align*}
\Ru_{s=\frac{1}{2}}\zeta(s,\tilde\Delta^{(2)})&=\frac{e_{2,4}}{\Gamma(1/2)}\\
&=\frac{1}{2^5 3^2 5\pi^3}\left(50\int_{\b C_lW} \tilde\tau^2 dvol_g-200\int_{\b C_lW} |\tilde Ric|^2 dvol_g+110\int_{\b C_lW} |\tilde R|^2 dvol_g\right),\\
\Ru_{s=\frac{3}{2}}\zeta(s,\tilde\Delta^{(2)})&=\frac{e_{2,2}}{\Gamma(3/2)}=-\frac{8}{96\pi^3}\int_{\b C_lW} \tilde\tau dvol_g
=-8\Ru_{s=\frac{3}{2}}\zeta(s,\tilde\Delta^{(0)}),\\
\Ru_{s=\frac{5}{2}}\zeta(s,\tilde\Delta^{(2)})&=\frac{e_{2,0}}{\Gamma(5/2)}=\frac{10}{24\pi^3}\int_{\b C_lW}  dvol_g=10\Ru_{s=\frac{5}{2}}\zeta(s,\tilde\Delta^{(0)}).
\end{align*}

Summing up, after some calculations, we obtain
\begin{align*}
\log &T_{\rm sing}(C_l W)\\
=&\frac{1}{2} \left(\Rz_{s=0}\Phi_{1,0}(s)+\Rz_{s=0}\Phi_{1,1}(s)+\Rz_{s=0}\Phi_{1,2}(s)\right)\Ru_{s=\frac{1}{2}}\zeta(s,\tilde\Delta^{(0)})\\
&-\frac{1}{2} \left(\Rz_{s=0}\Phi_{1,1}(s)+\Rz_{s=0}\Phi_{1,2}(s)\right)\Ru_{s=\frac{1}{2}}\zeta(s,\tilde\Delta^{(1)})\\
&+\frac{1}{2} \Rz_{s=0}\Phi_{1,2}(s)\Ru_{s=\frac{1}{2}}\zeta(s,\tilde\Delta^{(2)})\\
&+\Rz_{s=0}\left(\frac{1}{2}\Phi_{3,0}(s)+\Phi_{3,1}(s)-\Phi_{3,2}(s)
-\Phi_{1,0}(s)-\frac{1}{2}\Phi_{1,1}(s)\right)\Ru_{s=\frac{3}{2}}\zeta(s,\tilde\Delta^{(0)})\\
&+\Rz_{s=0}\left(\frac{1}{2}\Phi_{5,0}(s)-2\Phi_{5,1}(s)+3\Phi_{5,2}(s)-3\Phi_{3,0}(s)+\frac{3}{4}\Phi_{1,0}(s)+\frac{3}{4}\Phi_{1,1}(s)\right)\Ru_{s=\frac{5}{2}}\zeta(s,\tilde\Delta^{(0)}).
\end{align*}

By Corollaries \ref{c33} (when $q=2$), and \ref{c44} (when $q=0,1$)
\begin{align*}
\Rz_{s=0}\Phi_{1,2}(s)&=1,&\Rz_{s=0}\Phi_{3,2}(s)&=\frac{2}{315},&\Rz_{s=0}\Phi_{5,2}(s)&=-\frac{346}{22522}\\
\Rz_{s=0}\Phi_{1,1}(s)&=2,&\Rz_{s=0}\Phi_{3,1}(s)&=\frac{214}{315},&\Rz_{s=0}\Phi_{5,1}(s)&=\frac{31706}{75075}\\
\Rz_{s=0}\Phi_{1,0}(s)&=2,&\Rz_{s=0}\Phi_{3,0}(s)&=\frac{844}{315},&\Rz_{s=0}\Phi_{5,0}(s)&=\frac{487876}{75075}.
\end{align*}

This gives
\begin{align*}
\log T_{\rm sing}(C_l W)=&\frac{5}{2}\Ru_{s=\frac{1}{2}}\zeta(s,\tilde\Delta^{(0)})
-\frac{3}{2}\Ru_{s=\frac{1}{2}}\zeta(s,\tilde\Delta^{(1)})+\frac{1}{2}\Ru_{s=\frac{1}{2}}\zeta(s,\tilde\Delta^{(2)})\\
&-\Ru_{s=\frac{3}{2}}\zeta(s,\tilde\Delta^{(0)})+\frac{9}{10}\Ru_{s=\frac{5}{2}}\zeta(s,\tilde\Delta^{(0)}),
\end{align*}
and by Propositions \ref{coeff} and \ref{l3.1} 
\begin{align*}
\log T_{\rm sing}(C_l W)=&\frac{3}{80\pi^3}\int_{\b C_lW} dvol_g-\frac{1}{96\pi^3} \int_{\b C_lW} \tilde\tau dvol_g+\frac{1}{128\pi^3} \int_{\b C_lW} |\tilde R|^2 dvol_g\\
&-\frac{1}{32\pi^3} \int_{\b C_lW} |\tilde Ric|^2 dvol_g+\frac{1}{128\pi^3}\int _{\b C_lW} \tilde \tau^2 dvol_g.
\end{align*}

\subsection{A remark on the  general case}
\label{gene}

Assume $m=2p-1$ is odd, $p\geq 1$. Then,  $\log T_{\rm sing} (C_l W)$ depends only  the functions $\Phi_{k,q}(s)$ and $\zeta(s,\tilde \Delta^{(q)})$, by Proposition \ref{ppoo}. It follows from the definition in Section \ref{sb2.2} that the functions $\Phi_{k,q}(s)$ are universal functions that depend only on the decomposition of the spectrum of the Laplace operator on forms on the cone on the spectrum of the Laplace operator on forms on the section. This decomposition is independent from the section, so the functions $\Phi_{k,q}(s)$ do not depend on the particular section (they obviously depend on the dimension). This follows also from Corollaries \ref{c33} and \ref{c44}. Therefore, we can use the functions $\Phi_{k,q}(s)$ calculated when the section is a sphere of odd dimension. It follows that $\log T_{\rm sing}(C_l W)$ is a polynomial in the residues of the functions  $\zeta(s,\tilde \Delta^{(q)})$, with coefficients that are the same as in the case when $W$ is a sphere.  Now, the residues of $\zeta(s,\tilde \Delta^{(q)})$ are polynomials in the coefficients $e_{q,j}$ of the asymptotic expansion of the heat kernel of the Laplacian on forms  on $(W,g)$. In turn, the $e_{q,j}$ are the integrals of some polynomial in  the metric tensor $g$, its inverse and its derivatives. By work of P. Gilkey  \cite{Gil}, the coefficients of these polynomial are universal, namely are the same for any manifold $W$ \cite{Gil} Theorem 1.8.3. 
More precisely, by the above considerations and invariance theory as developed in \cite{Gil} Theorem 4.1.9, it follows that 
\[
\log T_{\rm sing}(C_l W)= \int_{\b C_lW} P(x),
\]
where $P$ belongs to the ring of all invariant polynomials in the derivative of the metric defined for manifolds of dimension $m$, $ \B_{m}(g)$ (see \cite{Gil} Section 2.1.4, and Lemma 2.4.2). Bases for this ring are given in terms of covariant derivatives of the curvature tensor using H. Weyl invariants of the orthogonal groups \cite{Gil} Section 2.4.3. It is possible to prove that these polynomial are universal up to a constant factor depending on the dimension $P=c_m Q$, where $Q$ does not depends on the dimension \cite{Gil} Lemma 4.1.4 and Theorem  4.1.9.

On the other side, by inspection of \cite{BM}, the term $A_{\rm BM,abs}(\b C_l W)$ is also the integral of some universal polynomial in some tensorial quantities constructed from the metric $g$.  Therefore
\[
A_{\rm BM,abs}(\b C_l W)=\int_{\b C_lW} R(x),
\]
$R\in\B_{m}(g)$. Fixing a base for $\B_{m}(g)$, the proof of Theorem \ref{t03} in the general case follows if we are able to prove that $P(x)-R(x)=0$, for $(W,g)$. 

We have some remarks on this point. First, by Theorem \ref{t02}, it follows that $P(x)-R(x)=0$, for $(W,g)=(S^{2p-1}_{\sin\alpha}, g_E)$. Unfortunately, this does not implies the general case. For using the base for $\B_{m}(g)$ in Theorem 4.1.9 of \cite{Gil}, we see that there are variable in $P(x)-R(x)$ that involves derivative of the curvature when $m>3$. Second, by the same argument, the proof of Theorem \ref{t03} in the case $m=5$ is a fundamental indication for the general case: indeed if $m<5$ the proof of Theorem \ref{t03} follows by that of Theorem \ref{t02}, by the previous considerations. Third, the proof of the general case along this line depends on the availability of some further information on the coefficients of the heat asymptotic. A recursion relation should probably be sufficient. However, this seems at present an hard problem (see remarks and references in Section 4.1.7 of \cite{Gil}.

\section{The proof of Theorem \ref{t03}: the general case}
\label{ultima}

Since the argument is very closed to the one described in details in the previous sections, we will just sketch it here. We consider the {\it conical frustum} (or more precisely its external surface) that is the compact connected oriented Riemannian manifold 
\[
 C_{[l_1,l_1]} W=[l_1,l_2]\times W,
 \]
with $0<l_1<l_2$, and with metric
\[
g_1=dx\otimes dx +x^2 g.
\]

We study the analytic torsion of $C_{[l_1,l_2]}$ with relative boundary conditions at $x=l_1$ and absolute boundary condition at $x=l_2$, and we respect to the  trivial representation for the fundamental group. We denote by $\b_{1/2} \tr$, or simply $\b_{1/2}$,  the two boundaries, and by $\log T_{\rm rel~\b_1,abs~\b_2}(\tr )$ the torsion.

\subsection{Spectrum} First, we describe the spectrum of the Laplace operator on forms. The proofs of the next lemmas are analogous to the proofs of Lemmas \ref{l2} and \ref{l3} and will be omitted.

\begin{lem}\label{l2.1} With the notation of Lemma \ref{l2}, 
assuming that $\mu_{q,n}$ is not an integer, all the solutions of the equation $\Delta u=\lambda^2 u$,
with $\lambda\not=0$, are convergent sums of forms of the following twelve types:
\begin{align*}
\psi^{(q)}_{+, 1,n,\lambda} =& x^{\alpha_q} J_{\mu_{q,n}}(\lambda x) \varphi_{{\rm cex},n}^{(q)},\\
\psi^{(q)}_{-, 1,n,\lambda} =& x^{\alpha_q} Y_{\mu_{q,n}}(\lambda x) \varphi_{{\rm cex},n}^{(q)},\\
\psi^{(q)}_{+, 2,n,\lambda} =& x^{\alpha_{q-1}} J_{\mu_{q-1,n}}(\lambda x) \tilde d\varphi_{{\rm cex},n}^{(q-1)} +
\b_x(x^{\alpha_{q-1}}J_{\mu_{q-1,n}}(\lambda x)) dx \wedge \varphi_{{\rm cex},n}^{(q-1)}\\
\psi^{(q)}_{-, 2,n,\lambda} =& x^{\alpha_{q-1}} Y_{\mu_{q-1,n}}(\lambda x) \tilde d\varphi_{{\rm cex},n}^{(q-1)} +
\b_x(x^{\alpha_{q-1}}Y_{\mu_{q-1,n}}(\lambda x)) dx \wedge \varphi_{{\rm cex},n}^{(q-1)}\\
\psi^{(q)}_{+,3,n,\lambda} =& x^{2\alpha_{q-1}+1}\b_x(x^{-\alpha_{q-1}}J_{\mu_{q-1,n}}(\lambda x) )\tilde d\varphi_{{\rm cex},n}^{(q-1)}\\
\nonumber &+ x^{\alpha_{q-1}-1}J_{\mu_{q-1,n}}(\lambda x) dx \wedge \tilde d^{\dag} \tilde d \varphi_{{\rm cex},n}^{(q-1)}\\
\psi^{(q)}_{-,3,n,\lambda} =& x^{2\alpha_{q-1}+1}\b_x(x^{-\alpha_{q-1}}Y_{\mu_{q-1,n}}(\lambda x) )\tilde d\varphi_{{\rm cex},n}^{(q-1)}\\
\nonumber &+ x^{\alpha_{q-1}-1}Y_{\mu_{q-1,n}}(\lambda x) dx \wedge \tilde d^{\dag} \tilde d \varphi_{{\rm cex},n}^{(q-1)}\\
\psi^{(q)}_{+,4,n,\lambda} =& x^{\alpha_{q-2}+1}J_{\mu_{q-2,n}}(\lambda x) dx \wedge \tilde d \varphi_{{\rm cex},n}^{(q-2)}\\
\psi^{(q)}_{-,4,n,\lambda} =& x^{\alpha_{q-2}+1}Y_{\mu_{q-2,n}}(\lambda x) dx \wedge \tilde d \varphi_{{\rm cex},n}^{(q-2)}\\
\psi^{(q)}_{+,E,\lambda} =& x^{\alpha_{q}} J_{|\alpha_{q}|}(\lambda x) \varphi_{{\rm har},n}^{(q)}\\
\psi^{(q)}_{-,E,\lambda} =& x^{\alpha_{q}} Y_{|\alpha_{q}|}(\lambda x) \varphi_{{\rm har},n}^{(q)}\\
\psi^{(q)}_{+,O,\lambda} =& \b_x(x^{\alpha_{q-1}}J_{|\alpha_{q-1}|}(\lambda x)) dx \wedge \varphi_{{\rm har},n}^{(q-1)}\\
\psi^{(q)}_{-,O,\lambda} =& \b_x(x^{\alpha_{q-1}}Y_{|\alpha_{q-1}|}(\lambda x)) dx \wedge \varphi_{{\rm har},n}^{(q-1)}.
\end{align*}

When $\mu_{q,n}$ is an integer the $-$ solutions must be modified including some logarithmic term (see for example \cite{Wat} for a set of linear independent solutions of the Bessel equation).

\end{lem}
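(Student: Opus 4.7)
The plan is to reduce the eigenvalue equation $\Delta u=\lambda^{2}u$ to a family of Bessel equations on $[l_{1},l_{2}]$ by the same separation of variables used in the proof of Lemma \ref{l2}. First I would fix a Hodge orthonormal basis $\{\varphi_{\mathrm{har},n}^{(q)},\varphi_{\mathrm{cex},n}^{(q)},\varphi_{\mathrm{ex},n}^{(q)}\}$ of $\Gamma(W,\Lambda^{(q)}T^{*}W)$ built from harmonic and coexact eigenforms of $\tilde\Delta$ (together with their $\tilde d$-images), exactly as was done in Lemma \ref{l2}, and expand an arbitrary smooth $q$-form $u$ on $C_{[l_{1},l_{2}]}W$ in this basis, with coefficients that are smooth functions of $x$. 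Plugging this expansion into the explicit formula (\ref{f3}) for $\Delta$ and using (\ref{f1})--(\ref{f2}), the equation $\Delta u=\lambda^{2}u$ separates into a coupled system of second order ODEs in $x$ for the coefficient functions, which then decouples in the blocks dictated by the Hodge splitting of the section.

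Second, the resulting scalar ODEs take, after the ansatz $f(x)=x^{\alpha_{q}}g(\lambda x)$, the standard form
\[
g''(z)+\tfrac{1}{z}g'(z)+\bigl(1-\tfrac{\mu_{q,n}^{2}}{z^{2}}\bigr)g(z)=0,
\]
i.e.\ Bessel's equation of order $\mu_{q,n}$ (with the index shifted to $\mu_{q-1,n}$ or $\mu_{q-2,n}$ in the blocks arising from $\tilde d\varphi_{\mathrm{cex},n}^{(q-1)}$ or from $dx\wedge \tilde d\varphi_{\mathrm{cex},n}^{(q-2)}$, respectively); the harmonic blocks yield the same equation with index $|\alpha_{q}|$ or $|\alpha_{q-1}|$. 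A fundamental system of solutions when $\mu_{q,n}\notin\Z$ is $\{J_{\mu_{q,n}}(\lambda x),Y_{\mu_{q,n}}(\lambda x)\}$. The crucial difference with Lemma \ref{l2} is that the tip $x=0$ is not in $[l_{1},l_{2}]$, so there is no $L^{2}$-obstruction eliminating the $Y$-branch: both Bessel functions are admissible. Substituting them back into the six structural types of Lemma \ref{l2} gives exactly the twelve forms $\psi^{(q)}_{\pm,1,n,\lambda},\ldots,\psi^{(q)}_{\pm,O,\lambda}$ displayed in the statement, where the $\pm$ signals the choice between $J$ and $Y$.

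Finally, I would verify linear independence and completeness: the twelve families are independent because they involve pairwise distinct basis elements of $\Lambda^{(q)}T^{*}W$ (coexact, $\tilde d$-exact, normal coexact, normal $\tilde d$-exact, harmonic tangential, harmonic normal) and, within each family, two linearly independent Bessel solutions; completeness in each eigenspace follows from two-dimensionality of the solution space of the scalar Bessel equation combined with the Hodge decomposition $\Omega^{(q)}(W)=\H^{q}(W)\oplus \tilde d\Omega^{(q-1)}(W)\oplus \tilde d^{\dagger}\Omega^{(q+1)}(W)$ on the section. The only technical care point, which I would merely record (as the statement itself does), is that when $\mu_{q,n}\in\Z$ the pair $\{J_{\mu_{q,n}},Y_{\mu_{q,n}}\}$ must be handled via the classical logarithmic normalization in \cite{Wat} to retain linear independence; this does not affect the list, only the explicit expression of the $-$ solutions. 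The main obstacle is therefore not analytic but combinatorial: keeping track of how the various blocks in the Hodge splitting of the section interact under $d$ and $d^{\dagger}$ in order to match the six types of Lemma \ref{l2} without duplication.
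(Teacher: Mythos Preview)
Your proposal is correct and follows essentially the same approach as the paper, which in fact omits the proof entirely with the remark that it is analogous to that of Lemma \ref{l2}: separation of variables via the Hodge basis on the section, reduction to Bessel's equation through the substitution $f(x)=x^{\alpha_q}g(\lambda x)$, and enumeration of the fundamental system. One minor point: the distinction from Lemma \ref{l2} is not really an $L^2$-obstruction at the level of the ODE (Lemma \ref{l2} also lists both $\pm$ branches, using $J_{\pm\mu}$ rather than $J_\mu,Y_\mu$; the $L^2$ selection occurs only in its Corollary), but rather the choice of $\{J_\mu,Y_\mu\}$ as the fundamental pair, which is more convenient on $[l_1,l_2]$ since neither endpoint is singular.
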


Note that the forms of types $1$, $3$ and $E$ are coexact, those of types  $2$, $4$  and $O$ exacts. The operator $d$ sends forms
of types $1$, $3$ and $E$ in forms of types $2$, $4$ and $O$, while $d^{\dag}$ sends forms of types $2$, $4$ and $O$ in forms of types
$1$, $3$ and $E$, respectively. The Hodge operator sends forms of type $1$ in forms of type $4$, $2$ in $3$, and  $E$ in $0$. Define the functions, for $c\neq 0$, 
\begin{align*}
F_{\mu,c} (x)&= J_{\mu}(l_2 x) (c Y_{\mu}(l_1 x) + l_1 x Y'_{\mu}(l_1 x)) - Y_{\mu}(l_2 x) (c J_{\mu}(l_1 x) + l_1 x J'_{\mu}(l_1 x)),\\
\hat F_{\mu,c} (x)&= J_{\mu}(l_1 x) (c Y_{\mu}(l_2 x) + l_2 x Y'_{\mu}(l_2 x)) - Y_{\mu}(l_l x) (c J_{\mu}(l_2 x) + l_2 x J'_{\mu}(l_2 x)) ,
\end{align*} and when $c=0$,
\begin{align*}
F_{\mu,0} (x)&= J_{\mu}(l_2 x) Y'_{\mu}(l_1 x) - Y_{\mu}(l_2 x) J'_{\mu}(l_1 x),\\
\hat F_{\mu,0} (x)&=  J_{\mu}(l_1 x) Y'_{\mu}(l_2 x) - Y_{\mu}(l_l x) J'_{\mu}(l_2 x).
\end{align*}

\begin{lem}\label{l3.1.1}
The positive part of the spectrum of the Laplace operator on forms on $\tr$, with
relative boundary conditions on $\b_1 \tr$ and absolute boundary conditions on $\b_2 \tr$ is:
\begin{align*}
\Sp_+ \Delta_{\rm rel~\b_1,abs~\-b_2}^{(q)} &= \left\{m_{{\rm cex},q,n} : \hat f^{2}_{\mu_{q,n},\alpha_q,k}\right\}_{n,k=1}^{\infty}
\cup
\left\{m_{{\rm cex},q-1,n} : \hat f^{2}_{\mu_{q-1,n},\alpha_{q-1},k}\right\}_{n,k=1}^{\infty} \\
&\cup \left\{m_{{\rm cex},q-1,n} : f^{2}_{\mu_{q-1,n},-\alpha_{q-1},k}\right\}_{n,k=1}^{\infty} \cup \left\{m _{{\rm cex},q-2,n} :
f^{2}_{\mu_{q-2,n},-\alpha_{q-2},k}\right\}_{n,k=1}^{\infty} \\
&\cup \left\{m_{{\rm har},q,0}:\hat f^2_{|\alpha_q|,\alpha_q,k}\right\}_{k=1}^{\infty} \cup \left\{ m_{{\rm har},q-1,0}:\hat
f^2_{|\alpha_{q-1}|,\alpha_{q-1},k}\right\}_{k=1}^{\infty}.
\end{align*}

With absolute boundary conditions on $\b_1 \tr$ and relative boundary conditions on $\b_2 \tr$ is:
\begin{align*}
\Sp_+ \Delta^{(q)}_{\rm abs~\b_1,rel ~\b_2} &= \left\{m _{{\rm cex},q,n} : f^{-2s}_{\mu_{q,n},\alpha_q,k}\right\}_{n,k=1}^{\infty} \cup
\left\{m _{{\rm cex},q-1,n} :f^{-2s}_{\mu_{q-1,n},\alpha_{q-1},k}\right\}_{n,k=1}^{\infty} \\
&\cup \left\{ m_{{\rm cex},q-1,n} : \hat f^{-2s}_{\mu_{q-1,n},-\alpha_{q-1},k}\right\}_{n,k=1}^{\infty} \cup
\left\{m _{{\rm cex},q-2,n} :
\hat f^{-2s}_{\mu_{q-1,n},-\alpha_{q-2},k}\right\}_{n,k=1}^{\infty} \\
&\cup \left\{m_{{\rm har},q}:f_{|\alpha_q|,\alpha_q,k}\right\}_{k=1}^{\infty} \cup \left\{m_{{\rm har},q-1}:
f_{|\alpha_{q-1}|,\alpha_{q-1},k}\right\}_{k=1}^{\infty},
\end{align*}
where  the $f_{\mu,c,k}$ are the zeros of the function $F_{\mu,c}(x)$,  the $\hat f_{\mu,c,k}$ are the zeros of
the function $\hat F_{\mu,c}(x) $,  $c\in \R$, $\alpha_q$ and $\mu_{q,n}$ are defined in Lemma \ref{l2}.
\end{lem}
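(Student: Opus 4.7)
The proof proceeds along the same lines as that of Lemma \ref{l3}, with the crucial modification that on the frustum $\tr$ the tip has been removed, so both linearly independent radial solutions in Lemma \ref{l2.1} (the $+$ and $-$ families built from $J_\mu$ and $Y_\mu$) are square integrable with square integrable exterior derivative, and the eigenvalue problem is determined by boundary conditions at \emph{both} $\b_1\tr$ and $\b_2\tr$. The plan is therefore to set up, for each of the six types of eigenforms, a $2\times 2$ homogeneous linear system for the coefficients $(a,b)$ of the $J$-part and $Y$-part of the radial factor, and to identify the vanishing of the corresponding $2\times 2$ determinant with one of the functions $F_{\mu,c}$ or $\hat F_{\mu,c}$.

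First, I would treat the relative-at-$\b_1$ / absolute-at-$\b_2$ case. For a type $1$ ansatz $a\,x^{\alpha_q}J_{\mu_{q,n}}(\lambda x)\varphi^{(q)}_{{\rm cex},n}+b\,x^{\alpha_q}Y_{\mu_{q,n}}(\lambda x)\varphi^{(q)}_{{\rm cex},n}$, which is purely tangential, the relative condition \eqref{rel} at $x=l_1$ reduces to $aJ_{\mu_{q,n}}(\lambda l_1)+bY_{\mu_{q,n}}(\lambda l_1)=0$, while \eqref{abs} at $x=l_2$ — equivalent in this case to $(d\psi)_{\rm norm}|_{l_2}=0$ — gives $a[\alpha_qJ_{\mu_{q,n}}(\lambda l_2)+\lambda l_2 J'_{\mu_{q,n}}(\lambda l_2)]+b[\alpha_qY_{\mu_{q,n}}(\lambda l_2)+\lambda l_2 Y'_{\mu_{q,n}}(\lambda l_2)]=0$. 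The vanishing of the $2\times 2$ determinant is exactly $\hat F_{\mu_{q,n},\alpha_q}(\lambda)=0$, giving the first family of eigenvalues. The analogous analysis for types $2$, $3$, $4$ uses the explicit formulas \eqref{f2} for $d$ and $d^\dagger$ to translate the two pieces of \eqref{abs}--\eqref{rel} into conditions on the single scalar radial factor: types $2$ and $4$ produce $\hat F_{\mu_{q-1,n},\alpha_{q-1}}$ and $F_{\mu_{q-2,n},-\alpha_{q-2}}$ respectively, while the mixed coexact/exact structure of type $3$ yields $F_{\mu_{q-1,n},-\alpha_{q-1}}$, the sign flip of $\alpha$ coming from the interaction of $d^\dagger$ with the normal component, as in the closed-cone calculation of Lemma \ref{l3}. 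The harmonic summands $E$ and $O$, handled analogously with $\mu$ replaced by $|\alpha|$, contribute $\hat f^2_{|\alpha_q|,\alpha_q,k}$ and $\hat f^2_{|\alpha_{q-1}|,\alpha_{q-1},k}$ with multiplicities $m_{{\rm har},q,0}$ and $m_{{\rm har},q-1,0}$.

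Second, for the absolute-at-$\b_1$ / relative-at-$\b_2$ case, I would repeat the same computation with the two boundary conditions swapped. The effect is to interchange the roles of $l_1$ and $l_2$ in the determinant, which turns $\hat F$ into $F$ and vice versa and reverses the sign of $\alpha$ in the second argument (because what was a tangential condition at the outer boundary becomes a tangential condition at the inner boundary, and the orientation of the outward normal is reversed); this produces the second list in the statement. Completeness, i.e.\ that every positive eigenvalue arises from one of the six form types, follows from Lemma \ref{l2.1} together with the Hodge decomposition \eqref{hodge1} on the section $W$ and the self-adjointness with compact resolvent for $\Delta^{(q)}$ on $\tr$, established exactly as in Lemma \ref{l1} (now the radial operator has regular endpoints at both $l_1$ and $l_2$, so the self-adjoint realization with the chosen mixed boundary conditions is unambiguous).

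The main obstacle is the bookkeeping for forms of type $3$: here the two components of the form involve the \emph{same} radial function $x^{\alpha_{q-1}-1}J_{\mu_{q-1,n}}(\lambda x)$ up to a derivative relation, and both conditions in \eqref{abs} (namely $\omega_{\rm norm}=0$ and $(d^\dagger\omega)_{\rm tan}=0$, under the appropriate translation) must be enforced simultaneously by a single scalar equation at each endpoint. Verifying that these two conditions collapse to the single equation $F_{\mu_{q-1,n},-\alpha_{q-1}}(\lambda)=0$ requires the Bessel identity $zJ'_{\mu}(z)=\mu J_\mu(z)-zJ_{\mu+1}(z)$ together with the explicit formula for $d^\dagger$ in \eqref{f2}; once this is done, the integer-$\mu_{q,n}$ case is handled by the same determinant (the logarithm in $Y_\mu$ being harmless because the $2\times 2$ Wronskian-type computation only uses the linear independence of $J_\mu$ and $Y_\mu$), and the statement follows.
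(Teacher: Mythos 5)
Your approach is essentially the paper's (the paper says only that the proof is analogous to Lemmas \ref{l2} and \ref{l3} and omits it): decompose into the six types of eigenforms, note that on $\tr=[l_1,l_2]\times W$ both endpoints are regular so both Bessel solutions $J_\mu$ and $Y_\mu$ survive, and impose the two boundary conditions to get a $2\times 2$ homogeneous system whose determinant is $F_{\mu,c}$ or $\hat F_{\mu,c}$. Your explicit type-$1$ derivation of $\hat F_{\mu_{q,n},\alpha_q}$ is correct, the reduction of types $2,3,4,E,O$ to single scalar conditions at each endpoint via \eqref{f2} (using the coexact/exact structure and the fact that the forms are already eigenforms) is the right mechanism, and your singling out of type $3$ as the place where one must check that the two conditions of $\B_{\rm abs}$ (resp.\ $\B_{\rm rel}$) collapse to one scalar equation is the genuine subtlety.

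There is, however, a factual error in your treatment of the second case (absolute at $\b_1$, relative at $\b_2$). You assert that swapping the boundary conditions both interchanges $F\leftrightarrow\hat F$ \emph{and} reverses the sign of $\alpha$ in the second subscript, attributing the latter to the reversal of the outward normal at $l_1$. But comparing the two lists in the lemma you will see the second subscript $c$ (namely $\alpha_q,\alpha_{q-1},-\alpha_{q-1},-\alpha_{q-2},\dots$) is \emph{the same} in both cases; only $F\leftrightarrow\hat F$ changes. The reason is that the conditions $\omega_{\rm tan}|_{\b W}=0$ and $\omega_{\rm norm}|_{\b W}=0$ are orientation-independent (they are projections onto orthogonal subspaces, not signed pairings with the normal), so flipping the direction of the outward normal does not introduce a sign. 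Interchanging $l_1\leftrightarrow l_2$ in $F_{\mu,c}(x)$ produces exactly $\hat F_{\mu,c}(x)$ with the same $c$. This is a slip in the symmetry argument, not in the method: carrying out the direct $2\times 2$ computation for case $2$ as you did for case $1$ yields the lemma's second list.
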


\subsection{Torsion zeta function} 

We  define the {\it torsion zeta function} as in Section \ref{forms} by (for $\Re(s)>\frac{m+1}{2}$)
\[
t_{\rm abs,rel}(s)=\frac{1}{2}\sum_{q=1}^{m+1} (-1)^q q \zeta(s,\Delta_{\rm abs,rel}^{(q)}).
\]

By a proof similar to the one of  Theorem \ref{Poinc2}, we obtain the expected duality:
\[
\log T_{\rm abs~\b_1,rel~\b_2}(\tr)=(-1)^{m}\log T_{\rm rel~\b_1 ,abs~\b_2}(\tr).
\]

We proceed assuming ${\rm dim} W=2p-1$ odd, and assuming relative boundary condition on $\b_1 \tr$ and absolute boundary condition on $\b_2 $; for notational convenience, we will omit the ${\it abs,rel}$ subscript. We define the functions
\begin{align*}
\hat F_{c} (x)&= J_{c}(l_2 x) Y_{c-1}(l_1 x) - Y_{c}(l_2 x) J_{c-1}(l_1 x),\\
F_{c} (x)&=  J_{c}(l_1 x) Y_{c-1}(l_2 x) - Y_{c}(l_l x) J_{c-1}(l_2 x).
\end{align*}
Note that, with these definitions $\hat F_0(x) = F_1(x)$ and $F_0(x) = \hat F_1(x)$ (remember that $Y_{-n}(x) =(-1)^n Y_n(x)$ and $J_{-n}(x) =(-1)^n J_n(x)$). The proof of the following lemma is analogous to the proof of Lemma \ref{tps}. The main step is to prove that  $\hat f_{|\alpha_{q}|,\alpha_q,k} =  f_{-\alpha_{q-1},k}$, that  $\hat f_{|\alpha_{q}|,\alpha_q,k} = \hat f_{\alpha_{q},k}$, when $p-1<q\leq2p-1$, and that $\hat f_{0,0,k} =   f_{1,k}$, where the $f_{c,k},hat f_{c,k}$ are the zeros of the functions $F_c, \hat F_c$, respectively.

\begin{lem} \label{tps1}
\begin{align*}
t(s)=& \frac{1}{2} \sum^{p-2}_{q=0} (-1)^q \sum^{\infty}_{n,k=1} m_{{\rm cex},q,n}\left(f^{-2s}_{\mu_{q,n},\alpha_q,k} +f^{-2s}_{\mu_{q,n},-\alpha_q,k} -\hat f^{-2s}_{\mu_{q,n},\alpha_q,k} - \hat
f^{-2s}_{\mu_{q,n},-\alpha_q,k}\right)\\
&+ (-1)^{p-1}\frac{1}{2} \sum^{\infty}_{n,k=1} m_{{\rm cex},p-1,n} \left( f^{-2s}_{\mu_{p-1,n},0,k} -
\hat f_{\mu_{p-1,n},0,k}^{-2s} \right)\\
&- \frac{1}{2}  \sum_{q=0}^{p-1} (-1)^{q} {\rm rk} \H_q(W;\Q) \sum_{k=1}^{\infty} \left(f^{-2s}_{-\alpha_{q-1},k} - \hat
f^{-2s}_{-\alpha_{q-1},k}\right).
\end{align*}
\end{lem}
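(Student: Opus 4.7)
The plan is to follow the same strategy used in the proof of Lemma \ref{tps} for the cone, adapting each step to the mixed boundary condition setting on the frustum. First I would substitute the spectrum of Lemma \ref{l3.1.1} into the definition
\[
t(s)=\frac{1}{2}\sum_{q=1}^{m+1}(-1)^q q\,\zeta(s,\Delta^{(q)}),
\]
writing $\zeta(s,\Delta^{(q)})$ as a sum of six contributions coming from the sequences indexed by types $1,2,3,4,E,O$ of Lemma \ref{l2.1}. Exactly as in the cone case, the combinatorial identity that was used to collapse consecutive $q$ and $q+1$ contributions (each coexact $q$-form appears both as itself and as $d$ of itself at degree $q+1$) will again telescope the alternating sum, so that after this first reduction one is left with one term per coexact eigenform at each degree $q$ involving $f^{-2s}_{\mu_{q,n},-\alpha_q,k}-\hat f^{-2s}_{\mu_{q,n},\alpha_q,k}$, plus a corresponding harmonic contribution.

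The second step is to use Hodge duality on coexact $q$-forms on the section $W$ of dimension $2p-1$, namely $\mu_{q,n}=\mu_{2p-2-q,n}$ and $\alpha_q=-\alpha_{2p-2-q}$, which allows me to pair the $q$-th summand with the $(2p-2-q)$-th summand so that only the range $0\le q\le p-1$ survives. This pairing produces the symmetrized form $f^{-2s}_{\mu_{q,n},\alpha_q,k}+f^{-2s}_{\mu_{q,n},-\alpha_q,k}-\hat f^{-2s}_{\mu_{q,n},\alpha_q,k}-\hat f^{-2s}_{\mu_{q,n},-\alpha_q,k}$ for $0\le q\le p-2$, and the special middle term at $q=p-1$ (where $\alpha_{p-1}=0$) which, by the collapse $\hat F_{0}=F_{1}$ and $F_0=\hat F_{1}$ noted in the excerpt, reduces to $f^{-2s}_{\mu_{p-1,n},0,k}-\hat f^{-2s}_{\mu_{p-1,n},0,k}$.

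For the harmonic-form piece I would proceed exactly as in Lemma \ref{tps}: using the Bessel recurrences $zJ'_\mu(z)=-zJ_{\mu+1}(z)+\mu J_\mu(z)$ and $zJ'_\mu(z)=zJ_{\mu-1}(z)-\mu J_\mu(z)$ (and the analogous ones for $Y_\mu$), one shows that
\[
\hat F_{|\alpha_q|,\alpha_q}(x)= \pm x\,\hat F_{-\alpha_{q-1}}(x),\qquad q\le p-1,
\]
so that $\hat f_{|\alpha_q|,\alpha_q,k}=f_{-\alpha_{q-1},k}$, and similarly for $p-1<q\le 2p-1$ one gets $\hat f_{|\alpha_q|,\alpha_q,k}=\hat f_{-\alpha_{q-1},k}$. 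Combining these with the Hodge-duality identity $m_{{\rm har},q,0}=m_{{\rm har},2p-1-q,0}={\rm rk}\H_q(W;\Q)$, the harmonic contribution reorganizes as the claimed $\sum_q(-1)^q{\rm rk}\H_q(W;\Q)(f^{-2s}_{-\alpha_{q-1},k}-\hat f^{-2s}_{-\alpha_{q-1},k})$, again with range collapsed to $0\le q\le p-1$.

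The main obstacle I expect is the bookkeeping of signs and boundary-function types at the second step: the frustum has two boundaries, so each ``absolute/relative'' label produces a function of type $F$ or $\hat F$ depending on whether the zero sits at $l_1$ or $l_2$, and one must keep track of how Hodge $\star$ on the section swaps the roles of the boundaries $\b_1$ and $\b_2$ (so that coexact $q$-forms with relative condition at $\b_1$ pair with exact $(2p-q-1)$-forms with absolute condition at $\b_2$). Once this swap is identified, the proof reduces to the same manipulation as in Lemma \ref{tps}, with $j$ and $\hat j$ systematically replaced by $f$ and $\hat f$, and the Bessel-function identities recalled above used to merge the harmonic indices.
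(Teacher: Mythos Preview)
Your proposal is correct and follows essentially the same approach as the paper: the paper explicitly states that the proof is analogous to that of Lemma~\ref{tps}, with the main step being the Bessel-recurrence identities for the harmonic part (e.g.\ $\hat f_{|\alpha_q|,\alpha_q,k}=f_{-\alpha_{q-1},k}$ for small $q$, and the companion identity for $q>p-1$), which is exactly the strategy you outline.

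One small correction to your ``obstacle'' paragraph: the Hodge duality invoked here is purely on the section $W$, sending coexact $q$-eigenforms to coexact $(2p-2-q)$-eigenforms with $\mu_{q,n}=\mu_{2p-2-q,n}$ and $\alpha_q\mapsto -\alpha_q$; it does \emph{not} swap the boundaries $\b_1$ and $\b_2$ of the frustum. The appearance of both $\pm\alpha_q$ in the symmetrized expression comes from this sign flip of $\alpha_q$ alone, not from any exchange of $l_1$ and $l_2$. Once you drop that remark, your outline matches the paper's argument line for line.
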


We set
\beq\label{z1.1}
\begin{aligned}
Z_{q,\pm}(s)&=\sum^{\infty}_{n,k=1} m_{{\rm cex},q,n} f^{-2s}_{\mu_{q,n},\pm\alpha_q,k},&
\hat Z_{q,\pm}(s)&=\sum^{\infty}_{n,k=1} m_{{\rm cex},q,n}\hat f_{\mu_{q,n},\pm\alpha_q,k}^{-2s},\\
Z_{p-1}(s)&=\sum^{\infty}_{n,k=1} m_{{\rm cex},p-1,n} f^{-2s}_{\mu_{p-1,n},0,k},&
\hat Z_{p-1,\pm}(s)&=\sum^{\infty}_{n,k=1} m_{{\rm cex},p-1,n}\hat f_{\mu_{p-1,n},0,k}^{-2s},\\
z_q(s)&=\sum_{k=1}^{\infty} \left(f^{-2s}_{-\alpha_{q-1},k} - \hat f^{-2s}_{-\alpha_{q-1},k}\right),
\end{aligned}
\eeq
for $0\leq q \leq p-1$, and
\beq\label{tq.1}
\begin{aligned}
t_{p-1}(s)&=Z_{p-1}(s)-\hat Z_{p-1}(s),&\\
t_q(s)&=Z_{q,+}(s)+Z_{q,-}(s)-\hat Z_{q,+}(s)-\hat Z_{q,-}(s),&0\leq q\leq p-2.\\
\end{aligned}
\eeq

Then,
\begin{align*}
t(s)=& \frac{1}{2} \sum^{p-2}_{q=0} (-1)^q \left(Z_{q,+}(s)+Z_{q,-}(s)-\hat Z_{q,+}(s)-\hat Z_{q,-}(s)\right)+(-1)^{p-1}\frac{1}{2}\left(Z_{p-1}(s)-\hat Z_{p-1}(s)\right)\\
&- \frac{1}{2} \sum_{q=0}^{p-1} (-1)^{q} {\rm rk}\H_q(W;\Q) z_q(s)\\
=&\frac{1}{2} \sum^{p-1}_{q=0} (-1)^q t_q(s)- \frac{1}{2} \sum_{q=0}^{p-1} (-1)^{q} {\rm rk}\H_q(\b C_lW;\Q)z_q(s),
\end{align*}
and
\beq\label{ttt.1}
\begin{aligned}
\log T_{\rm rel~\b_1,abs~\b_2}(\tr )=t'(0)=& \frac{1}{2} \sum^{p-1}_{q=0} (-1)^q t'_q(0)- \frac{1}{2} \sum_{q=0}^{p-1} (-1)^{q} {\rm rk}\H_q(\b C_lW;\Q)z'_q(0).
\end{aligned}
\eeq

\subsection{Expansions of the logarithmic Gamma functions} \label{ora}

We study the zeta functions $Z_{q,\pm}$, $\hat Z_{q,\pm}$,  by the method of Section \ref{sb2}. The double series associated to these zeta functions, as defined in equation (\ref{z1.1}), are denoted 
by $S_{\pm \alpha_q}$, $\hat S_{\pm\alpha_q}$. We show that all these double sequences are spectrally decomposable on the sequence $U_q$, defined at the beginning of Section \ref{s4}. We verify all requirements precisely as in Sections \ref{s3} and \ref{s4}. First, we need suitable representation for the associated logarithmic Gamma functions. Proceeding as in Section \ref{s4}, consider for example the function
\[
F_{\mu,c} (z)=  J_{\mu}(l_2 z) (c Y_{\mu}(l_1 z) + l_1 z Y'_{\mu}(l_1 z)) - Y_{\mu}(l_2 z) (c J_{\mu}(l_1 z) + l_1 z J'_{\mu}(l_1 z)).
\]
Recalling the series definition of the Bessel function \cite{GZ}pg. 910, near $z=0$,
\[
F_{\mu, c} (z) = \frac{1}{\pi}\left(\left(\frac{l_2^\mu}{l_1^\mu}+\frac{l_1^\mu}{l_2^\mu}\right)-\frac{c}{\mu}\left(\frac{l_2^\mu}{l_1^\mu} - \frac{l_1^\mu}{l_2^\mu}\right)\right)
\]

Thus $F_{\mu,c}(z)$ is an even function of $z$, and  we obtain the product representation
\[
F_{\mu,c}(z)  = \frac{1}{\pi}\left(\left(\frac{l_2^\mu}{l_1^\mu}+\frac{l_1^\mu}{l_2^\mu}\right)-\frac{c}{\mu}\left(\frac{l_2^\mu}{l_1^\mu} - \frac{l_1^\mu}{l_2^\mu}\right)\right)\prod^{+\infty}_{k=1} \left(1 -\frac{z^2}{f^2_{\mu,c,k}}\right).
\]

Recalling that
\[
Y_\mu(z) = \frac{\cos\mu\pi}{\sin\mu\pi} J_{\mu}(z) - \frac{1}{\sin\mu\pi} J_{-\mu}(z), \qquad I_{-\mu}(z) = \frac{2}{\pi} \sin\mu\pi K_\mu(z) + I_\mu (z),
\] 
and that (when $-\pi<\arg(z)\leq\frac{\pi}{2}$) $J_\mu(iz)= \e^{\frac{\pi  }{2}i \mu} I_\mu(z)$, and
$J'_\mu(iz)=\e^{\frac{\pi}{2}i\mu}\e^{-\frac{\pi}{2}i} I'_\mu(z)$, we obtain
\begin{align*}
Y_\mu(iz)&= \left(\frac{\cos\mu\pi}{\sin\mu\pi}\e^{\frac{\pi}{2}i\mu} + \frac{\e^{-\frac{\pi}{2}i\mu}}{\sin\mu\pi}\right)I_\mu(z) - \frac{2}{\pi} \e^{-\frac{\pi}{2}i\mu} K_\mu(z),\\
Y'_\mu(iz)&= e^{-\frac{\pi}{2}i}\left(\frac{\cos\mu\pi}{\sin\mu\pi}\e^{\frac{\pi}{2}i\mu} + \frac{\e^{-\frac{\pi}{2}i\mu}}{\sin\mu\pi}\right)I'_\mu(z) - \frac{2}{\pi}e^{-\frac{\pi}{2}i} \e^{-\frac{\pi}{2}i\mu} K'_\mu(z).
\end{align*} 

So
\[
F_{\mu,c}(iz) = \frac{2}{\pi} \left(-K_{\mu}(l_2 z)(cI_{\mu}(l_1 z) + l_1z I'_\mu(l_1 z)) + I_\mu(l_2z)(cK_\mu(l_1 z) + l_1 z K'_\mu(l_1 z))\right),
\] and if we define (for $-\pi < \arg(z)\leq \frac{\pi}{2}$) $G_{\mu,c}(z) =  i^2F_{\mu,c}(iz)$, 
\[
G_{\mu,c}(z) = \frac{1}{\pi}\left(\left(\frac{l_2^\mu}{l_1^\mu}+\frac{l_1^\mu}{l_2^\mu}\right)-\frac{c}{\mu}\left(\frac{l_2^\mu}{l_1^\mu} - \frac{l_1^\mu}{l_2^\mu}\right)\right)\prod^{+\infty}_{k=1} \left(1 +\frac{z^2}{f^2_{\mu,c,k}}\right).
\]

Proceeding in the similar way

\begin{align*}
\hat F_{\mu,c}(iz) &= \frac{2}{\pi} \left(K_{\mu}(l_1 z)(cI_{\mu}(l_2 z) + l_2z I'_\mu(l_2 z)) - I_\mu(l_1z)(cK_\mu(l_2 z) + l_2 z K'_\mu(l_2 z))\right),\\
\hat G_{\mu,c}(z) &=  \hat F_{\mu,c}(iz)= \frac{1}{\pi}\left(\left(\frac{l_2^\mu}{l_1^\mu}+\frac{l_1^\mu}{l_2^\mu}\right)+\frac{c}{\mu}\left(\frac{l_2^\mu}{l_1^\mu} - \frac{l_1^\mu}{l_2^\mu}\right)\right)\prod^{+\infty}_{k=1} \left(1 +\frac{z^2}{\hat f^2_{\mu,c,k}}\right);\\
F_{\mu,0}(iz) &= \frac{2}{\pi} \left(-K_{\mu}(l_2 z)I'_\mu(l_1 z) + I_\mu(l_2z) K'_\mu(l_1 z)\right),\\
G_{\mu,0}(z) &=  i^2F_{\mu,0}(iz)= \frac{1}{l_1 z \pi}\left(\frac{l_2^\mu}{l_1^\mu}+\frac{l_1^\mu}{l_2^\mu}\right)\prod^{+\infty}_{k=1} \left(1 +\frac{z^2}{f^2_{\mu,c,k}}\right);\\
\hat F_{\mu,0}(iz) &= \frac{2}{\pi} \left(K_{\mu}(l_1 z) I'_\mu(l_2 z) - I_\mu(l_1z)K'_\mu(l_2 z))\right),\\
\hat G_{\mu,0}(z) &=  \hat F_{\mu,0}(iz)
 = \frac{1}{l_2 z \pi}\left(\frac{l_2^\mu}{l_1^\mu}+\frac{l_1^\mu}{l_2^\mu}\right)\prod^{+\infty}_{k=1} \left(1 +\frac{z^2}{\hat f^2_{\mu,0,k}}\right).
\end{align*}

These give the following representations for the logarithmic Gamma functions
with $z=\sqrt{-\lambda}$, 
\begin{align*}
\log \Gamma(-\lambda,S_{{\pm\alpha_q}})&=-\log\prod_{k=1}^\infty \left(1+\frac{(-\lambda)}{f_{\mu_{q,n},\pm\alpha_q,k}^2}\right)\\
&=-\log  G_{\mu_{q,n},\pm\alpha_q}(\sqrt{-\lambda})  + \log\frac{1}{ \pi}+ \log \left(\left(\frac{l_2^{\mu_{q,n}}}{l_1^{\mu_{q,n}}}+\frac{l_1^{\mu_{q,n}}}{l_2^{\mu_{q,n}}}\right)\mp\frac{\alpha_q}{\mu_{q,n}}\left(\frac{l_2^{\mu_{q,n}}}{l_1^{\mu_{q,n}}} - \frac{l_1^{\mu_{q,n}}}{l_2^{\mu_{q,n}}}\right)\right);\\
\log \Gamma(-\lambda,\hat S_{{\pm\alpha_q}})&=-\log\prod_{k=1}^\infty \left(1+\frac{(-\lambda)}{\hat f_{\mu_{q,n},\pm\alpha_q,k}^2}\right)\\
&=-\log \hat G_{\mu_{n,q},\pm\alpha_q}(\sqrt{-\lambda})  + \log\frac{1}{ \pi}+ \log \left(\left(\frac{l_2^{\mu_{q,n}}}{l_1^{\mu_{q,n}}}+\frac{l_1^{\mu_{q,n}}}{l_2^{\mu_{q,n}}}\right)\pm\frac{\alpha_q}{\mu_{q,n}}\left(\frac{l_2^{\mu_{q,n}}}{l_1^{\mu_{q,n}}} - \frac{l_1^{\mu_{q,n}}}{l_2^{\mu_{q,n}}}\right)\right)
\end{align*}

\begin{align*}
\log \Gamma(-\lambda,S_{{0}})&=-\log\prod_{k=1}^\infty \left(1+\frac{(-\lambda)}{f_{\mu_{p-1,n},0,k}^2}\right)\\
&=-\log  G_{\mu_{n,p-1},0}(\sqrt{-\lambda}) - \frac{1}{2}\log -\lambda -\log l_1 + \log\frac{1}{ \pi}+ \log \left(\frac{l_2^{\mu_{q,n}}}{l_1^{\mu_{q,n}}}+\frac{l_1^{\mu_{q,n}}}{l_2^{\mu_{q,n}}}\right)\\
\log \Gamma(-\lambda,\hat S_{{0}})&=-\log\prod_{k=1}^\infty \left(1+\frac{(-\lambda)}{\hat f_{\mu_{p-1,n},0,k}^2}\right)\\
&=-\log \hat G_{\mu_{n,p-1},0}(\sqrt{-\lambda})  - \frac{1}{2}\log -\lambda -\log l_2+ \log\frac{1}{ \pi}+ \log \left(\frac{l_2^{\mu_{p-1,n}}}{l_1^{\mu_{p-1,n}}}+\frac{l_1^{\mu_{p-1,n}}}{l_2^{\mu_{p-1,n}}}\right)
\end{align*}

These representations and uniform asymptotic expansions of Bessel functions and their derivative (see the proof of Lemma \ref{s3.l2} for the functions $I_\nu$ and \cite{Olv} pg. 376 for the functions $K_\nu$) will give the expansion required in equation (\ref{exp}) of Definition \ref{spdec}. Let see one case in some details. We have
\begin{align*}
&\log \Gamma(-\lambda,S_{n,{\pm\alpha_q}}/\mu_{q,n}^2) = \\
&-\log G_{\mu_{n,q},\pm\alpha_q}(\mu_{q,n}\sqrt{-\lambda})  + \log\frac{1}{ \pi}+ \log \left(\left(\frac{l_2^{\mu_{q,n}}}{l_1^{\mu_{q,n}}}+\frac{l_1^{\mu_{q,n}}}{l_2^{\mu_{q,n}}}\right)\mp\frac{\alpha_q}{\mu_{q,n}}\left(\frac{l_2^{\mu_{q,n}}}{l_1^{\mu_{q,n}}} - \frac{l_1^{\mu_{q,n}}}{l_2^{\mu_{q,n}}}\right)\right).
\end{align*}

Using the cited expansions we obtain
\begin{align*}
&\log G_{\mu,c}(\mu z) = \log\frac{1}{\pi} + \mu\left(\sqrt{1+l_2^2 z^2} -\sqrt{1+l_1^2 z^2} \right) + \mu\log\frac{l_2(1+\sqrt{1+l_1^2z^2})}{l_1(1+\sqrt{1+l_2^2z^2})}+ \frac{1}{4} \log\frac{(1+l_1^2 z^2)}{(1+l_2^2 z^2)} \\ 
&+ \log\left(1 + \sum_{j=1}^{2p-1}\frac{1}{\mu^j} \left(U_j(l_2z)  +(-1)^j W_{c,j}(l_1z) +\sum_{k=1}^{j-1} (-1)^{j-k}U_k(l_2z)  W_{c,j-k}(l_1z)\right) + O(\mu^{-2p})\right),
\end{align*} where 
\[
W_{c,j}(z) = V_j( z) - \frac{c}{\sqrt{1 + z^2}} U_{j-1}( z).
\] 

Thus
\begin{align*}
\log  G_{\mu_{q,n},\pm\alpha_q}(\mu_{q,n} \sqrt{-\lambda}) &= \mu_{q,n}\left(\sqrt{1-l_2^2 \lambda} -\sqrt{1-l_1^2 \lambda} \right) + \mu_{q,n}\log\frac{l_2(1+\sqrt{1 - l_1^2\lambda})}{l_1(1+\sqrt{1-l_2^2\lambda})} \\ 
&+\log\frac{1}{\pi} + \frac{1}{4} \log\frac{(1-l_1^2 \lambda)}{(1-l_2^2 \lambda)}+ \sum_{j=1}^{2p-1} \frac{l_{j,\pm\alpha_q}(\lambda)}{\mu_{q,n}^j} + O(\mu_{q,n}^{-2p}),
\end{align*} with
\begin{align*}
a_{0,\pm\alpha_q}(\lambda) &=1,\\
a_{j,\pm\alpha_q}(\lambda) &=  U_j(l_2\sqrt{-\lambda})+(-1)^j W_{\pm\alpha_q,j}(l_1\sqrt{-\lambda}) +\sum_{k=1}^{j-1} U_k(l_2\sqrt{-\lambda})(-1)^{j-k}W_{\pm\alpha_q,j-k}(l_1\sqrt{-\lambda}),\\
l_{1,\pm\alpha_q}(\lambda) &= a_{1,\pm\alpha_q}(\lambda),\\
l_{j,\pm\alpha_q}(\lambda) &= a_{j,\pm\alpha_q}(\lambda) - \sum_{k=1}^{j-1} \frac{j-k}{j} a_{k,\pm\alpha_q}(\lambda) l_{j-k,\pm\alpha_q}(\lambda).
\end{align*}

Substituting in the $\log \Gamma(-\lambda,S_{n,{\pm\alpha_q}}/\mu_{q,n}^2)$, we have
\begin{align*}
&\log \Gamma(-\lambda,S_{n,{\pm\alpha_q}}/\mu_{q,n}^2) = -\mu_{q,n}\left(\sqrt{1-l_2^2 \lambda} -\sqrt{1-l_1^2 \lambda} \right) - \mu_{q,n}\log\frac{l_2(1+\sqrt{1 - l_1^2\lambda})}{l_1(1+\sqrt{1-l_2^2\lambda})} \\ 
&- \frac{1}{4} \log\frac{(1-l_1^2 \lambda)}{(1-l_2^2 \lambda)} - \sum_{j=1}^{2p-1} \frac{ l_{j,\pm\alpha_q}(\lambda)}{\mu_{q,n}^j} + \log \left(\left(\frac{l_2^{\mu_{q,n}}}{l_1^{\mu_{q,n}}}+\frac{l_1^{\mu_{q,n}}}{l_2^{\mu_{q,n}}}\right)\mp\frac{\alpha_q}{\mu_{q,n}}\left(\frac{l_2^{\mu_{q,n}}}{l_1^{\mu_{q,n}}} - \frac{l_1^{\mu_{q,n}}}{l_2^{\mu_{q,n}}}\right)\right)+ O(\mu_{q,n}^{-2p}).
\end{align*}

Proceeding in a similar way we obtain

\begin{align*}
&\log \Gamma(-\lambda,\hat S_{n,{\pm\alpha_q}}/\mu_{q,n}^2) = -\mu_{q,n}\left(\sqrt{1-l_2^2 \lambda} -\sqrt{1-l_1^2 \lambda} \right) - \mu_{q,n}\log\frac{l_2(1+\sqrt{1 - l_1^2\lambda})}{l_1(1+\sqrt{1-l_2^2\lambda})} \\ 
&- \frac{1}{4} \log\frac{(1-l_1^2 \lambda)}{(1-l_2^2 \lambda)} - \sum_{j=1}^{2p-1} \frac{\hat l_{j,\pm\alpha_q}(\lambda)}{\mu_{q,n}^j} + \log \left(\left(\frac{l_2^{\mu_{q,n}}}{l_1^{\mu_{q,n}}}+\frac{l_1^{\mu_{q,n}}}{l_2^{\mu_{q,n}}}\right)\pm\frac{\alpha_q}{\mu_{q,n}}\left(\frac{l_2^{\mu_{q,n}}}{l_1^{\mu_{q,n}}} - \frac{l_1^{\mu_{q,n}}}{l_2^{\mu_{q,n}}}\right)\right)+ O(\mu_{q,n}^{-2p}),
\end{align*}
with 
\begin{align*}
\hat a_{0,\pm\alpha_q}(\lambda) &=1,\\
\hat a_{j,\pm\alpha_q}(\lambda) &= \hat W_{\pm\alpha_q,j}(l_2\sqrt{-\lambda}) +(-1)^j U_j(l_1\sqrt{-\lambda}) +\sum_{k=1}^{j-1} (-1)^kU_k(l_1\sqrt{-\lambda})\hat W_{\pm\alpha_q,j-k}(l_2\sqrt{-\lambda}),\\
\hat l_{1,\pm\alpha_q}(\lambda) &= \hat a_{1,\pm\alpha_q}(\lambda),\\
\hat l_{j,\pm\alpha_q}(\lambda) &= \hat a_{j,\pm\alpha_q}(\lambda) - \sum_{k=1}^{j-1} \frac{j-k}{j} \hat a_{k,\pm\alpha_q}(\lambda) \hat l_{j-k,\pm\alpha_q}(\lambda).
\end{align*}

\begin{align*}
&\log \Gamma(-\lambda,\hat S_{n,{0}}/\mu_{p-1,n}^2) = -\mu_{p-1,n}\left(\sqrt{1-l_2^2 \lambda} -\sqrt{1-l_1^2 \lambda} \right) - \mu_{p-1,n}\log\frac{l_2(1+\sqrt{1 - l_1^2\lambda})}{l_1(1+\sqrt{1-l_2^2\lambda})} \\ 
&- \frac{1}{4} \log\frac{(1-l_1^2 \lambda)}{(1-l_2^2 \lambda)} - \sum_{j=1}^{2p-1} \frac{\hat l_{j,0}(\lambda)}{\mu_{p-1,n}^j} + \log \left(\frac{l_2^{\mu_{p-1,n}}}{l_1^{\mu_{p-1,n}}}+\frac{l_1^{\mu_{p-1,n}}}{l_2^{\mu_{p-1,n}}}\right)+ O(\mu_{p-1,n}^{-2p}),
\end{align*}
with
\begin{align*}
\hat a_{0,0}(\lambda) &=1,\\
\hat a_{j,0}(\lambda) &=V_{j}(l_2\sqrt{-\lambda}) +(-1)^j U_j(l_1\sqrt{-\lambda}) +\sum_{k=1}^{j-1} (-1)^kU_k(l_1\sqrt{-\lambda})V_{j-k}(l_2\sqrt{-\lambda}),\\
\hat l_{1,0}(\lambda) &= \hat a_{1,0}(\lambda),\\
\hat l_{j,0}(\lambda) &= \hat a_{j,0}(\lambda) - \sum_{k=1}^{j-1} \frac{j-k}{j} \hat a_{k,0}(\lambda) \hat l_{j-k,0}(\lambda).
\end{align*}

\begin{align*}
&\log \Gamma(-\lambda,S_{n,{0}}/\mu_{q,n}^2) = -\mu_{p-1,n}\left(\sqrt{1-l_2^2 \lambda} -\sqrt{1-l_1^2 \lambda} \right) - \mu_{p-1,n}\log\frac{l_2(1+\sqrt{1 - l_1^2\lambda})}{l_1(1+\sqrt{1-l_2^2\lambda})} \\ 
&- \frac{1}{4} \log\frac{(1-l_1^2 \lambda)}{(1-l_2^2 \lambda)} - \sum_{j=1}^{2p-1} \frac{ l_{j,0}(\lambda)}{\mu_{p-1,n}^j} + \log \left(\frac{l_2^{\mu_{p-1,n}}}{l_1^{\mu_{p-1,n}}}+\frac{l_1^{\mu_{p-1,n}}}{l_2^{\mu_{p-1,n}}}\right)+ O(\mu_{p-1,n}^{-2p}),
\end{align*}
with
\begin{align*}
a_{0,0}(\lambda) &=1,\\
a_{j,0}(\lambda) &=  U_j(l_2\sqrt{-\lambda})+(-1)^j V_{j}(l_1\sqrt{-\lambda}) +\sum_{k=1}^{j-1} U_k(l_2\sqrt{-\lambda})(-1)^{j-k}V_{j-k}(l_1\sqrt{-\lambda}),\\
l_{1,0}(\lambda) &= a_{1,0}(\lambda),\\
l_{j,0}(\lambda) &= a_{j,0}(\lambda) - \sum_{k=1}^{j-1} \frac{j-k}{j} a_{k,0}(\lambda) l_{j-k}(\lambda).
\end{align*}

We conclude this section with the expansions for large $\lambda$, accordingly to equation (\ref{form}). Using classical expansions of Bessel functions $I_nu$ and $K_\nu$ and their derivative for large argument, we obtain the expansions of the functions $G$ and $\hat G$, and then those for the Gamma functions:
\begin{align*}
&\log \Gamma(-\lambda,S_{n,{\pm\alpha_q}}/\mu_{q,n}^2) \sim \\
&\sim-\mu_{q,n} (l_2 - l_1)\sqrt{-\lambda} - \frac{1}{2}\log\frac{l_1}{l_2} + \log \left(\left(\frac{l_2^{\mu_{q,n}}}{l_1^{\mu_{q,n}}}+\frac{l_1^{\mu_{q,n}}}{l_2^{\mu_{q,n}}}\right)\mp\frac{\alpha_q}{\mu_{q,n}}\left(\frac{l_2^{\mu_{q,n}}}{l_1^{\mu_{q,n}}} - \frac{l_1^{\mu_{q,n}}}{l_2^{\mu_{q,n}}}\right)\right) + O\left(\frac{1}{\sqrt{-\lambda}}\right),\\
&\log \Gamma(-\lambda,\hat S_{n,{\pm\alpha_q}}/\mu_{q,n}^2) \sim \\
&\sim-\mu_{q,n} (l_2 - l_1)\sqrt{-\lambda} - \frac{1}{2}\log\frac{l_2}{l_1} + \log \left(\left(\frac{l_2^{\mu_{q,n}}}{l_1^{\mu_{q,n}}}+\frac{l_1^{\mu_{q,n}}}{l_2^{\mu_{q,n}}}\right)\pm\frac{\alpha_q}{\mu_{q,n}}\left(\frac{l_2^{\mu_{q,n}}}{l_1^{\mu_{q,n}}} - \frac{l_1^{\mu_{q,n}}}{l_2^{\mu_{q,n}}}\right)\right) + O\left(\frac{1}{\sqrt{-\lambda}}\right),\\
&\log \Gamma(-\lambda,S_{n,{0}}/\mu_{p-1,n}^2) \sim -\mu_{p-1,n} (l_2 - l_1)\sqrt{-\lambda} +\frac{1}{2}\log\frac{l_2}{l_1} + \log \left(\frac{l_2^{\mu_{p-1,n}}}{l_1^{\mu_{p-1,n}}}+\frac{l_1^{\mu_{p-1,n}}}{l_2^{\mu_{p-1,n}}}\right) + O\left(\frac{1}{\sqrt{-\lambda}}\right),\\
&\log \Gamma(-\lambda,\hat S_{n,{0}}/\mu_{p-1,n}^2) 
\sim-\mu_{p-1,n} (l_2 - l_1)\sqrt{-\lambda} +\frac{1}{2}\log\frac{l_1}{l_2}  + \log \left(\frac{l_2^{\mu_{p-1,n}}}{l_1^{\mu_{p-1,n}}}+\frac{l_1^{\mu_{p-1,n}}}{l_2^{\mu_{p-1,n}}}\right)+ O\left(\frac{1}{\sqrt{-\lambda}}\right).
\end{align*}

\subsection{The function $t_q(s)$}

By definition in equation (\ref{tq.1}), we need to consider the difference between $\log \Gamma(-\lambda, S_{n,\pm\alpha_q}/\mu_{q,n})$ and $\log \Gamma(-\lambda, \hat S_{n,\pm\alpha_q}/\mu_{q,n})$. The expansions given in the previous subsection give  expansion for large $\mu$ 
\begin{align*}
&\log \Gamma(-\lambda, S_{n,\alpha_q}/\mu_{q,n})+\log \Gamma(-\lambda, S_{n,\alpha_q}/\mu_{q,n}) - \log \Gamma(-\lambda, \hat S_{n,\alpha_q}/\mu_{q,n}) - 
\log \Gamma(-\lambda, \hat S_{n,-\alpha_q}/\mu_{q,n})=\\
&=\log\frac{(1-\lambda l_2^2)}{(1-\lambda l_1^2)} +\sum_{j=1}^{2p-1}\frac{1}{\mu_{q.n}^j} (\hat l_{j,\alpha_q}(\lambda)+\hat l_{j,-\alpha_q}(\lambda)-l_{j,\alpha_q}(\lambda)-l_{j,-\alpha_q}(\lambda)) + O(\mu_{q,n}^{-2p}),
\end{align*}
and for large $\lambda$
\begin{align*}
&\log \Gamma(-\lambda, S_{n,\alpha_q}/\mu_{q,n})+\log \Gamma(-\lambda, S_{n,\alpha_q}/\mu_{q,n}) - \log \Gamma(-\lambda, \hat S_{n,\alpha_q}/\mu_{q,n}) - 
\log \Gamma(-\lambda, \hat S_{n,-\alpha_q}/\mu_{q,n})=\\
&=2\log\frac{l_2}{l_1 } + O\left(\frac{1}{\sqrt{-\lambda}}\right).
\end{align*}

Proceeding as in the proof of Lemma \ref{s4.l4}, we obtain
\begin{align*}
a_{0,0,q,n}&=2\log \frac{l_2}{l_1},\\
a_{0,1,q,n}&=0,\\
b_{2j-1,0,0,q}&=0,\hspace{50pt}b_{2j-1,0,1,q}=0,
\end{align*}
and hence
\begin{align*}
A_{0,0,q}(s)&=2\log\frac{l_2}{l_1}\sum_{n=1}^\infty \frac{m_{q,n}}{\mu_{q,n}^{2 s}} = 2\log\frac{l_2}{l_1}\sum_{j=0}^{\infty}\binom{-s}{j}\alpha_q^2j \zeta_{{\rm ccl}}(s+j,\tilde \Delta^{(q)}),\\
A_{0,1,q}(s)&=0.
\end{align*} 

This gives
\begin{align*}
A_{0,0,q}(0)&=2\log\frac{l_2}{l_1}\zeta_{{\rm ccl},q}(0,\tilde \Delta^{(q)})=2(-1)^q\log\frac{l_2}{l_1} \sum_{k=0}^{q} (-1)^k {\rm rk}\H^k(W,\Q),
\end{align*}
and 
\[
t'_{q,{\rm reg}}(0) = 2(-1)^{q+1}\log\frac{l_2}{l_1} \sum_{k=0}^{q} (-1)^k {\rm rk}\H^k(W,\Q)
\]


Similarly, we consider the difference  of $\log \Gamma(-\lambda, S_{n,0}/\mu_{p-1,n})$ and $\log \Gamma(-\lambda, \hat S_{n,0}/\mu_{p-1,n})$ for the function $t_{p-1}$, and we obtain
\begin{align*}
a_{0,0,n,p-1}&=\log \frac{l_2}{l_1},\\
a_{0,1,n,p-1}&=0,\\
b_{2j-1,0,0,p-1}&=0,\hspace{50pt}b_{2j-1,0,1,p-1}=0,
\end{align*}
\begin{align*}
A_{0,0,p-1}(s)&=\log\frac{l_2}{l_1}\sum_{n=1}^\infty \frac{m_{p-1,n}}{\mu_{p-1,n}^{2 s}} = \log\frac{l_2}{l_1}\zeta_{{\rm ccl},p-1}(s,\tilde \Delta^{(q)}),\\
A_{0,1}(s)&=0,
\end{align*} 
\begin{align*}
A_{0,0,p-1}(0)&=\log\frac{l_2}{l_1}\zeta_{{\rm ccl},q}(0,\tilde \Delta^{(p-1)})=(-1)^{p-1}\log\frac{l_2}{l_1} \sum_{k=0}^{p-1} (-1)^k {\rm rk}\H^k(W,\Q),
\end{align*}
and
\[
t'_{p-1,{\rm reg}}(0) = (-1)^{p}\log\frac{l_2}{l_1} \sum_{k=0}^{p-1} (-1)^k {\rm rk}\H^k(W,\Q)
\]


\subsection{The regular term of the torsion }

We use equation (\ref{ttt.1}). First, note that as in Section \ref{s5} there is no singular contribution by the functions $z_q(s)$. Using equation \ref{p00}, and recalling that $-\alpha_{q-1} = -(q-1-p+1) = p-q$, we compute as in Lemma \ref{lp11}
\[
z'_q(0) =  \log\frac{l_2}{l_1} -2(p-q)\log\frac{l_2}{l_1}. 
\]

Therefore, substitution in equation (\ref{ttt.1}) gives
\[
\log T_{\rm rel~\b_1,abs~\b_2, reg}(\tr )=t_{reg}'(0)=0.
\]

\subsection{The singular term of the torsion}

We show that the singular part of the torsion is twice the singular part of the torsion on the cone, namely that
\beq\label{sos}
\log T_{\rm rel~\b_1,abs~\b_2, sing}(\tr )=2\log T_{\rm abs, sing}(C_l W ).
\eeq

For we need the following lemma.
\begin{lem}\label{formul} We have the following equations:
\begin{align*}
l_{j,\pm\alpha_q}(\lambda) &=  l_j(l_2^2 \lambda) + (-1)^j  l_j^{\mp}(l_1^2\lambda),\\
\hat l_{j,\pm\alpha_q}(\lambda) &=  l_j^{\pm}(l_2^2 \lambda) + (-1)^j l_j(l_1^2\lambda),\\
l_{j,0}(\lambda) &=  l_j(l_2^2 \lambda) + (-1)^j \dot l_j(l_1^2\lambda),\\
\hat l_{j,0}(\lambda) &= \dot l_j(l_2^2 \lambda) + (-1)^j l_j(l_1^2\lambda),\\ 
\end{align*}
where the functions $l_j$, $\dot l_j$ are defined in the proof of Lemma \ref{s3.l2}, the functions $l^\pm_j$ in the proof of Lemma \ref{s4.l2}, and the other function in Subsection \ref{ora}.
\end{lem}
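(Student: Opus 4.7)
\textbf{Proof plan for Lemma \ref{formul}.} The plan is to derive all four identities by the same mechanism: in the uniform asymptotic expansion for large $\mu$ of each of $\log G_{\mu,c}(\mu\sqrt{-\lambda})$ and $\log\hat G_{\mu,c}(\mu\sqrt{-\lambda})$, I split $G$ or $\hat G$ as a product of two Bessel-type factors (one evaluated at argument $l_2\sqrt{-\lambda}$, the other at $l_1\sqrt{-\lambda}$), take logarithms, and then match the resulting coefficients of $1/\mu^j$ against the generating relations that define $l_j$, $\dot l_j$, $l^{\pm}_j$ in Lemmas \ref{s3.l2} and \ref{s4.l2}. The starting observation is that on the domain $\lambda\in D_{\theta,c'}$, the two summands in the defining formulas for $G_{\mu,c}$ and $\hat G_{\mu,c}$ carry opposite exponential weights $e^{\pm\mu(\eta_2-\eta_1)}$, with $\eta_j=\sqrt{1-l_j^2\lambda}+\log\bigl(l_j\sqrt{-\lambda}/(1+\sqrt{1-l_j^2\lambda})\bigr)$; since $l_2>l_1$ forces $\eta_2>\eta_1$, only the dominant summand contributes to the power expansion in $1/\mu$, the other being absorbed in the $O(\mu^{-2p})$ remainder already permitted by Definition \ref{spdec}. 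This reduces $\log G_{\mu,\pm\alpha_q}(\mu\sqrt{-\lambda})$, modulo its explicit leading terms, to
\begin{equation*}
\log I_\mu\bigl(l_2\mu\sqrt{-\lambda}\bigr)+\log\bigl(\pm\alpha_q K_\mu(l_1\mu\sqrt{-\lambda})+l_1\mu\sqrt{-\lambda}\,K'_\mu(l_1\mu\sqrt{-\lambda})\bigr),
\end{equation*}
and similarly $\log\hat G_{\mu,\pm\alpha_q}$ to $\log K_\mu(l_1\mu\sqrt{-\lambda})+\log\hat I_{\mu,\pm\alpha_q}(l_2\mu\sqrt{-\lambda})$; analogous splittings apply in the two $c=0$ cases, with $K'_\mu$ or $I'_\mu$ playing the role of the second factor.

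The key analytic input is that the Olver uniform expansions of $K_\mu(\mu z)$ and $K'_\mu(\mu z)$ share exactly the polynomial coefficients $U_k$ and $V_k$ of $I_\mu(\mu z)$ and $I'_\mu(\mu z)$, up to an overall alternating sign $(-1)^k$. Hence, modulo the explicit leading terms, $\log K_\mu(\mu z)=\log\bigl(1+\sum_{k\geq 1}(-1)^kU_k(\sqrt{-\lambda})/\mu^k\bigr)$, and the formal substitution $\mu\mapsto-\mu$ in the generating relation $\log(1+\sum_k U_k(\sqrt{-\lambda})/\mu^k)=\sum_j l_j(\lambda)/\mu^j$ of Lemma \ref{s3.l2} identifies this series with $\sum_j(-1)^jl_j(\lambda)/\mu^j$. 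The same trick converts $\log K'_\mu(\mu z)$ into $\sum_j(-1)^j\dot l_j(\lambda)/\mu^j$ and $\log(\pm\alpha_q K_\mu(\mu z)+\mu z\,K'_\mu(\mu z))$ into $\sum_j(-1)^jl^?_j(\lambda)/\mu^j$, where the sign $?$ is $\mp$ because of the sign convention used for $W_{c,j}$ in Subsection \ref{ora} (in which $c$ enters with a minus sign in front of $U_{j-1}/\sqrt{1+z^2}$, opposite to the convention of Lemma \ref{s4.l2}). Substituting $l_2\sqrt{-\lambda}$ or $l_1\sqrt{-\lambda}$ for $\sqrt{-\lambda}$ in each factor simply replaces $\lambda$ by $l_2^2\lambda$ or $l_1^2\lambda$ throughout.

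Combining the two paragraphs, the coefficient of $\mu^{-j}$ in $-\log\Gamma(-\lambda,S_{n,\pm\alpha_q}/\mu^2)$, which by the representation in Subsection \ref{ora} equals $l_{j,\pm\alpha_q}(\lambda)$, is precisely $l_j(l_2^2\lambda)+(-1)^jl^{\mp}_j(l_1^2\lambda)$; this is the first identity. The remaining three identities follow by the same argument applied to the dominant factorisations $\hat G_{\mu,\pm\alpha_q}\sim K_\mu(l_1\mu z)\hat I_{\mu,\pm\alpha_q}(l_2\mu z)$, $G_{\mu,0}\sim I_\mu(l_2\mu z)K'_\mu(l_1\mu z)$, and $\hat G_{\mu,0}\sim K_\mu(l_1\mu z)I'_\mu(l_2\mu z)$, reading off $l^{\pm}_j$ or $\dot l_j$ from the $I$- or $I'$-factor and $(-1)^j$ times the appropriate $l_j$, $l^\pm_j$, or $\dot l_j$ from the $K$- or $K'$-factor. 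The main obstacle is purely bookkeeping: reconciling the two sign conventions for $W_{c,j}$ (which is what produces the flip from $\pm$ to $\mp$ on the right-hand side of the first identity), and noting that in every case the universal factor $(-1)^j$ is attached to whichever argument, $l_1$ or $l_2$, carries the $K$-type Bessel function in the dominant factorisation (which turns out to be $l_1$ in all four cases). Once these signs are organised, no new analytic input is required beyond the Olver expansions already invoked in Sections \ref{s3} and \ref{s4}.
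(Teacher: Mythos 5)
Your proposal is correct and takes a genuinely different route from the paper. The paper proves the lemma by induction on $j$, substituting the inductive hypothesis into the defining recursion $l_j = a_j - \sum_{k=1}^{j-1}\frac{j-k}{j}a_k\,l_{j-k}$ and reorganising a fairly bulky double sum to exhibit the claimed split. You instead observe that the coefficients $a_{j,\cdot}$ displayed in Subsection~\ref{ora} are manifestly the Cauchy-product coefficients of two one-sided Bessel brackets (the $U_j(l_2\sqrt{-\lambda})$-bracket coming from $I_\mu(l_2\mu z)$, and the $(-1)^jW_{c,j}(l_1\sqrt{-\lambda})$-bracket coming from the $K$-type factor at $l_1\mu z$), and that the map $\{a_j\}\mapsto\{l_j\}$ is exactly the coefficient map of the formal logarithm of $1+\sum_j a_j/\mu^j$; the identity $\log(AB)=\log A+\log B$ then splits $l_{j,\cdot}$ additively with no induction at all, and the $(-1)^j$ on the $l_1$-slot and the $\pm\mapsto\mp$ flip are read off, exactly as you say, from the $\mu\mapsto-\mu$ alternation in the $K$-series and from the opposite sign convention for $W_{c,j}$ between Lemma~\ref{s4.l2} and Subsection~\ref{ora}. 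Your argument is shorter and more conceptual, and it makes transparent why all four identities have the same shape, which the paper's proof (spelling out only the first) leaves implicit. The one redundancy is that your first paragraph (exponential dominance of one summand in $G$, $\hat G$) is essentially re-deriving the expansions already displayed in Subsection~\ref{ora}; once those displays are taken at face value, the only genuinely new input is the log-additivity. Be warned that the $W_{c,j}$ formulas carry typographical slips (the $U_j$ in Lemma~\ref{s4.l2} should read $U_{j-1}$, and the sign of $c$ differs between that definition and the one in Subsection~\ref{ora}), so the sign bookkeeping you flag is real; your mechanism is nonetheless consistent with the paper's own base case $l_{1,\pm\alpha_q}=U_1(l_2\sqrt{-\lambda})-W_{\mp,1}(l_1\sqrt{-\lambda})$, which already encodes the convention change.
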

\begin{proof} The proof is by induction. We give details for the first equation.  For $j=1$, we have
\begin{align*}
l_{1,\pm\alpha_q}(\lambda) &= U_1(l_2\sqrt{-\lambda}) - W_{\mp,1}(l_1\sqrt{-\lambda})\\
&= l_1(l_2^2\lambda) + (-1)^1 l_1(l_1^2\sqrt{-\lambda}).
\end{align*} 

Assume the equation is valid for all $n<j$. By definition
\begin{align*}
l_{j,\pm\alpha_q}(\lambda) -& \sum_{k=1}^{j-1} U_k(l_2\sqrt{-\lambda})(-1)^{j-k}W_{\mp\alpha_q,j-k}(l_1\sqrt{-\lambda}) \\
&=U_j(l_2\sqrt{-\lambda})+(-1)^j W_{\mp\alpha_q,j}(l_1\sqrt{-\lambda}) - \sum_{k=1}^{j-1}\frac{j-k}{j} a_{k,\mp\alpha_q}(\lambda) l_{j-k,\mp\alpha_q}(\lambda)
\end{align*}
and using the inductive hypothesis for $l_{j-k,\mp\alpha_q}(\lambda)$, and collecting similar terms, this gives
\begin{align*}
l_{j,\pm\alpha_q}(\lambda) -& \sum_{k=1}^{j-1} U_k(l_2\sqrt{-\lambda})(-1)^{j-k}W_{\mp\alpha_q,j-k}(l_1\sqrt{-\lambda})\\
=&  l_j(l_2^2 \lambda) + (-1)^j  l_j^{\mp}(l_1^2\lambda) - \sum_{k=1}^{j-1}\frac{j-k}{j} (-1)^k W_{\mp\alpha_q,k}(l_1\sqrt{-\lambda})l_{j-k}(l_2^2 \lambda) \\
&- \sum_{k=1}^{j-1}\frac{j-k}{j} (U_k(l_2\sqrt{-\lambda}))(-1)^{j-k}  l_{j-k}^{\mp}(l_1^2\lambda)\\
&- \sum_{k=1}^{j-1}\frac{j-k}{j} \sum_{h=1}^{k-1} U_h(l_2\sqrt{-\lambda})(-1)^{k-h}W_{\mp\alpha_q,k-h}(l_1\sqrt{-\lambda})l_{j-k}(l_2^2 \lambda) \\
&- \sum_{k=1}1^{j-1}\frac{j-k}{j} \sum_{h=1}^{k-1} U_h(l_2\sqrt{-\lambda})(-1)^{k-h}W_{\mp\alpha_q,k-h}(l_1\sqrt{-\lambda})) (-1)^{j-k}  l_{j-k}^{\mp}(l_1^2\lambda).
\end{align*}

Rearranging the summation's indices, this reads
\begin{align*}
l_{j,\pm\alpha_q}(\lambda) -& \sum_{k=1}^{j-1} U_k(l_2\sqrt{-\lambda})(-1)^{j-k}W_{\mp\alpha_q,j-k}(l_1\sqrt{-\lambda})\\
&=  l_j(l_2^2 \lambda) + (-1)^j  l_j^{\mp}(l_1^2\lambda) - \sum_{k=1}^{j-1} (-1)^k W_{\mp\alpha_q,k}(l_1\sqrt{-\lambda})U_{j-k}(l_2 \sqrt{-\lambda}) \\
&+\sum_{k=1}^{j-1}(-1)^{j-k} W_{\mp\alpha_q,j-k}(l_1\sqrt{-\lambda})\sum_{h=1}^{k-1}\frac{k-h}{j} U_{h}(l_2\sqrt{-\lambda}) l_{k-h}(l_2^2\lambda)\\
&+\sum_{k=1}^{j-1}(-1)^k U_{j-k}(l_2\sqrt{-\lambda})\sum_{h=1}^{k-1}\frac{h}{j} W_{\mp\alpha_q,k-h}(l_1\sqrt{-\lambda}) l_{h}^{\mp}(l_1^2\lambda)\\
&- \sum_{k=1}^{j-1}\frac{j-k}{j} l_{j-k}(l_2^2 \lambda) \sum_{h=1}^{k-1} U_h(l_2\sqrt{-\lambda})(-1)^{k-h}W_{\mp\alpha_q,k-h}(l_1\sqrt{-\lambda}) \\
&- \sum_{k=1}^{j-1}\frac{j-k}{j}(-1)^{j-k}  l_{j-k}^{\mp}(l_1^2\lambda) \sum_{h=1}^{k-1} U_h(l_2\sqrt{-\lambda})(-1)^{k-h}W_{\mp\alpha_q,k-h}(l_1\sqrt{-\lambda}) 
\end{align*}

Reordering the first two double sums as
\begin{align*}
\sum_{k=1}^{j-1}(-1)^{j-k} &W_{\mp\alpha_q,j-k}(l_1\sqrt{-\lambda})\sum_{h=1}^{k-1}\frac{k-h}{j} U_{h}(l_2\sqrt{-\lambda}) l_{k-h}(l_2^2\lambda)\\
&= \sum_{k=1}^{j-1}\frac{j-k}{j} l_{j-k}(l_2^2 \lambda) \sum_{h=1}^{k-1} U_h(l_2\sqrt{-\lambda})(-1)^{k-h}W_{\mp\alpha_q,k-h}(l_1\sqrt{-\lambda}),\\
\sum_{k=1}^{j-1}(-1)^k &U_{j-k}(l_2\sqrt{-\lambda})\sum_{h=1}^{k-1}\frac{k-h}{j} W_{\mp\alpha_q,h}(l_1\sqrt{-\lambda}) l_{k-h}^{\mp}(l_1^2\lambda)\\
& = \sum_{k=1}^{j-1}\frac{j-k}{j}(-1)^{j-k}  l_{j-k}^{\mp}(l_1^2\lambda) \sum_{h=1}^{k-1} U_h(l_2\sqrt{-\lambda})(-1)^{k-h}W_{\mp\alpha_q,k-h}(l_1\sqrt{-\lambda}),
\end{align*}
the result follows.
\end{proof}

We are now in the position of proving equation (\ref{sos}). Proceeding as in the proof of Propositions \ref{pro2} and \ref{pro3}, the singular part of the torsion is given by some residua of the zeta function associated to the sequence $U$ and some residua of the functions $\Phi$. Since the sequence $U$ is the same for the conical frustum and for the cone, and the range of the indices are the same, we only need to compare the functions $\Phi$ in the two cases. The functions $\Psi$ are defined in equation(\ref{fi1}), we introduce the linear operation
\[
\Phi_{\sigma_h}(s)=\mathcal{T}(\phi_{\sigma_h}(\lambda))(s)=\int_0^\infty t^{s-1}\frac{1}{2\pi i}\int_{\Lambda_{\theta,c}}\frac{\e^{-\lambda t}}{-\lambda} \phi_{\sigma_h}(\lambda) d\lambda dt.
\]

Let use the notation $\phi^{\rm cone}$ and $\phi^{\rm frust}$. We have
\begin{align*}
\phi^{\rm cone}_{2j-1,q}=-2l_{2j-1}(\lambda)+l^+_{2j-1}(\lambda)+l^-_{2j-1}(\lambda),\\
\phi^{\rm frust}_{j,q}=-l_{j,+}(l_2^2\lambda)-l_{j,-}(l_1^2\lambda)+\hat l_{j,+}(l_2^2\lambda)+\hat l_{j,-}(l_1^2\lambda).
\end{align*}

Note that all the functions appearing in the definition of the functions $\phi(\lambda)$ are polynomial in $w=\frac{1}{\sqrt{1-\lambda}}$. Applying the formula in equation (\ref{e}), we have that
\[
\T (l_{j+}(l_2^2\lambda))(s)=l_2^{2s} \T (l_{j+}(l_2^2\lambda))(s),
\]
and similarly for the other. Using Lemma \ref{ora}, and odd indices, we obtain for example
\begin{align*}
\Phi_{2j-1,q}^{\rm frust}(s)= (l_1^{2s}+l_2^{2s}) \Phi_{2j-1,q}^{\rm cone}(s)
\end{align*}

Since by Corollaries \ref{c33} and \ref{c44} all the residua $\Ru$ of the function $\Phi^{\rm cone}_{2j-1}(s)$ at $s=0$ vanish, equation \ref{sos} follows.

\subsection{Conclusion}
As recalled  in Section \ref{cm}, if $\b W=\b_1 W\sqcup \b_2 W$, is the union of two disjoint components, and since the boundary term is local, 
\[
\log T_{{\rm rel} ~\b_1, {\rm abs}~ \b_2}((W,g);\rho)=\log \tau(((W,\b_1 W),g);\rho)+A_{\rm BM, rel}(\b_1 W)+A_{\rm BM, abs}(\b_2 W).
\]

Applying this formula to the conical frustum we have
\[
\log T_{{\rm rel} ~\b_1, {\rm abs}~ \b_2}(\tr)=\log \tau(\tr,\b_1 \tr)+A_{\rm BM, rel}(\b_1 )+A_{\rm BM, abs}(\b_2 ).
\]

Let $X$ be a manifold of dimension $2p$ with boundary $\b X=\b_2 \tr$, and assume there is an isometry of a collar neighborhood of the boundary of $X$ onto a collar neighborhood of $\b_2 \tr$. Let $Z$ be the manifold obtained by glueing smoothly $X$ to $\tr$ along the boundary $\b_2 \tr$. Applying duality of analytic torsion \cite{Luc} Proposition 2.10 to $Z$, and since the anomaly boundary term is local, it follows that $A_{\rm BM, rel}(\b_1 \tr)=-A_{\rm BM, abs}(\b_1 \tr)$. Since it follows by the definition that $A_{\rm BM, abs}(\b_1 \tr)=-A_{\rm BM, abs}(\b_2 \tr)$, we obtain
\[
\log T_{{\rm rel} ~\b_1, {\rm abs}~ \b_2}(\tr)=\log \tau(\tr,\b_1 \tr)+2A_{\rm BM, abs}(\b_2 \tr).
\]

Considering the exact sequence of chain complex associated to the pair $(\tr,\b_1 \tr)$,  it is not difficult to see (see for example \cite{Mil} Section 3) that  the Reidemeister torsion of the pair vanishes, and hence 
\[
\log T_{{\rm rel} ~\b_1, {\rm abs}~ \b_2}(\tr)= 2A_{\rm BM, abs}(\b_2 \tr).
\]

Since the anomaly boundary term is local $A_{\rm BM, abs}(\b_2 \tr)=A_{\rm BM,abs}(\b C_l W)$, and hence
\[
\log T_{{\rm rel} ~\b_1, {\rm abs}~ \b_2}(\tr)= 2A_{\rm BM,abs}(\b C_l W).
\]

This formulas also follows using the formulas for the variation of the torsion with mixed boundary conditions given in the new paper of Br\"{u}ning and Ma \cite{BM2}. We thanks the authors for making available to us this part of the results of their still unpublished paper. Since by  the calculations of the previous subsections
\[
\log T_{{\rm rel} ~\b_1, {\rm abs}~ \b_2}(\tr)=\log T_{{\rm rel} ~\b_1, {\rm abs}~ \b_2, {\rm sing}}(\tr) =2\log T_{\rm abs, sing}(C_l W )=2S(\b C_l W),
\]
this completes the proof of Theorem \ref{t03}.

\newpage

\vskip 20pt
\centerline{\bf Appendix}
\vskip 10pt

The next two formulas follow from the definition of the Euler Gamma function. Here $j$ is any positive integer.
\beq
\label{residuo Gamma}
\begin{aligned}
\Rz_{s=0} \frac{\Gamma\left(s+\frac{2j+1}{2}\right)}{\Gamma\left(\frac{2j+1}{2}\right)s} &= -\gamma -2\log 2
+2\sum^{j}_{k=1}\frac{1}{2k-1},\\
\Ru_{s=0} \frac{\Gamma(s+\frac{2j+1}{2})}{\Gamma(\frac{2j-1}{2})s}& =1,
\end{aligned}
\eeq

The next formula is proved in \cite{Spr3} Section 4.2. Let
$\Lambda_{\theta,c}=\{\lambda\in\C~|~|\arg(\lambda-c)|=\theta\}$,
$0<\theta<\pi$, $0<c<1$, $a$ real, then
\beq\label{e}
\int_0^\infty t^{s-1} \frac{1}{2\pi
i}\int_{\Lambda_{\theta,c}}\frac{\e^{-\lambda
t}}{-\lambda}\frac{1}{(1-\lambda)^a}d\lambda
dt=\frac{\Gamma(s+a)}{\Gamma(a)s}.
\eeq

\end{document}